   \def\MR#1{}
\theoremstyle{plain}
\newtheorem{theorem}{Theorem}[section]
\newtheorem{theoremA}{Theorem}
\newtheorem{lemma}[theorem]{Lemma}
\newtheorem{proposition}[theorem]{Proposition}
\newtheorem{corollary}[theorem]{Corollary}
\theoremstyle{definition}
\newtheorem{definition}[theorem]{Definition}
\newtheorem{example}[theorem]{Example}
\newtheorem{remark}[theorem]{Remark}
\newtheorem{propdef}[theorem]{Proposition-Definition}
\newcommand{\R}{\mathbb{R}}
\newcommand{\C}{\mathbb{C}}
\newcommand{\N}{\mathbb{N}}
\newcommand{\rL}{{\rm L}}
\newcommand{\G}{\mathcal{G}}
\newcommand{\T}{\mathcal{T}}
\newcommand{\A}{\mathcal{A}}
\newcommand{\B}{\mathcal{B}}
\newcommand{\Z}{\mathbb{Z}}
\newcommand{\Gammahat}{\widehat{\Gamma}}
\newcommand{\w}{\wr_{*}}
\newcommand{\wH}{\wr_{*,H}}
\newcommand{\wreath}{G\w\SN}
\newcommand{\wreathH}{G\wH\SN}
\newcommand{\BS}{\mathrm{BS}}
\newcommand{\wreathhat}{\widehat{\wreath}}
\newcommand{\wreathHhat}{\widehat{\wreathH}}
\newcommand{\Ghat}{\widehat{G}}
\newcommand{\ebar}{\overline{e}}
\newcommand{\vbar}{\overline{v}}
\newcommand{\ot}{\otimes}
\newcommand{\id}{{\rm id}}
\newcommand{\tr}{{\rm tr}}
\newcommand{\Pol}{{\rm Pol}}
\newcommand{\Irr}{{\rm Irr}}
\newcommand{\Linf}{{\rm L}^\infty}
\newcommand{\SN}{S_N^+}
\newcommand{\F}{\mathbb{F}}
\newcommand{\Mor}{\mathrm{Mor}}
\newcommand{\HNN}{\mathrm{HNN}}
\newcommand{\Aut}{\mathrm{Aut}}
\newcommand\scalemath[2]{\scalebox{#1}{\mbox{\ensuremath{\displaystyle #2}}}}
\title{Operator algebras of free wreath products}
\author{Pierre Fima}
\address{Pierre Fima
\newline
Universit\'e Paris Cit\'e and Sorbonne Universit\'e, CNRS, IMJ-PRG, F-75013 Paris, France.}
\email{pierre.fima@imj-prg.fr}
\thanks{P.F. is partially supported by the ANR project ANCG (No. 
 ANR-19-CE40-0002), ANR project AODynG (No. ANR-19-CE40-0008), and Indo-French 
 Centre for the Promotion of Advanced Research - CEFIPRA}
 \author{Arthur Troupel}
\address{Arthur Troupel
\newline
Universit\'e Paris Cit\'e and Sorbonne Universit\'e, CNRS, IMJ-PRG, F-75013 Paris, France.}
\email{arthur.troupel@imj-prg.fr}
\begin{document}
\date{}
\maketitle

\begin{abstract}
   We give a description of operator algebras of free wreath products in terms of fundamental algebras of graphs of operator algebras as well as an explicit formula for the Haar state. This allows us to deduce stability properties for certain approximation properties such as exactness, Haagerup property, hyperlinearity and K-amenability. We study qualitative properties of the associated von Neumann algebra: factoriality, fullness, primeness and absence of Cartan subalgebra and we give a formula for Connes' $T$-invariant and $\tau$-invariant. We also study maximal amenable von Neumann subalgebras. Finally, we give some  explicit computations of K-theory groups for C*-algebras of free wreath products. As an application we show that the reduced C*-algebras of quantum reflection groups are pairwise non-isomorphic.
\end{abstract}

\section{Introduction}

\noindent After Woronowicz developed the general theory of compact quantum groups \cite{wor87,wor88,wor98} which covers, in particular, the deeply investigated $q$-deformations of compact Lie groups constructed by Drinfeld and Jimbo \cite{dri86,dri87,jim85}, new examples of compact quantum groups emerged from the works of Wang \cite{Wa95,wan98}. A particularly relevant quantum group for the present work is the quantum permutation group $S_N^+$ (see Section \ref{SectionQperm}) for which the representation theory has been completely understood by Banica \cite{Ba99} and its operator algebras (as well as approximation properties) have been extensively studied by Brannan and Voigt \cite{Br13,Vo17}. In order to find natural quantum subgroups of $S_N^+$, quantum automorphism groups of finite graphs were introduced in \cite{bic03}. Since the classical automorphism group of the graph obtained as the $N$-times disjoint union of a finite connected graph $\mathcal{G}$ is given by the wreath product of the classical automorphism group of $\mathcal{G}$ by $S_N$, it became important to introduce a free version of the wreath product.

\vspace{0.2cm}

\noindent The free wreath product construction was achieved by Bichon in \cite{Bi04}: given a compact quantum group $G$, Bichon constructed and studied the free wreath product $\wreath$ of $G$ by $S_N^+$ (see Section \ref{Sectionfwp}). Some generalisations of free wreath products by $S_N^+$ were considered in \cite{pit14,FP16,TW18,FS18,Fr22}. Concerning the representation theory of $\wreath$, it has been studied in many specific cases before achieving the general result \cite{bv09,lem14,lt14,pit14,FP16,TW18,FS18}. While some approximation properties are known to be preserved by the free wreath product construction, very little is known about the operator algebras associated to $\wreath$. It seems that the main reason why these operator algebras are not already completely understood is that there is no explicit computation of the Haar state. The first result of the present paper is an explicit formula for the Haar state of $\wreath$ (Theorem \ref{propreduced}). While studying free wreath products, we realized that the full C*-algebra $C(\wreath)$ of the free wreath product can actually be written as the full fundamental C*-algebra of a graph of C*-algebras \cite{FF14}. However, in contrast with \cite{FF14}, $\wreath$ is not the fundamental quantum group of a graph of quantum groups. Nevertheless, our description in terms of graph of C*-algebras allows us to produce a specific state, called the \textit{fundamental state} in \cite{FF14}, which is shown to be the Haar state of $C(\wreath)$. This allows a complete description of the reduced C*-algebra $C_r(\wreath)$ as the reduced fundamental C*-algebra of a graph of C*-algebras as well as a description of the von Neumann algebra $\Linf(\wreath)$ as the fundamental von Neumann algebra of a graph of von Neumann algebras.

\vspace{0.2cm}

\noindent Concerning approximation properties, our description in terms of graph of operator algebras of the free wreath product allows us to deduce the following. To state our result, we will the following terminology: we will always denote by $\widehat{G}$ the discrete dual of a compact quantum group $G$, and we say that $\widehat{G}$ has the \textit{Haagerup property} if $\Linf(G)$ has the Haagerup property (with respect to the Haar state see \cite{CS15}) and that $\widehat{G}$ is \textit{exact} when the reduced C*-algebra $C_r(G)$ is an exact C*-algebra. When $G$ is Kac, we say that its dual $\widehat{G}$ is \textit{hyperlinear} if the finite von Neumann algebra $\Linf(G)$ embeds in an ultraproduct of the hyperfinite ${\rm II}_1$-factor. Finally, $G$ is called \textit{co-amenable} when the canonical surjection from its full C*-algebra $C(G)$ to $C_r(G)$ is an isomorphism, and we say that $\widehat{G}$ is \textit{K-amenable} when this canonical surjection is a KK-equivalence.

\begin{theoremA}\label{THMA}Given $N\in\N^*$ and $G$ a compact quantum group. The following holds.
\begin{enumerate}
    \item $\widehat{G}$ is exact if and only if $\wreathhat$ is exact.
    \item $\Ghat$ has the Haagerup property if and only if $\wreathhat$ has the Haagerup property.
    \item If $G$ is Kac then $\Ghat$ is hyperlinear if and only if $\wreathhat$ is hyperlinear.
    \item
    \begin{itemize}\item If $N\geq 5$, $\wreath$ is not co-amenable for any $G$.
    \item If $N\in\{3,4\}$ then $\wreath$ is co-amenable if and only if $G$ is trivial.
    \item $G\wr_*S_{2}^+$ is co-amenable if and only if $G$ is trivial or if $G\simeq\Z/2\Z$.
    \end{itemize}
    \item $\Ghat$ is K-amenable if and only if $\wreathhat$ is K-amenable.
\end{enumerate}
\end{theoremA}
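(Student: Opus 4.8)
The plan is to leverage the structural description announced as Theorem \ref{propreduced}: $C(\wreath)$, $C_r(\wreath)$ and $\Linf(\wreath)$ are respectively the full fundamental C*-algebra, the reduced fundamental C*-algebra, and the fundamental von Neumann algebra of a graph of operator algebras in the sense of \cite{FF14}, with the Haar state realized as the fundamental state. Items (1), (2), (3) and (5) should then all reduce to permanence properties of fundamental algebras, which behave like iterated amalgamated free products and $\HNN$ extensions. The crucial structural input is that in the relevant graph the edge algebras are finite-dimensional and carry GNS-faithful, trace-preserving conditional expectations, so the standard stability machinery for such constructions applies; in the Kac case the whole construction is moreover trace-preserving, which is what makes (3) available.

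For the forward implications in (1), (2), (3), (5) I would first read off from the graph that the vertex operator algebras are assembled out of $C_r(G)$ together with finite-dimensional pieces coming from the magic unitary of $\SN$, so that whenever $G$ enjoys the property in question the vertex algebras do as well. Then I would invoke the corresponding permanence results: exactness passes to reduced fundamental C*-algebras over exact (here finite-dimensional) edges; the Haagerup property is stable under the fundamental von Neumann construction relative to trace-preserving conditional expectations onto finite-dimensional edges; hyperlinearity, i.e.\ embeddability into $R^\omega$, is preserved by amalgamated free products and fundamental algebras over hyperfinite subalgebras; and K-amenability is stable under the fundamental construction when the edge algebras are K-amenable. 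For the converse implications the key observation is that $C_r(G)$, resp.\ $\Linf(G)$, sits inside $C_r(\wreath)$, resp.\ $\Linf(\wreath)$, as a vertex algebra admitting a trace-preserving conditional expectation: exactness and hyperlinearity then descend to this subalgebra, the Haagerup property descends through the conditional expectation, and K-amenability is inherited by a restriction argument.

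The main obstacle is item (4), which is arithmetic in nature and cannot be settled by soft permanence. Here co-amenability amounts to amenability of $\Linf(\wreath)$, and one must pin down both the threshold in $N$ and the exceptional value $N=2$. For $N\geq 5$ the free wreath product contains enough of the non-co-amenable quantum permutation group to obstruct co-amenability for every $G$; I would make this precise by exhibiting $\SN$ as a quantum subgroup and using that $\SN$ is not co-amenable for $N\geq 5$. For $N\in\{3,4\}$ the group $\SN$ is itself co-amenable (note $\{1\}\w\SN=\SN$), so any obstruction must come from $G$: via the fusion semiring of $\wreath$ and a Kesten-type estimate on the fundamental character I would show that co-amenability forces the generating representation of $G$ to be trivial, hence $G$ trivial. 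The genuinely delicate point is $N=2$, where $S_2^+=\Z/2\Z$ is classical and $\wreath$ degenerates to a free-product-type object; there a direct analysis of the fusion rules should show that co-amenability holds exactly when $G$ is trivial or $G\simeq\Z/2\Z$, the extra solution reflecting the amenability of the free product $\Z/2\Z*\Z/2\Z$.
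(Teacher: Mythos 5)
Your treatment of items (1), (2), (3) and (5) is essentially the paper's own: the paper realizes $\Linf(\wreath)$ as an iterated amalgamated free product over finite-dimensional algebras (the inductive construction of Proposition \ref{InductiveConstruction}) and invokes exactly the permanence results you gesture at --- \cite{FF14} for exactness, \cite{CKSVW21} for the Haagerup property, \cite{BDJ08} for hyperlinearity in the Kac/tracial case, \cite{FG18} for K-amenability --- with the converses obtained by conditional-expectation descent. One caveat on (5): the embeddings $\nu_i$ and the inclusion $C(G)^{*N}\subset C(\wreath)$ do \emph{not} intertwine the comultiplications, so K-amenability cannot be "restricted" by any dual-quantum-subgroup reasoning; the paper instead uses that these inclusions preserve the Haar states (Theorem \ref{propreduced}) to descend them to the reduced C*-algebras, and then checks $[\lambda_G]\ot\beta=[\varepsilon_G]$ by an explicit Kasparov-product computation. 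Your "restriction argument" must be completed in precisely this way; the ingredients are present in your setup, but the step is not automatic.

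The genuine gap is item (4) for $N\in\{3,4\}$. First, your opening premise that co-amenability of $\wreath$ "amounts to amenability of $\Linf(\wreath)$" is not available: this equivalence is known only for Kac-type quantum groups \cite{To06} and is open in general, as the paper itself remarks after Proposition \ref{Prop-Freeprod}. Second, and more seriously, a Kesten-type estimate on the fundamental character requires knowing the fusion semiring of $\wreath$, which is classified only for $N\geq 4$; for $N=3$ and $G$ nontrivial no description of $\Irr(G\w S_3^+)$ exists --- the paper even shows that $G\w S_3^+\not\simeq G^{*3}\rtimes S_3$ exactly when $G$ is nontrivial (Proposition \ref{Prop-Subgroup}) --- so your argument has no representation-theoretic input to run on. The paper avoids fusion rules altogether: by Remark \ref{RmkSubgroup}, both $S_N^+$ and $G^{*N}$ are quantum subgroups (quotients intertwining the comultiplications) of $\wreath$, so co-amenability of $\wreath$ passes to them \cite{Sa10}; non-co-amenability of $S_N^+$ settles $N\geq 5$, while for $N\in\{3,4\}$ Proposition \ref{Prop-Freeprod} (a free product of nontrivial compact quantum groups is co-amenable iff both factors are $\Z/2\Z$) shows that $G^{*N}$ with $N\geq 3$ free factors is co-amenable only if $G$ is trivial. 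Your $N=2$ conclusion, and the heuristic that the exceptional solution reflects amenability of $\Z/2\Z*\Z/2\Z$, are correct, but the efficient route is again structural rather than fusion-theoretic: $G\w S_2^+\simeq G^{*2}\rtimes S_2$ (Proposition \ref{Prop-Subgroup}) combined with Propositions \ref{Prop-Semidirect} and \ref{Prop-Freeprod}.
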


\noindent Let us mention that some of the statements in Theorem A are already known in specific cases but our proofs, based on our description in terms of graphs of operator algebras, are completely different. The stability of exactness under the free wreath product with $S_N^+$ is known \cite{lt14, FP16} for $N\geq 4$ while the stability of the Haagerup property is not known in the form above. Actually, most of the previous proofs of approximation properties were based on a monoidal equivalence argument so that it was not possible to deduce properties which are not stable under monoidal equivalence. To deduce approximation properties, the common trick was to strengthen the definition of these properties so that they become invariant under monoidal equivalence. Hence, what is known in relation with the Haagerup property is actually the stability of the \textit{central ACPAP} \cite{lem14,lt14,FP16,TW18}. One exception is the case $N=2$ which has been completely described by Bichon \cite{bic03}. From Bichon's description, it has been possible to deduce in \cite[Corollary 7.11]{DFSW16} that, when $G$ is Kac, $\widehat{G\wr_*S_2^+}$ has the Haagerup property if and only if $\Ghat$ has the Haagerup property. Concerning hyperlinearity, it is already known that $\widehat{S_N^+}$ (for all $N$) and the discrete duals of the quantum reflection groups $\widehat{\Z_s}\wr_*S_N^+$ (for $1\leq s\leq +\infty\,$ and $N\geq 4$) are residually finite hence hyperlinear \cite{BCF20}. The now classical argument is based on the topological generation method developed by Brannan-Collins-Vergnioux \cite{BCV17}. The stability of hyperlinearity of Theorem A is completely new and our argument is not based on the topological generation method. The stability of K-amenability is also new. It was known that $\widehat{S_N^+}$ is K-amenable \cite{Vo17} and, again by using a monoidal equivalence argument, it was known \cite{FM20} that if $G$ is torsion-free and satisfies the strong Baum-Connes conjecture then $\wreathhat$ is K-amenable. One of the consequence of our result on $K$-amenability which seems to have been unknown is that duals of quantum reflection groups are K-amenable. A more interesting application of our description in terms of graph of operator algebras is about the qualitative properties of the von Neumann algebra $\Linf(\wreath)$ for which nothing is known, outside of the factoriality in some specific cases. It is known that $\Linf(\wreath)$ is a full ${\rm II}_1$-factor whenever $G=\widehat{\Gamma}$ is the dual of a discrete group and $N\geq 8$ \cite{lem14} and it is also a non-Gamma ${\rm II}_1$-factor whenever $G$ is a compact matrix quantum group of Kac type and $N\geq 8$ \cite{Wa14}. Using results about the structure of amalgamated free products von Neumann algebra \cite{Ue11,Ue13,HV13,HI20} we are able to deduce the following. Let $(\sigma_t^G)_t$ be the modular group of the Haar state on $\Linf(G)$ and $T(M)$ the Connes' $T$ invariant of the von Neumann algebra $M$.

\begin{theoremA}\label{THMB} Suppose that $\Irr(G)$ is infinite and $N\geq 4$. Then ${\rm L}^\infty(\wreath)$ is a non-amenable full and prime factor without any Cartan subalgebra. Moreover:
    \begin{itemize}
        \item If $G$ is Kac then ${\rm L}^\infty(\wreath)$ is a type ${\rm II}_1$ factor.
        \item If $G$ is not Kac then ${\rm L}^\infty(\wreath)$ is a type ${\rm III}_\lambda$ factor for some $\lambda\neq 0$ and:
        $$T({\rm L}^\infty(\wreath))=\{t\in\R\,:\,\sigma_t^G=\id\}.$$
        Moreover, the Connes' $\tau$-invariant of the full factor $\Linf(\wreath)$ is the smallest topology on $\R$ for which the map $(t\mapsto\sigma_t^G)$ is continuous.
    \end{itemize}
\end{theoremA}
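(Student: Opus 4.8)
The plan is to deduce every assertion from our identification of $\Linf(\wreath)$ with the fundamental von Neumann algebra of a graph of von Neumann algebras, together with the explicit formula for the Haar state in Theorem \ref{propreduced}, and then to feed this structure into the structure theory of amalgamated free products from \cite{Ue11,Ue13,HV13,HI20}. The first step is a reduction from the graph picture to an amalgamated free product picture: a fundamental von Neumann algebra over a finite connected graph is obtained, after choosing a spanning tree and collapsing it, from iterated amalgamated free products and HNN extensions over the edge algebras. One must check that the edge (amalgam) algebras are amenable — they arise from the $\SN$-part of the construction and, in the relevant range, are either finite dimensional or of the strongly solid/bi-exact type controlled by \cite{HI20} — and that the hypotheses $\Irr(G)$ infinite and $N\geq 4$ make the vertex algebras coming from $\Linf(G)$ diffuse and the free-product data non-degenerate. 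This translation step is what makes the cited results applicable.

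With this in hand, non-amenability, factoriality and fullness follow from the free-product structure: the free product forces non-amenability once the pieces are large enough, which holds here since $\Linf(G)$ is infinite dimensional, and Ueda's criteria \cite{Ue11} give factoriality and fullness when the amalgam is small and the pieces are diffuse. The type is then read off from the modular group of the Haar state: by Theorem \ref{propreduced} the modular automorphism group $\sigma_t$ of the fundamental state preserves each building block, acts trivially on the (Kac) $\SN$-pieces and on the amalgam, and restricts to $\sigma_t^G$ on the $\Linf(G)$-pieces. If $G$ is Kac the Haar state is a trace and the factor is a non-amenable type ${\rm II}_1$ factor. If $G$ is not Kac the factor is of type III, and Ueda's computation of the $T$-invariant of a free product \cite{Ue11} gives $T(\Linf(\wreath))=\{t\in\R:\sigma_t=\id\}=\{t\in\R:\sigma_t^G=\id\}$, the last equality because the $\SN$-contribution to the modular group is trivial. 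Since the factor is full, Connes' theory of full factors excludes type ${\rm III}_0$, so it is ${\rm III}_\lambda$ with $\lambda\neq 0$; the $\tau$-invariant is computed from the pieces in the same way via \cite{Ue11}, and since only the $G$-pieces contribute it is the smallest topology on $\R$ making $t\mapsto\sigma_t^G$ continuous.

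For primeness and the absence of a Cartan subalgebra I would invoke \cite{HV13,HI20}: an amalgamated free product of diffuse algebras over an amenable (or strongly solid) subalgebra is prime and has no Cartan subalgebra, the type III case being covered by the bi-exactness and strong-solidity-relative-to-the-amalgam techniques of \cite{HI20}. The almost periodicity of the Haar state — its modular operator has pure point spectrum, the eigenvalues being the entries of the $F$-matrices of $G$ and $\SN$ — is what keeps the analysis inside the reach of these theorems in the non-tracial case and provides a second route to excluding type ${\rm III}_0$.

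The main obstacle I expect is the first, reduction step: the cited theorems are stated for (amalgamated) free products and HNN extensions, not for fundamental algebras of arbitrary graphs, so one must genuinely rewrite $\Linf(\wreath)$ in that form and verify that the edge inclusions are with conditional expectation onto an amenable (or bi-exact) subalgebra and that the non-degeneracy hypotheses hold for all $N\geq 4$ — including the delicate value $N=4$, where $\SN$ is co-amenable and all of the non-amenability must come from the free product with the infinite dimensional $\Linf(G)$.
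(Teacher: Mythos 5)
Your architecture is the same as the paper's: decompose $\Linf(\wreath)$ as an iterated amalgamated free product (the paper's Proposition \ref{InductiveConstruction} gives $M_{i+1}=(\Linf(G)\ot\C^N)\underset{\C^N}{*}M_i$ with $M_0=\Linf(S_N^+)$, amalgam $\C^N=L_i\subset\Linf(S_N^+)$), then feed this into the AFP structure theory: \cite[Theorem E]{HV13} for non-amenable prime factoriality, \cite[Theorem 4.10]{Ue13} for fullness, \cite[Theorem B]{BHR14} for absence of Cartan, Connes' ``full $\Rightarrow$ not ${\rm III}_0$'', and \cite{HI20} for the $T$- and $\tau$-invariants. (One small correction: no HNN extensions ever arise, since the underlying graph $\mathcal{T}_N$ is a tree.) However, there is a concrete gap hiding inside your ``non-degeneracy hypotheses'' catch-all. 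Applying \cite[Theorem E]{HV13} to get factoriality requires, beyond diffuseness of the legs and finite-dimensionality of the amalgam, a relative commutant condition of the form $M_0'\cap(\Linf(G)\ot\C^N)'\cap B=\C 1$, which reduces to proving $\Linf(S_N^+)'\cap L_k=\C1$ for \emph{all} $N\geq 4$. This does not follow from known factoriality of $\Linf(S_N^+)$ (available only for $N\geq 8$ by \cite{Br13}); the paper has to prove it by hand (Lemma \ref{LemmaCommutant}), via a magic unitary built from two non-commuting projections. Without this lemma your induction does not start, and it is exactly the point where the range $4\leq N\leq 7$ is delicate.

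The second under-specified step is the modular analysis. Because even the non-amalgamated free wreath product decomposes as an AFP over $\C^N$ — never as a plain free product — Ueda's free product computations in \cite{Ue11}, which you invoke for factoriality, fullness and the $T$-invariant, do not apply as stated; everything must go through the amalgamated results of \cite{Ue13,HI20}. Moreover your identification $T(\Linf(\wreath))=\{t\in\R:\sigma_t=\id\}$ is the \emph{conclusion}, not an input: the $T$-invariant consists of the $t$ for which $\sigma_t$ is \emph{inner}, and the needed bridge is the \cite{HI20} statement that an inner modular automorphism of the AFP is implemented by a unitary lying in the amalgam $B=\C^N$; since $B$ is central in $\Linf(G)\ot\C^N$, conjugation by such a unitary acts trivially there, forcing $\sigma_t^G=\id$. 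The $\tau$-invariant likewise is not ``computed from the pieces'': the paper's proof needs \cite[Theorem B]{HI20} to produce a sequence of unitaries $v_n\in\mathcal{U}(B)$ with ${\rm Ad}(v_n)\circ\sigma_{t_n}\to\id$, followed by the restriction argument of Remark \ref{RmkRestriction} and centrality of $B$ to conclude $\sigma^G_{t_n}\to\id$, with the converse direction coming from continuity of the homomorphism $\psi$ of Equation $(\ref{MorphismPsi})$. These are fixable within your plan, but they are genuine missing steps rather than routine verifications.
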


\noindent To our knowledge, there was no result about the maximal amenable von Neumann subalgebras of $\Linf(\wreath)$ in the literature. A von Neumann subalgebra $A\subset M$ is called \textit{with expectation} if there a normal faithful conditional expectation $P\rightarrow A$. $A\subset P$ is called \textit{maximal amenable} if $A$ is amenable and, for any intermediate von Neumann algebra $A\subset P\subset M$ if $P$ is amenable then $A=P$. $A\subset P$ is called \textit{maximal amenable with expectation} if $A$ is amenable with expectation and, for any $A\subset P\subset M$, if $P$ is amenable with expectation then $A=P$.

\vspace{0.2cm}

\noindent Recall that $\Linf(\wreath)$ is generated by $N$ free copies of $\Linf(G)$ given by $\nu_i\,:\,\Linf(G)\rightarrow\Linf(\wreath)$, $1\leq i\leq N$, and by $\Linf(S_N^+)\subset\Linf(\wreath)$ plus some relations (see Section \ref{Sectionfwp}). Let $u=(u_{ij})_{ij}\in M_N(\C)\ot\Linf(S_N^+)$ be the fundamental representation of $S_N^+$ so that $\Linf(S_N^+)$ is generated by the coefficients $u_{ij}$ of $u$. We use \cite{BH18} to deduce the following.

\begin{theoremA}\label{THMC}
Let $G$ be a compact quantum group and $N\geq2$. The following holds.
\begin{enumerate}
    \item If ${\rm L}^\infty(G)$ is amenable and $\Irr(G)$ is infinite then, for all $1\leq i\leq N$, the von Neumann subalgebra of ${\rm L}^\infty(\wreath)$ generated by $\{\nu_i(a)u_{ij}\,:\, a\in \Linf(G),\,1\leq j\leq N\}$ is maximal amenable with expectation in ${\rm L}^\infty(\wreath)$.
    \item If $G$ is Kac then ${\rm L}^\infty(S_4^+)\subset {\rm L}^\infty(G\wr_*S_4^+)$ is maximal amenable.
\end{enumerate}
\end{theoremA}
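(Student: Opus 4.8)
The plan is to combine the description of $\Linf(\wreath)$ as the fundamental von Neumann algebra of a graph of von Neumann algebras --- equivalently, as an (iterated) amalgamated free product over an explicit amenable edge algebra --- with the amenable absorption machinery of Boutonnet--Houdayer \cite{BH18}. Throughout I write $M=\Linf(\wreath)$ and let $M=M_1*_B M_2$ be the amalgamated free product splitting coming from the graph decomposition, where $B$ is the amenable edge algebra and the conditional expectations onto $B$ and onto the vertex algebras are those induced by the fundamental state, which is identified with the Haar state in Theorem \ref{propreduced}.

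For part (1), I would first identify, for each fixed $i$, the von Neumann algebra $A_i=\{\nu_i(a)u_{ij}:a\in\Linf(G),\,1\le j\le N\}''$ with the vertex algebra $M_1$ attached to $i$ in this decomposition; this is where the explicit generators of the free wreath product and the graph structure are used, and it is essentially bookkeeping once the decomposition is in place. Since $\Linf(G)$ and $B$ are amenable, the vertex algebra $A_i$ is amenable, and it is with expectation because the fundamental state restricts to a faithful normal conditional expectation onto it. The hypothesis that $\Irr(G)$ is infinite guarantees that $B$ sits inside $A_i$ with infinite index, so that $A_i\nprec_{A_i}B$ in the sense of Popa's intertwining-by-bimodules. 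These are exactly the hypotheses of the amenable absorption theorem of \cite{BH18}: any intermediate von Neumann algebra $A_i\subseteq P\subseteq M$ that is amenable with expectation must then be contained in the vertex algebra $M_1=A_i$, whence $P=A_i$. This shows that $A_i$ is maximal amenable with expectation in $M$.

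For part (2) the decisive point is that $N=4$ is the largest value for which $\SN$ is co-amenable, so $\Linf(S_4^+)$ is an amenable (hyperfinite) von Neumann algebra; together with the Kac assumption, which places us in the tracial $\mathrm{II}_1$ setting where conditional expectations are automatic, this makes $\Linf(S_4^+)$ a genuine candidate for maximal amenability. Here I would locate $\Linf(S_4^+)$ inside the graph decomposition as (a maximal amenable subalgebra of) the amalgam, which is common to all vertices, and then argue as follows: given an amenable $Q$ with $\Linf(S_4^+)\subseteq Q\subseteq M$, the control of amenable subalgebras of amalgamated free products (the relative amenability dichotomy of \cite{BH18}) forces $Q$ to embed into a vertex algebra, and since $Q$ already contains the amalgam this confines $Q$ to a single vertex; maximality of $\Linf(S_4^+)$ inside that vertex then yields $Q=\Linf(S_4^+)$.

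The main obstacle is part (2): because $G$ need not be amenable, the vertex algebras are non-amenable and I cannot simply absorb $Q$ into an amenable vertex as in part (1). The crux is therefore to establish that $\Linf(S_4^+)$ is maximal amenable \emph{inside a single vertex algebra} despite the presence of the non-amenable free copies of $\Linf(G)$; this requires a malnormality/mixing property of $\Linf(S_4^+)$ relative to those copies (an asymptotic-orthogonality or spectral-gap argument in the spirit of Popa), which is precisely the input for which the co-amenability of $S_4^+$ at $N=4$ is indispensable. Verifying this mixing behaviour, together with confirming that the generators $\nu_i(a)u_{ij}$ genuinely exhaust a vertex algebra in the graph decomposition, are the two points where I expect the argument to require the most care.
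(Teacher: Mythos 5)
Your part (1) is correct and is essentially the paper's proof: one performs the d\'evissage removing the edge $v_i$ from $\mathcal{T}_N$, identifies $\{\nu_i(a)u_{ij}\}''$ with the vertex algebra $M_1=\Linf(G)\ot\C^N$ in the splitting $\Linf(\wreath)=M_1\underset{B}{*}M_2$ with $B=\C^N$, and applies \cite[Main Theorem]{BH18}. The one imprecision is your justification of $M_1\nprec_{M_1}B$: this does not follow from ``infinite index'' but from the fact that $\Irr(G)$ infinite makes $\Linf(G)$, hence $M_1$, diffuse (Lemma \ref{LemmaInfinite}), while $B$ is finite-dimensional, and a diffuse algebra never intertwines into a finite-dimensional one.

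Part (2), however, has a genuine gap, located exactly where you place $\Linf(S_4^+)$ in the decomposition. In the graph of von Neumann algebras underlying $\Linf(G\wr_*S_4^+)$, the algebra $\Linf(S_4^+)$ is \emph{not} an edge algebra: it is the vertex algebra at the root $p_0$, and all edge algebras are the finite-dimensional algebras $L_i\simeq\C^4$. One can indeed regroup the tree so that $\Linf(G\wr_*S_4^+)$ becomes an amalgamated free product of the algebras $P_i=\left(\nu_i(\Linf(G))\cup\Linf(S_4^+)\right)''$ over the common subalgebra $\Linf(S_4^+)$, but then the Main Theorem of \cite{BH18} cannot be invoked at all: its hypothesis is that the starting subalgebra $Q_0$ of a \emph{vertex} algebra satisfies $Q_0\nprec B$, where $B$ is the amalgam, and when the starting subalgebra is the amalgam itself this fails trivially (certainly $B\prec B$). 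So no BH18-type absorption or dichotomy will push an amenable $Q\supset\Linf(S_4^+)$ into a single vertex $P_i$ in this decomposition. On top of this, the step you yourself call the crux --- maximal amenability of $\Linf(S_4^+)$ inside one vertex $P_i$ --- is left unproven, so the proposal is incomplete even granting that first step.

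The repair, which is the paper's actual argument, is to only ever use splittings whose amalgam is \emph{finite-dimensional}, so that diffuseness of $\Linf(S_4^+)$ supplies the non-intertwining hypothesis. Concretely, take the inductive construction $M_0=\Linf(S_4^+)\subset M_1\subset\dots\subset M_N=\Linf(G\wr_*S_4^+)$ with $M_{k+1}=(\Linf(G)\ot\C^4)\underset{\C^4}{*}M_k$. If $Q$ is amenable and $M_0\subset Q\subset M_N$, then $Q\cap M_{N-1}$ contains the diffuse algebra $M_0$, hence $Q\cap M_{N-1}\nprec_{M_{N-1}}\C^4$, and \cite[Main Theorem]{BH18}, applied to $M_N=(\Linf(G)\ot\C^4)\underset{\C^4}{*}M_{N-1}$ with starting subalgebra $Q\cap M_{N-1}\subset Q$, yields $Q\subset M_{N-1}$; iterating gives $Q\subset M_0$, hence $Q=M_0$. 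The Kac assumption ensures each $M_k$ is finite, so every intermediate inclusion is automatically with expectation, which is what makes genuine (not merely ``with expectation'') maximal amenability accessible. Note finally that this same trick disposes of your crux immediately: inside $P_i\simeq\Linf(S_4^+)\underset{L_i}{*}(\Linf(G)\ot\C^4)$ the amalgam $L_i\simeq\C^4$ is finite-dimensional and $\Linf(S_4^+)$ is a diffuse vertex algebra, so \cite{BH18} gives maximal amenability of $\Linf(S_4^+)$ in $P_i$ directly; no mixing or asymptotic orthogonality property is needed, and co-amenability of $S_4^+$ enters only through the amenability (and, via $\vert\Irr(S_4^+)\vert=\infty$, the diffuseness) of $\Linf(S_4^+)$.
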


\noindent Finally, we compute K-theory groups of the C*-algebras $C(\wreath)$ and $C_r(\wreath)$ (recall that they are KK-equivalent whenever $\Ghat$ is K-amenable). In the next Theorem, we denote by $C_\bullet(G)$ either the full $C^*$-algebra or the reduced $C^*$-algebra of $G$.

\begin{theoremA}\label{THMD}For any compact quantum group $G$ and integer $N\in\N^*$ we have,
\begin{eqnarray*}
K_0(C_\bullet (\wreath)) &\simeq& K_0(C_\bullet(G)) \ot (\Z^{N^2}) \oplus K_0(C(S_N^+))/ (\Z^{N^2}) \\ &\simeq & \left\{\begin{array}{lcl}K_0(C_\bullet(G))^{\oplus N^2}/ \Z^{2N-2} &\text{if}&N\neq 3,\\K_0(C_\bullet(G))^{\oplus N^2}/\Z^3 &\text{if}& N=3 .\end{array}\right.  \text{ and,}\\
K_1(C_\bullet(\wreath))&\simeq&  K_1(C_\bullet(G))^{\oplus N^2} \oplus K_1(C(S_N^+)) \simeq \left\{\begin{array}{lcl}K_1(C_\bullet(G))^{\oplus N^2} \oplus \Z&\text{if}&N\geq 4,\\K_1(C_\bullet(G))^{\oplus N^2}&\text{if}&1\leq N\leq 3.\end{array}\right.\\.
\end{eqnarray*}
In particular, for the quantum reflection groups $H^{s+}_N = \widehat{\Z_s}\w S_N^+$, which have K-amenable duals, if $N\geq 4$,
$$K_0(C_\bullet(H^{s+}_N))\simeq\left\{
\begin{array}{lcl}\Z^{N^2-2N+2}&\text{if}&s=+\infty,\\ \Z^{sN^2-2N+2}&\text{if}&s<+\infty,\end{array}\right.
\quad K_1(C_\bullet(H^{s+}_N))\simeq\left\{
\begin{array}{lcl}\Z^{N^2+1}&\text{if}&s=+\infty,\\ \Z&\text{if}&s<+\infty,\end{array}\right. $$
and if $N\in \lbrace 1,2,3\rbrace,$
$$K_0(C_\bullet(H^{s+}_N))\simeq\left\{
\begin{array}{lcl}\Z^{N!}&\text{if}&s=+\infty,\\ \Z^{(s-1)N^2+N!}&\text{if}&s<+\infty,\end{array}\right.
\quad K_1(C_\bullet(H^{s+}_N))\simeq\left\{
\begin{array}{lcl}\Z^{N^2}&\text{if}&s=+\infty,\\ 0&\text{if}&s<+\infty,\end{array}\right. $$
\end{theoremA}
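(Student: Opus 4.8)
The plan is to feed the description of $C_\bullet(\wreath)$ as the (full, respectively reduced) fundamental C*-algebra of a graph of C*-algebras, established in Theorem~\ref{propreduced} and the surrounding discussion, into the six-term exact sequence in K-theory available for fundamental C*-algebras of graphs of C*-algebras (see \cite{FF14}). Writing $A_v$ for the vertex algebras, $A_e$ for the edge algebras, and $\alpha_i\colon\bigoplus_e K_i(A_e)\to\bigoplus_v K_i(A_v)$ for the map given by the difference of the two edge inclusions, the algebra $B:=C_\bullet(\wreath)$ sits in the cyclic exact sequence
\begin{equation*}
\bigoplus_e K_0(A_e)\xrightarrow{\alpha_0}\bigoplus_v K_0(A_v)\to K_0(B)\to\bigoplus_e K_1(A_e)\xrightarrow{\alpha_1}\bigoplus_v K_1(A_v)\to K_1(B)\to\bigoplus_e K_0(A_e).
\end{equation*}
The first thing I would record is that every edge algebra of this graph is finite-dimensional, so $K_1(A_e)=0$ and $K_0(A_e)$ is free abelian. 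The sequence then collapses to $K_0(B)\simeq\mathrm{coker}(\alpha_0)$ together with a short exact sequence $0\to\bigoplus_v K_1(A_v)\to K_1(B)\to\ker(\alpha_0)\to0$ that splits because $\ker(\alpha_0)$ is a subgroup of a free abelian group. Everything is thereby reduced to the single integral homomorphism $\alpha_0$ and to the K-theory of the vertices.

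The vertices carry, on the one hand, the algebra $C(\SN)$, whose K-theory I import from Voigt~\cite{Vo17} in the form of the presentation $K_0(C(\SN))\simeq\Z^{N^2}/\Z^{2N-2}$ with generators the classes $[u_{ij}]$ of the magic-unitary coefficients (the rank of the relations dropping exceptionally to $3$ at $N=3$, where $S_3^+=S_3$ is classical), together with $K_1(C(\SN))\simeq\Z$ for $N\geq4$ and $0$ otherwise; and, on the other hand, matrix amplifications of $C_\bullet(G)$, which by Morita invariance contribute exactly $N^2$ copies of $K_*(C_\bullet(G))$ indexed by the pairs $(i,j)$. Thus $\bigoplus_v K_*(A_v)\simeq K_*(C_\bullet(G))^{\oplus N^2}\oplus K_*(C(\SN))$. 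The decisive point, and the step I expect to be the main obstacle, is to identify $\alpha_0$ from the concrete construction. The edge K-theory is $\bigoplus_e K_0(A_e)\simeq\Z^{N^2}$, with a basis $\{e_{ij}\}$ labelled by the pairs $(i,j)$; tracking the two edge inclusions at the level of projections should show that they send $e_{ij}$ to the unit class $[1_{ij}]$ of the $(i,j)$-th copy of $K_0(C_\bullet(G))$ on one side and to the magic-unitary class $[u_{ij}]\in K_0(C(\SN))$ on the other, so that $\alpha_0(e_{ij})=[1_{ij}]-[u_{ij}]$. Pinning this down requires following the edge inclusions of the fundamental-algebra presentation, which is where the real work lies.

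Granting this, the computation finishes. The unit classes $[1_{ij}]$ are linearly independent and non-torsion in $K_0(C_\bullet(G))^{\oplus N^2}$ — for the full algebra because the counit $\epsilon\colon C(G)\to\C$ splits off a copy of $\Z$, the reduced case being analogous — so $\alpha_0$ is injective; hence $\ker(\alpha_0)=0$ and
\begin{equation*}
K_1(\wreath)\simeq\bigoplus_v K_1(A_v)\simeq K_1(C_\bullet(G))^{\oplus N^2}\oplus K_1(C(\SN)),
\end{equation*}
which is the asserted $K_1$-formula. For $K_0$ we obtain $K_0(\wreath)\simeq\mathrm{coker}(\alpha_0)\simeq(K_0(C_\bullet(G))^{\oplus N^2}\oplus K_0(C(\SN)))/\Z^{N^2}$, the subgroup $\Z^{N^2}$ being generated by the relations $[1_{ij}]-[u_{ij}]$; this is the first displayed form. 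The second, closed form follows by inserting the counit splitting $K_0(C_\bullet(G))\simeq\Z[1]\oplus\widetilde{K}_0(C_\bullet(G))$ and Voigt's presentation of $K_0(C(\SN))$: the relations then lie entirely in the unit part $\Z[1]^{\oplus N^2}$ and cut it by a free subgroup of rank $2N-2$ (respectively $3$ when $N=3$), giving $K_0(C_\bullet(G))^{\oplus N^2}/\Z^{2N-2}$. I would sanity-check the whole chain by specialising to $G$ trivial, where the formula must reproduce Voigt's groups for $C(\SN)$, and I note that the reduced case runs verbatim with $K_*(C_r(G))$ in place of $K_*(C(G))$ throughout.

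Finally, for the quantum reflection groups I specialise to $G=\widehat{\Z_s}$. Here $C^*(\Z_s)\simeq\C^s$ when $s<\infty$, giving $K_0\simeq\Z^s$ and $K_1=0$, while $C^*(\Z)\simeq C(\mathbb{T})$ when $s=\infty$, giving $K_0\simeq K_1\simeq\Z$; moreover $\widehat{\Z_s}$ is co-amenable, hence K-amenable, so Theorem~\ref{THMA}(5) makes $H^{s+}_N$ K-amenable and the notation $C_\bullet$ unambiguous. Substituting these values into the two general formulas and carrying out the elementary bookkeeping of the quotients by $\Z^{2N-2}$ (respectively $\Z^3$ when $N=3$) yields the stated groups.
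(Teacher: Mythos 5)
Your proposal is essentially the paper's own proof: the same decomposition of $C_\bullet(\wreath)$ as a fundamental C*-algebra over $\mathcal{T}_N$ (Sections \ref{SectionGraphFull} and \ref{SectionGraphReduced}), the same six-term exact sequence with $C=\C$, the same identification of the connecting map $e_{ij}\mapsto[1\otimes e_{ij}]-[u_{ji}]$ together with its injectivity, the same input $K_*(C_\bullet(S_N^+))$ from \cite{Vo17}, and the same specialisation to $G=\widehat{\Z_s}$. Even your homological shortcut (edge algebras are finite-dimensional, so $K_1$ of the edges vanishes and the sequence collapses) is only a cosmetic repackaging of the paper's argument.

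Two caveats, both at the steps you yourself call decisive. First, the exact sequences are those of \cite{FG18}, not \cite{FF14}: the conditional expectations at the full level are not GNS-faithful, which is precisely the situation \cite{FG18} treats. Second, your injectivity argument via the counit is valid for $C(G)$, but ``the reduced case being analogous'' fails as stated: the counit is bounded on $C_r(G)$ only when $G$ is co-amenable, and $C_r(G)$ can be simple and hence without characters (e.g. $C_r^*(\mathbb{F}_2)$, exactly the case needed for Corollary \ref{CORKthFree}). What injectivity requires is that $[1]$ be non-torsion in $K_0(C_r(G))$ (equivalently, that the classes $[1\otimes e_j]$ be $\Z$-linearly independent); in the Kac case this follows from pairing with the Haar trace, while in general the paper itself only asserts it (``one checks easily''), so on this point your write-up matches, but does not exceed, the paper's level of rigour — the word ``analogous'' must be replaced by an actual argument. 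A final small inaccuracy that self-corrects: at $N=3$ the classes $[\chi_{ij}]$ do \emph{not} generate $K_0(C(S_3))=\Z^6$ (the coefficients of the permutation representation of $S_3$ span only a rank-$5$ sublattice, since that representation contains only the trivial and the standard irreducibles), so there is no presentation of $K_0(C(S_3^+))$ on these generators with a rank-$3$ relation lattice; nevertheless your counit-splitting computation, which yields $\widetilde{K}_0(C_\bullet(G))^{\oplus 9}\oplus\Z^6\simeq K_0(C_\bullet(G))^{\oplus 9}/\Z^3$, is correct and in fact covers the case $N=3$, which the paper's written proof carries out only for $N\geq 4$.
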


\noindent It seems that the only attempt to compute the K-theory of free wreath products was done in \cite{FM20} in which they compute K-theory groups of some quantum groups which are not free wreath product with $S_N^+$ but only monoidally equivalent to a free wreath products with $S_N^+$. Actually the K-theory is computed for some quantum groups of the form $G\wr_* SO_q(3)$, where the free wreath product is in the sense of \cite{FP16}. The method of computation in \cite{FM20}, which is based on the ideas of the works of Voigt \cite{Vo11,Vo17} and Voigt-Vergnioux \cite{VV13} is to prove the strong Baum-Connes property and then find an explicit projective resolution of the trivial action. However, they could not do it for free wreath products by $S_N^+$ but they showed that this can be done for $G\wr_* SO_q(3)$ with some specific $G$ such as free products of free orthogonal and free unitary quantum groups as well as duals of free groups. While our method cannot provide results on the stability of the strong Baum-Connes conjecture for free wreath products, it has the advantage to be extremely simple and also applicable to any free wreath product with $S_N^+$. Our K-theory computations are based on our decomposition as fundamental algebras and the results of \cite{FG18}. We also use the result of \cite{Vo17} on the computation of the K-theory of $C_\bullet(S_N^+)$. As mentioned to us by Adam Skalski, it seems unknown whether or not the reduced C*-algebras of quantum reflection groups $H_N^{s+}$ are isomorphic or not for different values of the parameters $N,s$. Our K-theory computation in Theorem \ref{THMD} solves this question: we show in Corollary \ref{CORReflection} that for all $N,M\geq 8$ one has $C_r(H_N^{s+})\simeq C_r(H_M^{t+})\Leftrightarrow (N,s)=(M,t)$. We do have to restrict the parameters to $N,M\geq 8$ since our proof uses the unique trace result of Lemeux \cite{lem14}, which is valid only for $N\geq 8$. We also compute some other K-theory groups of free wreath products, such as $\widehat{\mathbb{F}}_m\wr_*S_N^+$ and deduce that reduced (as well as full) C*-algebras $C_\bullet(\widehat{\mathbb{F}}_m\wr_*S_N^+)$ are pairwise non-isomorphic.

\vspace{0.2cm}

\noindent The paper is organized as follows. After the introduction in Section 1, we have included preliminaries in Section 2 in which we detail all our notations concerning operator algebras and quantum groups. We also prove in this preliminary Section that the von Neumann algebra $\Linf(G)$ of a compact quantum group $G$ is diffuse if and only if it is infinite-dimensional. This result, which will be useful in the study of von Neumann algebras of free wreath product, also solves a question left open in \cite{BCF20} in which it is mentioned that it is unknown if the von Neumann algebra of a quantum reflection group $\Z_s\wr_* S_N^+$ is diffuse when $N\leq 7$. Section $2$ also contains important remarks on free product quantum groups, semi-direct product quantum groups, quantum permutation groups and free wreath product quantum groups, as well as amalgamated free wreath products (in the sense of Freslon \cite{Fr22}). In particular, we identify the amalgamated free wreath product $\wreathH$ at $N=2$ with a semi-direct product. In Section $3$ we explain how operator algebras of amalgamated free wreath products can be described as fundamental algebras of graphs of operator algebras. Section $4$ contains the proof of the amalgamated version of Theorem \ref{THMA}, Section $5$ contains the proof of the amalgamated versions of Theorem \ref{THMB} and \ref{THMC}. In Section $6$, we give a formula describing an amalgamated free wreath product of a fundamental quantum group of a graph of quantum groups by $S_N^+$ as the fundamental quantum group of a graph of quantum groups given by free wreath products. Finally, section $7$ contains the proof of Theorem \ref{THMD} and some other K-theory computations.
\section{Preliminaries}

\subsection{Notations}

All Hilbert spaces, C*-algebras and preduals of von Neumann algebras considered in this paper are assumed to be separable. The inner product on an Hilbert space $H$ is always linear on the right. The C*-algebra of bounded linear maps from $H$ to $H$ is denoted by $\mathcal{B}(H)$ and, given vectors $\xi,\eta\in H$ we denote by $\omega_{\eta,\xi}\in\mathcal{B}(H)^*$ the bounded linear form on $\mathcal{B}(H)$ defined by $(x\mapsto\langle\eta,x\xi\rangle)$. For $T\in\mathcal{B}(H)$, we denote by ${\rm Sp}(T)$ the spectrum of $T$ and by ${\rm Sp}_p(T)$ the point spectrum of $T$. The symbol $\ot$ will denote the tensor product of Hilbert spaces, von Neumann algebras as well as the minimal tensor product of C*-algebras. 

\subsection{Full von Neumann algebras}

Let $M$ be a von Neumann algebra with predual $M_*$. We consider on ${\rm Aut}(M)$ the topology of pointwise convergence in $M_*$ i.e. the smallest topology for which the maps ${\rm Aut}(M)\rightarrow M_*$, $\alpha\mapsto\omega\circ\alpha$ are continuous, for all $\omega\in M_*$, where $M_*$ is equipped with the norm topology. It is well known that ${\rm Aut}(M)$ is a Polish group (since $M_*$ is separable). When $\omega\in M_*$ is a faithful normal state, we may consider the closed subgroup ${\rm Aut}(M,\omega)<{\rm Aut}(M)$ of automorphisms preserving $\omega$. Note that the induced topology from ${\rm Aut}(M)$ to ${\rm Aut}(M,\omega)$ is the smallest topology making the maps ${\rm Aut}(M,\omega)\rightarrow{\rm L}^2(M,\omega)$,  $\alpha\mapsto \alpha(a)\xi_\omega$ continuous, for the norm on ${\rm L}^2(M,\omega)$, where $\xi_\omega$ is the cyclic vector in the GNS construction ${\rm L}^2(M,\omega)$ of $\omega$. We record the following Remark for later use.

\begin{remark}\label{RmkRestriction}
Note that if $(M,\omega)$ is a von Neumann algebra with a faithful normal state $\omega$ and $A\subset M$ is a von Neumann subalgebra with a $\omega$-preserving normal conditional expectation $M\rightarrow A$ then, the subgroup ${\rm Aut}_A(M,\omega)=\{\alpha\in{\rm Aut}(M,\omega)\,:\,\alpha(A)=A\}<{\rm Aut}(M,\omega)$ is closed and the restriction map ${\rm Aut}_A(M,\omega)\rightarrow{\rm Aut}(A,\omega)$, $\alpha\mapsto\alpha\vert_A$ is continuous.
\end{remark}

\vspace{0.2cm}

\noindent Let ${\rm Inn}(M)\subset{\rm Aut}(M)$ be the normal subgroup of inner automorphisms and ${\rm Out}(M):={\rm Aut}(M)/{\rm Inn}(M)$ be the quotient group. Equipped with the quotient topology, ${\rm Out}(M)$ is a topological group and it is Hausdorff if and only if ${\rm Inn}(M)\subset{\rm Aut}(M)$ is closed. In that case (since $M_*$ is assumed to be separable) ${\rm Out}(M):={\rm Aut}(M)/{\rm Inn}(M)$ is actually a metrizable topological group so the convergence of sequences in ${\rm Out}(M)$ completely characterises the topology. Following Connes \cite{Co74}, a von Neumann algebra is called \textit{full} if ${\rm Inn}(M)\subset{\rm Aut}(M)$ is closed. A normal faithful semi-finite weight $\varphi$ on $M$ is called \textit{almost-periodic} if its modular operator $\nabla_\varphi$ has pure point spectrum. Connes defines \cite{Co74} the invariant $Sd(M)$ of a full von Neumann algebra as the intersection of point spectrum of $\nabla_\varphi$ for $\varphi$ a normal faithful semi-finite almost-periodic weight and shows that the closure of $Sd(M)$ is the $S$-invariant $S(M)$.

\vspace{0.2cm}

\noindent The famous noncommutative Radon-Nykodym Theorem of Connes \cite{Co73} shows that, for any pair of normal faithful states (actually semi-finite weights) $\omega_1$ and $\omega_2$ on $M$, their modular groups $\sigma_t^{\omega_1}$ and $\sigma_t^{\omega_2}$ have the same image in ${\rm Out}(M)$, for all $t\in\R$. Hence, there is a well defined homomorphism $\delta\,:\,\R\rightarrow{\rm Out}(M)$, called the \textit{modular homomorphism}, defined by $\delta(t)=[\sigma^\omega_t]$, where $\omega$ is any normal faithful state on $M$ and $[\cdot]$ denotes the class in ${\rm Out}(M)$.

\vspace{0.2cm}

\noindent Connes defines in \cite{Co74}, the \textit{invariant} $\tau(M)$ of a full von Neumann algebra $M$ as the smallest topology on $\R$ for which the modular homomorphism $\delta$ is continuous. When $M$ is full (and $M_*$ is separable), ${\rm Out}(M)$ is clearly a metrizable topological group hence, $(\R,\tau(M))$ is also a metrizable topological group. In particular, the topology $\tau(M)$ is completely characterized by the knowledge of which sequences are converging to zero.

\subsection{Compact quantum groups}\label{SectionCQG}

For a discrete group $\Gamma$ we denote by $C^*(\Gamma)$ its full C*-algebra, $C^*_r(\Gamma)$ its reduced C*-algebra and ${\rm L}(\Gamma)$ its von Neumann algebra. We briefly recall below some elements of the compact quantum group (CQG) theory developed by Woronowicz \cite{wor87,wor88,wor98}.

\vspace{0.2cm}

\noindent For a CQG $G$, we denote by $C(G)$ its \textit{maximal} C*-algebra, which is the enveloping C*-algebra of the unital $*$-algebra $\Pol(G)$ given by the linear span of coefficients of irreducible unitary representations of $G$. The set of equivalence classes of irreducible unitary representations will be denoted by $\Irr(G)$. We will denote by $\varepsilon_G\,:\, C(G)\rightarrow\C$ the counit of $G$ which satisfies ($\id\ot\varepsilon_G)(u)={\rm \id}_H$ for all finite dimensional unitary representations $u\in\mathcal{B}(H)\ot C(G)$.

\vspace{0.2cm}

\noindent Let us recall below the modular ingredients of a CQG. Let us fix a complete set of representatives $u^x\in\mathcal{B}(H_x)\ot C(G)$ for $x\in \Irr(G)$. It is known that for any $x\in\Irr(G)$ there exists a unique $\overline{x}\in\Irr(G)$, called the contragredient of $x$, such that $\Mor(1,x\ot \overline{x})\neq\{0\}$ and $\Mor(1,\overline{x}\ot x)\neq\{0\}$, where $1$ denotes the trivial representation of $G$. Both the spaces $\Mor(1,\overline{x}\ot x)$ and $\Mor(1,x\ot \overline{x})$ are actually one-dimensional. Fix non-zero vectors $s_x\in H_x\otimes H_{\overline{x}}$ and $s_{\overline{x}}\in H_{\overline{x}}\ot H_x$ such that $s_x\in \Mor(1,x\ot \overline{x})$ and $s_{\overline{x}}\in \Mor(1,\overline{x}\ot x)$. Let $J_x\,:\, H_x\rightarrow H_{\overline{x}}$ be the unique invertible antilinear map satisfying $\langle J_x\xi,\eta\rangle=\langle s_x,\xi\ot\eta\rangle$ for all $\xi\in H_x$, $\eta\in H_{\overline{x}}$ and define the positive invertible operator $Q_x=J_x^*J_x\in\mathcal{B}(H_x)$. Then, there exists a unique normalization of $s_x$ and $s_{\overline{x}}$ such that $\Vert s_x\Vert=\Vert s_{\overline{x}}\Vert$ and $J_{\overline{x}}=J_x^{-1}$. With this normalization, $Q_x$ is uniquely determined, we do have ${\rm Tr}(Q_x)={\rm Tr}(Q_x^{-1})=\Vert s_x\Vert^2$, $Q_{\overline{x}}=(J_xJ_x^*)^{-1}$ and ${\rm Sp}(Q_{\overline{x}})={\rm Sp}(Q_x)^{-1}$ (${\rm Tr}$ is the unique trace on $\mathcal{B}(H_x)$ such that ${\rm Tr}(1)={\rm dim}(H_x)$). The number ${\rm Tr}(Q_x)$ is called the \textit{quantum dimension} of $x$ and is denoted by ${\rm dim}_q(x)$. From the orthogonality relations:
$$(\id\ot h_G)((u^x)^*(\xi\eta^*\ot 1)u^y)=\frac{\delta_{x,y}1}{{\rm dim}_q(x)}\langle\eta,Q_x^{-1}\xi\rangle$$
it is not difficult to check that, with ${\rm L}^2(G)=\bigoplus_{x\in\Irr(G)}H_x\ot H_{\overline{x}}$, $\xi_G:=1\in H_1\ot H_{\overline{1}}=\C$ and $\lambda_G\,:\, C(G)\rightarrow\mathcal{B}({\rm L}^2(G))$ the unique unital $*$-homomorphism such that
$$\lambda_G((\omega_{\eta,\xi}\ot\id)(u^x))\xi_G={\rm dim}_q(x)^{-\frac{1}{2}}\xi\ot J_{\overline{x}}^*\eta,$$
$\xi_G$ is $\lambda_G$-cyclic and $h_G(x)=\langle\xi_G,\lambda_G(a)\xi_G\rangle$ $\forall a\in C(G)$. Hence, the triple $({\rm L}^2(G),\lambda_G,\xi_G)$ is an explicit GNS construction for the Haar state $h_G$ on $C(G)$. 

\vspace{0.2cm}

\noindent Let $C_r(G)=\lambda_G(C(G))\subset\mathcal{B}({\rm L}^2(G))$ be the \textit{reduced C*-algebra} of $G$. The surjective unital $*$-homomorphism $\lambda_G\,:\,C(G)\rightarrow C_r(G)$ is called the canonical surjection. Recall that $G$ is called \textit{co-amenable} whenever $\lambda_G$ is injective. The von Neumann algebra of $G$ is denoted by $\Linf(G):=C_r(G)''\subset\mathcal{B}({\rm L}^2(G))$. We will still denote by $h_G$ the Haar state of $G$ on the C*-algebra $C_r(G)$ as well as on the von Neumann algebra ${\rm L}^\infty(G)$ i.e. $h_G=\langle\xi_G,\cdot\xi_G\rangle$ when viewed as a state on $C_r(G)$ or a normal state on $\Linf(G)$. We recall that $h_G$ is faithful on both $C_r(G)$ and $\Linf(G)$. With the explicit GNS construction of $h_G$ given above, it is not difficult to compute the modular ingredients of the normal faithful state $h_G$ on $\Linf(G)$ and we find that the closure $S_G$ of the antilinear operator $x\xi_G\mapsto x^*\xi_G$ has a polar decomposition $S_G=J_G\nabla_G^{\frac{1}{2}}$, where $\nabla_G$ is the positive operator on ${\rm L}^2(G)$ given by $\nabla_G:=\bigoplus_{x\in\Irr(G)} Q_x\ot Q_{\overline{x}}^{-1}$. Hence, the Haar state of a CQG is always almost-periodic and its modular group $(\sigma_t^G)_t$ is the unique one-parameter group of $\Linf(G)$ such that $(\id\ot\sigma_t^G)(u^x)=(Q_x^{it}\ot 1)u^x(Q_x^{it}\ot 1)$. Let us recall that $G$ is said to be of \textit{Kac type} whenever $h_G$ is a trace and it happens if and only if $Q_x=1$ for all $x\in\Irr(G)$. Let us also recall that the \textit{scaling group} $(\tau_t^G)_t$ is the one-parameter group of $\Linf(G)$ given by by the formula $(\id\ot\tau_t^G)(u^x)=(Q_x^{it}\ot 1)u^x(Q_x^{-it}\ot 1)$. It is well known that the scaling group of $G$ is the unique one parameter group $(\tau_t^G)_t$ of $\Linf(G)$ such that $\Delta\circ\sigma_t^G=(\tau_t^G\ot\sigma_t^G)\circ\Delta$ $\forall t\in\R$. Moreover, the scaling group preserves that Haar state: $h_G\circ\tau_t^G=h_G$ (this means that the \textit{scaling constant} of a compact quantum group is $1$).

\noindent Let us also recall the definition of the $T$\textit{-invariant} of a CQG $G$, introduced by S. Vaes in \cite{Va05}:
$$T(G):=\{t\in\R\,:\,\exists u\in\mathcal{U}(\Linf(G))\,:\,\Delta(u)=u\ot u\text{ and }\tau_t^G={\rm Ad}(u)\},$$ where ${\rm Ad}(u)$ is the automorphism of $\Linf(G)$ given by $a\mapsto uau^*$.

\vspace{0.2cm}

\noindent The following result is well known \cite[Proposition 2.2]{Fi10}.

\begin{proposition}
The set ${\rm Mod}(G):=\bigcup_{x\in\Irr(G)}{\rm Sp}(Q_x)$ is a subgroup of $\R^*_+$.
\end{proposition}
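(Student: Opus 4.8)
The plan is to check the three subgroup axioms directly, reducing each to a property of the operators $Q_x$. Two of them are immediate from facts already recorded in the excerpt. For the neutral element, the trivial representation has $H_1=\C$ and $Q_1=1$, so ${\rm Sp}(Q_1)=\{1\}$ and $1\in{\rm Mod}(G)$. For closure under inverses, I would use the stated relation ${\rm Sp}(Q_{\overline{x}})={\rm Sp}(Q_x)^{-1}$: if $\lambda\in{\rm Sp}(Q_x)\subseteq{\rm Mod}(G)$ then, since $\overline{x}\in\Irr(G)$, we get $\lambda^{-1}\in{\rm Sp}(Q_{\overline{x}})\subseteq{\rm Mod}(G)$.

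The real content is closure under multiplication, and here the key input is that $Q$ is multiplicative under tensor products and compatible with the decomposition into irreducibles. Recall that $Q_x=(\id\ot f_1)(u^x)$, where $f_1$ is the Woronowicz character, a multiplicative linear functional on $\Pol(G)$. Since $u^{x\ot y}=u^x_{13}u^y_{23}$, multiplicativity of $f_1$ gives $Q_{x\ot y}=Q_x\ot Q_y$. Moreover, applying $(\id\ot f_1)$ to the intertwining relation $(T\ot 1)u^x=u^{x'}(T\ot 1)$ shows $TQ_x=Q_{x'}T$ for every $T\in\Mor(x,x')$, so $Q$ respects subrepresentations. Decomposing $x\ot y\cong\bigoplus_z m_z\,z$ by a unitary intertwiner $U$, these two facts combine to give $U(Q_x\ot Q_y)U^*=\bigoplus_z Q_z^{\oplus m_z}$.

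From here the argument closes quickly. Because $H_x$ and $H_y$ are finite-dimensional and $Q_x,Q_y$ are positive, the spectrum of a tensor product is the product of the spectra, so
$$ {\rm Sp}(Q_x)\cdot{\rm Sp}(Q_y)={\rm Sp}(Q_x\ot Q_y)=\bigcup_{z\subseteq x\ot y}{\rm Sp}(Q_z)\subseteq{\rm Mod}(G). $$
Hence for any $\lambda\in{\rm Sp}(Q_x)$ and $\mu\in{\rm Sp}(Q_y)$ we obtain $\lambda\mu\in{\rm Mod}(G)$, which is the desired closure under multiplication. Together with the two observations above, this shows ${\rm Mod}(G)$ is a subgroup of $\R^*_+$.

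I expect the only genuine obstacle to be the identity $Q_{x\ot y}=Q_x\ot Q_y$ together with its refinement $U(Q_x\ot Q_y)U^*=\bigoplus_z Q_z^{\oplus m_z}$; everything else is a one-line consequence of statements in the excerpt. If one prefers not to invoke the character $f_1$, the same identities can be extracted directly from the conjugate equations: one checks that, with the chosen normalizations, the vector $(\id\ot s_y\ot\id)s_x$ solves the conjugate equation for $x\ot y$ with conjugate $\overline{y}\ot\overline{x}$, and computing $J_{x\ot y}^*J_{x\ot y}$ from this solution yields $Q_{x\ot y}=Q_x\ot Q_y$. This is the only representation-theoretic computation the proof really needs.
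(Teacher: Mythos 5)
Your proof is correct and takes essentially the same route as the paper: inverses follow from ${\rm Sp}(Q_{\overline{x}})={\rm Sp}(Q_x)^{-1}$, and closure under products follows from the compatibility relation $SQ_z=(Q_x\ot Q_y)S$ for $S\in\Mor(z,x\ot y)$, which is precisely your statement that $Q_x\ot Q_y$ is unitarily equivalent to $\bigoplus_z Q_z^{\oplus m_z}$ over the irreducible summands $z\subseteq x\ot y$. The only difference is that the paper cites this relation without proof, whereas you derive it from the multiplicativity of the Woronowicz character (or alternatively from the conjugate equations), which is a harmless elaboration of the same key fact.
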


\begin{proof}
We already remarked in Section \ref{SectionCQG} that ${\rm Sp}(Q_{\overline{x}})=({\rm Sp}(Q_x))^{-1}$. Hence ${\rm Mod}(G)$ is stable by inverse. The fact that ${\rm Mod}(G)$ is stable by product follows from the relation
\begin{equation*}
SQ_z=(Q_x\ot Q_y)S,
\end{equation*}
for any $x,y,z\in\Irr(G)$, and isometry $S\in\Mor(z,x\ot y)$.\end{proof}

\begin{remark}
For convenience, we will use the following non-standard notations.
\begin{enumerate}
    \item Let $Sd(G):={\rm Sp}_p(\nabla_G)$. From the explicit computation of $\nabla_G$ and since ${\rm Sp}(Q_{\overline{x}})={\rm Sp}(Q_x)^{-1}$ one has $Sd(G)=\cup_{x\in\Irr(G)}{\rm Sp}(Q_x)^2\subset{\rm Mod}(G)$ and $Sd(G)^{-1}=Sd(G)$.
    \item We denote by $\tau(G)$ the smallest topology on $\R$ such that the map $(t\mapsto\sigma^G_t)$ is continuous. Hence $\tau(G)$ is smaller than the usual topology on $\R$. It is not difficult to check that it is the smallest topology on $\R$ for which the maps $f_\lambda\,:\,\R\rightarrow\mathbb{S}^1\,:\,(t\mapsto\lambda^{it})$ are continuous, for all $\lambda\in Sd(G)$, where $\mathbb{S}^1$ has the usual topology. Note also that, for any topology $\tau$ on $\R$, the set of $\lambda>0$ for which $f_\lambda$ is $\tau$-continuous is a closed subgroup of $\R^*_+$ (for the usual topology on $\R^*_+$). Hence, $\tau(G)$ is also the smallest topology on $\R$ for which the maps $f_\lambda$ are continuous, for all $\lambda\in\overline{\langle Sd(G)\rangle}$, the closed subgroup of $\R_+^*$ generated by $Sd(G)$ (for the usual topology on $\R_+^*$). Moreover, the following is easy to check:
    \begin{itemize}
        \item $G$ is Kac $\Leftrightarrow Sd(G)=\{1\}\Leftrightarrow\tau(G)$ is the trivial topology. If $Sd(G)\neq\{1\}$ then $(\R,{\tau}(G))$ is a metrizable topological group so the topology $\tau(G)$ is completely characterized by the knowledge of which sequences are converging to zero.
        \item $\overline{\langle Sd(G)\rangle}=\R^*_+$ if and only if $\tau(G)$ is the usual topology on $\R$.
    \end{itemize}
\end{enumerate}
\end{remark}

\noindent Recall that a von Neumann algebra is diffuse when it has no non-zero minimal projection. We will use the following simple Lemma. While the arguments for its proof are already present in the literature, the precise statement seems to be unknown.

\begin{lemma}\label{LemmaInfinite}
Let $G$ be a compact quantum group. The following are equivalent.
\begin{enumerate}
    \item $\Irr(G)$ is infinite.
    \item $\Linf(G)$ is diffuse.
    \item $C(G)$ is infinite-dimensional.
\end{enumerate}

\end{lemma}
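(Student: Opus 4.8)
The plan is to prove the equivalence $(1)\Leftrightarrow(3)\Leftrightarrow(2)$ by first handling the algebraic dichotomy coming from representation theory, and then bridging to the von Neumann-algebraic notion of diffuseness. The implication $(1)\Leftrightarrow(3)$ is essentially a counting statement: by the Peter--Weyl decomposition, $\Pol(G)$ is the algebraic direct sum $\bigoplus_{x\in\Irr(G)}\mathcal{B}(H_x)^*$-worth of coefficient functions, so $\dim_{\C}\Pol(G)=\sum_{x\in\Irr(G)}\dim(H_x)^2$. Since each $\dim(H_x)\geq 1$ and $\Pol(G)$ is dense in $C(G)$, finiteness of $\Irr(G)$ forces $\Pol(G)$, hence $C(G)$, to be finite-dimensional; conversely if $\Irr(G)$ is infinite the sum is infinite and $C(G)$ is infinite-dimensional. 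I would state this carefully using that $\Pol(G)$ is the linear span of matrix coefficients and that distinct irreducibles contribute linearly independent coefficients.

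For the equivalence with diffuseness, I would argue $(2)\Rightarrow(1)$ by contraposition: if $\Irr(G)$ is finite, then $C(G)=\Pol(G)$ is a finite-dimensional C*-algebra, so $C_r(G)$ and $\Linf(G)$ are finite-dimensional, and a nonzero finite-dimensional von Neumann algebra always has minimal projections, hence is not diffuse. The substantive direction is $(1)\Rightarrow(2)$, i.e.\ infinitely many irreducibles forces $\Linf(G)$ to be diffuse. Here my strategy is to suppose for contradiction that $\Linf(G)$ has a nonzero minimal projection $p$, so that $p\Linf(G)p=\C p$. The idea is to locate a nonzero central projection which is finite-dimensional and use the structure of the Haar state. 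Concretely, I would use the explicit GNS picture $\rL^2(G)=\bigoplus_{x}H_x\ot H_{\overline{x}}$ together with the orthogonality relations recalled in the excerpt to analyse how $\Linf(G)$ acts; a minimal projection in $\Linf(G)$ would produce a one-dimensional piece that is incompatible with the coefficient functions of infinitely many distinct irreducibles being linearly independent and faithfully represented.

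The cleanest route I would actually take for $(1)\Rightarrow(2)$, avoiding delicate case analysis in the non-Kac setting, is to reduce to the center. The key observation is that the Haar state $h_G$ is faithful and normal on $\Linf(G)$, and its modular group $\sigma_t^G$ acts on each spectral block $\mathcal{B}(H_x)$-coefficient space; averaging over the modular group (or using the $\tau$-invariance of $h_G$ under the scaling group) produces a conditional expectation onto a canonical abelian subalgebra. If $\Linf(G)$ had a minimal projection $p$, then since $h_G$ is faithful $h_G(p)>0$, and I would derive a trace-like estimate forcing $\sum_x \dim_q(x)\,(\text{something})$ to be finite, bounding the number of contributing $x\in\Irr(G)$ and contradicting $(1)$. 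The main obstacle is precisely this: in the non-Kac case $h_G$ is not tracial, so I cannot simply count dimensions via a trace, and I must work with the weights $Q_x$ and the orthogonality relations to control minimal projections. I expect the technical heart of the proof to be showing that a minimal projection in $\Linf(G)$ must be subordinate to finitely many of the blocks $\mathcal{B}(H_x)$, which is where the explicit description of $\lambda_G$ and $\nabla_G$ from Section~\ref{SectionCQG} does the real work.
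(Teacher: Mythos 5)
Your equivalence $(1)\Leftrightarrow(3)$ and the implication $(2)\Rightarrow(1)$ are correct, but these are exactly the parts the paper dismisses as ``clear'' and ``obvious.'' The genuine gap is in $(1)\Rightarrow(2)$: what you offer there is a plan rather than a proof, and you say so yourself (``I expect the technical heart of the proof to be\dots''). That heart is never supplied, and the surrounding sketch is conceptually off in two places. First, $\Linf(G)$ does not decompose into blocks $\mathcal{B}(H_x)$; it is the dual discrete quantum group algebra $\ell^\infty(\widehat{G})=\bigoplus_{x\in\Irr(G)}\mathcal{B}(H_x)$ that does, while $\Linf(G)$ merely acts on $\rL^2(G)=\bigoplus_x H_x\ot H_{\overline{x}}$; so ``a minimal projection must be subordinate to finitely many of the blocks $\mathcal{B}(H_x)$'' is not a meaningful condition on a projection in $\Linf(G)$, and no orthogonality relation will make it one. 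Second, averaging over the modular group (or the scaling group) does not produce a conditional expectation onto an abelian subalgebra: it gives at best an expectation onto the centralizer of $h_G$, which is noncommutative in general --- in the Kac case the modular group is trivial and the ``centralizer'' is all of $\Linf(G)$, so your proposed reduction to the center collapses precisely in the case you call easy. Note also that non-traciality is not the true obstruction: even for Kac $G$, ruling out a $\mathcal{B}(H)$ direct summand of $\Linf(G)$ when $\Irr(G)$ is infinite requires a real argument, not dimension counting against a trace.

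For comparison, the paper's proof of $(1)\Rightarrow(2)$ runs as follows: a von Neumann algebra is diffuse if and only if it has no direct summand of the form $\mathcal{B}(H)$, and the comultiplication defines an \emph{ergodic} action of $G$ on $\Linf(G)$; one then invokes \cite[Theorem 3.4]{CKSS16} when $H$ is finite-dimensional and \cite[Theorem 6.1]{KS22} when $H$ is infinite-dimensional to conclude that such a summand is impossible once $\Irr(G)$ is infinite. These are substantive external results on ergodic actions of compact quantum groups, and they are exactly the input your sketch lacks. Unless you either import results of this type or construct a genuinely new argument excluding $\mathcal{B}(H)$ summands (in both the finite- and infinite-dimensional cases), your proof of $(1)\Rightarrow(2)$ does not close.
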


\begin{proof}
The implication $(2)\Rightarrow(1)$ is obvious. Let us show $(1)\Rightarrow(2)$. By the general theory, a von Neumann algebra is diffuse if and only if it has no direct summand of the form $\mathcal{B}(H)$. When $H$ is finite dimensional, we may apply \cite[Theorem 3.4]{CKSS16} (since the action of $G$ on $\Linf(G)$ given by the comultiplication is ergodic) to deduce that $\mathcal{B}(H)$ is not a direct summand of $\Linf(G)$ whenever $\Irr(G)$ is infinite. When $H$ is infinite-dimensional, we may apply \cite[Theorem 6.1]{KS22} to deduce that $\mathcal{B}(H)$ is not a direct summand of $\Linf(G)$ whenever $\Irr(G)$ is infinite. The equivalence between $(1)$ and $(3)$ is clear.\end{proof}

\noindent Let us now recall the notion of a dual quantum subgroup. We are grateful to Kenny De Commer for explaining to us the main argument of the proof of the next Proposition and to Makoto Yamashita for showing to us the reference \cite{Chi14}. Recall that $C_\bullet(G)$ denotes either the reduced or the maximal C*-algebra of $G$. 

\begin{proposition}\label{dualqsg}
Let $G$ and $H$ be CQG. The following data are equivalent.
\begin{itemize}
    \item $\iota\,:\, C(H)\rightarrow C(G)$ is a faithful unital $*$-homomorphism intertwining the comultiplications.
    \item $\iota\,:\,\Pol(H)\rightarrow\Pol(G)$ is a faithful unital $*$-homomorphism intertwining the comultiplications.
    \item $\iota\,:\,C_r(H)\rightarrow C_r(G)$ is a faithful unital $*$-homomorphism intertwining the comultiplications.
\end{itemize} If one of the following equivalent conditions is satisfied, we view $\Pol(H)\subset\Pol(G)$, $C(H)\subset C(G)$ and $C_r(H)\subset C_r(G)$. Then, the unique linear map $E\,:\,\Pol(G)\rightarrow \Pol(H)$ such that
$$(\id\ot E)(u^x)=\left\{\begin{array}{lcl}u^x&\text{if}&x\in\Irr(H),\\0&\text{if}&x\in\Irr(G)\setminus\Irr(H).\end{array}\right.$$
has a unique ucp extension to a map $E_\bullet\,:\,C_\bullet(G)\rightarrow C_\bullet(H)$ which is a Haar-state-preserving conditional expectation onto the subalgebra $C_\bullet(H)\subset C_\bullet(G)$. At the reduced level, $E_r$ is faithful and extends to a Haar-state preserving normal faithful conditional expectation $\Linf(G)\rightarrow\Linf(H)$.
\end{proposition}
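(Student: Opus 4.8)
The plan is to treat the purely algebraic statement at the level of $\Pol$ as the core and then transport it to the C*- and von Neumann levels. First I would observe that all three pieces of data reduce to a single faithful unital $*$-homomorphism $\iota\colon\Pol(H)\to\Pol(G)$ intertwining the comultiplications. Indeed, from a map $C(H)\to C(G)$ (resp.\ $C_r(H)\to C_r(G)$) one restricts to $\Pol(H)$, which lands in $\Pol(G)$ because intertwining $\Delta$ sends coefficients of finite-dimensional representations to coefficients; faithfulness is inherited, using that $\Pol(H)\hookrightarrow C_r(H)$ is injective since $h_H$ is faithful. Conversely, from $\iota$ on $\Pol$ one obtains the extension $C(H)\to C(G)$ by the universal property of the maximal C*-algebra. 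The real content is thus the passage $(2)\Rightarrow(1),(3)$ together with the construction of $E$.

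Starting from $\iota\colon\Pol(H)\hookrightarrow\Pol(G)$, the first key step is that $h_G\circ\iota=h_H$: the functional $h_G\circ\iota$ is a state on $\Pol(H)$, and from $\Delta_G\circ\iota=(\iota\ot\iota)\circ\Delta_H$ together with the injectivity of $\iota$ one checks that it is left and right invariant, so by uniqueness of the Haar state it equals $h_H$. The second key step is that $\iota$ identifies $\Irr(H)$ with a subset of $\Irr(G)$: for $y\in\Irr(H)$ the corepresentation $(\id\ot\iota)(u^y)$ is again irreducible, and inequivalent $y$'s stay inequivalent, because injectivity of $\iota$ yields a bijection $\Mor_G((\id\ot\iota)(u^y),(\id\ot\iota)(u^{y'}))\cong\Mor_H(u^y,u^{y'})$. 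Writing $C_G(x)$ for the linear span of the coefficients of $u^x$, it follows that $\iota(\Pol(H))$ is exactly the sum of the spaces $C_G(x)$ over those $x\in\Irr(G)$ coming from $\Irr(H)$, and the prescribed map $E$ is the projection onto this sum along the complementary coefficient spaces. One then checks directly that $E$ is a unital $\Pol(H)$-bimodule idempotent and that $h_H\circ E=h_G$, both sides annihilating $C_G(x)$ for $x\neq 1$.

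To reach the reduced and von Neumann levels I would use the GNS picture. Since $h_G\circ\iota=h_H$, the prescription $\lambda_H(a)\xi_H\mapsto\lambda_G(\iota(a))\xi_G$ defines an isometry ${\rm L}^2(H)\hookrightarrow{\rm L}^2(G)$, whose range is a reducing subspace for $\lambda_G(\iota(\Pol(H)))$ on which the action is precisely $\lambda_H$. Restricting to this subspace gives a normal $*$-homomorphism from $M_H:=\lambda_G(\iota(\Pol(H)))''$ onto $\Linf(H)$, injective because $\xi_G$ lies in ${\rm L}^2(H)$ and is separating for $M_H$; a normal injective $*$-homomorphism being isometric, this identifies $\Linf(H)\subset\Linf(G)$ and, after taking norm closures, $C_r(H)\subset C_r(G)$ isometrically, which is exactly $(3)$. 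For the conditional expectation, note that $\sigma_t^G$ preserves every coefficient space, since $(\id\ot\sigma_t^G)(u^x)=(Q_x^{it}\ot1)u^x(Q_x^{it}\ot1)$, hence preserves $\Linf(H)$; Takesaki's theorem then yields a unique $h_G$-preserving normal faithful conditional expectation $\Linf(G)\to\Linf(H)$ whose restriction to $\Pol(G)$ is $E$ and which restricts to a faithful conditional expectation $E_r\colon C_r(G)\to C_r(H)$. In particular $E$ is completely positive.

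It remains to treat the maximal level and uniqueness. The universal property provides the extension $\tilde\iota\colon C(H)\to C(G)$ intertwining the comultiplications, and the main obstacle is to prove that it is faithful: unlike the reduced case this cannot be read off a single GNS representation, because the maximal norm on $\Pol(H)$ is a supremum over all representations. I expect this to be the genuinely hard point, to be handled as in the classical case of a subgroup of a discrete group, where faithfulness of $C^*(\Lambda)\hookrightarrow C^*(\Gamma)$ is proved by inducing representations. The quantum substitute is the faithful flatness of the cosemisimple Hopf algebra $\Pol(G)$ over the Hopf subalgebra $\Pol(H)$ \cite{Chi14}, which lets one induce any representation of $C(H)$ to a representation of $C(G)$ restricting to it, giving $\Vert a\Vert_{C(H)}\leq\Vert\iota(a)\Vert_{C(G)}$ and hence faithfulness of $\tilde\iota$; the same input furnishes the $h$-preserving ucp conditional expectation $E_{\max}\colon C(G)\to C(H)$ extending $E$. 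Finally, uniqueness of the ucp extension $E_\bullet$ is immediate from the density of $\Pol(G)$ and continuity, which completes the proof.
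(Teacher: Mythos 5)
Your proposal is correct and follows essentially the same route as the paper: reduce all three formulations to the Hopf-$*$-algebra level, use Haar-state preservation to pass to the reduced and von Neumann levels (your explicit GNS-isometry and Takesaki argument is precisely what the paper delegates to the citation \cite{Ve04}), and invoke \cite{Chi14} for the genuinely hard point at the maximal level. The only difference is the order of deduction there: the paper first extends $E$ to a ucp map $C(G)\rightarrow C(H)$ by \cite[Theorem 3.1]{Chi14} and then derives faithfulness of $C(H)\rightarrow C(G)$ from the norm inequality $\Vert a\Vert_{C(H)}=\Vert E(\iota(a))\Vert_{C(H)}\leq\Vert \iota(a)\Vert_{C(G)}\leq\Vert a\Vert_{C(H)}$, whereas you obtain faithfulness from induction of representations and quote the same reference for the expectation, so both arguments hinge on the identical external input.
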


\begin{proof}
If $\iota\,:\, C(H)\rightarrow C(G)$ is a faithful unital $*$-homomorphism intertwining the comultiplications then it is clear that its restriction to $\Pol(H)$ has image in $\Pol(G)$ and is still faithful. If now $\iota\,:\,\Pol(H)\rightarrow \Pol(G)$ is defined at the algebraic level then, since it is faithful and intertwines the comultiplications, it also intertwines the Haar states and so extends to a faithful unital $*$-homomorphism $\iota\,:\, C_r(H)\rightarrow C_r(G)$ which is easily seen to intertwine the comultiplications. It is proved in \cite{Ve04} that $E$ extends to a faithful and Haar state preserving conditional expectation at the reduced level as well as at the von Neumann level (which is moreover normal).  Also, if $\iota\,:\, C_r(H)\rightarrow C_r(G)$ is defined at the reduced level, its restriction to $\Pol(H)$ satisfies the second condition. Hence, it suffices to check that if $\Pol(H)\subset\Pol(G)$ is a unital $*$-subalgebra with the inclusion intertwining the comultiplications then, the canonical extension $\iota\,:\, C(H)\rightarrow C(G)$ of the inclusion, which obviously also intertwines the comultiplications, is still faithful. Note that it suffices to show that $E$ extends to a ucp map $E\,:\,C(G)\rightarrow C(H)$. Indeed, if we have such an extension then $E$ has norm $1$ and $E\circ\iota=\id_{C(H)}$, so for all $a\in Pol(H)$ one has $\Vert a\Vert_{C(H)}=\Vert E(\iota(a))\Vert_{C(H)}\leq\Vert \iota(a)\Vert_{C(G)}\leq\Vert a\Vert_{C(H)}$. Then, $\iota$ is an isometry hence faithful. The fact that $E$ extends to a ucp map is proved in \cite[Theorem 3.1]{Chi14}. It is clear that the ucp extension preserves the Haar states since it already does at the algebraic level (by definition of $E$). Now, viewing $C(H)\subset C(G)$, $E$ is ucp and is the identity on $C(H)$ hence, it is a conditional expectation onto $C(H)$.
\end{proof}

\noindent If one of the above equivalent condition is satisfied, we say that $H$ is a \textit{dual quantum subgroup of} $G$ and we will view ${\rm Pol}(H)\subset{\rm Pol}(G)$, $C(H)\subset C(G)$, $C_r(H)\subset C_r(G)$ as well as ${\rm L}^\infty(H)\subset{\rm L}^\infty(G)$. Let us note that, the ucp extension of $E$ at the maximal level is not, in general, faithful and not even GNS-faithful (meaning that the GNS representation morphism may be non injective). We will usually denote $E$ at the algebraic, full, reduced and von Neumann algebraic level by the same symbol $E_H$.

\begin{remark}\label{RmkECH} Let $C(H)\subset C(G)$ be a dual quantum subgroup and define $\Pol(G)^\circ:=\{a\in \Pol(G)\,:\, E_H(a)=0\}$ and $C_\bullet(G)^\circ:=\{a\in C_\bullet(G)^\circ\,:\, E_\bullet(a)=0\}$. Then $\Pol(G)^\circ$ is the linear span of coefficient of irreducible representations $x\in\Irr(G)\setminus\Irr(H)$ , $C_\bullet(G)^\circ$ is the closure in $C_\bullet(G)$ of $\Pol(G)^\circ$ and $\Delta(\Pol(G)^\circ)\subset\Pol(G)^\circ\otimes\Pol(G)^\circ$. All this statements easily follow from the property of $E_H$ stated in the previous proposition.
\end{remark}

\noindent Let us recall that if $\Linf(H)\subset\Linf(G)$ is a dual compact quantum subgroup then $\sigma_t^G\vert_{\Linf(H)}=\sigma_t^H$ and $\tau_t^G\vert_{\Linf(H)}=\tau_t^H$ for all $t\in\R$.

\vspace{0.2cm}

\begin{definition}\label{index}
Following Vergnioux \cite{Ve04}, given a dual quantum subgroup $C(H)\subset C(G)$, we introduce an equivalence relation $\sim_H$ on $\Irr(G)$ by defining $x\sim_H y\Leftrightarrow \exists s\in\Irr(H),\,\Mor(s,\overline{x}\ot y)\neq\{0\}$. Note in particular that $x\nsim_H y\Leftrightarrow (\id\ot E_H)(u^{\overline{x}}\ot u^{y})=0$. We define the \textit{index of $H$ in $G$} by the number of equivalence classes $[G:H]:=\left\vert\Irr(G)/\sim_H\right\vert$.
\end{definition} 

\subsection{Free product quantum group}

We now recall some well known results about free product quantum groups. Given two compact quantum groups $G_1$ and $G_2$ the universal property of the C*-algebra given by the full free product $C(G):=C(G_1)*C(G_2)$ allows to define the unique unital $*$-homomorphism $\Delta\,:\, C(G)\rightarrow C(G)\otimes C(G)$ such that $\Delta\vert_{C(G_k)}=\Delta_{G_k}$ for $k=1,2$. It is easy to check that $G:=(C(G),\Delta)$ is a CQG with maximal C*-algebra $C(G)$, reduced C*-algebra given by the reduced free product with respect to the Haar states $C_r(G)=(C_r(G_1),h_1)*(C_r(G_2),h_2)$, and $\Linf(G)=(\Linf(G_1),h_1)*(\Linf(G_2),h_2)$. Moreover the Haar state on $C_r(G)$ is the free product state $h=h_1*h_2$.

\vspace{0.2cm}

\noindent We collect below some important remarks about free products. Most of them are well known to specialists. Since we could not find any explicit statements in the literature, we include a complete proof.

\begin{proposition}\label{Prop-Freeprod}
Let $G_1,G_2$ be non-trivial CQG. The scaling group of $G:=G_1*G_2$ is the free product $\tau_t^G=\tau_t^{G_1}*\tau_t^{G_2}$ and the Vaes' $T$-invariant of $G_1*G_2$ is given by:
$$T(G_1*G_2)=\{t\in\R\,:\,\tau_t^{G_1}=\tau_t^{G_2}=\id\}.$$
The following are equivalent.
\begin{enumerate}
    \item $G_1*G_2$ is co-amenable.
    \item $\vert\Irr(G_1)\vert=2=\vert\Irr(G_2)\vert$.
    \item $G_1\simeq G_2\simeq \Z/2\Z$.
    \item ${\rm L}^\infty(G_1*G_2)$ is an amenable von Neumann algebra.
\end{enumerate}
Moreover, if one of the previous equivalent conditions does not hold then ${\rm L}^\infty(G_1*G_2)$ is a full and prime factor without any Cartan subalgebras and:
\begin{itemize}
    \item If $G_1$ and $G_2$ are Kac then ${\rm L}^\infty(G_1*G_2)$ is a type ${\rm II}_1$ factor.
    \item If $G_1$ or $G_2$ is not Kac then ${\rm L}^\infty(G_1*G_2)$ is a type ${\rm III}_\lambda$ factor with $\lambda\neq 0$ and its Connes' $T$-invariant is given by $\{t\in\R\,:\,\sigma_t^{h_1}=\id=\sigma_t^{h_2}\}$.
    \item$Sd(\Linf(G_1*G_2))=\langle Sd(G_1),Sd(G_2)\rangle$ and $\tau(\Linf(G_1*G_2))=\langle\tau(G_1),\tau(G_2)\rangle$, where $\langle\,\cdot\,\rangle$ means either the group generated by or the topology generated by.
\end{itemize}
\end{proposition}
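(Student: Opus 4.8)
The plan is to build everything on the two standard structural facts about the free product: Wang's description of $\Irr(G_1*G_2)$ as the set of alternating reduced words in $(\Irr(G_1)\setminus\{1\})\sqcup(\Irr(G_2)\setminus\{1\})$, along which the operators $Q_x$ multiply as tensor products of the $Q$'s of the letters, and the identification $\Linf(G_1*G_2)=(\Linf(G_1),h_1)*(\Linf(G_2),h_2)$ under which $h=h_1*h_2$ and its modular group is $\sigma_t^{h}=\sigma_t^{G_1}*\sigma_t^{G_2}$. Since each $\tau_t^{G_k}$ is $h_k$-preserving, the free product automorphism $\tau_t^{G_1}*\tau_t^{G_2}$ is well defined on the reduced free product, preserves each factor, and fixes $h$. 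Evaluating the characterising identity $\Delta\circ\sigma_t^G=(\,\cdot\,\ot\sigma_t^G)\circ\Delta$ on an alternating word and using multiplicativity of $Q_x$, I would check that $\tau_t^{G_1}*\tau_t^{G_2}$ satisfies the property defining the scaling group, whence $\tau_t^G=\tau_t^{G_1}*\tau_t^{G_2}$ by uniqueness. For the $T$-invariant, the inclusion $\{t:\tau_t^{G_1}=\tau_t^{G_2}=\id\}\subseteq T(G)$ is immediate since there $\tau_t^G=\id={\rm Ad}(1)$. The reverse inclusion rests on a rigidity lemma I would isolate: if $\alpha=\alpha_1*\alpha_2$ is a free product of $h_k$-preserving automorphisms and $\alpha={\rm Ad}(u)$ for some $u\in\mathcal{U}(\Linf(G_1*G_2))$, then $u\in\C1$ and $\alpha_1=\alpha_2=\id$. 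This is proved by expanding $u$ as a $\Vert\cdot\Vert_2$-convergent sum over reduced words and comparing the word-length components of $ubu^*=\alpha(b)\in\Linf(G_2)$ for centred $b\in\Linf(G_2)$: the length-$2$ and higher components force the centred part of $u$ to vanish. Applying this with $\alpha=\tau_t^G$ and group-like $u$ gives the reverse inclusion; the same lemma applied to $\alpha=\sigma_t^h$ will reappear for the Connes $T$-invariant.

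For the equivalences I would argue $(2)\Leftrightarrow(3)$ representation-theoretically: if $|\Irr(G_k)|=2$ with nontrivial irreducible $x$, then $\overline x=x$ and the multiplicity of $1$ in $x\ot x$ is one, so $(\dim x)^2=1+b\dim x$ forces $\dim x=1$; thus $x$ is a group-like involution and $G_k\simeq\Z/2\Z$, the converse being clear. If $(3)$ holds then $G_1*G_2\simeq\widehat{D_\infty}$, the dual of the amenable infinite dihedral group, which is co-amenable, giving $(1)$; co-amenability of a compact quantum group always forces $\Linf(G)$ injective, giving $(4)$. Finally, if $(2)$ fails then some factor strictly contains $(\C^2,{\rm tr})$, so by the classification of free product von Neumann algebras $\Linf(G_1*G_2)$ is non-amenable, i.e. $(4)$ fails. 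This closes the loop $(3)\Rightarrow(1)\Rightarrow(4)\Rightarrow(2)\Rightarrow(3)$.

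Assuming $(1)$–$(4)$ fail, the moreover part is read off from the structure theory of free product von Neumann algebras: factoriality and fullness from \cite{Ue11,Ue13,HV13}, primeness and absence of Cartan subalgebra from \cite{HI20}. For the type, if both $G_k$ are Kac then $h$ is a trace and $\Linf(G_1*G_2)$ is a ${\rm II}_1$ factor; otherwise $h$ is non-tracial, and since it is almost-periodic — its modular operator $\nabla_h$ is diagonalised by the word decomposition of $L^2$, acting on each word as the tensor product of the $\nabla_{h_k}$'s — the factor is of type ${\rm III}_\lambda$ with $\lambda\neq0$ (almost-periodicity excludes ${\rm III}_0$), its Connes $T$-invariant being $\{t:\sigma_t^{h_1}=\id=\sigma_t^{h_2}\}$ by the rigidity lemma applied to $\sigma_t^h$. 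The point-spectrum computation then gives $Sd(\Linf(G_1*G_2))={\rm Sp}_p(\nabla_h)=\langle Sd(G_1),Sd(G_2)\rangle$, and passing to the generated topology yields $\tau(\Linf(G_1*G_2))=\langle\tau(G_1),\tau(G_2)\rangle$.

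The main obstacle I expect is the rigidity lemma together with the invariant computations. Ruling out a nontrivial unitary implementing $\tau_t^G$ (resp. $\sigma_t^h$) requires the reduced-word bookkeeping to be carried out carefully, in particular in the non-diffuse cases where the usual relative-commutant shortcut $M_k'\cap M=\C$ is unavailable. Moreover, identifying the \emph{abstractly defined} Connes invariants $Sd$ and $\tau$ of the full factor with ${\rm Sp}_p(\nabla_h)$ and its generated topology requires fullness and the almost-periodicity of $h$ to be firmly in place before the word-decomposition of $\nabla_h$ can legitimately be invoked.
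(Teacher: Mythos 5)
Your outline of the scaling-group computation, the four equivalences, and the ``moreover'' part agrees in substance with the paper's proof (same free-product identification $h=h_1*h_2$, same representation-theoretic argument for $(2)\Leftrightarrow(3)$, same appeal to the structure theory of free product von Neumann algebras together with diffuseness to remove the multimatrix direct summand). The genuine gap is the tool you make the two $T$-invariant computations depend on: your rigidity lemma is \emph{false}. Take $G_1=G_2=\Z/2\Z$, so that $\Linf(G_1*G_2)={\rm L}(\Z_2)*{\rm L}(\Z_2)={\rm L}(D_\infty)$ with $D_\infty=\langle g_1\rangle*\langle g_2\rangle$, and let $\alpha_k$ be the flip of $\Linf(G_k)\simeq\C^2$, i.e. $\lambda(g_k)\mapsto-\lambda(g_k)$; each $\alpha_k$ preserves $h_k$. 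Write $a=g_1g_2$ and ${\rm L}(D_\infty)={\rm L}(\Z)\rtimes\Z_2$, where $\Z=\langle a\rangle$, ${\rm L}(\Z)\simeq{\rm L}^\infty(\mathbb{T})$ and $g_1$ acts by $\sigma(f)(z)=f(\overline{z})$. The unitary $u=\mathbf{1}_{\{\mathrm{Im}(z)>0\}}-\mathbf{1}_{\{\mathrm{Im}(z)<0\}}\in {\rm L}(\Z)$ satisfies $u\,\sigma(u)^*=-1$, so ${\rm Ad}(u)$ fixes ${\rm L}(\Z)$ pointwise and sends $\lambda(g_1)\mapsto-\lambda(g_1)$, hence also $\lambda(g_2)=\lambda(g_1)\lambda(a)\mapsto-\lambda(g_2)$; that is, $\alpha_1*\alpha_2={\rm Ad}(u)$ with $u\notin\C1$ and $\alpha_1,\alpha_2\neq\id$. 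Both conclusions of your lemma fail, and they fail exactly in the non-diffuse situation you yourself flag as delicate; no word-length bookkeeping can prove a false statement. Since the Vaes $T$-invariant identity is asserted for \emph{all} non-trivial $G_1,G_2$ (including this dihedral case), and since your computation of the Connes $T$-invariant routes through the same lemma, neither is established by your argument.

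The paper avoids this in two different ways, and the contrast is instructive. For the Vaes invariant it exploits the hypothesis your lemma discards: the implementing unitary satisfies $\Delta(u)=u\ot u$, so by Wang's classification of irreducible representations of $G_1*G_2$ a non-trivial such $u$ is a \emph{finite} alternating product $u_1\cdots u_n$ of centred group-like unitaries of the two factors; then for centred $x\in\Linf(G_l)$ with $l\neq i_n$, the element $uxu^*$ is a genuinely reduced word, so $E_l(uxu^*)=0$, contradicting $\tau_t(x)=uxu^*\in\Linf(G_l)$ and injectivity of $\tau_t$. This is elementary and valid in every case (in the example above $u$ is not group-like, which is why no contradiction arises). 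For the Connes $T$-invariant, $Sd$ and $\tau$ of the factor in the non-degenerate case, the paper proves no innerness-rigidity by hand at all: it quotes Ueda's theorems, whose proofs rest on Popa-style intertwining and asymptotic-orthogonality techniques --- precisely the machinery your proposed $L^2$ expansion would have to reproduce. (Your lemma does become true when both $\Linf(G_k)$ are diffuse, e.g. via mixingness of each leg of the free product, but the proposition quantifies over all non-trivial CQG, so that restricted version cannot carry the statement.)
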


\begin{proof}Since $h=h_1*h_2$, we have $\sigma_t=\sigma_t^{G_1}*\sigma_t^{G_2}$, where $\sigma_t$ denotes the modular group of $h$. Since $\tau_t^{G_k}$ is $h_{k}$-invariant, the free product $\tau_t:=\tau_t^{G_1}*\tau_t^{G_2}$ makes sense and it defines a one parameter group of $\Linf(G_1*G_2)$. To show that it is the scaling group, we only have to check that $\Delta\circ\sigma_t=(\tau_t\ot\sigma_t)\circ\Delta$, which is clear. Let $T':=\{t\in\R\,:\,\tau_t^{G_1}=\tau_t^{G_2}=\id\}$. It is clear that $T'\subseteq T(G_1*G_2)$. Let $t\in T(G_1*G_2)$ so that there exists $u\in\Linf(G_1*G_2)$ a unitary such that $\Delta(u)=u\otimes u$ and $\tau_t={\rm Ad}(u)$. It follows that $u$ is a dimension $1$ unitary representation of $G_1*G_2$, hence irreducible. If $u$ is non-trivial, it follows from the classification of irreducible representations of $G_1*G_2$ \cite{Wa95} that $u$ is a product of non-trivial dimension $1$ unitary representations alternating from $\Irr(G_1)$ and $\Irr(G_2)$ i.e. $u=u_1u_2\dots u_n$, where $u_k$ is a unitary in $\Linf(G_{i_k})$ with $\Delta_{G_{i_k}}(u_k)=u_k\ot u_k$, $h_k(u_k)=0$ and $i_k\neq i_{k+1}$ for all $k$. Let $l\in\{1,2\}\setminus\{i_n\}$ and note that, since $G_l$ is non-trivial, there exists a non zero $x\in\Linf(G_l)$ such that $h_l(x)=0$. Then $uxu^*\in\Linf(G_1)*\Linf(G_2)$ is a reduced operator so $E_l(uxu^*)=0$, where $E_l\,:\,\Linf(G_1)*\Linf(G_2)\rightarrow\Linf(G_l)$ denotes the canonical Haar-state-preserving normal and faithful conditional expectation. However, since $\tau_t(x)=\tau_t^{G_l}(x)\in\Linf(G_l)$ one has $\tau_t(x)=E_{l}(\tau_t(x))=E_l(uxu^*)=0$ hence $x=0$, leading to a contradiction. It follows that such a $u$ is always trivial and $T(G_1*G_2)\subseteq T'$.

\vspace{0.2cm}

\noindent$(3)\Rightarrow(2)\Rightarrow(1)\Rightarrow(4)$ are obvious. Also $(2)\Rightarrow(3)$ is easy and well known. Let us repeat however the argument here for the convenience of the reader. Let $G$ be a CQG satisfying $\vert \Irr(G)\vert=2$ and let $u$ be the unique, up to unitary equivalence, non-trivial irreducible representation of $G$ and write $1$ for the trivial representation. Since $u$ is non-trivial, $\overline{u}$ also is hence $\overline{u}\simeq u$. It follows that ${\rm dim}(1,u\ot u)=1$. Hence, $u\ot u=1\oplus du$, where $d={\rm dim}(u,u\ot u)\in\N$. Let us denote by $N\in\N^*$ the dimension of $u$ so that we have $N^2=1+dN$ hence $1\equiv 0 \pmod N$ which implies that $N=1$ and then $d=0$. Since $u$ is of dimension $1$, $u\in C(G)$ is a unitary such that $\Delta(u)=u\otimes u$, $u=u^*$ (hence $u^2=1$) and $C(G)$ is generated by $u$ so $G=\Z/2\Z$.

\vspace{0.2cm}

\noindent Suppose that $(2)$ does not hold so that ${\rm dim}(\Linf(G_1))+{\rm dim}(\Linf(G_2))\geq 5$. It follows from \cite[Theorem 4.1]{Ue10} that there exists a central projection $z$ in ${\rm L}^\infty(G_1*G_2)=({\rm L}^\infty(G_1),h_1)*({\rm L}^\infty(G_2),h_2)$ such that $z\Linf(G_1*G_2)$ is either a full factor of type ${\rm II}_1$ or a full factor of type ${\rm III}_\lambda$, $\lambda\neq 0$, with $T$-invariant given by $\{t\in\R\,:\,\sigma_t^{h_1}=\sigma_t^{h_2}\}$ and $(1-z)\Linf(G_1*G_2)$ is a direct sum of matrix algebras. Hence,  $z\Linf(G_1*G_2)$ is non-amenable (since it is full and not of type ${\rm I}$) so $\Linf(G_1*G_2)$ is non-amenable either and it shows $(4)\Rightarrow(2)$. Moreover, we know from \cite{Wa95} that the set $\Irr(G_1*G_2)$ is infinite hence, by Lemma \ref{LemmaInfinite}, $\Linf(G_1*G_2)$ is diffuse so $z=1$. Both the primeness and absence of Cartan follow now from \cite[Corollary 4.3]{Ue10}. Finally, the $Sd$ and $\tau$ invariants are computed in \cite[Corollary 2.3]{Ue11} and \cite[Theorem 3.2]{Ue11} respectively (recall that our von Neumann algebras are supposed to have separable preduals and that the Haar states on CQG are all almost periodic).\end{proof}

\begin{remark}It is known that, for $G$ a CQG of Kac type, the co-amenability of $G$ is equivalent to the injectivity of the von Neumann algebra $\Linf(G)$ (see Corollary 3.17 in \cite{To06} and the discussion after its proof). However, the equivalence for general CQG is open. Proposition \ref{Prop-Freeprod} shows that in the class of CQG which are nontrivial free products the equivalence between co-amenability and injectivity of the von Neumann algebra is true.\end{remark}

\noindent Let us now recall the amalgamated free product construction \cite{Wa95,Ve04}. Let $G_1,G_2$ be two CQG and $C(H)\subset C(G_k)$ a dual quantum subgroup of both $G_1,G_2$. Let $E_k\,:\, C_r(G_k)\rightarrow C_r(H)$ be the faithful CE. The amalgamated free product is introduced in \cite{Wa95} and its Haar state and reduced C*-algebra is understood in \cite{Ve04}. Following \cite{Wa95}, let us define $C(G):=C(G_1)\underset{C(H)}{*}C(G_2)$ the full amalgamated free product. By universal property there exists a unique unital $*$-homomorphism $\Delta\,:\, C(G)\rightarrow C(G)\ot C(G)$ such that $\Delta\vert_{C(G_k)}=\Delta_{G_k}$ for $k=1,2$ and it is easy to check \cite{Wa95} that the pair $(C(G),\Delta)$ is a compact quantum group, denoted by $G=G_1\underset{H}{*}G_2$. It is shown in \cite{Ve04} that the reduced C*-algebra is the reduced amalgamated free product with respect to the CE $E_k$, $C_r(G)=(C_r(G_1),E_1)\underset{C_r(H)}{*}(C_r(G_2),E_2)$ and the Haar state of $G$ is the free product state $h_{G_1}*h_{G_2}$. To study further amalgamated free products, we will need the following lemma. Let us introduce before some terminology. A unitary representation $u$ of a CQG $G$ is called a \textit{Haar representation} if, for all $k\in\Z^*$ one has $\Mor(1,u^{\ot k})=\{0\}$, where $1$ denotes the trivial representation and, for $k\geq 1$, we define $u^{\ot -k}:=\overline{u}^{\ot k}$. Two unitary representation $u_1,u_2$ of $G$ are called \textit{free} if, for $l\geq 1$, any $(i_1,\dots,i_l)\in\{1,2\}^l$ such that $i_s\neq i_{s+1}$ for all $s$ and any $k_1,\dots,k_l\in\Z^*$, one has $\Mor(1,u_{i_1}^{\ot k_1}\ot u_{i_2}^{\ot k_2}\ot\dots\ot u_{i_l}^{\ot k_l})=\{0\}$.

\begin{lemma}\label{LemAmalgamated}
Let $G$ be a compact quantum group with two Haar representations which are free then there exists $N\geq 1$ such that ${\rm L}(\mathbb{F}_2)\subset M_N(\C)\ot\Linf(G)$ (with a state preserving inclusion). In particular, if $G$ is Kac then $\Linf(G)$ is not amenable. 
\end{lemma}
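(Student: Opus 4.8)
The plan is to produce two explicit $*$-free Haar unitaries inside a matrix amplification of $\Linf(G)$, using the representation-theoretic freeness hypothesis to control every relevant moment through the Haar state. The starting observation is purely computational: if $v\in\mathcal B(H_v)\ot\Linf(G)$ is any finite-dimensional unitary representation then $(\id\ot h_G)(v)$ is the orthogonal projection onto $\Mor(1,v)$, so $\Mor(1,v)=\{0\}$ forces $h_G$ to annihilate \emph{every} matrix coefficient of $v$. Writing $u_1,u_2$ for the two free Haar representations, of dimensions $n_1,n_2$, and viewing each $u_i$ as a single unitary of $M_{n_i}(\C)\ot\Linf(G)$ (with $u_i^{-1}:=u_i^*$), the entries of the matrix power $u_i^{k}$ are finite sums of coefficients of the tensor power $u_i^{\ot k}$ (conjugate tensor powers for $k<0$). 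Since $u_i$ is a Haar representation, $\Mor(1,u_i^{\ot k})=\{0\}$ for every $k\in\Z^*$, hence $(\id\ot h_G)(u_i^{k})=0$ as a matrix and so $(\tr_{n_i}\ot h_G)(u_i^{k})=0$; thus each $u_i$ is a Haar unitary for $\tr_{n_i}\ot h_G$, where $\tr$ denotes the normalized trace. More generally, for an alternating word the entries of $u_{i_1}^{k_1}\cdots u_{i_l}^{k_l}$ (with $i_s\ne i_{s+1}$ and $k_s\in\Z^*$) are sums of coefficients of $v:=u_{i_1}^{\ot k_1}\ot\cdots\ot u_{i_l}^{\ot k_l}$, and the freeness of $u_1,u_2$ is exactly the statement $\Mor(1,v)=\{0\}$; hence every such word is annihilated entrywise by $h_G$.

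To turn these two Haar unitaries, which a priori live in matrix algebras of different sizes, into free elements of a single algebra, I would set $N:=n_1n_2$ and embed $M_{n_1}(\C)\ot\Linf(G)$ and $M_{n_2}(\C)\ot\Linf(G)$ into $M_{n_1}(\C)\ot M_{n_2}(\C)\ot\Linf(G)=M_N(\C)\ot\Linf(G)$ by $e_{ij}\ot x\mapsto e_{ij}\ot 1\ot x$ and $e_{kl}\ot x\mapsto 1\ot e_{kl}\ot x$ respectively. These amplifications are unital, trace-preserving $*$-homomorphisms, so the images $\widehat u_1,\widehat u_2$ remain Haar unitaries for $\phi:=\tr_N\ot h_G$; crucially, since both images keep their coefficients in the \emph{same} $\Linf(G)$-factor, the computation of the previous paragraph applies verbatim and gives $\phi(\widehat u_{i_1}^{k_1}\cdots\widehat u_{i_l}^{k_l})=0$ for every alternating word. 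By the standard characterization of freeness through vanishing of centered alternating moments, $\widehat u_1$ and $\widehat u_2$ are $*$-free Haar unitaries in $(M_N(\C)\ot\Linf(G),\phi)$, so they generate a copy of ${\rm L}(\F_2)$ on which $\phi$ restricts to the canonical trace; this is the desired state-preserving inclusion ${\rm L}(\F_2)\subset M_N(\C)\ot\Linf(G)$ (the case $N=1$ is allowed, when both representations are one-dimensional).

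For the final assertion, assume $G$ is Kac, so $h_G$ is a trace and $\phi$ is a faithful trace on the finite von Neumann algebra $M_N(\C)\ot\Linf(G)$; being trace-preserving, the inclusion ${\rm L}(\F_2)\subset M_N(\C)\ot\Linf(G)$ admits a $\phi$-preserving conditional expectation. If $\Linf(G)$ were amenable then $M_N(\C)\ot\Linf(G)$ would be injective, and injectivity passes to the range of a conditional expectation, forcing ${\rm L}(\F_2)$ to be injective---a contradiction. Hence $\Linf(G)$ is not amenable. The only genuinely delicate point, and the one I would treat with the most care, is the passage to a common matrix algebra: naive padding by the identity destroys the Haar-unitary property and naive external tensoring makes the two unitaries commute, whereas the amplification above simultaneously preserves the normalized trace and keeps all coefficients inside a single copy of $\Linf(G)$, which is precisely what makes the freeness computation survive.
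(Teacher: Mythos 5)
Your proof is correct and follows essentially the same route as the paper's: your amplified unitaries $\widehat u_1,\widehat u_2$ in $M_{n_1}(\C)\ot M_{n_2}(\C)\ot\Linf(G)$ are exactly the paper's $v_1=(u_1)_{13}$ and $v_2=(u_2)_{23}$ acting on $\mathcal{B}(H_1\ot H_2)\ot\Linf(G)$, and your key computation---that the absence of invariant vectors in (conjugate) tensor powers and alternating tensor words forces $h_G$ to annihilate the corresponding matrix entries---is the same argument the paper makes, with the paper merely being more explicit about the $Q$-conjugation relating $\overline{u}$ to the matrix of adjoints when handling negative powers. The only cosmetic difference is that you spell out the trace-preserving conditional expectation argument for non-amenability in the Kac case, which the paper leaves implicit.
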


\begin{proof}
Recall that, for $u\in\mathcal{B}(H)\ot\Linf(G)$ a finite dimensional unitary representation, its contragredient unitary representation $\overline{u}\in\mathcal{B}(\overline{H})\ot\Linf(G)$ satisfies the following: there exists an invertible operator $Q\in\mathcal{B}(\overline{H})$ and an orthonormal basis $(e_i)_i$ of $H$ such that, writing $u=\sum_{ij}e_{ij}\ot u_{ij}$, where $(e_{ij})_{ij}$ are the matrix units associated to $(e_i)_i$, then $u^c:=(Q\ot 1)\overline{u}(Q^{-1}\ot 1)=\sum_{ij}  e'_{ij}\ot u_{ij}^*$, where $e_{ij}'$ are the matrix units associated to $(\overline{e}_i)_i$. Note also that $(u^c)^{\ot k}:=(u^c)_{1,k+1}(u^c)_{2,k+1}\dots(u^c)_{k,k+1}=(Q^{\ot k}\ot 1)\overline{u}^{\ot k}((Q^{-1})^{\ot k}\ot 1)\in\mathcal{B}(\overline{H}^{\ot k})\ot\Linf(G)$ for all $k\geq 1$. Hence, if $\Mor(1,\overline{u}^{\ot k})=\{0\}$ one has $(\id\ot h)((u^{c})^{\ot k})=Q^{\ot k}(\id\ot h)(\overline{u}^{\ot k})(Q^{-1})^{\ot k}=0$, since $(\id\ot h)(\overline{u}^{\ot k})\in\mathcal{B}(\overline{H}^{\ot k})$ is the orthogonal projection onto $\Mor(1,\overline{u}^{\ot k})$. It then follows that $h(x)=0$ for any coefficient $x$ of $(u^c)^{\ot k}$. Since the coefficients of $(u^c)^{\ot k}$ are exactly the products of $k$ adjoints of coefficients of $u$, we deduce that $h(x)=0$ for any product of $k$ adjoints of coefficients of $u$ whenever $\Mor(1,\overline{u}^{\ot k})=\{0\}$. We will use this remark in the rest of the proof.

\vspace{0.2cm}

\noindent Let $u_k\in\mathcal{B}(H_k)\ot\Linf(G)$, $k=1,2$, be two free Haar representations and define $v_1:=(u_1)_{13}$ and $v_2:=(u_2)_{23}$ which are both unitary in $\mathcal{B}(H_1\ot H_2)\ot\Linf(G)$ with respect to $\omega$. Consider the faithful normal state $\omega={\rm Tr}\ot h\in\left(\mathcal{B}(H_1\ot H_2)\ot\Linf(G)\right)_*$. Let us show that $v_1$ and $v_2$ are two free Haar unitaries. Since $(\id\ot h)(u_i^{\ot k})$ ($i=1,2$) is the orthogonal projection onto $\Mor(1,u_i^{\ot k})=\{0\}$, for $k\in\Z^*$, it follows that $h(x)=0$ whenever $x$ is a product of $\vert k\vert$ coefficients of $u_i$ or a product of $\vert k\vert$ adjoints of coefficients of $u_i$, for all $k\in\Z^*$. Let $\mathcal{C}_i$ be the linear span of products of coefficients of $u_i$ and of products of adjoints of coefficients of $u_i$ so that $\omega(\mathcal{C}_i)=\{0\}$. Since $v_1^k=\sum_{ij}e_{ij}\ot 1\ot x_{ij}$ and $v_2^k=\sum_{ij}1\ot e_{ij}\ot y_{ij}$, where $x_{ij}\in\mathcal{C}_1$, $y_{ij}\in\mathcal{C}_2$, for all $k\in\Z^*$, it follows that $\omega(v_i^k)=({\rm Tr}\ot\id)(\id\ot h)(v_i^k)=0$ for all $k\in\Z^*$, $i\in\{1,2\}$. Hence, both $v_1$ and $v_2$ are Haar unitaries with respect to $\omega$. Since $u_1$ and $u_2$ are free representations, the same argument as before shows that $h(\mathcal{C})=\{0\}$, where $\mathcal{C}$ is the linear span of operators $x\in\Linf(G)$ of the form $x=y_1 \dots y_l$, $l\geq 1$, $y_s$ is a product of $\vert k_s\vert$ coefficients of $u_{i_s}$ if $k_s\geq 1$ or adjoints of coefficients of $u_{i_s}$ if $k_s\leq -1$ with $k_s\in\Z^*$ and $i_s\in\{1,2\}$ such that $i_s\neq i_{s+1}$ for all $s$. Let now $l\geq 1$ and $i_1,\dots i_l\in\{1,2\}$ with $i_s\neq i_{s+1}$ and $k_1,\dots, k_l\in\Z^*$. We can write $v_{i_1}^{k_1}\dots v_{i_l}^{k_l}=\sum_{i,j,k,l}e_{ij}\ot e_{kl}\ot x_{ijkl}$, where $x_{ijkl}\in\mathcal{C}$. It follows that $\omega(v_{i_1}^{k_1}\dots v_{i_l}^{k_l})=({\rm Tr}\ot\id)(\id\ot h)(v_{i_1}^{k_1}\dots v_{i_l}^{k_l})=0$. Hence $v_1$ and $v_2$ are free with respect to $\omega$. It follows that there exists a unique normal faithful $*$-homomorphism ${\rm L}(\mathbb{F}_2)\rightarrow \mathcal{B}(H_1\ot H_2)\ot\Linf(G)$ which maps one generator of $\mathbb{F}_2$ onto $v_1$ and the other onto $v_2$. Hence, if $G$ is Kac, $\mathcal{B}(H_1\ot H_2)\ot\Linf(G)$ is non amenable either and it implies that $\Linf(G)$ is also non amenable.\end{proof}

\begin{definition}\label{DefProper}
A dual quantum subgroup $C(H)\subset C(G)$ is called \textit{proper} if there exists an irreducible representation $a$ of $G$ such that $(\id\ot E_H)(a)=0$ and for any $s\in\Irr(H)$, if $s\subset \overline{a}\ot a$ then $s=1$.
\end{definition}

\noindent Note that if $C(H)\subset C(G)$ is proper then $[G:H]\geq 2$.

\begin{remark}
In the case of duals of discrete groups, the notion of proper dual quantum subgroup coincides with the usual notion of a proper subgroup. However, for quantum groups, there are examples of non proper dual quantum subgroup of index $2$. A nice example is the dual quantum subgroup ${\rm Aut}^+(M_N(\C))$ of $O_N^+$. Indeed, the representation category of $O_N^+$ is the category such that all the irreducible are self-adjoint, indexed by $\N$, $(u_n)_{n\in \N}$, with $u_0 = \varepsilon$ and fusion rules $u_n\ot u_m = u_{\vert n-m\vert}\oplus u_{\vert n-m\vert+2}\oplus \dots \oplus u_{n+m}$. Taking the full subcategory of ${\rm Rep}( O_N^+)$ generated by the irreducible representations $(v_n=u_{2n})$, we get a category isomorphic to ${\rm Rep}({\rm Aut}^+(M_N(\C)))$, with irreducible representations indexed by $\N$ and fusion rules $v_n\ot v_m = u_{2n}\ot u_{2m} = v_{\vert n-m\vert} \oplus v_{\vert n-m\vert+1}\oplus\dots \oplus v_{n+m}$. With the fusion rules, we see that it is indeed a full subcategory and that the index of the corresponding subgroup is $2$. However, taking any irreducible $u_n$ of $O_N^+$, with $n\geq 1$, we have that $v_1 = u_2\subset \overline{u_n}\ot u_n = u_n\ot u_n$ so the dual quantum subgroup cannot be proper because $v_1\in \Irr(\Aut^+(M_N(\C)))$.
\end{remark}

\begin{proposition}\label{PropProper}
Let $G=G_1*_HG_2$ be an amalgamated free product with $H$ a proper dual quantum subgroup of $G_1$ and $[G_2:H]\geq 3$ then there exists $N$ such that ${\rm L}(\mathbb{F}_2)\subset M_N(\C)\ot\Linf(G)$.
\end{proposition}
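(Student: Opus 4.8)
The plan is to produce two \emph{free Haar representations} of $G$ and then invoke Lemma \ref{LemAmalgamated}, which immediately yields an integer $N\geq 1$ with ${\rm L}(\mathbb{F}_2)\subset M_N(\C)\ot\Linf(G)$. Throughout I use the standard description of the amalgamated free product \cite{Wa95,Ve04}: writing $E_H$ for the Haar-state-preserving conditional expectation onto $C_r(H)$, one has $h_G=h_H\circ E_H$, and $E_H$ annihilates every \emph{reduced word}, i.e. every product $y_1\cdots y_m$ with $y_j\in\Pol(G_{\epsilon_j})$, $E_H(y_j)=0$ and $\epsilon_j\neq\epsilon_{j+1}$. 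Since, for $x\in\Irr(G_k)$, the coefficients of $x$ are killed by $E_H$ exactly when $x\notin\Irr(H)$, a tensor word whose letters are coefficients of representations outside $\Irr(H)$ and whose consecutive letters come from different factors is reduced, hence has vanishing Haar state; I call such a letter \emph{centered}.

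For the choice of representations, properness of $H$ in $G_1$ (Definition \ref{DefProper}) provides an irreducible $a\in\Irr(G_1)$ with $(\id\ot E_H)(a)=0$ and such that the only $s\in\Irr(H)$ with $s\subset\overline{a}\ot a$ is $s=1$; in particular $a\notin\Irr(H)$. On the $G_2$-side I first record that, rewriting $x\sim_H y\Leftrightarrow y\subset x\ot s$ for some $s\in\Irr(H)$, the relation $\sim_H$ has a \emph{left} companion $x\sim_H^{\ell}y\Leftrightarrow y\subset s\ot x$, that conjugation $x\mapsto\overline{x}$ interchanges $\sim_H$ and $\sim_H^{\ell}$, and hence that both partitions of $\Irr(G_2)$ have the same number of classes, namely $[G_2:H]\geq 3$. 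Removing the trivial class $\Irr(H)$ leaves at least two nontrivial classes for each relation, and a short combinatorial argument applies: if no two nontrivial representations lay in distinct $\sim_H$-classes \emph{and} distinct $\sim_H^{\ell}$-classes, then all nontrivial representations would lie in a single $\sim_H$-class or a single $\sim_H^{\ell}$-class, forcing one of the two (equal) indices to be $2$. Since $[G_2:H]\geq 3$ this is impossible, so I may fix $b_1,b_2\in\Irr(G_2)\setminus\Irr(H)$ with $b_1\nsim_H b_2$ and $b_1\nsim_H^{\ell}b_2$.

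Now set $v_1:=a\ot b_1$ and $v_2:=a\ot b_2$, viewed as representations of $G$. Each $v_i$ is a Haar representation: the letters of $v_i^{\ot k}$ and of $\overline{v_i}^{\ot k}=(\overline{b_i}\ot\overline{a})^{\ot k}$ alternate between the two factors and are all centered, so these words are reduced and $\Mor(1,v_i^{\ot k})=\{0\}$ for all $k\in\Z^*$. For freeness I examine an arbitrary alternating word $v_{i_1}^{k_1}\ot\cdots\ot v_{i_l}^{k_l}$ (with $i_s\neq i_{s+1}$, $k_s\in\Z^*$) written out in the letters $a,\overline{a},b_j,\overline{b}_j$. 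Same-factor adjacencies arise only at the junctions between consecutive blocks, and there are exactly two types. A junction $b_{i_s}\ot\overline{b}_{i_{s+1}}$ (with $i_s\neq i_{s+1}$) carries no $H$-subrepresentation precisely because $b_1\nsim_H^{\ell}b_2$, so it merges into a single centered $G_2$-letter with no scalar produced. A junction $\overline{a}\ot a$ decomposes, by properness, as a scalar (the trivial part) plus a centered $G_1$-part; the centered part keeps the word reduced, while the scalar part deletes the pair and brings together two $G_2$-letters $\overline{b}_{i_s}\ot b_{i_{s+1}}$ ($i_s\neq i_{s+1}$), which again form a centered $G_2$-letter because $b_1\nsim_H b_2$. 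Thus $G_2$-letters only ever merge with one another and always stay centered, hence are never absorbed into $C_r(H)$; since every block contributes at least one $G_2$-letter, at least one centered $G_2$-letter survives every reduction. Consequently the word can never collapse to a scalar: after resolving all adjacencies each residual term is a $C_r(H)$-scalar times a reduced word of length $\geq 1$, so $E_H$ (hence $h_G$) annihilates all its coefficients, giving $\Mor(1,v_{i_1}^{k_1}\ot\cdots\ot v_{i_l}^{k_l})=\{0\}$. Therefore $v_1,v_2$ are free, and Lemma \ref{LemAmalgamated} provides the required $N$ and embedding.

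The main obstacle is the freeness verification of the third paragraph, namely controlling the cascade of reductions in the amalgamated free product and certifying that no iterated junction regenerates the trivial representation; the bookkeeping is clean here only because the two $G_1$-letters are $a$ and $\overline{a}$, so the sole collapsing junction is $\overline{a}\ot a$, and because $E_H$ is $C_r(H)$-bimodular. This is exactly where both hypotheses enter essentially: properness of $a$ confines the collapsing junction $\overline{a}\ot a$ to a single trivial summand, while the bound $[G_2:H]\geq 3$ is what makes the two inequivalences $b_1\nsim_H b_2$ and $b_1\nsim_H^{\ell}b_2$ simultaneously available, preventing any $G_2$-letter from being absorbed into the amalgam.
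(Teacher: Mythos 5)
Your overall strategy is the same as the paper's (build two free Haar representations and invoke Lemma \ref{LemAmalgamated}), and your selection of $b_1,b_2$ with both $b_1\nsim_H b_2$ and $b_1\nsim_H^{\ell}b_2$ is correct (in fact more than the paper uses). But the construction $v_i:=a\ot b_i$ is genuinely wrong, and the flaw sits exactly at the step you flag as the "main obstacle". The key assertion "$G_2$-letters only ever merge with one another and always stay centered" is unjustified: being centered (having no $H$-subrepresentation) is \emph{not} stable under tensor products, so while your two hypotheses control the pairwise merges $b_{i}\ot\overline{b}_{j}$ and $\overline{b}_{i}\ot b_{j}$, they say nothing about the longer $G_2$-products created by the cascade. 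With two-letter blocks, a block with $k_s=\pm1$ has its two letters involved in the junctions on \emph{both} sides, so collapsing an $\overline{a}\ot a$ junction makes previously merged $G_2$-letters adjacent to further $G_2$-letters, producing words like $b_1\ot\overline{b}_2\ot b_1\ot\overline{b}_2$ whose $H$-content is uncontrolled; if that product contains the trivial representation, the whole word collapses to a scalar and freeness fails. This is not just a gap in bookkeeping: take $G_1=\widehat{\Z_2}$ with generator $a$, $G_2=\widehat{\Z_2\times\Z_2}$ with generators $b_1=g_1$, $b_2=g_2$, and $H$ trivial (the hypotheses hold: properness is automatic for trivial $H$, and $[G_2:H]=4\geq 3$, while $\sim_H$ and $\sim_H^{\ell}$ are both equality so your conditions on $b_1,b_2$ hold). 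Then $v_1=ag_1$, $v_2=ag_2$ are group-like elements of $\Z_2*(\Z_2\times\Z_2)$ and
$$\left(v_1v_2^{-1}\right)^2=(ag_1g_2a)(ag_1g_2a)=a(g_1g_2)^2a=a^2=e,$$
so $\Mor\left(1,\,v_1\ot\overline{v}_2\ot v_1\ot\overline{v}_2\right)\neq\{0\}$: $v_1$ and $v_2$ are not free.

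The paper's proof avoids this by taking the four-letter, commutator-type words $u_i=a\ot b_i\ot\overline{a}\ot\overline{b}_i$ (in the example above, $u_i=[a,g_i]$, and commutators do generate a free group). With these blocks, letters alternate between the factors inside each block, so same-factor adjacencies occur only at block junctions and are separated by at least three letters; moreover every junction that can collapse is $\overline{a}\ot a$, and its trivial branch brings together $\overline{b}_{i_s}\ot b_{i_{s+1}}$ \emph{flanked on both sides by the $G_1$-letters $a$ and $\overline{a}$}. Since that merged $G_2$-letter is centered (only the relation $\sim_H$ is needed, which is why the paper does not require your left companion $\sim_H^{\ell}$), it has no trivial component and never disappears, so the flanking $G_1$-letters can never meet and the reduction terminates after one step. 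To repair your argument you would have to replace $v_i=a\ot b_i$ by words with this insulation property; as it stands, the freeness claim in your third paragraph is false.
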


\begin{proof}
There exist an irreducible representation $a$ of $G_1$ satisfying the conditions of Definition \ref{DefProper} and two irreducible representations $b_1,b_2$ of $G_2$ such that $(\id\ot E_H)(b_i)=0$ and $(\id\ot E_H)(\overline{b_1}\ot b_2)=0$ (so we also have $(\id\ot E_H)(\overline{b_2}\ot b_1)=0$). Define $u_i=a\ot b_i\ot\overline{a}\ot\overline{b}_i$ and let $k\geq 1$. Since $\chi(a)\in\Linf(G_1)^\circ,\chi(b_i)\in{\rm L}^\infty(G_2)^\circ$, $\chi(u_i^{\ot k})=\chi(u_i)^k=\left(\chi(a)\chi(b_i)\chi(a)^*\chi(b_i)^*\right)^{k}$ is a reduced operator in the amalgamated free product so:
$$\dim\left(\Mor(1, u_i^{\ot k})\right)=h(\chi(u_i^{\ot k}))=0.$$
Also, $\chi(\overline{u}_i^{\ot k})=\left(\chi(b_i)\chi(a)\chi(b_i)^*\chi(a)^*\right)^{k}$ is reduced so $\dim\left(\Mor(1, \overline{u_i}^{\ot k})\right)=h(\chi(\overline{u_i}^{\ot k}))=0$. Hence, $u_i$ is a Haar representation of $G$ for $i\in\{1,2\}$. Let us show that $u_1$ and $u_2$  are free. It suffices to show that, for any $l\geq 1$, $(i_1,\dots,i_l)\in\{1,2\}^l$ with $i_s\neq i_{s+1}$ and $k_1,\dots,k_l\in\Z^*$ the operator $x:=\chi(u_{i_1}^{\ot k_{i_1}}\ot\dots\ot u_{i_l}^{\ot k_l})$ is in the linear span of reduced operators. The case $l=1$ is clear by the first part of the proof and the general case can be shown by induction by using the same arguments used in the case $l=2$ that we present below. Let $x=\chi(u_{i_1}^{\ot k_1}\ot u_{i_{2}}^{\ot k_{2}})$. It suffices to show that $x$ is in the linear span of reduced operator. If $k_1$ and $k_2$ have the same sign then $x$ is already reduced. If $k_1\geq 1$ and $k_{2}\leq -1$ then,
$$
x=\chi(u_{i_1})^{k_1}\chi(\overline{u}_{i_{2}})^{- k_{2}}=\chi(u_{i_1})^{k_l-1}\chi(a)\chi(b_{i_1})\chi(a)^*\chi(\overline{b}_{i_1}\ot b_{i_{2}})\chi(a)\chi(b_{i_{2}})^*\chi(a)^*\chi(\overline{u}_{i_{2}})^{-k_{2}-1}$$
Since $(\id\ot E_H)(\overline{b}_{i_1}\ot b_{i_{2}})=0$, we have $\chi(b_{i_1}\ot\overline{b}_{i_{2}})\in\Linf(G_2)^\circ$ hence $x$ is reduced. If $k_1\leq -1$ and $k_2\geq 1$ then
\begin{eqnarray*}
x&=&\chi(\overline{u}_{i_1})^{-k_1-1}\chi(b_{i_1})\chi(a)\chi(b_{i_1})^*\chi(\overline{a}\ot a)\chi(b_{i_2})\chi(a)^*\chi(b_{i_2})^*\chi(
u_{i_2})^{k_2-1}\\
&=&\chi(\overline{u}_{i_1})^{-k_1-1}\chi(b_{i_1})\chi(a)\chi(\overline{b}_{i_1}\ot b_{i_2})\chi(a)^*\chi(b_{i_2})^*\chi(u_{i_2})^{k_2-1}\\
&&+\sum_{s\in\Irr(G_1)\setminus\Irr(H), s\subset\overline{a}\ot a}\chi(\overline{u}_{i_1})^{-k_1-1}\chi(b_{i_1})\chi(a)\chi(b_{i_1})^*\chi(s)\chi(b_{i_2})\chi(a)^*\chi(b_{i_2})^*\chi(u_{i_2})^{k_2-1},
\end{eqnarray*}
The right hand side of this equality is in the linear span of reduced operators since $\chi(\overline{b}_{i_1}\ot b_{i_2})\in\Linf(G_2)^\circ$.\end{proof}

\begin{remark}
The index condition is clearly necessary since it is already necessary in the discrete group case. Indeed, if $\Gamma=\Gamma_1\underset{\Sigma}{*}\Gamma_2$ be a non-trivial amalgamated free product of discrete groups (i.e. $\Sigma\neq\Gamma_k$, $k=1,2$). It is well known and easy to check that $\Gamma$ is amenable if and only if $\Sigma$ is amenable and  $[\Gamma_k:\Sigma]= 2$ for all $k\in\{1,2\}$ (actually $\Gamma$ is an extension of $\Sigma$ by $D_\infty=\Z_2*\Z_2$).
\end{remark}

\begin{example}
The dual quantum subgroup $C({\rm Aut^+}(M_2(\C)))\subset C(O_2^+)$ is not proper, has index $2$ and the quantum group $G:=O_2^+\underset{{\rm Aut}^+(M_2(\C))}{*} O_2^+$ is co-amenable. The inclusion of $C({\rm Aut^+}(M_2(\C)))$ in $C(O_2^+)$ is the map which sends the fundamental representation of $\Aut^+(M_2(\C))$ onto $v\ot v$, where $v\in M_2(\C)\ot C(O_2^+)$ is the fundamental representation of $O_2^+$. Hence, writing $v_l$, $l\in\N$, the representatives of the irreducible representations of $O_N^+$ such that $v_0=1$ and $v_1=v$, $C({\rm Aut^+}(M_2(\C)))$ is viewed in $C(O_2^+)$ has the C*-subalgebra generated by the coefficients of representations $v_{l}$ for $l\in 2\N$. Let $\rho\,:\,C(O_2^+)\rightarrow C^*(\Z_2)$ be the unique unital $*$-homomorphism such that $(\id\ot\rho)(v)=1\ot g$, where $g$ is the generator of $\Z_2$. It is not difficult to check that $\rho$ intertwines the comultiplications and,
$$(\id\ot\rho)(v_l)=\left\{\begin{array}{lcl}1&\text{if}&l\in 2\N,\\g&\text{if}&l\in 2\N+1.\end{array}\right.$$
In particular, one has $\rho(x)=\varepsilon(x)1$ for all $x\in C({\rm Aut}^+(M_2(\C)))$.

\vspace{0.2cm}

\noindent It follows from the preceding discussion that, writing $v_{1,l}$ and $v_{2,l}$ the two copies of $v_l$ in $C(G)$, there exists a unique unital $*$-homomorphism $\pi\,:\, C(G)\rightarrow C^*(\Z_2*\Z_2)$ such that $(\id\ot\pi)(v_{i,1})=1\ot g_i$, for $i=1,2$ where $g_1$, $g_2$ are the two copies of $g$ in $\Z_2*\Z_2$. In particular $\pi$ intertwines the comultiplications, $\pi(x)=\varepsilon(x)1$ for all $x\in C({\rm Aut}^+(M_2(\C)))$ and, whenever $u$ is a representation of the form $u=v_{i_1,l_1}\ot\dots\ot v_{i_n,l_n}$ with $i_s\neq i_{s+1}$ and $k_s\in 2\N+1$ for all $s$ one has $(\id\ot\pi)(u)=1\ot g_{i_1}\dots g_{i_n}$. Let us call such a representation a reduced representation and let $\mathcal{C}$ be the linear span of coefficients of reduced representations. By the previous computation, for all $x\in\mathcal{C}$, $\pi(x)$ is a linear combination of reduced words in $\Z_2*\Z_2$ hence, $\tau\circ\pi(\mathcal{C})=\{0\}$, where $\tau$ is the canonical tracial state on $C^*(\Z_2*\Z_2)$. Note also that, since any coefficient of a reduced representation $u$ is a reduced word in the amalgamated free product $C(G)$, one has $E(\mathcal{C})=\{0\}$ where $E\,:\, C(G)\rightarrow C({\rm Aut}^+(M_2(\C)))$ is the canonical conditional expectation. Since $C(G)$ is the closed linear span of $\mathcal{C}$ and $C({\rm Aut}^+(M_2(\C)))$ we deduce that $\tau\circ\pi=\varepsilon\circ E$. It follows that $\ker(\lambda_G)\subset\ker(\pi)$. Indeed, let $E_r\,:\, C_r(G)\rightarrow C_r({\rm Aut}^+(M_2(\C)))$ be the canonical faithful conditional expectation such that $E_r\circ\lambda_G=\lambda_{{\rm Aut}^+(M_2(\C))}\circ E$ and take $x\in\ker(\lambda_G)$. Then, $E_r(\lambda_G(x^*x))=\lambda_{{\rm Aut}^+(M_2(\C))}(E(x^*x))=0$. Since ${\rm Aut}^+(M_2(\C))$ is co-amenable \cite{Ba99} it follows that $E(x^*x)=0$ hence $\varepsilon(E(x^*x))=\tau(\pi(x^*x))=0$. Since $\Z_2*\Z_2$ is amenable, $\tau$ is faithful so $x\in\ker(\pi)$. Hence $\pi\prec\lambda_G$ and the co-amenability follows from \cite[Theorem 3.11 and 3.12]{KKSV20}.
\end{example}

\subsection{Semi-direct product quantum group}\label{semi-direct}

The semi-direct product quantum group is defined and studied in \cite{Wa19}. Let us recall below the basic facts about this construction.

\vspace{0.2cm}

\noindent Let $G$ be a compact quantum group and $\Lambda$ a finite group acting on $C(G)$ by automorphisms of $G$, meaning that we have a group homomorphism $\alpha\,:\,\Lambda\rightarrow{\rm Aut}(C(G))$ such that $\Delta_G\circ\alpha_g=(\alpha_g\ot\alpha_g)\circ\Delta_G$ for all $g\in\Lambda$. Define the C*-algebra $C(G\rtimes\Lambda)=C(G)\otimes C(\Lambda)$ with the comultiplication $\Delta\,:\,C(G\rtimes\Lambda)\rightarrow C(G\rtimes\Lambda)\otimes C(G\rtimes\Lambda)$ such that:
$$\Delta(a\ot\delta_r)=\sum_{s\in\Lambda}\left[(\id\ot\alpha_s)(\Delta_G(a))\right]_{13}(1\ot\delta_s\ot 1\ot\delta_{s^{-1}r}).$$
In particular, the inclusion $C(\Lambda)\subset C(G\rtimes\Lambda)\,:\,x\mapsto 1\ot x$ preserves the comultiplications. It is shown in \cite{Wa19} that the pair $(C(G\rtimes\Lambda),\Delta)$ is a compact quantum group in its maximal version, the Haar measure $h$ is given by $h=h_G\ot {\rm tr}$, where $h_G$ is the Haar state on $C(G)$ and ${\rm tr}$ is the integration with respect to the uniform probability on $\Lambda$ i.e. ${\rm tr}(\delta_r)=\frac{1}{\vert\Lambda\vert}$. Hence the reduced C*-algebra is $C_r(G)\ot C(\Lambda)$, the von Neumann algebra is ${\rm L}^\infty(G)\ot C(\Lambda)$ and the modular group $\sigma_t$ of $G\rtimes\Lambda$ is $\sigma_t=\sigma_t^G\ot\id$. Moreover, the canonical surjection $\lambda\,:\,C(G\rtimes\Lambda)=C(G)\ot C(\Lambda)\rightarrow C_r(G\rtimes\Lambda)=C_r(G)\ot C(V)$ is $\lambda=\lambda_G\ot\id$, where $\lambda_G$ is the canonical surjection $C(G)\rightarrow C_r(G)$. The irreducible representations and the fusion rules of $G\rtimes\Lambda$ are described in \cite{Wa19}. We could use the general classification of irreducible representations of $G\rtimes\Lambda$ from \cite{Wa19} to deduce the one-dimensional representations of $G\rtimes\Lambda$. However, since we only need to understand the one-dimensional representations, we prefer to include a self contained proof in the next Lemma.

\begin{lemma}\label{Lem1Dim}
The one-dimensional unitary representations of $G\rtimes\Lambda$ are of the form $w\ot v\in C(G\rtimes \Lambda)$, where $w\in C(G)$ and $v\in C(\Lambda)$ are one-dimensional unitary representations of $G$ and $\Lambda$ respectively and $\alpha_r(w)=w$ for all $r\in\Lambda$.
\end{lemma}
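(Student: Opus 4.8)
The plan is to identify the one-dimensional unitary representations of $G\rtimes\Lambda$ with the \emph{group-like unitaries} of $C(G\rtimes\Lambda)=C(G)\ot C(\Lambda)$, i.e. the unitaries $u$ with $\Delta(u)=u\ot u$. First I would expand $u=\sum_{r\in\Lambda}a_r\ot\delta_r$ with $a_r\in C(G)$ and compute both sides of $\Delta(u)=u\ot u$ using the explicit formula for $\Delta$ on $C(G\rtimes\Lambda)$. In the fourfold tensor product $C(G)\ot C(\Lambda)\ot C(G)\ot C(\Lambda)$ the two $C(\Lambda)$-legs (legs $2$ and $4$) are spanned by the $\delta$'s, so matching the coefficient of $\delta_s\ot\delta_q$ — which, because of the factor $\delta_{s^{-1}r}$ in the formula for $\Delta$, forces the relabelling $r=sq$ on the $\Delta(u)$ side — yields the family of identities
\begin{equation*}
(\id\ot\alpha_s)(\Delta_G(a_{sq}))=a_s\ot a_q\quad\text{for all }s,q\in\Lambda.\tag{$\ast$}
\end{equation*}
Getting this bookkeeping correct is the main technical point; everything afterwards is extraction of data from $(\ast)$ by slicing with the counit $\varepsilon_G$.

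Next I would exploit $(\ast)$ through three applications of $\varepsilon_G$. Writing $c_r:=\varepsilon_G(a_r)\in\C$, applying $\varepsilon_G\ot\varepsilon_G$ to $(\ast)$ and using $(\varepsilon_G\ot\varepsilon_G)\Delta_G=\varepsilon_G$ gives $c_{sq}=c_sc_q$, so $c$ is multiplicative. Applying $\id\ot\varepsilon_G$ to $(\ast)$, together with the fact that quantum automorphisms preserve the counit (indeed $\varepsilon_G\circ\alpha_s$ is a character satisfying $(\id\ot\varepsilon_G\circ\alpha_s)\Delta_G=\id$, whence $\varepsilon_G\circ\alpha_s=\varepsilon_G$ by uniqueness of the counit), yields $a_{sq}=c_q\,a_s$. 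If $c\equiv0$ this would force every $a_r=0$ and hence $u=0$, a contradiction; so $c_e=1$, the map $c$ is a character of the finite group $\Lambda$ (in particular $|c_r|=1$), and taking $s=e$ in $a_{sq}=c_q a_s$ shows $a_r=c_r w$ for all $r$, where $w:=a_e\in C(G)$.

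Finally I would substitute $a_r=c_r w$ back into $(\ast)$. Since the scalars $c_sc_q$ are nonzero they cancel, leaving $(\id\ot\alpha_s)(\Delta_G(w))=w\ot w$ for every $s$; the case $s=e$ gives $\Delta_G(w)=w\ot w$, and comparing a general $s$ with $s=e$ gives $w\ot\alpha_s(w)=w\ot w$, so $\alpha_s(w)=w$ after slicing off the (nonzero) first leg by a functional $\phi$ on $C(G)$ with $\phi(w)=1$. Setting $v:=\sum_{r\in\Lambda}c_r\delta_r\in C(\Lambda)$, multiplicativity of $c$ gives $\Delta_\Lambda(v)=v\ot v$ and $|c_r|=1$ gives $v$ unitary, so $u=w\ot v$ with $v$ a one-dimensional unitary representation of $\Lambda$; unitarity of $u$ and of $v$ then forces $w^*w=ww^*=1$ (slice the identity $(w^*w)\ot 1=1\ot 1$), making $w$ a one-dimensional unitary representation of $G$ fixed by every $\alpha_r$, as claimed. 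The converse, that every such $w\ot v$ is a group-like unitary of $G\rtimes\Lambda$, is an immediate reverse reading of the same computation, using $\alpha_r(w)=w$ to collapse the sum over $s$ in the formula for $\Delta$.
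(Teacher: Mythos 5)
Your proof is correct and takes essentially the same route as the paper's: your coefficient identity $(\ast)$ is precisely the paper's formula $\Delta(u)(r,s)=(\id\ot\alpha_r)(\Delta_G(u(rs)))=u(r)\ot u(s)$ obtained from the identification $C(G\rtimes\Lambda)\simeq C(\Lambda,C(G))$, and both arguments then extract $w$ and the character of $\Lambda$ by slicing with $\varepsilon_G$ and using $\varepsilon_G\circ\alpha_s=\varepsilon_G$. The only cosmetic difference is that the paper sets $v_s:=\varepsilon_G(u(s))$ and obtains $u(s)=wv_s$ in one step, whereas you first establish multiplicativity of $c$ and then substitute back into $(\ast)$.
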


\begin{proof}
For this proof, we will view $C(G\rtimes\Lambda)=C(\Lambda,C(G))$ and $C(G\rtimes\Lambda)\ot C(G\rtimes\Lambda)=C(\Lambda\times\Lambda,C(G)\ot C(G))$. With this identification, the comultiplication becomes $\Delta(u)(r,s)=(\id\ot\alpha_r)(\Delta_G(u(rs)))$ for all $u\in C(G\rtimes\Lambda)$ and all $r,s\in\Lambda$ and it is then easy to check that the unitary of the form given in the Lemma are indeed one-dimensional unitary representations of $G\rtimes\Lambda$. Conversely, if $u\in C(G\rtimes\Lambda)$ is a unitary such that $\Delta(u)=u\ot u$, then $u(r)\in C(G)$ is a unitary for all $r\in\Lambda$ and, for all $r,s\in\Lambda$, $(\id\ot\alpha_r)(\Delta_G(u(rs))=u(r)\ot u(s)$. It follows that $w:=u(1)$ is a unitary in $C(G)$ such that $\Delta(w)=w\ot w$. Moreover, since $\alpha_r$ intertwines the comultiplication of $G$ one has $\varepsilon_G\circ\alpha_r=\varepsilon_G$, where $\varepsilon_G$ is the counit of $G$, and,
$$(\id\ot\varepsilon_G\circ\alpha_r)(\Delta_G(u(rs)))=(\id\ot\varepsilon_G)(\Delta_G(u(rs)))=u(rs)=u(r)\varepsilon_G(u(s)).$$
It follows that $v:=(r\mapsto v_r)$, where $v_r:=\varepsilon_G(u_r)$, is a one-dimensional unitary representation of $\Lambda$ and, for all $s\in\Lambda$, $u(s)=wv_s$ i.e. $u=w\ot v\in C(G)\ot C(\Lambda)=C(G\rtimes\Lambda)$. Using that $\Delta(u)=u\ot u$, one checks easily that $\alpha_r(w)=w$ for all $r\in\Lambda$.
\end{proof}

\noindent We collect in the following Proposition some easy observations about $G\rtimes\Lambda$ that are not contained in \cite{Wa19}. We use the terminology introduced in the Introduction before the statement of Theorem \ref{THMA} and we denote by $\Lambda_{cb}(G)$ the Cowling-Haagerup constant of the von Neumann algebra $\Linf(G)$.

\begin{proposition}\label{Prop-Semidirect}
The following holds.
\begin{enumerate}
    \item $G\rtimes\Lambda$ is co-amenable if and only if $G$ is co-amenable.
    \item $\widehat{G\rtimes\Lambda}$ has the Haagerup property if and only if $\Ghat$ has the Haagerup property.
    \item $\Lambda_{cb}(\widehat{G\rtimes\Lambda})=\Lambda_{cb}(\Ghat)$.
    \item The scaling group $\tau_t$ of $G\rtimes\Lambda$ is the one parameter group of $\Linf(G\rtimes\Lambda)=\Linf(G)\otimes C(\Lambda)$ defined by $\tau_t=\tau_t^G\ot\id$, where $\tau_t^G$ is the scaling group of $G$.
    \item The Vaes' $T$-invariant $T(G\rtimes\Lambda)$ is:
    $$\{t\in\R:\exists w\in\mathcal{U}(C(G)),\tau_t^G={\rm Ad}(w),\Delta_G(w)=w\ot w,\,\alpha_r(w)=w\,\forall r\in\Lambda\}.$$
    \item $Sd(G\rtimes\Lambda)=Sd(G)$ and $\tau(G\rtimes\Lambda)=\tau(G)$.
\end{enumerate}
\end{proposition}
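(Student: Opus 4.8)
The plan is to exploit the explicit description recalled above, namely $\Linf(G\rtimes\Lambda)=\Linf(G)\ot C(\Lambda)$ with Haar state $h=h_G\ot\tr$, modular group $\sigma_t=\sigma_t^G\ot\id$ and canonical surjection $\lambda=\lambda_G\ot\id$, together with the fact that $C(\Lambda)$ is finite dimensional. Items $(1)$, $(2)$ and $(3)$ then become ``stability under tensoring by a finite dimensional algebra'' statements. For $(1)$, since $C(\Lambda)$ is finite dimensional (hence nuclear), the kernel of $\lambda=\lambda_G\ot\id$ is $\ker(\lambda_G)\ot C(\Lambda)$, so $\lambda$ is injective if and only if $\lambda_G$ is, which is exactly the equivalence of co-amenability of $G\rtimes\Lambda$ and of $G$. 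For $(2)$ and $(3)$, one inclusion uses the state-preserving embedding $\Linf(G)\hookrightarrow\Linf(G)\ot C(\Lambda)$, $x\mapsto x\ot 1$, which carries the $h$-preserving normal conditional expectation $\id\ot\tr$; both the Haagerup property and the value of the Cowling--Haagerup constant pass to subalgebras with a state-preserving expectation. For the converse, if $(\Phi_n)$ witnesses the Haagerup property (resp. the weak amenability) of $\Linf(G)$, then $(\Phi_n\ot\id_{C(\Lambda)})$ witnesses it for the tensor product, because $\id_{C(\Lambda)}$ is compact on the finite dimensional Hilbert space $\rL^2(C(\Lambda),\tr)$ and $\Lambda_{cb}(C(\Lambda))=1$. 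This gives $\Lambda_{cb}(G\rtimes\Lambda)=\Lambda_{cb}(G)$ and the equivalence of the Haagerup properties.

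For $(4)$ I would use the characterization of the scaling group as the unique one-parameter group $(\tau_t)_t$ of $\Linf(G\rtimes\Lambda)$ with $\Delta\circ\sigma_t=(\tau_t\ot\sigma_t)\circ\Delta$, and verify this identity for the candidate $\tau_t:=\tau_t^G\ot\id$. The computation is performed on elements $a\ot\delta_r$ using the explicit formula for $\Delta$ recalled above; the only nontrivial ingredient is that each quantum automorphism $\alpha_s$ commutes with both $\sigma_t^G$ and $\tau_t^G$. Commutation with $\sigma_t^G$ holds because $\alpha_s$, intertwining $\Delta_G$, preserves the unique Haar state and hence its modular group. Commutation with $\tau_t^G$ then follows from the same uniqueness property applied to $G$: the one-parameter group $\alpha_s\tau_t^G\alpha_s^{-1}$ satisfies $\Delta_G\circ\sigma_t^G=(\alpha_s\tau_t^G\alpha_s^{-1}\ot\sigma_t^G)\circ\Delta_G$, using $\alpha_s\sigma_t^G\alpha_s^{-1}=\sigma_t^G$ and that $\alpha_s^{\pm1}$ both intertwine $\Delta_G$, so by uniqueness it equals $\tau_t^G$. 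Granting this, the Sweedler-type computation shows $\Delta\circ\sigma_t=(\tau_t\ot\sigma_t)\circ\Delta$ for $\tau_t=\tau_t^G\ot\id$, identifying the scaling group.

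Item $(5)$ combines $(4)$ with Lemma \ref{Lem1Dim}. A unitary $u\in\mathcal{U}(\Linf(G\rtimes\Lambda))$ with $\Delta(u)=u\ot u$ is a one-dimensional representation, so its single coefficient lies in $\Pol(G\rtimes\Lambda)\subset C(G\rtimes\Lambda)$ and Lemma \ref{Lem1Dim} applies: $u=w\ot v$ with $w,v$ one-dimensional representations of $G,\Lambda$ and $\alpha_r(w)=w$ for all $r$. Since $C(\Lambda)$ is commutative, ${\rm Ad}(u)={\rm Ad}(w)\ot\id$, while $\tau_t=\tau_t^G\ot\id$ by $(4)$; hence $\tau_t={\rm Ad}(u)$ if and only if $\tau_t^G={\rm Ad}(w)$. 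As the trivial character of $\Lambda$ is always available for $v$, the existence of such a $u$ is equivalent to the existence of a one-dimensional representation $w$ of $G$ with $\alpha_r(w)=w$ for all $r$ and $\tau_t^G={\rm Ad}(w)$, which is exactly the announced description of $T(G\rtimes\Lambda)$.

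Finally, for $(6)$ I would compute the modular operator of $h=h_G\ot\tr$. As $C(\Lambda)$ is commutative and $\tr$ is a trace, its modular operator is trivial, so $\nabla_{G\rtimes\Lambda}=\nabla_G\ot 1$ on $\rL^2(G)\ot\rL^2(C(\Lambda),\tr)$; comparing point spectra gives $Sd(G\rtimes\Lambda)={\rm Sp}_p(\nabla_G\ot 1)={\rm Sp}_p(\nabla_G)=Sd(G)$. The equality $\tau(G\rtimes\Lambda)=\tau(G)$ is then immediate from the description of $\tau(\,\cdot\,)$ as the smallest topology on $\R$ making the maps $f_\lambda:t\mapsto\lambda^{it}$ continuous for $\lambda$ ranging over the $Sd$-set, since the two $Sd$-sets coincide. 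The hard part of the whole argument is the commutation of the automorphisms $\alpha_s$ with the scaling group used in $(4)$; every other step is a routine consequence of the tensor decomposition and of the finite dimensionality of $C(\Lambda)$.
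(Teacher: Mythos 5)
Your proof is correct and follows essentially the same route as the paper: all six items are derived from the tensor decompositions $\lambda=\lambda_G\ot\id$, $\Linf(G\rtimes\Lambda)=\Linf(G)\ot C(\Lambda)$, $h=h_G\ot\tr$, $\sigma_t=\sigma_t^G\ot\id$, with $(4)$ verified via the characterizing identity $\Delta\circ\sigma_t=(\tau_t\ot\sigma_t)\circ\Delta$, $(5)$ deduced from $(4)$ together with Lemma \ref{Lem1Dim}, and $(6)$ from $\nabla=\nabla_G\ot\id$. You merely fill in details the paper labels as easy to check; note that in $(4)$ only the commutation $\alpha_s\sigma_t^G=\sigma_t^G\alpha_s$ (from Haar-state preservation) is actually needed, the additional commutation with $\tau_t^G$ being correct but superfluous.
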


\begin{proof}
$(1)$ directly follows from the fact that $\lambda=\lambda_G\ot\id$ and $(2)$ and $(3)$ follows from ${\rm L}^\infty(G\rtimes\Lambda)\simeq{\rm L}^\infty(G)\ot\C^K$, where $K=\vert\Lambda\vert$. To prove $(4)$, one can easily check that the one parameter group $\tau_t:=\tau_t^G\ot\id$ satisfies $\Delta\circ\sigma_t=(\tau_t\ot\sigma_t)\circ\Delta$. To prove $(6)$, we note that since $h=h_G\ot{\rm tr}$, the modular operator $\nabla$ of $h$, is the positive operator on ${\rm L}^2(G)\ot l^2(\Lambda)$ given by $\nabla_G\ot\id$, the equality $Sd(G\rtimes\Lambda)=Sd(G)$ follows, while the equality $\tau(G\rtimes\Lambda)=\tau(G)$ is a direct consequence of $\sigma_t=\sigma_t^G\ot\id$. Finally, $(5)$ is a consequence of $(4)$ and Lemma \ref{Lem1Dim}.\end{proof}

\subsection{Quantum permutation group}\label{SectionQperm}

For $N\in\N^*$, we denote by $S_N^+$ the quantum permutation group on $N$ points. We recall that $C(S_N^+)$ is the universal unital C*-algebra generated by $N^2$ orthogonal projections $u_{ij}$, $1\leq i,j\leq N$ with the relations $\sum_{j=1}^Nu_{ij}=1=\sum_{j=1}^Nu_{ji}$ for all $1\leq i\leq N$. In particular $u=(u_{ij})_{ij}\in M_N(\C)\ot C(S_N^+)$ is a unitary. The comultiplication on $C(S_N^+)$ is defined, using the universal property of $C(S_N^+)$, by the relation $\Delta(u_{ij})=\sum_{k=1}^N u_{ik}\ot u_{kj}$ for all $1\leq i,j\leq N$. In particular, $u$ is a unitary representation of $S_N^+$, called the \textit{fundamental representation}. For $1\leq i\leq N$ we write $L_i:={\rm Span}\{u_{ij}\,:\,1\leq j\leq N\}\subset\Pol(\SN)$. Since the family $(u_{ij})_{1\leq j\leq N}$ is a partition of unity, the vector subspace $L_i$ is actually a unital $*$-subalgebra of $\Pol(\SN)$ and the map $\C^N\rightarrow L_i$, $e_j\mapsto u_{ij}$ is a unital $*$-isomorphism of $*$-algebras. Since $L_i$ is finite dimensional, we may view $L_i\subset C(\SN)$ or $L_i\subset C_r(\SN)$ as an abelian finite dimensional C*-subalgebra and also $L_i\subset\Linf(\SN)$ as an abelian finite dimensional von Neumann subalgebra. We use the same symbol $h$ to denote the Haar state of $\SN$ on $C(\SN)$, $C_r(\SN)$ or $\Linf(\SN)$. We also recall that $h(u_{ij})=\frac{1}{N}$ for all $1\leq i,j\leq N$, where $h$ is the Haar state on $C(S_N^+)$.

\vspace{0.2cm}

\noindent The elementary proof of the next Proposition is left to the reader.

\begin{proposition}\label{lemCE}
Let $1\leq i\leq N$. The following holds
\begin{enumerate}
    \item The unique trace preserving conditional expectation $E_i\,:\,\Linf(\SN)\rightarrow L_i$ satisfies :
    $$E_i(x)=N\sum_{j=1}^Nh(xu_{ij})u_{ij}\quad\text{for all }x\in\Linf(\SN).$$
    \item The map $x\mapsto N\sum_{j=1}^Nh(xu_{ij})u_{ij}$ is a conditional expectation from $C(\SN)$ (resp. $C_r(\SN)$) onto $L_i$. All these maps will be denoted by $E_i$.
    \item The conditional expectation $E_i\,:\, C_r(\SN)\rightarrow L_i$ is faithful.
\end{enumerate}
\end{proposition}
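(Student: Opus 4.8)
The plan is to verify that the explicit formula defines the unique trace-preserving conditional expectation and then to read off faithfulness directly from faithfulness of the Haar state. Throughout I use that $\SN$ is of Kac type, so that $h$ is tracial, together with the structure already recorded above: $L_i\cong\C^N$, with the projections $u_{ij}$ ($1\le j\le N$) forming a partition of unity into mutually orthogonal minimal projections, each of trace $h(u_{ij})=\tfrac1N$.

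First I would record the elementary identities satisfied by $E_i(x):=N\sum_{j=1}^N h(xu_{ij})u_{ij}$, working simultaneously on $C(\SN)$, $C_r(\SN)$ and $\Linf(\SN)$, since the formula only involves the state $h$ and multiplication and its range $L_i$ is a common finite-dimensional $*$-subalgebra of all three. Unitality is immediate from $h(u_{ij})=\tfrac1N$ and $\sum_j u_{ij}=1$. That $E_i$ is the identity on $L_i$ follows from $u_{ik}u_{ij}=\delta_{kj}u_{ik}$, which gives $E_i(u_{ik})=N\,h(u_{ik})u_{ik}=u_{ik}$; hence $E_i$ is a linear idempotent onto $L_i$. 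Trace-preservation is the computation $h(E_i(x))=\sum_j h(xu_{ij})=h\big(x\sum_j u_{ij}\big)=h(x)$.

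The only point requiring a small trick is positivity, and this is where traciality enters. For $a$ in any of the three algebras, $h(a^*a\,u_{ij})=h(a^*a\,u_{ij}^2)=h(u_{ij}a^*a\,u_{ij})=h\big((au_{ij})^*(au_{ij})\big)\ge 0$, using $u_{ij}^2=u_{ij}$ and the trace property $h(xy)=h(yx)$. Thus $E_i(a^*a)=N\sum_j h(a^*a\,u_{ij})u_{ij}$ is a positive element of $L_i\cong\C^N$, so $E_i$ is a unital positive idempotent onto the $*$-subalgebra $L_i$; being unital and positive it is contractive, hence a norm-one projection, and by Tomiyama's theorem it is automatically completely positive and $L_i$-bimodular, i.e.\ a conditional expectation. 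This proves (2) at the $C^*$-level, and the same formula read on $\Linf(\SN)$ yields a trace-preserving normal conditional expectation $\Linf(\SN)\to L_i$. Since the trace-preserving conditional expectation onto a von Neumann subalgebra of a finite (tracial) von Neumann algebra is unique, this settles (1).

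Finally, for (3) I would argue faithfulness directly. As the $u_{ij}$ are linearly independent, $E_i(a^*a)=0$ forces $h(a^*a\,u_{ij})=0$ for every $j$; summing over $j$ and using $\sum_j u_{ij}=1$ gives $h(a^*a)=0$, whence $a=0$ because $h$ is faithful on $C_r(\SN)$, as recalled in the preliminaries. The main (and only mild) obstacle is the positivity step, for which the combination of idempotency of the $u_{ij}$ with the trace property of $h$ is precisely what is needed; everything else is bookkeeping.
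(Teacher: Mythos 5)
Your proof is correct. The paper itself gives no argument here (it explicitly leaves the proof to the reader as elementary), and your proposal supplies exactly the intended reasoning: traciality of $h$ (since $\SN$ is Kac) yields positivity of the formula via $h(a^*a\,u_{ij})=h\bigl((au_{ij})^*(au_{ij})\bigr)\ge 0$, Russo--Dye/Tomiyama upgrades the unital positive idempotent onto $L_i$ to a genuine conditional expectation, uniqueness of the trace-preserving expectation settles the von Neumann statement, and faithfulness of $h$ on $C_r(\SN)$ gives faithfulness of $E_i$ there.
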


\noindent We collect below some elementary computations concerning the conditional expectation onto $L_i$.

\begin{lemma}\label{lemortho2}
Let $a\in C(\SN)$ and $1\leq i\leq N$ be such that $E_i(a)= 0$. Then,
\begin{enumerate}
    \item For any $1\leq s,j \leq N$, we have $(h\ot \id)(\Delta(a) (u_{is}\ot u_{sj}))=0$.
    \item For all $1\leq s\leq N$, we have $(h\ot\id)\left(\Delta(a) (u_{is}\ot 1)\right) = 0$.
\end{enumerate}
\end{lemma}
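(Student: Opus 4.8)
The plan is to derive both statements from a single invariance identity: apply the left-invariant slice $(h\ot\id)\circ\Delta$ to the products $au_{ij}$, and then separate the resulting sum into its individual summands using the orthogonality of the magic-unitary entries lying in a fixed column. First I would unpack the hypothesis. By Proposition~\ref{lemCE} one has $E_i(a)=N\sum_{j}h(au_{ij})u_{ij}$, and since the entries $\{u_{ij}:1\le j\le N\}$ are linearly independent (the map $\C^N\to L_i$, $e_j\mapsto u_{ij}$, being a $*$-isomorphism onto $L_i$), the condition $E_i(a)=0$ is equivalent to $h(au_{ij})=0$ for every $j$.

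For the key computation, I would use $\Delta(u_{ij})=\sum_s u_{is}\ot u_{sj}$ together with the multiplicativity of $\Delta$ to write $\Delta(au_{ij})=\Delta(a)\Delta(u_{ij})=\sum_s\Delta(a)(u_{is}\ot u_{sj})$. Applying $h\ot\id$ and invoking the left invariance of the Haar state, the left-hand side collapses to $(h\ot\id)(\Delta(au_{ij}))=h(au_{ij})1=0$. Hence $\sum_s(h\ot\id)(\Delta(a)(u_{is}\ot u_{sj}))=0$ for each fixed $j$. Setting $Y_s:=(h\ot\id)(\Delta(a)(u_{is}\ot 1))=\sum_{(1),(2)}h((a)_{(1)}u_{is})(a)_{(2)}$ and noting that the scalars $h((a)_{(1)}u_{is})$ let us factor the fixed element $u_{sj}$ out on the right, each summand equals $(h\ot\id)(\Delta(a)(u_{is}\ot u_{sj}))=Y_su_{sj}$, so the identity becomes $\sum_s Y_su_{sj}=0$.

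The one genuinely nontrivial step, which I expect to be the main obstacle, is peeling off a single summand from $\sum_s Y_su_{sj}=0$. Here I would use that $\sum_s u_{sj}=1$ is a partition of unity by projections, so the entries in column $j$ are mutually orthogonal, $u_{sj}u_{s'j}=\delta_{s,s'}u_{sj}$. Right-multiplying $\sum_s Y_su_{sj}=0$ by $u_{s'j}$ collapses the sum to $Y_{s'}u_{s'j}=0$; since $Y_{s'}u_{s'j}=(h\ot\id)(\Delta(a)(u_{is'}\ot u_{s'j}))$, this is precisely statement~(1). Finally, summing the relations $Y_su_{sj}=0$ over $j$ and using the row identity $\sum_j u_{sj}=1$ gives $Y_s=\sum_j Y_su_{sj}=0$, which is statement~(2). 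Beyond correctly invoking the column orthogonality to isolate one term, the argument is routine.
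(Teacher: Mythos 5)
Your proof is correct and takes essentially the same route as the paper's: both arguments rest on applying Haar invariance to $au_{ij}$, expanding $\Delta(au_{ij})=\sum_s\Delta(a)(u_{is}\ot u_{sj})$, and using the mutual orthogonality of the column entries $u_{sj}$ of the magic unitary to isolate a single summand, with the only (cosmetic) difference being that the paper collapses the sum by multiplying by $1\ot u_{sj}$ inside the coproduct before slicing with $h\ot\omega$, whereas you slice with $h\ot\id$ first and then kill the cross terms by right multiplication with $u_{s'j}$. Part (2) is obtained in both cases by summing over $j$ and using $\sum_j u_{sj}=1$.
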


\begin{proof}
$(1).$ We show that $\forall\omega \in C(\SN)^*$, $1\leq s,j \leq N$, $(h\ot \omega) (\Delta(a) (u_{is}\ot u_{sj})) = 0$. Let $\omega, s,j$ be as above, and define $\mu = \omega(\,\cdot\, u_{sj}) \in C(\SN)^*$.
Using the Sweedler notation,
$$\Delta(a u_{ij}) \left( 1\ot u_{sj}\right) =  \sum_{t=1}^N a_{(1)} u_{it} \ot a_{(2)} u_{tj}u_{sj}=  a_{(1)} u_{is} \ot a_{(2)} u_{sj}=  \Delta(a) \left( u_{is} \ot u_{sj} \right).$$

\noindent Applying $(h\ot \omega)$, we get
$$
(h\ot\omega) \left(\Delta(a)(u_{is}\ot u_{sj})\right)=  (h\ot \omega)\left(\Delta(au_{ij})(1\ot u_{sj})\right) 
 =  (h\ot \mu) (\Delta (au_{ij}))
=  \mu(1) h(au_{ij}) =0,
$$
where we used the invariance of the Haar state $h$ and the fact that $h(au_{ij})=0$ for all $1\leq j\leq N$ since $E_i(a)=0$ and by definition of $E_i$.

\vspace{0.2cm}

\noindent$(2).$ It suffices to sum the relation $(1)$ for $1\leq j \leq N$ and use $\sum_{j=1}^N u_{sj}=1$ $\forall s$.\end{proof}

\noindent We will use the following Lemma, which is an easy consequence of the factoriality of $\Linf(S_N^+)$ when $N\geq 8$ \cite{Br13}, but we will need the next result for all $N\geq 4$.

\begin{lemma}\label{LemmaCommutant}
For all $N\geq 4$ and all $1\leq k\leq N$ one has $\Linf(S_N^+)'\cap L_k=\C1$.
\end{lemma}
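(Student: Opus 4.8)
The plan is to identify $\Linf(\SN)'\cap L_k$ with $Z\cap L_k$, where $Z=Z(\Linf(\SN))$ is the centre: since $L_k\subseteq\Linf(\SN)$, an element $x\in L_k$ lies in $\Linf(\SN)'$ precisely when it is central. Now $Z\cap L_k$ is a unital $*$-subalgebra of the abelian finite-dimensional algebra $L_k\cong\C^N$ (recall that $\C^N\to L_k$, $e_j\mapsto u_{kj}$, is a $*$-isomorphism), so it corresponds to a partition of $\{1,\dots,N\}$, its minimal projections being sums of the $u_{kj}$. The goal is to show that this partition has a single block, which forces $Z\cap L_k=\C1$.

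First I would exploit the translation automorphisms coming from the classical points of $\SN$. For $\sigma\in S_N\subseteq\SN$ let $\chi_\sigma\colon C(\SN)\to\C$ be the character with $\chi_\sigma(u_{ij})=\delta_{j,\sigma(i)}$, and set $R_\tau=(\id\ot\chi_\tau)\circ\Delta$ and $L_\sigma=(\chi_\sigma\ot\id)\circ\Delta$. A direct computation gives $R_\tau(u_{ij})=u_{i\tau^{-1}(j)}$ and $L_\sigma(u_{ij})=u_{\sigma(i)j}$, and since $h\circ R_\tau=h=h\circ L_\sigma$ by invariance of the Haar state, these extend to Haar-preserving normal $*$-automorphisms of $\Linf(\SN)$; in particular they preserve $Z$. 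Moreover $R_\tau(L_k)=L_k$ while $L_\sigma(L_k)=L_{\sigma(k)}$. Consequently $Z\cap L_k$ is invariant under every $R_\tau$, and the family $(R_\tau)_{\tau\in S_N}$ acts on the minimal projections $\{u_{kj}\}$ as the full symmetric group on $\{1,\dots,N\}$. Since the only partitions of $\{1,\dots,N\}$ invariant under all of $S_N$ are the one-block partition and the all-singletons partition, I conclude that $Z\cap L_k$ is either $\C1$ or the whole of $L_k$.

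It then remains to rule out $Z\cap L_k=L_k$. If $L_k\subseteq Z$, then applying the automorphisms $L_\sigma$ and using $L_\sigma(L_k)=L_{\sigma(k)}$ together with $L_\sigma(Z)=Z$ yields $L_i\subseteq Z$ for every $1\leq i\leq N$; hence all generators $u_{ij}$ are central and $\Linf(\SN)$ is commutative. This is impossible for $N\geq4$, since $\Pol(\SN)$ is noncommutative (Wang) and embeds into $C_r(\SN)\subseteq\Linf(\SN)$ because the Haar state is faithful on $\Pol(\SN)$. Therefore $Z\cap L_k=\C1$. The decisive ingredient, and the reason the statement requires $N\geq4$, is precisely this noncommutativity of $\Linf(\SN)$: for $N\leq3$ one has $\SN=S_N$, the algebra $\Linf(\SN)$ is abelian, and indeed $\Linf(\SN)'\cap L_k=L_k$. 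The only points demanding care are verifying that the translation maps are genuine Haar-preserving automorphisms respecting both $L_k$ and the centre, and the elementary combinatorial fact about $S_N$-invariant partitions; I expect the reduction to noncommutativity of $\Linf(\SN)$ to be the conceptual crux rather than any explicit computation.
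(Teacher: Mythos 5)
Your proof is correct, but it takes a genuinely different route at the decisive step. Both arguments exploit the row and column permutation automorphisms of $\Pol(\SN)$: you build them as $(\chi_\sigma\ot\id)\circ\Delta$ and $(\id\ot\chi_\tau)\circ\Delta$ from the classical subgroup $S_N\subseteq\SN$, while the paper defines the same maps $R_\sigma,C_\sigma$ via the universal property of $\Pol(\SN)$. The paper, however, uses them only to reduce to $k=1$ and to normalize a hypothetical nontrivial commuting projection $p=\sum_{j\in I}u_{1j}$ into $q=u_{11}+u_{13}+\dots+u_{1,\vert I\vert+1}$, and then kills $q$ by an explicit representation: a magic unitary built from two non-commuting projections $P,Q$, under which $q\mapsto P$ and $u_{33}\mapsto Q$, so $q$ cannot commute with $u_{33}$. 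You instead run a soft symmetry argument: the column automorphisms act on $L_k\cong\C^N$ as the full symmetric group, so $Z\cap L_k$ (with $Z$ the centre of $\Linf(\SN)$) corresponds to an $S_N$-invariant partition and is therefore $\C1$ or $L_k$; the row automorphisms then propagate $L_k\subseteq Z$ to every row, forcing $\Linf(\SN)$ to be abelian, which contradicts Wang's noncommutativity of $\Pol(\SN)$ for $N\geq4$ (inherited by $\Linf(\SN)$ via faithfulness of the Haar state on $\Pol(\SN)$). What each approach buys: yours is more conceptual and modular, deriving the lemma formally from noncommutativity plus symmetry with no explicit model; the paper's is self-contained, since the concrete representation it constructs is essentially the standard proof of the very noncommutativity you cite, and it moreover exhibits a concrete witness ($u_{33}$) failing to commute with any given nontrivial projection of $L_1$. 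Both proofs correctly locate where the hypothesis $N\geq 4$ enters.
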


\begin{proof}
$\Pol(S_N^+)$ being weakly dense in $\Linf(S_N^+)$ one has $\Linf(S_N^+)'\cap L_k=\Pol(S_N^+)'\cap L_k$. By the universal property of $\Pol(S_N^+)$, for all $\sigma\in S_N$, there exist unique unital $*$-homomorphisms $R_\sigma,C_\sigma\,:\,\Pol(S_N^+)\rightarrow \Pol(S_N^+)$ such that $R_\sigma(u_{ij})=u_{\sigma(i)j}$ and $C_\sigma(u_{ij})=u_{i\sigma(j)}$, for all $1\leq i,j\leq N$. Note that $R_\sigma$ and $C_\sigma$ are $*$-isomorphisms since $R_{\sigma^{-1}}R_\sigma=R_\sigma R_{\sigma^{-1}}=\id$ and $C_{\sigma^{-1}}C_\sigma=C_\sigma C_{\sigma^{-1}}=\id$. Denoting by $(1,k)\in S_N$ the transposition of $1$ and $k$, one has $R_{(1,k)}(L_k)=L_1$ hence, it suffices to show that $\Pol(S_N^+)'\cap L_1=\C1$.

\noindent Fix a Hilbert space $H$ with two non-commuting orthogonal projections  $P,Q\in\mathcal{B}(H)$ and let us denote by $A\in M_4(\mathcal{B}(H))$ the matrix $$A:=\left(\begin{array}{cccc}P&1-P&0&0\\1-P&P&0&0\\0&0&Q&1-Q\\0&0&1-Q&Q\end{array}\right),$$
and by $B\in M_N(\mathcal{B}(H))$ the block matrix $B=\left(\begin{array}{cc}A&0\\0&I\end{array}\right)$, where $I$ is the identity matrix. Note that $B$ is a magic unitary. Hence, writing $B=(b_{ij})$, there exists a unique unital $*$-homomorphism $\pi\,:\,\Pol(S_N^+)\rightarrow\mathcal{B}(H)$ such that $\pi(u_{ij})=b_{ij}$ for all $1\leq i,j\leq N$.

\vspace{0.2cm}

\noindent To show that $\Pol(S_N^+)'\cap L_1=\C1$, it suffices to show that the only orthogonal projections in $L_1$ that commutes with $\Pol(S_N^+)$ are $0$ and $1$. A projection $p\in L_1\setminus\{0,1\}$ is of the form $p=\sum_{j\in I}u_{1j}$ for $I\subset\{1,\dots,N\}$  with $1\leq \vert I\vert\leq N-1$. Let $\sigma\in S_N$ be a permutation such that $\sigma(I)=\{1,\dots,\vert I\vert+1\}\setminus\{2\}$ so that $q:=C_\sigma(p)=u_{11}+u_{13}+u_{14}+\dots+u_{1,\vert I\vert+1}$. It suffices to show that $q$ does not commute with $u_{33}$ and this follows from $\pi(q)=P$ and $\pi(u_{33})=Q$.
\end{proof}

\subsection{Free wreath products}\label{Sectionfwp}

For a compact quantum group $G$ and an integer $N$, the \emph{free wreath product} of $G$ by $\SN$, as defined by Bichon in \cite{Bi04}, is the CQG $\wreath$ with
$$C(\wreath)=C(G)^{*N}*C(S_N^+)/I,$$
where we consider the full free product and $I$ is the two-sided closed ideal generated by:
\begin{equation}\label{idealI}
    \{\nu_i(a)u_{ij}-u_{ij}\nu_i(a)\,:\,a\in C(G),\,1\leq i,j\leq N\}
\end{equation}
and $\nu_i\,:\,C(G)\rightarrow C(G)^{*N}\subset C(G)^{* N}*C(S_N^+)$ is the unital $*$-homomorphism on the $i^{th}$-copy of $C(G)$ in $C(G)^{*N}$, $u_{ij}\in C(S_N^+)$ are the coefficients of the fundamental representation. If $G$ has a \textit{dual quantum subgroup} $H$ i.e. $C(H)\subset C(G)$,
then we define, following \cite{Fr22}, the \emph{free wreath product with amalgamation} $\wreathH$. This is the CQG with $C(\wreathH) = C(G)^{*_H N}*C(S_N^+)/I$, where the full free product is taken amalgamated over $C(H)$ and the ideal $I$ is the same as in (\ref{idealI}). It is easy to check (using both universal properties) that $C(\wreathH)=C(\wreath)/J$, where $J$ is the closed two-sided ideal generated by $\{\nu_i(a)-\nu_j(a)\,:\,a\in C(H),\,1\leq i,j\leq N\}$.  Note that $G$ always admits the trivial group $\{e\}$ as a dual quantum subgroup, and we have, for $H= \{e\}$, $C(\wreathH) \simeq C(\wreath)$.

\begin{remark}\label{RemPi}
The surjective $*$-homomorphism $a\mapsto a+I\,:\, C(G)^{* N}* C(S_N^+)\rightarrow C(\wreath)$ is injective when restricted to $C(G)^{* N}$ as well as to $C(S_N^+)$. By the universal property, there exists a unique unital $*$-homomorphism $\pi\,:\,C(\wreath)\rightarrow C(G)^{* N} \otimes C(S_N^+)$ such that $\pi(x+I)=x\ot 1$ if $x\in C(G)^{*N}$ and $\pi(x+I)=1\ot x$ if $x\in C(S_N^+)$. The composition of $x\mapsto x+I$ and $\pi$ is the map sending $C(G)^{*N}$ and $C(S_N^+)$ to their respective copies in $C(G)^{* N} \otimes C(S_N^+)$, which is injective. The same holds for $C(G)^{*_H N}$ and $C(\SN)$ in the amalgamated case.
\end{remark}

\noindent Following the previous Remark, we will always view $C(G)^{*_H N},\, C(S_N^+)\subset C(\wreathH)$.

\vspace{0.2cm}

\noindent We endow the unital $C^*$-algebra $C(\wreathH)$ with the unique unital $*$-homomorphism $\Delta\,:\,C(\wreathH)\rightarrow C(\wreathH)\ot C(\wreathH)$ satisfying:
$$\Delta(\nu_i(a))=\sum_{j=1}^N(\nu_i\ot\nu_j)(\Delta_G(a))(u_{ij}\ot 1)\text{ and }\Delta(u_{ij})=\sum_{k=1}^N u_{ik}\ot u_{kj}.$$

\begin{remark}\label{RmkSubgroup}
\begin{enumerate}
    \item Both $G^{*_H N}$ and $S_N^+$ are compact quantum subgroups of $\wreathH$ via the maps $(\id\ot\varepsilon_{S_N^+})\circ\pi\,:\,C(\wreathH)\rightarrow C(G)^{*_H N}$ and $(\varepsilon_{G^{* N}}\ot\id)\circ\pi\,:\, C(\wreathH)\rightarrow C(S_N^+)$ (which obviously intertwine the comultiplications), where $\pi$ is the map defined in Remark \ref{RemPi}.
    \item Let $\nu\,:\,C(H)\rightarrow C(\wreathH)$ be the common restriction of the maps $\nu_i$ on $C(H)\subset C(G)$, $1\leq i\leq N$. Then, $\nu$ is faithful and, since $\sum_ju_{ij}=1$ we see that $\nu$ intertwines the comultiplications. Hence, $H$ is a dual compact subgroup of $\wreathH$.
    \end{enumerate}
    \end{remark}

\noindent Let us now describe another specific compact quantum subgroup of $\wreathH$. The only mentions of this subgroup that we could find in the literature are in \cite[Proposition 2.6]{Bi04}, when $N=2$ and $G=\widehat{\Gamma}$ is the dual of a discrete group, and in Gromada's recent work \cite{Gr23} where it appears under the name \textit{wreath product}, in the non-amalgamated case. We would like to thank the referee for showing us this reference.

\vspace{0.2cm}

\noindent Fix $\sigma\in S_N$. By the universal property of $C(G)^{*_H N}$, there exists a unique unital $*$-homomorphism $\alpha_\sigma\,:\, C(G)^{*_H N}\rightarrow C(G)^{*_H N}$ such that $\alpha_\sigma\circ\nu_i=\nu_{\sigma(i)}$ for all $1\leq i\leq N$. Since we clearly have $\alpha_\sigma\alpha_\tau=\alpha_{\sigma\tau}$, for all $\sigma,\tau\in S_N$, $\alpha$ is an action of $S_N$ on the C*-algebra $C(G)^{*_H N}$ by unital $*$-isomorphisms. Moreover, it is easy to see that, for all $\sigma\in\Sigma$, $(\alpha_\sigma\ot\alpha_\sigma)\circ\Delta_{G^{*N}}=\Delta_{G^{*N}}\circ\alpha_\sigma$, where $\Delta_{G^{*N}}$ is the comultiplication on $C(G)^{*_H N}$. Note that this action by automorphisms of $G^{*_H N}$ is actually the restriction of the action of $S_N^+$ on the compact quantum group $G^{*_H N}$ to the compact quantum subgroup $S_N$ which was described in \cite[Proposition 2.1]{Bi04}. Let us now consider the semi-direct product quantum group $G^{*_H N}\rtimes S_N$ associated to this action, as described in Section \ref{semi-direct}. We show below that the quantum group $G^{*_H N}\rtimes S_N$ is actually a compact quantum subgroup of $\wreathH$.

\begin{proposition}\label{Prop-Subgroup}
Given $N\in\N^*$, there exists a unique unital $*$-homomorphism $$\pi\,:\,C(\wreathH)\rightarrow C(G^{*_H N}\rtimes S_N)=C(G)^{*_H N}\otimes C(S_N)\text{ s.t. }\left\{\begin{array}{lcl}\pi(\nu_i(a))&=&\nu_i(a)\otimes 1\\ \pi(u_{ij})&=&1\ot\chi_{ij}\end{array}\right.$$
where $\chi_{ij}\in C(S_N)$ is the characteristic function of $A_{i,j}:=\{\sigma\in S_N\,:\,\sigma(j)=i\}$. Moreover,
\begin{enumerate}
    \item $\pi$ is surjective and intertwines the comultiplications.
    \item $\pi$ is an isomorphism if and only if $N\in\{1,2\}$ or if $N=3$ and the inclusion $C(H)\hookrightarrow C(G)$ is an isomorphism (a special case being when $G$ is the trivial group).
\end{enumerate}
\end{proposition}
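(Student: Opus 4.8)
The plan is to build $\pi$ from the universal property defining $C(\wreathH)$, establish surjectivity and the coproduct intertwining by computations on generators, and then treat injectivity case by case, the only genuinely delicate point being the case $N=3$ with $C(H)\subsetneq C(G)$.

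\emph{Construction of $\pi$.} First note that $(\chi_{ij})_{ij}$ is a magic unitary in $C(S_N)$: each $\chi_{ij}$ is a projection, and since every $\sigma\in S_N$ satisfies $\sigma(j)=i$ for exactly one $j$ (resp. exactly one $i$), one gets $\sum_j\chi_{ij}=1=\sum_i\chi_{ij}$. The universal property of $C(\SN)$ thus yields a unital $*$-homomorphism $C(\SN)\to C(S_N)$, $u_{ij}\mapsto\chi_{ij}$, namely the canonical surjection dual to $S_N\hookrightarrow\SN$. Combining it with $x\mapsto x\ot 1$ on $C(G)^{*_H N}$ and invoking the universal property of the free product, I get a $*$-homomorphism on $C(G)^{*_H N}*C(\SN)$; since $(\nu_i(a)\ot 1)(1\ot\chi_{ij})=\nu_i(a)\ot\chi_{ij}=(1\ot\chi_{ij})(\nu_i(a)\ot 1)$, it annihilates the ideal $I$ of (\ref{idealI}) and descends to the desired $\pi$. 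Uniqueness is immediate since $\pi$ is prescribed on generators.

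\emph{Part (1).} Surjectivity holds because the image contains $C(G)^{*_H N}\ot 1$, and since $\chi_{\sigma(1)1}\cdots\chi_{\sigma(N)N}=\delta_\sigma$ the elements $\chi_{ij}$ generate all of $C(S_N)$, so $1\ot C(S_N)$ lies in the image too. The intertwining $(\pi\ot\pi)\circ\Delta=\Delta\circ\pi$ is a routine verification on the generators $u_{ij}$ and $\nu_i(a)$: one inserts the explicit coproduct of $\wreathH$ on the left and the semi-direct product coproduct of Section \ref{semi-direct} on the right, and after expanding $\chi_{ij}=\sum_{\sigma(j)=i}\delta_\sigma$ and using the action $\alpha$, both sides collapse to the same sum indexed by $S_N$.

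\emph{Part (2), the isomorphism cases.} For $N=1$ both algebras equal $C(G)$. For $N=2$ we have $C(\SN)=C(S_2)=\C[p]$ with $p=u_{11}=u_{22}$, and the relations $[\nu_1(a),u_{11}]=0=[\nu_2(a),u_{22}]$ force each $\nu_i(a)$ to commute with $p$, hence with all of $C(S_2^+)$; thus $C(G)^{*_H2}$ and $C(S_2^+)$ commute inside $C(\wreathH)$ and the commuting pair of inclusions induces a $*$-homomorphism $C(G)^{*_H2}\ot C(S_2)\to C(\wreathH)$ which is a two-sided inverse of $\pi$. For $N=3$ with $C(H)=C(G)$, the amalgamation collapses $C(G)^{*_GN}$ to $C(G)$ with $\nu_1=\nu_2=\nu_3$, so every $\nu_i(a)$ commutes with all $u_{kl}$ and $C(\wreathH)=C(G)\ot C(S_3^+)=C(G)\ot C(S_3)$ (recall $C(S_3^+)=C(S_3)$), making $\pi$ an isomorphism.

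\emph{Part (2), non-injectivity and the main obstacle.} For $N\geq4$, the restriction of $\pi$ to the embedded copy $C(\SN)\subset C(\wreathH)$ (Remark \ref{RemPi}) is the canonical surjection $C(\SN)\to C(S_N)$, which has nonzero kernel since $C(\SN)$ is infinite dimensional and $C(S_N)$ is not; hence $\pi$ is not injective. The subtle case is $N=3$ with $C(H)\subsetneq C(G)$, where $C(S_3^+)=C(S_3)$ makes this argument unavailable. Here injectivity of $\pi$ would force $[\nu_1(a),u_{kl}]=0$ for \emph{all} $k,l$ (since $\nu_1(a)\ot1$ commutes with every $1\ot\chi_{kl}$), whereas the free wreath product only imposes commutation for matching first index; I therefore aim to show $[\nu_1(a),u_{22}]\neq 0$ for a suitable $a$, which lies in $\ker\pi$ because $\pi([\nu_1(a),u_{22}])=[\nu_1(a)\ot1,1\ot\chi_{22}]=0$. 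To exhibit it I build a representation on $\mathcal H=\bigoplus_{\sigma\in S_3}K$, where $\theta\colon C(G)\to\mathcal B(K)$ is faithful with $\theta(C(H))''=\Linf(H)\subsetneq\Linf(G)=\theta(C(G))''$: let $u_{ij}$ act as the projection onto $\bigoplus_{\sigma(j)=i}K_\sigma$, and define each $\nu_i$ block-diagonally for the decomposition $\{\bigoplus_{\sigma(j)=i}K_\sigma\}_j$ so that $[\nu_i(a),u_{ij}]=0$ holds automatically. On the single block $K_e\oplus K_{(23)}$ of $\nu_1$ I conjugate $\theta\oplus\theta$ by $V=\tfrac{1}{\sqrt2}\left(\begin{smallmatrix}1&w\\-w^*&1\end{smallmatrix}\right)$ with $w$ a unitary in $\theta(C(H))'\setminus\theta(C(G))'$, which exists exactly because $\Linf(H)\neq\Linf(G)$. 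Then $\nu_1\vert_{C(H)}$ stays diagonal (so the amalgamation over $C(H)$ and all partial commutation relations persist), while the off-diagonal entry of $\nu_1(a)$ on this block is $\tfrac12[w,\theta(a)]\neq 0$ for some $a$; as $u_{22}$ restricts to the projection onto $K_e$ there, $[\nu_1(a),u_{22}]\neq0$. The heart of the proof is precisely this representation: verifying that one off-diagonal twist is simultaneously compatible with the amalgamation and with every relation of the form $[\nu_i(a),u_{ij}]=0$, yet detects the missing cross-commutation.
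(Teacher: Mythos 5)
Your proposal is correct, and on most of the statement it follows the paper's route: the construction of $\pi$ via the universal property and the magic unitary $(\chi_{ij})$, the surjectivity, the case $N=2$ (Bichon's observation that $C(S_2^+)$ is commutative, so the line relations force $C(G)^{*_H 2}$ and $C(S_2^+)$ to commute), and the case $N\geq 4$ (the copy of $C(\SN)$ inside $C(\wreathH)$ from Remark \ref{RemPi} is infinite-dimensional, so $u_{ij}\mapsto\chi_{ij}$ kills something) are all as in the paper; the comultiplication-intertwining computation you declare routine is the one the paper writes out in full. Where you genuinely diverge is the delicate case $N=3$ with $C(H)\subsetneq C(G)$. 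The paper settles it by invoking its Haar-state formula (Theorem \ref{propreduced}, proved later, with an explicit disclaimer that no circularity occurs): the reduced word $\nu_1(a)u_{22}\nu_1(b)$ has Haar state $0$, while its image $\nu_1(ab)\ot u_{22}$ has Haar state $h_G(ab)/3\neq 0$, so the surjection $\pi$, intertwining comultiplications, cannot be injective. You instead exhibit an explicit nonzero element of $\ker\pi$, the commutator $[\nu_1(a),u_{22}]$, by constructing a concrete representation of $C(\wreathH)$ on $\bigoplus_{\sigma\in S_3}K$ in which $\nu_1$ is conjugated off-diagonally on the block $K_e\oplus K_{(23)}$ by a unitary $w\in\theta(C(H))'\setminus\theta(C(G))'$; your checks that the twist respects the amalgamation over $C(H)$ and every relation $[\nu_i(a),u_{ij}]=0$, and that the commutator has off-diagonal entry $\tfrac12[w,\theta(a)]\neq 0$, are correct. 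Your argument is elementary and self-contained — it needs neither the Haar state computation nor any representation theory of $\wreathH$ — which removes the forward dependence the paper has to flag; in exchange, the paper's argument is two lines once Theorem \ref{propreduced} is available and shows the stronger fact that $\pi$ does not even preserve Haar states. You also prove the isomorphism case $N=3$, $C(H)=C(G)$ for arbitrary $G$ (collapse of the amalgam plus nuclearity of $C(S_3)$), where the paper only spells out the special case of trivial $G$.

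One small repair: you require $\theta$ to be \emph{faithful} with $\theta(C(H))''=\Linf(H)\subsetneq\Linf(G)=\theta(C(G))''$. For non-co-amenable $G$ these two demands conflict: the GNS representation $\lambda_G$ of the Haar state realizes the commutant condition but is not faithful, and a faithful representation need not generate $\Linf(G)$. Fortunately, faithfulness of $\theta$ is never used in your argument; all you need is \emph{some} representation $\theta$ of $C(G)$ together with a unitary $w\in\theta(C(H))'\setminus\theta(C(G))'$. So take $\theta=\lambda_G$: since $\Irr(H)\subsetneq\Irr(G)$ forces $\Linf(H)\subsetneq\Linf(G)$, the bicommutant theorem gives $\Linf(G)'\subsetneq\Linf(H)'$ and hence a unitary $w$ as required (or take $\theta=\theta_0\oplus\lambda_G$ and $w=1\oplus w'$ if you want $\theta$ faithful). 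With this adjustment the proof is complete.
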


\begin{proof}
The existence of $\pi$ is a direct consequence of the universal property of the C*-algebra $C(\wreathH)$ and the fact that the matrix $(\chi_{ij})_{ij}\in M_N(C(S_N))$ is a magic unitary.

\vspace{0.2cm}

\noindent(1). Since $C(S_N)$ is generated by the $\chi_{ij}$, the surjectivity of $\pi$ is clear. Let us check that $\pi$ intertwines the comultiplications. Recall that the comultiplication on $\wreathH$ is denoted by $\Delta$, the one on $G$ by $\Delta_G$, the one on $G^{*_H N}$ by $\Delta_{G^{*N}}$ (it satisfies $\Delta_{G^{*N}}\circ\nu_i=\nu_i\ot\nu_i\circ\Delta_G$) and let us denote the one on $C(G^{*_H N}\rtimes S_N)$ by $\Delta_s$. On the one hand, $\forall 1\leq i,j\leq N$, $a\in C(G)$,
\begin{eqnarray*}
\Delta_s(\pi(u_{ij}))&=&\Delta_s(1\ot\chi_{ij})=\sum_{\sigma\in S_N,\,\sigma(j)=i}\Delta_s(1\ot\delta_\sigma)=\sum_{\sigma\in S_N,\,\sigma(j)=i}\sum_{\tau\in S_N}(1\ot\delta_\tau\ot 1\ot\delta_{\tau^{-1}\sigma}),\\
\Delta_s(\pi(\nu_i(a)))&=&\sum_{\sigma\in S_N}\Delta_s(\nu_i(a)\ot\delta_\sigma)
=\sum_{\tau,\sigma\in S_N}\left[(\id\ot\alpha_\tau)(\Delta_{G^{*N}}(\nu_i(a)))\right]_{13}(1\ot\delta_\tau\ot 1\ot\delta_{\tau^{-1}\sigma})\\
&=&\sum_{\tau\in S_N}\left[(\id\ot\alpha_\tau)((\nu_i\ot\nu_i)(\Delta_G(a)))\right]_{13}(1\ot\delta_\tau\ot1\ot 1).
\end{eqnarray*}
On the other hand, for all $1\leq i,j\leq N$ and all $a\in C(G)$,
\begin{eqnarray*}
(\pi\ot\pi)(\Delta(\nu_i(a)))&=&\sum_{j=1}^N(\pi\ot\pi)(\nu_i\ot\nu_j)(\Delta_G(a))(\pi(u_{ij})\ot 1)\\
&=&\sum_{j=1}^N\left[(\nu_i\ot\nu_j)(\Delta_G(a))\right]_{13}(1\ot\chi_{ij}\ot 1\ot 1)\\
&=&\sum_{j=1}^N\sum_{\tau\in S_N,\,\tau(j)=i}\left[(\id\ot\alpha_\tau)(\nu_i\ot\nu_i)(\Delta_G(a))\right]_{13}(1\ot\delta_\tau\ot 1\ot 1)\\
&=&\sum_{\tau\in S_N}\left[(\id\ot\alpha_\tau)(\nu_i\ot\nu_i)(\Delta_G(a))\right]_{13}(1\ot\delta_\tau\ot 1\ot 1)
\end{eqnarray*}
where, in the last equality, we use the partition $S_N=\bigsqcup_{j=1}^N\{\tau\in S_N\,:\,\tau(j)=i\}$. Moreover,

\begin{eqnarray*}
(\pi\ot\pi)(\Delta(u_{ij}))&=&\sum_{k=1}^N\pi(u_{ik})\ot\pi(u_{kj})=\sum_{k=1}^N1\ot\chi_{ik}\ot 1\ot\chi_{kj}\\
&=&\sum_{k=1}^N\sum_{\substack{\sigma,\tau\in S_N,\\ \tau(k)=i,\, \sigma(j)=k}}1\ot\delta_\tau\ot 1\ot\delta_{\sigma}
=\sum_{\substack{\sigma,\tau\in S_N,\\\sigma(j)=i}}1\ot\delta_\tau\ot 1\ot\delta_{\tau^{-1}\sigma},
\end{eqnarray*}
where we use, in the last equality, the following partition:
$$\{(\tau,\tau^{-1}\sigma)\in S_N^2\,:\,\sigma(j)=i\}=\bigsqcup_{k=1}^N\{(\tau,\sigma)\in S_N^2\,:\,\tau(k)=i\text{ and }\sigma(j)=k\}.$$
\vspace{0.2cm}

\noindent$(2).$ Suppose that $N=2$. In that case, Bichon's proof of \cite[Proposition 2.1]{Bi04} can be directly adapted. It is well known that $C(S_2^+)$ is commutative: the surjection $C(S_2^+)\rightarrow C(S_2)$, $u_{ij}\mapsto\chi_{ij}$ is an isomorphism. Hence, we view $C(S_2^+)=C(S_2)$ and we have $u=\left(\begin{array}{cc}\delta_1&\delta_\tau\\\delta_\tau&\delta_1\end{array}\right)$, where $\tau$ is the unique non-trivial element in $S_2$. Now, for $i\in\{1,2\}$ and $a\in C(G)$, $\nu_i(a)\in C(\wreathH)$ is commuting with the line $i$ of $u$ hence, it is also commuting with the other line. Hence, $C(G)^{*_H2}$ and $C(S_2^+)$ are commuting in $C(\wreathH)$ and it follows that $\pi$ is actually injective.

\vspace{0.2cm}
\noindent Suppose that $N\geq 4$. In that case, it is well known that $C(S_N^+)$ is infinite-dimensional (the classical argument is actually contained in the proof of Lemma \ref{lemortho2}). In particular, $C(S_N^+)\mapsto C(S_N)$, $u_{ij}\mapsto \chi_{ij}$ is not injective which implies that $\pi$ itself is not injective.
\vspace{0.2cm}

\noindent The non injectivity in the case $N=3$ is a consequence of Theorem \ref{propreduced} (the proof of which does not rely on this isomorphism even in the case $N=3$), using the fact that an isomorphism between these quantum group would intertwine the Haar measures. It is then enough, when $G$ is non-trivial to show that there is a \emph{reduced} operator in $C(G\wr_{*,H} S_3^+)$, hence of Haar measure $0$, which is sent to an element of $C(G^{*_H 3}\rtimes S_3)\simeq C(G)^{*_H 3}\otimes \C^6$. Since $H$ is a strict (not necessarily proper) dual quantum subgroup of $G$, then $\mathrm{Rep}(H)$ is a full subcategory of $\mathrm{Rep}(G)$, with a non trivial irreducible representation $v$ of $G$ which is not a representation of $H$, and $a,b\in C(G)\setminus C(H)$ non trivial coefficients of $v$ and $\overline{v}$ respectively such that $ab$ has non zero Haar measure (using the unique intertwiner between the trivial representation and $v\ot \overline{v}$). Then we can consider the element $\nu_1(a) u_{22} \nu_1(b)\in C(G\wr_{*,H} S_3^+)$, which is sent to $\nu_1(a)\nu_1(b) \ot u_{22} = \nu_1(ab)\ot u_{22} \in C(G)^{*_H 3}\otimes \C^6$. However, the word $\nu_1(a) u_{22} \nu_1(b)$ is reduced in the sense of \ref{propreduced}, because $a$ and $b$ are coefficients of a non trivial irreducible representation, and therefore of Haar measure $0$ and the element $u_{22}$ is such that $E_{L_1}(u_{22})$ is equal to $0$ if $N\geq 3$ and to $u_{22}$ if $N\in \lbrace 0,1\rbrace$, while the element $\nu_1(ab) \ot u_{22}$ is of Haar measure $h_G(ab)/3\neq 0$, because the Haar measure on $C(G^{*_H 3}\rtimes S_3)\simeq C(G)^{*_H 3}\otimes \C^6$ is the tensor product of the Haar measures on $C(G)^{*_H 3}$ and on $C(S_3)$. \vspace{0.2cm}

\noindent If $G$ is the trivial group then the surjection is the canonical morphism $C(\SN)\twoheadrightarrow C(S_N)$ which is an isomorphism if and only if $N\leq 3$.
\end{proof}

\begin{remark}
The condition $C(H)\hookrightarrow C(G)$ not being an isomorphism is equivalent to the index $[G:H]$ being greater than $2$, as defined in \ref{index}.
\end{remark}

\noindent Let us show that this semi-direct product agrees with the wreath product by $S_N$ as defined in \cite{Gr23}. We first start by defining a well-known family of compact quantum subgroups of $\wreathH$.

\begin{propdef}\label{subgroupsn}
For any compact quantum subgroup $K_N\leq S_N^+$ with $\mu : C(S_N^+)\twoheadrightarrow C(K_N)$, we have a corresponding compact quantum subgroup $G\wr_{\ast,H} K_N \leq \wreathH$, with $C(G\wr_{\ast,H} K_N) := C(G)^{\ast_{H}N}\ast C(K_N)/I$, where the double-sided closed ideal $I$ is once again defined as in (\ref{idealI}), replacing the elements $u_{ij}$ by their images through $\mu$ in $C(K_N)$, with the comultiplication defined in the same way as for the free wreath product with $S_N^+$. In particular $G\wr_{\ast,H} S_N$ is a compact quantum subgroup of $\wreathH$, where $S_N$ is seen as a compact quantum group of $S_N^+$.
\end{propdef}

\begin{proof}
It is enough to show that the quotient map $$ \psi : C(\wreathH)\rightarrow C(G\wr_{\ast,H} K_N)\text{ s.t. }\left\{\begin{array}{lcl}\psi(\nu_i(a))&=&\nu_i(a)\\ \psi(u_{ij})&=&\mu(u_{ij})\end{array}\right.,$$
intertwines the comultiplications but this is obvious by the definitions.
\end{proof}

\begin{remark}
Note that the considered dimension of the generating matrix of the compact quantum group $K_N$ matters a lot, as it changes the numbers of free copies of $C(G)$ in the definition of the free wreath product. Take $H$ to be the trivial group and $K_N$ and $K_M$ for $N\neq M$ two presentations of the trivial group (seen as acting on $\C^N$ and $\C^M$ respectively), then we have that $G\wr_{\ast,H} K_N = G^{\ast N}$ and $G\wr_{\ast,H} K_M = G^{\ast M}$, which are oftentimes not isomorphic.
\end{remark}

\begin{proposition}
The wreath product $G\wr S_N$ from \cite{Gr23} is isomorphic to the semi-direct product $G^{*N}\rtimes S_N$.
\end{proposition}

\begin{proof}
It is enough to prove that the map $\pi : C(\wreath) \twoheadrightarrow C(G^{\ast N}\rtimes S_N) \simeq C(G)^{*N}\otimes C(S_N)$ defined in proposition \ref{Prop-Subgroup} is the composition of the maps $C(\wreath)\twoheadrightarrow C(G\wr_\ast S_N)$ and $C(G\wr_\ast S_N) \twoheadrightarrow C(G^{*N}\rtimes S_N)$. Let $v$ be the matrix $(\chi_{ij})_{1\leq i,j\leq N}$, with coefficients generating $C(S_N)$. Then the map $C(G\wr_\ast S_N) \twoheadrightarrow C(G\wr S_N)$ as defined in \cite{Gr23} sends $\nu_i(g)$ to $\nu_i(g) \ot 1$ and $\chi_{ij}$ to $1\ot \chi_{ij}$, so the composition with the previous quotient coincides exactly with the map $\pi$.
\end{proof}

\begin{remark}
For many quantum groups $G$ we can also use K-theory to distinguish between the free wreath product and the semi-direct product. Indeed, the K-theory of the algebra of the semi-direct product $G^{*3}\rtimes S_3$ is just the K-theory of the tensor product $C(G)^{*3}\ot \C^6$ which is
\begin{align*}
    K_0(C(G)^{*3}\ot \C^6) \simeq (K_0(C(G))^3/\Z^2)^3, \text{ and, } K_1(C(G)^{*3}\ot \C^6) \simeq K_1(C(G))^{18},
\end{align*}
which can only coincide with the K-theory of the free wreath product algebra if $K_0(C(G))\simeq \Z$ and $K_1(C(G))=0$, using the computations of Theorem \ref{THMD} which are independent from Proposition \ref{Prop-Subgroup}
\end{remark}

\noindent In \cite[Section 3.4]{BCF20} it is mentioned that the question whether the von Neumann algebra of a quantum reflection group $H_N^{s+}:=\Z_s\wr_*S_N^+$ is diffuse is open when $N\leq 7$. We add the following Proposition in order to provide a complete answer to this question.

\begin{proposition}
The von Neumann algebra $\Linf(\wreathH)$ is diffuse if and only if at least one of the following conditions hold:
\begin{itemize}
    \item $N\geq 4$,
    \item $\Irr(G)$ is infinite,
    \item $[G:H]\geq 2$ (i.e. $C(H)\hookrightarrow C(G)$ is not surjective) and $N\geq 2$.
\end{itemize}
\end{proposition}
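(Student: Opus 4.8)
The plan is to reduce everything, via Lemma~\ref{LemmaInfinite} applied to $\wreathH$, to the infinite-dimensionality of $C(\wreathH)$: recall that by Remark~\ref{RemPi} we view $C(G)^{*_H N}$ and $C(\SN)$ as C*-subalgebras of $C(\wreathH)$, and that $\Linf(\wreathH)$ is diffuse if and only if $C(\wreathH)$ is infinite-dimensional. I will prove sufficiency of each of the three conditions by exhibiting an infinite-dimensional subalgebra, and the converse by identifying $\wreathH$ with a finite-dimensional object when none of them holds.

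\emph{Sufficiency.} If $\Irr(G)$ is infinite then $C(G)$ is infinite-dimensional by Lemma~\ref{LemmaInfinite}, hence so is its faithful copy $\nu_1(C(G))\subset C(G)^{*_H N}\subset C(\wreathH)$. If $N\geq 4$ then $C(\SN)\subset C(\wreathH)$ is already infinite-dimensional, by the classical fact recalled in the proof of Lemma~\ref{lemortho2}. The remaining case $[G:H]\geq 2$ and $N\geq 2$ is the crux. Since $[G:H]\geq 2$ we have $C(H)\subsetneq C(G)$ and $\Irr(H)\subsetneq\Irr(G)$, so there is an irreducible $x\in\Irr(G)\setminus\Irr(H)$; any nonzero coefficient $a$ of $x$ lies in $\ker E_H$ by Remark~\ref{RmkECH} and satisfies $h_G(a^*a)>0$ by faithfulness of the Haar state. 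Passing to the reduced amalgamated free product $C_r(G)^{*_H N}$, a quotient of $C(G)^{*_H N}$, equip it with the faithful free-product state $\varphi=h_H\circ E$, where $E$ is the free-product conditional expectation onto $C_r(H)$; note $\varphi(\nu_i(a)^*\nu_i(a))=h_H(E_H(a^*a))=h_G(a^*a)>0$. Because $\nu_1(a),\nu_2(a)\in\ker E$, the alternating reduced words $(\nu_1(a)\nu_2(a))^n$, $n\geq 1$, are nonzero and pairwise orthogonal in the GNS space of $\varphi$ (reduced words of distinct lengths are orthogonal, and their $\varphi$-norms are positive products built from $h_G(a^*a)$). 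Hence $C_r(G)^{*_H N}$, and therefore $C(G)^{*_H N}\subset C(\wreathH)$, is infinite-dimensional.

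\emph{Necessity.} Suppose none of the three conditions holds: then $N\leq 3$, $\Irr(G)$ is finite, and either $N=1$ or $[G:H]=1$. If $N=1$ then $\SN$ is trivial and $C(\wreathH)=\nu_1(C(G))\cong C(G)$, which is finite-dimensional since $\Irr(G)$ is finite. If $N\in\{2,3\}$ then $[G:H]=1$, i.e. $C(H)=C(G)$, so by Proposition~\ref{Prop-Subgroup}(2) the morphism $\pi$ is an isomorphism $C(\wreathH)\cong C(G)^{*_H N}\otimes C(S_N)$. Here the amalgamation identifies all copies (each $\nu_i$ agrees on $C(H)=C(G)$), so $C(G)^{*_H N}=C(G)$ and $C(\wreathH)\cong C(G)\otimes C(S_N)$ is finite-dimensional. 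In all cases $C(\wreathH)$ is finite-dimensional, so $\Linf(\wreathH)$ is not diffuse by Lemma~\ref{LemmaInfinite}.

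\emph{Main obstacle.} The only nonroutine step is the third sufficiency case: one must know that a reduced amalgamated free product over a proper C*-subalgebra is infinite-dimensional. This is a standard free-probability fact, obtained here from the orthogonality of reduced words for the free-product state, and it is precisely what forces the hypotheses $[G:H]\geq 2$ and $N\geq 2$ to appear together.
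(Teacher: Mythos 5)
Your proof follows the same basic route as the paper's: reduce everything to infinite-dimensionality of $C(\wreathH)$ via Lemma \ref{LemmaInfinite}, get the cases $N\geq 4$ and $\Irr(G)$ infinite from the subalgebras $C(\SN)$ and $\nu_1(C(G))$, and, in the crucial case, build infinitely many linearly independent alternating words from a coefficient $a$ of an irreducible $x\in\Irr(G)\setminus\Irr(H)$. Your treatment of necessity (via Proposition \ref{Prop-Subgroup}) is in fact more explicit than the paper's, which instead runs an induction through surjections $C(\wreathH)\twoheadrightarrow C(G\wr_{*,H}S_{N-1}^+)$ down to $N=2$. However, the step you flag as ``standard free probability'' is a genuine gap, and it is exactly where the difficulty of this proposition sits.

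The false claim is that the $\varphi$-norms of $(\nu_1(a)\nu_2(a))^n$ ``are positive products built from $h_G(a^*a)$''. In an \emph{amalgamated} free product the letters do not decouple: one computes $\varphi\bigl((\nu_1(a)\nu_2(a))^*\,\nu_1(a)\nu_2(a)\bigr)=h_H\bigl(E_H(a^*E_H(a^*a)a)\bigr)$, and more generally the norms are nested expressions $c_{k+1}=E_H(a^*c_k a)$, not products; these can vanish, because $E_H(a^*a)$ is an element of $C(H)$, not a scalar, and may annihilate $a$. Indeed, reduced words in nonzero letters can be zero: if $p\in C(H)$ is a projection with $a=ap$ and $pa=0$, then $\nu_1(a)\nu_2(a)=\nu_1(ap)\nu_2(a)=\nu_1(a)\nu_2(pa)=0$ by the amalgamation relations. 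This really occurs for coefficients of irreducibles outside $\Irr(H)$: for the Kac--Paljutkin quantum group $G$ (with $C(G)\cong\C^4\oplus M_2(\C)$) and $H$ the dual quantum subgroup spanned by the four one-dimensional representations, the coefficient space of the two-dimensional irreducible $x$ is the odd part of a $\Z_2$-grading of $C(G)$ (because $x\ot x$ contains only one-dimensional representations), and any such grading of $\C^4\oplus M_2(\C)$ with four-dimensional even part places the off-diagonal of the $M_2$-block in the odd part; taking $a=e_{12}$, its support projection $e_{22}$ lies in $C(H)$ and all your words vanish. So ``any nonzero coefficient $a$ of $x$'' cannot be right: you must either prove that some good coefficient exists, or argue with the whole representation --- the alternating tensor products $\nu_1(u^x)\ot\nu_2(u^x)\ot\cdots$ ($k$ factors) are unitary representations of $G^{*_H 2}$, hence nonzero, their coefficients are sums of length-$k$ reduced words, and coefficient spaces of different lengths are orthogonal for the free-product Haar state, which yields the required infinite linearly independent family. (In fairness, the paper's own proof asserts linear independence of the words $b_k=\nu_1(a)\nu_2(a)\cdots$ for an arbitrary such $a$ with no more justification, so the same repair is needed there; but your explicit product-of-norms justification is precisely the statement that fails.)
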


\begin{proof}
If $G$ is trivial, it directly follows from Lemma \ref{LemmaInfinite} since $S_N^+\simeq S_N$ for $N\leq 3$ and $C(S_N^+)$ is infinite-dimensional for $N\geq 4$.
Suppose that $G$ is non-trivial and let $N\geq 3$. Let us denote by $u=(u_{ij})$ and $v=(v_{ij})$ the fundamental representations of $S_N^+$ and $S_{N-1}^+$ respectively. By the universal property, there exists a unique unital $*$-homomorphism $\rho\,:\,C(\wreathH)\rightarrow C(G\wr_{*,H} S_{N-1}^+)$ such that $\rho(u_{ij})=v_{ij}$ if $1\leq i,j\leq N-1$, $\rho(u_{ij})=\delta_{i,j}$ if $i$ or $j$ is equal to $N$ and $\rho(\nu_i(a))=\nu_i(a)$ if $1\leq i\leq N-1$, $\rho(\nu_N(a))=\nu_{N-1}(a)$ for all $a\in C(G)$. Since $\rho$ is clearly surjective a direct induction implies that, for all $N\geq 2$, $C(\wreathH)$ is infinite-dimensional (hence $\Linf(\wreathH)$ is diffuse by Lemma \ref{LemmaInfinite}) whenever $C(G\wr_{*,H} S_2^+)$ is. By Proposition \ref{Prop-Subgroup}, $C(G\wr_{*,H} S_2^+)\simeq C(G*_H G)\ot\C^2$. Hence, it is infinite-dimensional as soon as $C(G*_H G)$ is. There is an embedding $C(G)\hookrightarrow C(G*_H G)$, so the case $\vert\Irr(G)\vert=\infty$ is a direct consequence of Lemma \ref{LemmaInfinite}. If $C(H)\hookrightarrow C(G)$ is not surjective, then there is a non zero element $a\in C(G)$ such that $E_H(G)=0$ (for example, take a coefficient of an irreducible representation of $G$ which is not in the representation category of $H$). Then the words $b_k = \nu_1(a)\nu_2(a)\dots \nu_{\bar{k}}(a)$ taking a product of $k$ elements, where $\bar{k}$ is $1$ if $k$ is odd and $2$ if $k$ is even, form a family of linearly independent reduced elements in $C(G*_HG)$, which is therefore infinite dimensional.
\end{proof}

\subsection{Graphs of operator algebras}\label{graphsop}

We recall below some notions and results from \cite{FF14,FG18}. If $\G$ is a graph in the sense of \cite[Def 2.1]{Se77}, its vertex set will be denoted $V(\G)$ and its edge set will be denoted $E(\G)$. We will always assume that $\G$ is at most countable. For $e\in E(\G)$ we denote by $s(e)$ and $r(e)$ respectively the source and range of $e$ and by $\overline{e}$ the inverse edge of $e$. An \emph{orientation} of $\G$ is a partition $E(\G) = E^{+}(\G)\sqcup E^{-}(\G)$ such that $e\in E^{+}(\G)\Leftrightarrow\overline{e}\in E^{-}(\G)$.

\vspace{0.2cm}

\noindent The data $(\G, (A_{q})_{q\in V(\G)}, (B_{e})_{e\in E(\G)}, (s_{e})_{e\in E(\G)})$ will be called a graph of C*-algebras if:
\begin{itemize}
\item $\G$ is a connected graph.
\item For every $q\in V(\G)$ and every $e\in E(\G)$, $A_{q}$ and $B_{e}$ are unital C*-algebras.
\item For every $e\in E(\G)$, $B_{\overline{e}} = B_{e}$.
\item For every $e\in E(\G)$, $s_{e}: B_e \rightarrow A_{s(e)}$ is a unital  faithful $*$-homomorphism.
\end{itemize}
Define $r_e=s_{\overline{e}}: B_e \rightarrow A_{s(e)}$.

\vspace{0.2cm}

\noindent Fix a maximal subtree $\mathcal{T}\subset\G$ and define $P= \pi_1(\G,A_q,B_e,\T)$ to be the maximal fundamental C*-algebra of the graph of C*-algebras $(\G,A_q,B_e,s_e)$ with respect to the maximal subtree $\mathcal{T}$. This means that $P$ is the universal unital C*-algebra generated by $A_q$, for $q\in V(\G)$ and unitaries $u_e$, for $e\in E(\G)$ with the following relations:

\begin{itemize}
\item For every $e\in E(\G)$, $u_{\overline{e}} = u_{e}^{*}$.
\item For every $e\in E(\G)$ and every $b\in B_e$, $u_{\overline{e}}s_{e}(b)u_{e}=r_{e}(b)$.
\item For every $e\in E(\mathcal{T})$, $u_{e}=1$.
\end{itemize}

\noindent It is known that $P$ is non-zero and that the canonical unital $*$-homomorphisms $A_q\rightarrow P$ are all faithful \cite[Remark 2.2]{FG18}. Hence, we will always view $A_q\subset P$ for all $q\in V(\G)$.

\vspace{0.2cm}

\noindent Assume now that there exists, for all $e\in E(\G)$ a conditional expectation $E_e^s\,:\,A_{s(e)}\rightarrow s_e(B_e)$. For $p\in V(\G)$, an element $a\in P$ will be called a \textit{reduced operator} from $p$ to $p$ if it is of the form $a=a_0u_{e_1}a_1\dots u_{e_n} a_n$ where $n\geq1$, $(e_1,\dots,e_n)$ is a path in $\G$ from $p$ to $p$ (i.e $e_k\in E(\G)$ are such that $r(e_k)=s(e_{k+1})$ and $s(e_1)=r(e_n)=p$), $a_0\in A_p$, $a_k\in A_{r(e_k)}$ and, for all $1\leq k\leq n-1$, if $e_{k+1}=\overline{e}_k$ then $E^s_{e_{k+1}}(a_k)=0$. Then one can construct \cite{FG18}, the unital C*-algebra $P_r$ called the \textit{vertex reduced fundamental algebra} which is the unique (up to canonical isomorphism) quotient $\lambda\,:\,P\rightarrow P_r$ of $P$ satisfying the following:

\begin{enumerate}
    \item There exists, for all $p\in V(\G)$, a ucp map $E_p\,:\,P_r\rightarrow A_p$ such that $E_p(\lambda(a))=a$ for all $a\in A_p$ and $E_p(\lambda(a))=0$ for all reduced operators $a\in P$ from $p$ to $p$. Moreover, the family $\{E_p\,:\,\in V(\G)\}$ is GNS-faithful.
    \item For any unital C*-algebra $C$ with a surjective unital $*$-homomorphism $\rho\,:\,P\rightarrow C$ and a GNS-faithful family of ucp map $\varphi_p\,:\,C\rightarrow A_p$, $p\in V(\G)$, such that $\varphi_p(\rho(a))=a$ for all $a\in A_p$ and $\varphi_p(\rho(a))=0$ for all reduced operators $a\in P$ from $p$ to $p$ there exists a unique unital $*$-isomorphism $\nu\,:\,P_r\rightarrow C$ such that $\nu\circ\lambda=\rho$.
\end{enumerate}

\noindent We recall that, given unital C*-algebras $A,B_i$, $i\in I$, a family of ucp maps $\varphi_i\,:\ A\rightarrow B_i$ is called \textit{GNS-faithful} if $\bigcap_{i\in I}{\rm Ker}(\pi_i)=\{0\}$, where $(H_i,\pi_i,\xi_i)$ is the GNS-construction of $\varphi_i$.

\vspace{0.2cm}

\noindent Note that a ucp map satisfying $(1)$ is necessarily unique and property $(1)$ implies that $\lambda\,:\, P\rightarrow P_r$ is faithful on $A_p$, for all $p\in V(\G)$ so that we may and will view $A_p\subset P_r$ for all $p\in V(\G)$ and the ucp maps $E_p\,:\,P_r\rightarrow A_p$ become conditional expectations under this identification. If all the ucp maps $E_e^s$ are supposed to be GNS-faithful then the vertex reduced fundamental algebra $P_r$ is the same as the reduced fundamental algebra constructed in \cite{FF14} and the conditional expectations $E_p\,:\,P_r\rightarrow A_p$ are all GNS-faithful.

\section{Free wreath products as fundamental algebras}

\noindent Let $N\in\N^*$ and $\mathcal{T}_N$ be the rooted tree with $N+1$ vertices $p_0,\dots,p_N$, $p_0$ being the root and $2N$ edges $v_1,\dots,v_N, \vbar_1,\dots\vbar_N$, source maps $s(v_k)=p_0$ and range maps $r(v_k)=p_k$ for all $1\leq k\leq N$.
\subsection{The full version}\label{SectionGraphFull}

Consider the graph of C*-algebras over $\T_N$ given by $\A_{p_0}=C(H) \ot C(\SN)$, $\A_{p_k}:=C(G)\otimes\C^N$, $\B_{v_k}=\B_{\vbar_k}=C(H)\ot \C^N$ ($1\leq k\leq N$) with source map $s_{v_k}\,:\, C(H) \ot \C^N\rightarrow C(H) \ot L_k\subset C(H)\ot C(\SN)$, $h\ot e_j\mapsto h\ot  u_{kj}$ and range map $r_{v_k}\,:\,C(H) \ot \C^N\rightarrow C(G)\ot\C^N$ being the canonical inclusion. Note that our graph of C*-algebras has conditional expectations $\id\ot E_k\,:\,C(H) \ot C(S_N^+)\rightarrow C(H) \ot L_k$ (Proposition \ref{lemCE}) which can be non GNS-faithful and $E_H\ot\id\,:\, C(G)\ot\C^N\rightarrow C(H) \ot \C^N$, coming from proposition \ref{dualqsg}, which can also be non-GNS faithful.
\vspace{0.2cm}

\noindent Let us denote by $\mathcal{A}$ the maximal fundamental C*-algebra of this graph of C*-algebras relative to the unique maximal subtree $\T_N$ itself so that $\mathcal{A}$ is the universal unital C*-algebra generated by $\mathcal{A}_{p_k}$ for $0\leq k\leq N$ with the relations $s_{v_k}(a)=r_{v_k}(a)$ for all $a\in C(H) \ot \C^N$ and all $1\leq k\leq N$. Recall that $\nu_i\,:\, C(G)\rightarrow C(G)^{*_H N}\subset C(\wreathH)$ denotes the $i^{th}$-copy of $C(G)$ in $C(G)^{*_H N}$, we also denote by $\nu$ the common restriction of the $\nu_i$'s to $C(H)$

\begin{proposition}\label{PropFull}
There is a unique isomorphism $\pi\,:\,\mathcal{A}\rightarrow C(\wreathH)$ such that $\pi(h\ot u_{ij})= \nu (h) u_{ij}$ ($h\ot u_{ij}\in\A_{p_0}=C(H)\ot C(\SN)$) and, $\pi(a\ot e_j)=\nu_i(a)u_{ij}$ for $a\ot e_j\in\A_{p_i}=C(G)\ot\C^N$, $1\leq i,j\leq N$.
\end{proposition}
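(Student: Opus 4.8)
The plan is to prove the proposition by producing an explicit two-sided inverse to $\pi$, i.e.\ a unital $*$-homomorphism $\rho\colon C(\wreathH)\to\A$, and then checking that $\pi\circ\rho$ and $\rho\circ\pi$ fix the generators. Two observations make everything concrete. First, since $\T_N$ is itself the maximal subtree, every edge unitary of the fundamental algebra is trivial, so $\A$ is just the universal unital C*-algebra generated by the vertex algebras $\A_{p_0},\dots,\A_{p_N}$ (each faithfully embedded, so we view $\A_{p_k}\subset\A$) subject only to the identifications $s_{v_k}(b)=r_{v_k}(b)$. Writing $\iota\colon C(H)\hookrightarrow C(G)$ for the inclusion, these read, for $b=h\ot e_j$,
$$h\ot u_{kj}\ =\ \iota(h)\ot e_j\qquad\text{in }\A,$$
the left-hand side lying in $\A_{p_0}$ and the right-hand side in $\A_{p_k}$. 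Putting $h=1$, and separately summing over $j$, yields the two identities I will use throughout:
$$1\ot u_{kj}=1\ot e_j\in\A_{p_k}\qquad\text{and}\qquad h\ot 1=\iota(h)\ot 1\in\A_{p_k}.$$

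To construct $\pi$ I use the universal property of $\A$, giving compatible unital $*$-homomorphisms out of each vertex algebra. On $\A_{p_i}=C(G)\ot\C^N$ ($i\ge1$) I set $a\ot e_j\mapsto\nu_i(a)u_{ij}$; this is a $*$-homomorphism because, by the defining relation of the free wreath product, the $u_{ij}$ ($1\le j\le N$) are pairwise orthogonal projections summing to $1$ which commute with $\nu_i(C(G))$, whence $\nu_i(a)u_{ij}\,\nu_i(a')u_{ij'}=\delta_{jj'}\,\nu_i(aa')u_{ij}$. On $\A_{p_0}=C(H)\ot C(\SN)$ I use the commuting pair $\nu\colon C(H)\to C(\wreathH)$ and $C(\SN)\hookrightarrow C(\wreathH)$: they commute because the defining relation gives $\nu_i(\iota(h))u_{ij}=u_{ij}\nu_i(\iota(h))$ for all $i,j$, and since $\nu_i\circ\iota=\nu$ does not depend on $i$ this says that $\nu(h)$ commutes with every $u_{ij}$, hence with $C(\SN)$. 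Compatibility along the edges holds since $s_{v_k}(h\ot e_j)=h\ot u_{kj}\mapsto\nu(h)u_{kj}$ and $r_{v_k}(h\ot e_j)=\iota(h)\ot e_j\mapsto\nu_k(\iota(h))u_{kj}=\nu(h)u_{kj}$ agree.

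For the inverse I use the universal property of $C(\wreathH)$. Define $\rho_i\colon C(G)\to\A$ by $\rho_i(a)=a\ot 1\in\A_{p_i}$ and $\rho_0\colon C(\SN)\to\A$ by $\rho_0(u_{ij})=1\ot u_{ij}\in\A_{p_0}$. The second identity above gives $\rho_i(\iota(h))=\iota(h)\ot1=h\ot1$ independently of $i$, so the $\rho_i$ agree on $C(H)$ and assemble to a homomorphism on $C(G)^{*_H N}$; together with $\rho_0$ this defines one on the full free product. It kills the ideal $I$ precisely by the first identity: $\rho_0(u_{ij})=1\ot u_{ij}=1\ot e_j$ lies in $\A_{p_i}$, where it commutes with $\rho_i(a)=a\ot1$. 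Hence $\rho$ descends to $C(\wreathH)$, and verifying $\pi\circ\rho=\id$ and $\rho\circ\pi=\id$ is then a direct computation on the generators $\nu_i(a),u_{ij}$ and on the vertex algebras (using $\pi(a\ot1)=\nu_i(a)$ together with the two identities). Uniqueness of $\pi$ is automatic, since it is prescribed on the generating vertex algebras.

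The single genuinely analytic point --- and the step I expect to demand the most care --- is the well-definedness of $\pi$ on $\A_{p_0}=C(H)\ot C(\SN)$: the commuting pair $\nu,\ C(\SN)\hookrightarrow C(\wreathH)$ only manifestly yields a homomorphism out of the \emph{maximal} tensor product, whereas the vertex algebra carries the minimal one. This reduces to identifying the subalgebra $C^*(\nu(C(H)),C(\SN))\subset C(\wreathH)$ with $C(H)\ot C(\SN)$, which I would extract from the auxiliary homomorphism $C(\wreathH)\to C(G)^{*_H N}\ot C(\SN)$ of Remark \ref{RemPi} (sending $\nu_i(a)\mapsto\nu_i(a)\ot1$ and $u_{ij}\mapsto1\ot u_{ij}$): it maps that subalgebra into the genuine minimal tensor product $C(G)^{*_H N}\ot C(\SN)$, and checking this restriction is isometric pins down the tensor norm and makes $\pi$ well-defined. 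Everything else is the algebraic bookkeeping of the two displayed identities.
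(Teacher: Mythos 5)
Your construction of $\pi$ and of its inverse $\rho$ is, step for step, the paper's own proof: $\pi$ is assembled from vertex maps via the universal property of $\A$ and checked against the edge relations, and $\rho$ comes from the universal property of $C(\wreathH)$, with your two identities $1\ot u_{kj}=1\ot e_j$ and $h\ot 1=\iota(h)\ot 1$ doing exactly the work done in the paper's computation of the inverse (the paper factors it through the non-amalgamated free product and kills both ideals, you factor through the amalgamated one -- the same argument). All of that algebraic bookkeeping is correct.

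The problem is the step you yourself single out as the ``single genuinely analytic point'': your proposed treatment of it would fail. You are right that commuting ranges only give a morphism out of $C(H)\ot_{\max}C(\SN)$, while $\A_{p_0}$ carries the minimal tensor norm under the paper's stated conventions, so $\pi_0$ needs justification (a point the paper's proof also glosses over -- it simply asserts the existence of $\pi_0$). But the reduction you propose -- showing that the morphism of Remark \ref{RemPi} is isometric on $C^*(\nu(C(H))\cup C(\SN))$ -- is not something one can check: that subalgebra is canonically the \emph{maximal} tensor product. Indeed, $\nu$ and the inclusion $C(\SN)\subset C(\wreathH)$ exhibit $H\times S_N^+$ as a dual quantum subgroup of $\wreathH$ (injectivity of $\Pol(H)\odot\Pol(\SN)\to\Pol(\wreathH)$ follows from Remark \ref{RemPi}), and Proposition \ref{dualqsg} identifies the closure of $\Pol(H)\odot\Pol(\SN)$ in $C(\wreathH)$ with the enveloping C*-algebra of $\Pol(H\times S_N^+)$, which is $C(H)\ot_{\max}C(\SN)$ (every $*$-representation of $\Pol(H)\odot\Pol(\SN)$ is a commuting pair of representations, bounded since the generators are coefficients of unitaries). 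One sees this with bare hands in the admissible degenerate case $H=G$: there $C(G)^{*_HN}=C(G)$ and the ideal (\ref{idealI}) is generated by all commutators, so $C(\wreathH)=C(G)\ot_{\max}C(\SN)$ on the nose, while $\A=\A_{p_0}=C(G)\ot_{\min}C(\SN)$. Consequently the restriction of the Remark \ref{RemPi} morphism to $C^*(\nu(C(H))\cup C(\SN))$ is precisely the canonical surjection $C(H)\ot_{\max}C(\SN)\to C(H)\ot_{\min}C(\SN)$, and your claimed isometry is equivalent to $\max=\min$ for this pair -- a nuclearity-type identification that the hypotheses do not provide, and without which the proposition read with the minimal tensor product at $p_0$ is itself false, as the case $H=G$ shows. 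The genuine resolution is not analytic: either read $\A_{p_0}$ with the maximal tensor norm, in which case $\pi_0$ exists immediately from the universal property and both your proof and the paper's go through verbatim, or restrict to the situations actually used in the paper ($H$ trivial or $\Irr(H)$ finite), where $C(H)$ is nuclear and the distinction disappears.
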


\begin{proof}
Let us show the existence of $\pi$. Note first that, since $\nu_i(a)$ and $u_{ij}$ commute in $C(\wreathH)$, there exists a unique unital $*$-homomorphism $\pi_i\,:\,\A_{p_i}=C(G)\ot\C^N\rightarrow C(\wreathH)$ such that $\pi_i(a\ot e_j)=\nu_i(a)u_{ij}$, for all $a\in C(G)$ and $1\leq j\leq N$. We also define $\pi_0\,:\,\mathcal{A}_{p_0}=C(H)\ot C(S_N^+)\rightarrow C(\wreathH)$, $\pi_0(h\ot u_{ij})=\nu(h)u_{ij}$. Next, for all $1\leq i,j\leq N,$ and $h\in C(H)$, one has $\pi_0(s_{v_i}(h\ot e_j))=\pi_0(h\ot u_{ij})=\nu(h) u_{ij}$ and $\pi_i(r_{v_i}(h\ot e_j))=\pi_i(h \ot e_j)=\nu(h) u_{ij}$. By the universal property of $\mathcal{A}$, there exists a unique unital $*$-homomorphism $\pi\,:\,\mathcal{A}\rightarrow C(\wreathH)$ satisfying the properties of the proposition. Moreover, since the image of $\pi$ contains $u_{ij}$ for all $i,j$ and $\sum_{j=1}^N\pi_i(a\ot e_j)=\sum_{j=1}^N\nu_i(a)u_{ij}=\nu_i(a)$ for all $a\in A$ and $1\leq i\leq N$, it follows that $\pi$ is surjective.

\vspace{0.2cm}

\noindent To show that it is an isomorphism, we will give an inverse, using this time the universal property of $C(\wreathH)$. By the universal property of the full free product, there exists a unique unital $*$-homomorphism $\mu\,:\,C(G)^{* N}* C(S_N^+)\rightarrow\mathcal{A}$ such that, for all $ 1\leq i\leq N$, $a\in C(G)$, $b\in C(S_N^+)$,
$$\mu(\nu_i(a))=a\ot 1\in C(G)\ot\C^N=\mathcal{A}_{p_i}\subset\mathcal{A}\text{ and }\mu(b)=1\ot b\in C(H) \ot C(S_N^+)=\mathcal{A}_{p_0}\subset\mathcal{A}.$$
Recall that $C(\wreathH)=C(G)^{*_H N}*C(S_N^+)/I$, where $I$ is defined in Equation $(\ref{idealI})$. Note that
\begin{eqnarray*}
\mu(\nu_i(a)u_{ij})&=&(a\ot 1)(1\ot u_{ij})=\mu(\nu_i(a))s_{v_i}(1\ot e_j)=\mu(\nu_i(a))r_{v_i}(1\ot e_j)=(a\ot 1)(1\ot e_j)\\
&=&(1\ot e_j)(a\ot 1)
=r_{v_i}(1\ot e_j)\mu(\nu_i(a))=s_{v_i}(1\ot e_j)\mu(\nu_i(a))\\&=&(1\ot u_{ij})\mu(\nu_i(a))
=\mu(u_{ij}\nu_i(a)).
\end{eqnarray*}
Moreover, we have that for every $h\in C(H)$, and $1\leq i,j\leq N$,
\begin{eqnarray*}
\mu(\nu_i(\iota(h))) &=& (\iota(h)\ot 1) = s_{v_i}(h\ot 1) = r_{v_i}(h\ot 1) = (h\ot 1)\\ &=& r_{v_j}(h\ot 1)= s_{v_j}(h\ot 1)=(\iota(h)\ot 1) =\mu(\nu_j(\iota(h))),
\end{eqnarray*}
Hence, the images of $C(H)$ through $\mu$ coincide and $I\subset{\rm ker}(\mu)$ so there exists a unique unital $*$-homomorphism $\rho\,:\,C(\wreathH)\rightarrow\mathcal{A}$ which factorizes $\mu$. It is clear that $\rho$ is surjective and it is easy to check that $\rho$ is the inverse of $\pi$.\end{proof}

\subsection{The reduced version}\label{SectionGraphReduced}

\noindent Consider the graph of C*-algebras over $\T_N$ given by $A_{p_0}=C_r(H) \ot C_r(\SN)$, $A_{p_k}:=C_r(G)\otimes\C^N$, $B_{v_k}=B_{\vbar_k}=C_r(H)\ot \C^N$ for all $1\leq k\leq N$ with source map $s_{v_k}\,:\,C_r(H)\ot \C^N\rightarrow C_r(H)\ot L_k\subset C_r(H)\ot C_r(\SN)$, $h\ot e_j\mapsto h\ot u_{kj}$ and range map $r_{v_k}\,:\,C_r(H)\ot \C^N\rightarrow C_r(G)\ot\C^N$, being the canonical inclusion. Note that our graph of C*-algebras has faithful conditional expectations $\id\ot E_k\,:\,C_r(H)\ot C_r(S_N^+)\rightarrow C_r(H) \ot L_k$ (Proposition \ref{lemCE}) and $E_H\ot\id\,:\, C_r(G)\ot\C^N\rightarrow C_r(H)\ot \C^N$, thanks to Proposition \ref{dualqsg}. Let $A$ the vertex reduced fundamental C*-algebra of this graph of C*-algebras with faithful conditional expectations and view $A_{p_k}\subset A$ for all $0\leq k\leq N$. By the universal property of $\mathcal{A}$ defined in Section \ref{SectionGraphFull}, there exists a unique unital $*$-homomorphism (which is surjective) $\lambda'\,:\, \mathcal{A}\rightarrow A$ such that $\lambda'\vert_{\mathcal{A}_{p_0}}=\lambda_H \ot \lambda_{S_N^+}$ and $\lambda'\vert_{\mathcal{A}_{p_k}}=\lambda_{G}\ot{\rm id}_{\C^N}$ for all $1\leq k\leq N$. Let $E\,:\, A\rightarrow C_r(H)\ot C_r(S_N^+)$ the GNS-faithful conditional expectation and define $\omega:=h_H \ot h_{S_N^+}\circ E$ so that $\omega\vert_{A_{p_0}}=h_H\ot h_{S_N^+}$ and $\omega(c)=0$ for $c\in A$ a reduced operator. The state $\omega\in A^*$ is called the \textit{fundamental state}. Let $\lambda\,:\,C(\wreathH)\rightarrow C_r(\wreathH)$ be the canonical surjection.

\begin{theorem}\label{propreduced}
The Haar state $h\in C(\wreathH)^*$ is the unique state such that:
\begin{equation*}
    h\left(a_0 \nu_{i_1}(b_1) a_1 \nu_{i_2}(b_2)\dots \nu_{i_{n}}(b_{n}) a_n\right) =0
\end{equation*}
whenever $a_k\in C(S_N^+)\subset C(\wreathH)$, and $b_k\in C(G)$ are such that $E_{H}(b_k)=0$ for all $k$ and if $i_k = i_{k+1}$, then $E_{i_k}(a_k) = 0$.
\vspace{0.2cm}

\noindent There exists a unique unital $*$-isomorphism $\pi_r\,:\,A\rightarrow C_r(\wreathH)$ such that $\lambda\circ\pi=\pi_r\circ\lambda'$, where $\pi\,:\,\mathcal{A}\rightarrow C(\wreathH)$ is the isomorphism of Proposition \ref{PropFull}. Moreover, $\pi$ intertwines the Haar state on $C_r(\wreathH)$ and the fundamental state $\omega\in A^*$.
\end{theorem}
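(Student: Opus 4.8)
The plan is to deduce all three assertions from the single identity
$$h\circ\pi=\omega\circ\lambda'\quad\text{as states on }\mathcal{A},$$
where $h$ is the Haar state of $\wreathH$. Granting this identity, the remaining assertions follow formally. Since the edge conditional expectations of the reduced graph are faithful, the fundamental state $\omega$ is faithful on $A$, so $A$ is the image of $\mathcal{A}$ under the GNS representation of $\omega\circ\lambda'$; likewise $h$ is faithful on $C_r(\wreathH)$, so $C_r(\wreathH)$ is the image of $C(\wreathH)\cong\mathcal{A}$ under the GNS representation of $h\circ\pi$. As these two states coincide, the two quotients have the same kernel, $\ker\lambda'=\ker(\lambda\circ\pi)$, and $\lambda'$ being surjective yields a unique $*$-isomorphism $\pi_r\colon A\to C_r(\wreathH)$ with $\pi_r\circ\lambda'=\lambda\circ\pi$. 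Evaluating $h\circ\pi_r$ on $\lambda'(y)$ and using that $h$ on $C_r(\wreathH)$ pulls back along $\lambda$ to $h$ then gives $h\circ\pi_r=\omega$, i.e.\ the intertwining statement (the $\pi$ in the last sentence of the theorem should read $\pi_r$).

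To prove the identity I would first record that, since the maximal subtree is all of $\T_N$, every edge unitary is trivial, so $\mathcal{A}$ is just the amalgamated free product associated with the star graph $\T_N$. Consequently every element of $\Pol(\wreathH)$ is a linear combination of elements of the ``core'' $A_{p_0}$ and of reduced operators from the root $p_0$ to $p_0$. Under $\pi$, the core $A_{p_0}$ is carried onto the subalgebra $K$ generated by $\nu(C(H))$ and $C(\SN)$, while reduced operators from $p_0$ to $p_0$ are carried precisely onto the words $a_0\nu_{i_1}(b_1)a_1\cdots\nu_{i_n}(b_n)a_n$ of the theorem (the leaf conditions becoming $E_H(b_k)=0$, and the turning conditions at $p_0$ becoming $E_{i_k}(a_k)=0$ when $i_k=i_{k+1}$). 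Because $h\circ\pi$ and $\omega\circ\lambda'$ are both states, it then suffices to check that they agree on $K$ and that both annihilate these reduced words; as $\omega=(h_H\ot h_{\SN})\circ E_{p_0}$ does so by construction, everything reduces to the two corresponding facts for $h$.

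The first fact is soft: from $\Delta(\nu(h))=(\nu\ot\nu)\Delta_H(h)$ (which follows from $\nu=\nu_i|_{C(H)}$, $\sum_j u_{ij}=1$ and the commutation of $\nu(C(H))$ with $C(\SN)$) together with $\Delta(u_{ij})=\sum_k u_{ik}\ot u_{kj}$, one sees that $K$ is a dual quantum subgroup of $\wreathH$ isomorphic to $H\times\SN$. Proposition~\ref{dualqsg} then supplies a Haar-state-preserving conditional expectation onto $C(K)$, so $h|_{C(K)}$ is the Haar state $h_H\ot h_{\SN}$, matching $\omega$ on the core. The second fact, that $h$ annihilates every reduced word, is the crux and the main obstacle. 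I would establish it by induction on $n$, applying the left invariance $(h\ot\id)\Delta(w)=h(w)1$ and expanding $\Delta(w)$ through the comultiplication formulae above; the orthogonality relations of Lemma~\ref{lemortho2} for the quantum permutation factors are exactly what forces the cross terms carrying the constraints $E_{i_k}(a_k)=0$ to drop out, reducing $h(w)$ to a Haar value of a strictly shorter reduced word and finally to $0$.

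The uniqueness clause for the Haar state is then read off from the same decomposition of $\Pol(\wreathH)$: a state vanishing on all the listed words is determined by its restriction to the core $\Pol(H)\cdot\Pol(\SN)$, and that restriction is pinned to $h_H\ot h_{\SN}$ by the dual quantum subgroup structure just established, so $h$ is the unique state with the stated vanishing together with the correct restriction to the copy of $C(H)\ot C(\SN)$. The points requiring care are the precise combinatorial matching between reduced operators in $\mathcal{A}$ and the words in the statement, and the bookkeeping in the inductive use of Lemma~\ref{lemortho2}, which is where I expect the real work to lie.
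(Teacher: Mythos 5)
Your formal skeleton is sound, and two pieces of it are genuinely nice: reducing everything to the single identity $h\circ\pi=\omega\circ\lambda'$ is exactly how the paper concludes (one caveat: you should only claim that $\omega$ is \emph{GNS}-faithful on $A$, which is what the graph machinery provides and is all your kernel argument needs), and handling the core by exhibiting $\nu(C(H))\cdot C(\SN)\cong C(H)\ot C(\SN)$ as a dual quantum subgroup isomorphic to $H\times\SN$, so that Proposition \ref{dualqsg} pins down $h$ there, is a clean substitute for the paper's treatment, which gets the core values for free because its candidate state is \emph{defined} to be $h_H\ot h_{\SN}$ on the core. Your reading of the uniqueness clause (vanishing on reduced words \emph{together with} the core normalization) is also what the paper's argument actually yields.

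The gap is the crux step, exactly where you located "the real work": your mechanism for proving $h(w)=0$ on reduced words fails. The comultiplication never shortens words: $\Delta(\nu_i(b))=\sum_j(\nu_i\ot\nu_j)(\Delta_G(b))(u_{ij}\ot 1)$ places one $\nu$-letter in \emph{each} tensor leg, so expanding $\Delta(w)$ for a reduced word with $n$ letters $\nu_{i_k}(b_k)$ produces words with exactly $n$ such letters in both legs; already for $n=1$, applying $(h\ot\id)$ or $(\id\ot h)$ merely expresses $h(w)$ in terms of $h$ evaluated on other length-one reduced words, which is circular, and there is no base case. What Lemma \ref{lemortho2} and the splitting $(a_k)_{(1)}=E_{i_k}((a_k)_{(1)})+(a_k)_{(1)}^{\circ}$ really give --- and this is the paper's key computation --- is that the terms whose left leg would become shorter (two adjacent $\nu_{i_k}$'s merging after insertion of $E_{i_k}((a_k)_{(1)})$) vanish \emph{identically as tensors}, because the scalars $h((a_k)_{(1)}u_{i_k t})$ pair with $(a_k)_{(2)}$ in the right leg and sum to zero; the surviving terms have reduced left legs of unchanged length. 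So the computation proves the inclusion $\Delta(V)\subset V\odot C(\wreathH)$, where $V$ is the span of reduced words, and says nothing directly about the values of $h$ on $V$. The paper therefore runs the argument in the opposite direction: the candidate state $\widetilde\omega=(h_H\ot h_{\SN})\circ E\circ\lambda'\circ\pi^{-1}$ kills $V$ by construction, the inclusion above yields the one-sided invariance $(\widetilde\omega\ot\id)\Delta=\widetilde\omega(\cdot)1$ on a dense subspace, and uniqueness of the invariant state gives $\widetilde\omega=h$. If you insist on your direction, the same inclusion does salvage it, but by linear algebra rather than induction: $(\id\ot h)\Delta(w)=h(w)1$ forces $h(w)1\in V$, and you must then know $1\notin V$; this is not free, and the available argument is again the graph machinery --- apply $E_{p_0}\circ\lambda'$, which vanishes on $V$ and fixes $1$, to get $h(w)=0$. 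Either way, the missing ingredient is the mechanism converting the Sweedler/Lemma \ref{lemortho2} computation into vanishing of $h$, and "induction reducing to a strictly shorter reduced word" is not it.
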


\noindent An element of the form $a_0 \nu_{i_1}(b_1) a_1 \nu_{i_2}(b_2)\dots \nu_{i_{n}}(b_{n}) a_n\in C(\wreathH)$ and satisfying the conditions of Theorem \ref{propreduced} will be called a \textit{reduced operator}.
\begin{proof}
Consider the state $\widetilde{\omega}:=\omega\circ\lambda_N\circ\mu=h_H \ot h_{S_N^+}\circ E\circ\lambda_N\circ\mu\in C(\wreathH)^*$, where $\mu:=\pi^{-1}\,:\,C(\wreathH)\rightarrow\mathcal{A}$ has been constructed in the proof of Proposition \ref{PropFull}. Let $\mathcal{C}\subset\mathcal{A}$ be the linear span of $C(S_N^+)$, $\nu(C(H))$ and all reduced operators in $C(\wreathH)$. Note that $\mathcal{C}$ is dense in $\mathcal{A}$. By construction, the state $\widetilde{\omega}$ satisfies $\widetilde{\omega}\vert_{C(S_N^+)}=h_{S_N^+}$, $\widetilde{\omega}\circ\nu=h_H$ and $\widetilde{\omega}(c)=0$ for $c\in C(\wreathH)$ a reduced operator so $\widetilde{\omega}$ satisfies the property of the state $h$ stated in the Theorem hence $h=\widetilde{\omega}$ by density of $\mathcal{C}$. We will show that $\widetilde{\omega}$ is $\Delta$-invariant, which will imply that it is the Haar state, and thus (since $E$ is GNS faithful and $h_H\ot h_{S_N^+}$ is faithful on $C_r(H)\ot C_r(S_N^+)$) that $A$ is isomorphic to the algebra obtained through the GNS-construction applied to the Haar state, namely the reduced C*-algebra $C^\ast_r(\wreathH)$, which will complete the proof of the Theorem. It is enough to show that $\widetilde{\omega}$ is $\Delta$ invariant when restricted to $\mathcal{C}$. An element in $\mathcal{C}$ is the sum of an element $x_0\in C(S_N^+)\ot C(H)$ and elements of the form $x = a_0 \nu_{i_1} (b_1) a_1 \dots a_{n-1} \nu_{i_n}(b_n) a_n$ with $n\geq 1$, $a_k\in C(\SN)$, $b_k \in C(G)^\circ$, where $C(G)^\circ:=\lbrace b \in C(G),\, E_H(b)=0\rbrace$, for all $0\leq k\leq N$, and if there is $k$ such that $i_k = i_{k+1}$, then $E_{i_k}(a_k) = 0$. It suffices to show $\Delta(x)\in\mathcal{C}\odot C(\wreathH)$ (where $\odot$ is the algebraic tensor product). Using the Sweedler notation,
{\small\begin{eqnarray*}
\Delta(x) &=& \sum (a_0)_{(1)} \nu_{i_1}((b_1)_{(1)})u_{i_1,s_1}(a_1)_{(1)} \nu_{i_2}((b_2)_{(1)})u_{i_2,s_2}\dots (a_n)_{(1)}\\
 & &\ot (a_0)_{(2)} \nu_{s_1}((b_1)_{(2)})(a_1)_{(1)} \nu_{s_2}((b_2)_{(2)})\dots (a_n)_{(2)}.
\end{eqnarray*}}
By Remark \ref{RmkECH} one has $E_H((b_k)_{(1)})=0$ hence, the left part of every term of the sum is a reduced operator whenever there is no $k$ such that $i_k = i_{k+1}$ and $E_{i_k}((a_{k})_{(1)}) \neq 0$, we need only to take care of the remaining cases. Assume that we have $k$ such that $i_k=i_{k+1}$, and $E_{i_k}((a_{k})_{(1)}) \neq 0$, with $k$ being the smallest such integer. Then we can write $(a_{k})_{(1)} =  E_{i_k}((a_{k})_{(1)}) + (a_{k})_{(1)}^\circ$, with $ E_{i_k}((a_{k})_{(1)}^\circ)=0$. Fixing $s =(s_1,s_2,\dots,s_n) \in N^n$, we can write
{\small\begin{align*}
    (a_0)_{(1)} &\nu_{i_1}((b_1)_{(1)})u_{i_1,s_1}(a_1)_{(1)}\dots (a_n)_{(1)}\otimes (a_0)_{(2)} \nu_{s_1}((b_1)_{(2)})(a_1)_{(1)} \nu_{s_2}((b_2)_{(2)})\dots (a_n)_{(2)}\\
    = & (a_0)_{(1)} \nu_{i_1}((b_1)_{(1)})u_{i_1,s_1}(a_1)_{(1)}  \dots (a_k)_{(1)}^\circ \dots (a_n)_{(1)} \ot (a_0)_{(2)} \dots (a_n)_{(2)}\\
     &+(a_0)_{(1)} \nu_{i_1}((b_1)_{(1)})u_{i_1,s_1}(a_1)_{(1)} \dots E_{i_k}((a_k)_{(1)}) \dots (a_n)_{(1)} \ot (a_0)_{(2)} \dots (a_n)_{(2)},
\end{align*}}
we see that the first term of the decomposition is itself reduced up to the $k$-th term, and we can iterate the process with the next term such that $i_{k'}=i_{k'+1}$. In the end we get a sum of tensor such that the first term is always reduced, and we only have to show that the second vanishes. The second term is always of the form
$$
    \alpha_{k,s} = (a_0)_{(1)} \nu_{i_1}((b_1)_{(1)})u_{i_1,s_1}(a_1)_{(1)} \dots E_{i_k}((a_k)_{(1)}) \dots (a_n)_{(1)} \ot (a_0)_{(2)} \dots (a_n)_{(2)},
$$
with maybe other conditional expectations appearing after rank $k$. The definition of the conditional expectations $(E_i)$ gives $E_{i_k}((a_k)_{(1)}) = N \sum_{t=1}^N  h((a_k)_{(1)}u_{i_k t})u_{i_k t}$ hence,
{\small\begin{eqnarray*}
\alpha_{k,s}&=&N \sum_{t=1}^N h((a_k)_{(1)}u_{i_k t})(a_0)_{(1)} \nu_{i_1}((b_1)_{(1)})u_{i_1,s_1}(a_1)_{(1)}\dots\nu_{i_k}((b_k)_{(1)})u_{i_k,t}\nu_{i_k}((b_{k+1})_{(1)}) \dots (a_n)_{(1)}\\
& & \ot (a_0)_{(2)} \dots (a_n)_{(2)}\\
&= & N \sum_{t=1}^N h((a_k)_{(1)}u_{i_k t})(a_0)_{(1)} \nu_{i_1}((b_1)_{(1)})u_{i_1,s_1}(a_1)_{(1)} \dots \nu_{i_k}((b_k)_{(1)}(b_{k+1})_{(1)})u_{i_k,t} \dots (a_n)_{(1)}\\
& & \ot (a_0)_{(2)} \dots (a_n)_{(2)}\\
&=& N \sum_{t=1}^N h((a_k)_{(1)'}u_{i_k t})(a_0)_{(1)} \nu_{i_1}((b_1)_{(1)})u_{i_1,s_1}(a_1)_{(1)} \dots \nu_{i_k}((b_k)_{(1)}(b_{k+1})_{(1)})u_{i_k,t} \dots (a_n)_{(1)}\\
& &    \ot (a_0)_{(2)}\dots (a_k)_{(1)'} \dots (a_n)_{(2)}
\end{eqnarray*}}

\noindent The symbols $(1)'$ and $(2)'$ are used to differentiate between the summation indices of the scalars we will now move to the right side of the tensor product and the others summation indices:
     {\small\begin{align*}
     \alpha_{k,s} =
        & N \sum_{t=1}^N(a_0)_{(1)} \nu_{i_1}((b_1)_{(1)})u_{i_1,s_1}(a_1)_{(1)} \dots \nu_{i_k}((b_k)_{(1)}(b_{k+1})_{(1)})u_{i_k,t} \dots (a_n)_{(1)}\\
        &    \ot (a_0)_{(2)} \dots h((a_k)_{(1)'}u_{i_k t})(a_k)_{(2)'}\dots (a_n)_{(2)}.\\
\end{align*}}

\noindent Now, by Lemma \ref{lemortho2}, we get $\sum_{(1)',(2)'} h((a_k)_{(1)'}u_{i_k t})(a_k)_{(2)'} = 0$. This is true for every value of $t$, hence $\alpha_{k}$ vanishes whenever we considered $k$ as before. This proves that $\widetilde{\omega}$ is $\Delta$-invariant.\end{proof}

\noindent The explicit computation of the Haar state can be used to show the following property of dual quantum subgroups in free wreath products.

\begin{proposition}\label{quantumsub}
Let $C(H)\subset C(G)$ be a dual quantum subgroup and consider, by the universal property of $C(H\wr_* S_N^+)$, the unique unital $*$-homomorphism $\iota\,:\,C(H\wr_* S_N^+)\rightarrow C(\wreath)$ such that $\iota\circ\nu_i=\nu_i\,:\, C(H)\rightarrow C(\wreath)$ for all $1\leq i\leq N$ and $\iota\vert_{C(S_N^+)}$ is the inclusion $C(S_N^+)\subset C(\wreath)$. Then, $\iota$ is faithful so $H\wr_*S_N^+$ is a dual quantum subgroup of $G\wr_*S_N^+$.
\end{proposition}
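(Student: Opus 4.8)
The plan is to reduce the statement to Proposition \ref{dualqsg}: to show that $H\w\SN$ is a dual quantum subgroup of $\wreath$ it suffices to check that $\iota$ intertwines the comultiplications and is faithful on the Hopf $*$-algebra $\Pol(H\w\SN)$. The intertwining comes essentially for free, since on the generators $u_{ij}$ and $\nu_i(\beta)$ (with $\beta\in C(H)$) the defining formulas for $\Delta_{H\w\SN}$ and $\Delta_{\wreath}$ agree; consequently $\iota$ restricts to a unital $*$-homomorphism $\Pol(H\w\SN)\to\Pol(\wreath)$. So the real content is faithfulness on $\Pol$, and I would obtain this from the stronger claim that $\iota$ intertwines the Haar states, $h_{\wreath}\circ\iota=h_{H\w\SN}$. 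Indeed, once this is known, $\iota(x)=0$ forces $h_{H\w\SN}(x^*x)=h_{\wreath}(\iota(x^*x))=0$, and faithfulness of the Haar state on $\Pol$ gives $x=0$.

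To prove the intertwining of Haar states I would use the characterisation of Theorem \ref{propreduced}, applied to both wreath products (with trivial amalgamation, so that the condition on the middle factors of a reduced operator reads $h_H(\beta_k)=0$ for $H\w\SN$ and $h_G(b_k)=0$ for $\wreath$). Since $h_{\wreath}\circ\iota$ is a state on $C(H\w\SN)$, the uniqueness clause of Theorem \ref{propreduced} reduces the claim to checking that $h_{\wreath}\circ\iota$ vanishes on every reduced operator of $H\w\SN$.

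The key step, and the only point where the hypothesis that $H$ is a dual quantum subgroup of $G$ enters, is that $\iota$ carries reduced operators to reduced operators. Given a reduced operator $a_0\nu_{i_1}(\beta_1)a_1\cdots\nu_{i_n}(\beta_n)a_n$ of $H\w\SN$ — so $a_k\in C(\SN)$, $\beta_k\in C(H)$ with $h_H(\beta_k)=0$, and $E_k(a_k)=0$ whenever $i_k=i_{k+1}$ — its image is the same word read inside $C(\wreath)$, because $\iota$ is the identity on $C(\SN)$. The constraints on the $a_k$ are thus preserved verbatim, while the constraints on the $\beta_k$ survive because $h_G\vert_{C(H)}=h_H$: this follows from $h_G=h_H\circ E_H$ together with $E_H\vert_{C(H)}=\id$, where $E_H\colon C(G)\to C(H)$ is the Haar-state-preserving conditional expectation of Proposition \ref{dualqsg}. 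Hence $h_G(\beta_k)=h_H(\beta_k)=0$, the image is a genuine reduced operator of $\wreath$, and $h_{\wreath}$ annihilates it by Theorem \ref{propreduced}.

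Putting the pieces together is then mechanical: $h_{\wreath}\circ\iota$ kills all reduced operators, hence equals $h_{H\w\SN}$; $\iota$ is faithful on $\Pol(H\w\SN)$; and Proposition \ref{dualqsg} promotes this to faithfulness of $\iota$ at the full level and to the assertion that $H\w\SN$ is a dual quantum subgroup of $\wreath$. I do not anticipate a genuine obstacle beyond bookkeeping; the single substantive input is the compatibility $h_G\vert_{C(H)}=h_H$ of the Haar states, which is precisely the dual-quantum-subgroup hypothesis on $H\subset G$.
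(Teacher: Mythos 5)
Your proposal is correct and follows the same route as the paper: check that $\iota$ intertwines the comultiplications, use Theorem \ref{propreduced} to see that it intertwines the Haar states, deduce injectivity on $\Pol(H\w\SN)$ from faithfulness of the Haar state there, and invoke Proposition \ref{dualqsg} to pass to faithfulness of $\iota$ and the dual quantum subgroup conclusion. The only difference is that you spell out the Haar-state intertwining (reduced operators map to reduced operators, using $h_G\vert_{C(H)}=h_H$), which the paper leaves implicit in its citation of Theorem \ref{propreduced}.
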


\begin{proof}
It is clear that $\iota$ intertwines the comultiplications and, by Theorem \ref{propreduced}, it also intertwines the Haar states so the restriction $\iota\vert_{\Pol(H\w \SN)} : \Pol(H\w \SN) \rightarrow \Pol (\wreath)$ is injective (by faithfulness of the Haar states) and still intertwines the comultiplication. Hence,  $H\wr_*S_N^+$ is a dual quantum subgroup of $G\wr_*S_N^+$ and $\iota$ itself faithful by Proposition \ref{dualqsg}.\end{proof}

\subsection{The von Neumann version}

Consider the graph of von Neumann algebras over $\T_N$ given by $M_{p_0}=\Linf(H)\ot \Linf(\SN)$, $M_{p_k}:=\rL^\infty(G)\otimes\C^N$, $N_{e_k}=N_{\ebar_k}=\Linf(H) \ot \C^N$ for all $1\leq k\leq N$ with source map $s_{e_k}\,:\,\Linf(H)\ot \C^N\rightarrow \Linf(H) \ot L_k\subset \Linf(H) \ot \Linf(\SN)$, $h\ot e_j\mapsto h\ot u_{kj}$ and range map $r_{e_k}\,:\,\Linf(H) \ot \C^N\rightarrow \Linf(G)\ot\C^N$ being the canonical inclusion. We also consider family of faithful normal states $\omega_{p_0}=h_H\ot h_{S_N^+}$, $\omega_{p_k}=h_G\ot\tr$ (where $\tr$ is the normalized uniform trace on $\C^N$ i.e. $\tr(e_j)=\frac{1}{N}$) and $\omega_{e_k}=h_H\ot \tr$ for all $1\leq k\leq N$. Hence, we get a graph of von Neumann algebras as defined in \cite[Definition A.1]{FF14}. Let us denote by $M$ the fundamental von Neumann algebra at $p_0$ and view $M_{p_0}=\Linf(H) \ot \Linf(S_N^+)\subset M$. Let $\varphi$ the associated fundamental normal faithful state which is a trace if and only if $G$ is Kac from the results of \cite{FF14}, as $H$ is automatically Kac in that case. As a direct consequence of Proposition \ref{propreduced} and \cite[Section A.5]{FF14} we have the following. Note that, by Theorem \ref{propreduced}, the unital faithful $*$-homomorphism $\nu_i\,:\,C(G)\rightarrow C(\wreathH)$ preserves the Haar states hence, it induces a unital normal faithful $*$-homomorphism $\nu_i\,:\,\Linf(G)\rightarrow\Linf(\wreathH)$.

\begin{proposition}\label{propvn}
There exists a unique unital normal $*$-isomorphism $\pi_r''\,:\,M\rightarrow \Linf(\wreathH)$ extending the isomorphism $\pi_r\,:\, A\rightarrow C_r(\wreathH)$ from Theorem \ref{propreduced}. Moreover, $h\circ\pi_r''=\varphi$, where $h$ also denotes the Haar state on ${\rm L}^\infty(\wreathH)$.
\end{proposition}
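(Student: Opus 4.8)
The plan is to realise both $M$ and $\Linf(\wreathH)$ as the von Neumann completions of the C*-algebras $A$ and $C_r(\wreathH)$ in the GNS representations of the fundamental state $\omega$ and the Haar state $h$, and then to promote the state-preserving $*$-isomorphism $\pi_r$ of Theorem \ref{propreduced} to a spatial isomorphism. By \cite[Section A.5]{FF14}, the reduced fundamental C*-algebra $A$ sits weakly densely in the fundamental von Neumann algebra $M$, with $\varphi\vert_A=\omega$ and $\varphi$ the vector state of the GNS cyclic vector $\xi_\omega$ of $\omega$; in particular $M=\pi_\omega(A)''$ acting on $\rL^2(A,\omega)$. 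On the quantum group side, by definition $\Linf(\wreathH)=\pi_h(C_r(\wreathH))''$ acts on $\rL^2(\wreathH)$ with $h$ the vector state of the cyclic vector $\xi_h$, and $h$ is faithful there. Note that $\omega$ is automatically faithful on $A$, since $\omega=h\circ\pi_r$ by Theorem \ref{propreduced} and $h$ is faithful on $C_r(\wreathH)$.

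First I would use the identity $h\circ\pi_r=\omega$ to define a map $U\,:\,\rL^2(A,\omega)\rightarrow\rL^2(\wreathH)$ by $U(a\xi_\omega)=\pi_r(a)\xi_h$ for $a\in A$. This is well defined and isometric because
\[
\Vert U(a\xi_\omega)\Vert^2=h\big(\pi_r(a)^*\pi_r(a)\big)=h\big(\pi_r(a^*a)\big)=\omega(a^*a)=\Vert a\xi_\omega\Vert^2,
\]
and it is surjective since $\pi_r$ is onto and $\xi_h$ is cyclic for $C_r(\wreathH)$, so that $U(A\xi_\omega)=\pi_r(A)\xi_h$ is dense. By construction $U\pi_\omega(a)U^*=\pi_h(\pi_r(a))$ for every $a\in A$, hence $U\pi_\omega(A)U^*=\pi_h(C_r(\wreathH))$. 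I would then set $\pi_r'':={\rm Ad}(U)\vert_M$.

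Since double commutants are preserved under the spatial isomorphism ${\rm Ad}(U)$, we obtain $U M U^*=U\pi_\omega(A)''U^*=\pi_h(C_r(\wreathH))''=\Linf(\wreathH)$, so $\pi_r''$ is a normal $*$-isomorphism of $M$ onto $\Linf(\wreathH)$. It extends $\pi_r$ by the intertwining relation above, and it is the unique normal extension because $A$ is weakly dense in $M$. Finally, as $U\xi_\omega=\pi_r(1)\xi_h=\xi_h$, for every $x\in M$ we get
\[
h(\pi_r''(x))=\langle\xi_h,U x U^*\xi_h\rangle=\langle\xi_\omega,x\xi_\omega\rangle=\varphi(x),
\]
which is exactly the asserted identity $h\circ\pi_r''=\varphi$.

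The computations are routine once the objects are correctly identified; the only genuine point requiring care is the input from \cite[Section A.5]{FF14}, namely that the fundamental von Neumann algebra $M$ of the graph of von Neumann algebras of this subsection coincides with the weak closure of the reduced fundamental C*-algebra $A$ from Theorem \ref{propreduced} in the GNS representation of $\omega$, with $\varphi\vert_A=\omega$. This compatibility between the C*-level construction of \cite{FG18} and the von Neumann-level construction of \cite{FF14} is what the whole spatial argument rests on, and it depends on the faithfulness of the conditional expectations in the reduced graph of Section \ref{SectionGraphReduced}.
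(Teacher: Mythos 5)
Your proof is correct and takes essentially the same approach as the paper: the paper simply asserts the Proposition as a direct consequence of Theorem \ref{propreduced} and \cite[Section A.5]{FF14}, and your GNS/spatial-extension argument (the unitary $U$ intertwining the two GNS representations, then $\pi_r'':={\rm Ad}(U)\vert_M$) is exactly the standard reasoning that makes this precise. The single nontrivial input you isolate — that $M$ is the weak closure of $A$ in the GNS representation of $\omega$, with $\varphi$ the corresponding vector state — is precisely what the paper is citing from \cite[Section A.5]{FF14}, so nothing is missing.
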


\noindent Recall that $\mathrm{Aut}_{\Linf(H)}(\Linf(G),h_G) = \lbrace \beta \in \mathrm{Aut}(\Linf(G),h_G),\,\beta(\Linf(H)) = \Linf(H)\rbrace$.

\begin{proposition}
For all $\alpha\in \mathrm{Aut}_{\Linf(H)}(\Linf(G),h_G)$ there exists a unique $h$-preserving automorphism $\psi(\alpha)$ of $\Linf(\wreathH)$ such that
\begin{equation}\label{MorphismPsi}
\psi(\alpha)\vert_{\Linf(S_N^+)}=\id\quad\text{and}\quad \psi(\alpha)\circ\nu_i=\nu_i\circ\alpha\quad\forall1\leq i\leq N.
\end{equation}
Moreover, $\psi\,:\,{\rm Aut}_{\Linf(H)}(\Linf(G),h_G)\rightarrow{\rm Aut}(\Linf(\wreathH),h)$ is a continuous group homomorphism.
\end{proposition}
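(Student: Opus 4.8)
The plan is to obtain $\psi(\alpha)$ from the functoriality of the fundamental von Neumann algebra construction of \cite{FF14} applied to the graph of von Neumann algebras over $\T_N$ preceding Proposition \ref{propvn}, and then to read off uniqueness, the homomorphism property, and continuity. First I record two preliminary facts about $\alpha\in\mathrm{Aut}_{\Linf(H)}(\Linf(G),h_G)$. Since $\alpha(\Linf(H))=\Linf(H)$ and $h_G\vert_{\Linf(H)}=h_H$, the restriction $\alpha_H:=\alpha\vert_{\Linf(H)}$ is a well-defined $h_H$-preserving automorphism of $\Linf(H)$; and $\alpha$ commutes with $E_H$, because $\alpha\circ E_H\circ\alpha^{-1}$ is again an $h_G$-preserving normal conditional expectation onto $\Linf(H)$ and such an expectation is unique. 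Now define state-preserving automorphisms of the building blocks: $\beta_{p_0}=\alpha_H\ot\id$ on $M_{p_0}=\Linf(H)\ot\Linf(\SN)$, $\beta_{p_k}=\alpha\ot\id$ on $M_{p_k}=\Linf(G)\ot\C^N$, and $\beta_{e_k}=\alpha_H\ot\id$ on $N_{e_k}=\Linf(H)\ot\C^N$. These preserve $\omega_{p_0},\omega_{p_k},\omega_{e_k}$ and intertwine the structure maps: for $s_{e_k}$ one checks $\beta_{p_0}(h\ot u_{kj})=\alpha_H(h)\ot u_{kj}=s_{e_k}(\beta_{e_k}(h\ot e_j))$, and $r_{e_k}$ is handled identically since it is the canonical inclusion and $\beta_{p_k}$ restricts to $\beta_{e_k}$ on $\Linf(H)\ot\C^N$. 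As the maximal subtree is $\T_N$ itself, there are no nontrivial edge unitaries, so these compatible vertex automorphisms assemble into a unique normal $\varphi$-preserving automorphism $\Psi$ of $M$. Transporting through the isomorphism $\pi_r''$ of Proposition \ref{propvn} and using $\nu_i(\Linf(G))=\pi_r''(\Linf(G)\ot 1)$ and $\Linf(\SN)=\pi_r''(1\ot\Linf(\SN))$, I set $\psi(\alpha):=\pi_r''\circ\Psi\circ(\pi_r'')^{-1}$, which is $h$-preserving and satisfies \eqref{MorphismPsi}. (Existence can alternatively be obtained by the formula $a_0\nu_{i_1}(b_1)\cdots a_n\mapsto a_0\nu_{i_1}(\alpha(b_1))\cdots a_n$ on reduced operators, checking via Theorem \ref{propreduced} that the induced map on $L^2(\wreathH)$ is a well-defined unitary fixing $\xi_h$.)

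Uniqueness is immediate: $\Linf(\wreathH)$ is generated as a von Neumann algebra by $\Linf(\SN)$ together with the $\nu_i(\Linf(G))$, and \eqref{MorphismPsi} prescribes $\psi(\alpha)$ on all these generators, so $\psi(\alpha)$ is determined by normality. The homomorphism property then follows formally: for $\alpha,\beta\in\mathrm{Aut}_{\Linf(H)}(\Linf(G),h_G)$ the automorphism $\psi(\alpha)\psi(\beta)$ is $h$-preserving, is the identity on $\Linf(\SN)$, and satisfies $\psi(\alpha)\psi(\beta)\circ\nu_i=\nu_i\circ\alpha\beta$; by uniqueness $\psi(\alpha\beta)=\psi(\alpha)\psi(\beta)$, and since $\psi(\id)=\id$ the map $\psi$ is a group homomorphism.

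The remaining point, continuity, is the one I expect to be the main obstacle: a direct telescoping estimate on a reduced word fails because the varying factors $\nu_{i_k}(\alpha(b_k))$ are \emph{sandwiched}, and left multiplication by an element small in $\Vert\cdot\Vert_2$ need not be small in operator norm. The resolution is to pass to the GNS space $L^2(\wreathH,h)=L^2(M,\varphi)$, on which $\psi(\alpha)$ is implemented by the unitary $W_\alpha$ determined by $W_\alpha\xi_h=\xi_h$ and $W_\alpha(x\xi_h)=\psi(\alpha)(x)\xi_h$. By the explicit decomposition of $L^2(M,\varphi)$ into reduced-word pieces from \cite{FF14}, each such piece being a tensor product of pointed GNS spaces of the vertex and edge algebras, the unitary $W_\alpha$ decomposes as a direct sum of tensor products of the single-factor unitaries implementing $\beta_{p_0},\beta_{p_k},\beta_{e_k}$, that is, of copies of the implementing unitary $U_\alpha^G$ of $\alpha$ on $L^2(\Linf(G),h_G)$, of that of $\alpha_H$ on $L^2(\Linf(H),h_H)$, and of the identity on $L^2(\Linf(\SN))$. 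The topology on $\mathrm{Aut}_{\Linf(H)}(\Linf(G),h_G)$ is exactly the one making $\alpha\mapsto U_\alpha^G$ $*$-strongly continuous, and by Remark \ref{RmkRestriction} the restriction $\alpha\mapsto\alpha_H$ is continuous, so $\alpha\mapsto U_{\alpha_H}^H$ is $*$-strongly continuous as well.

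Since a direct sum of tensor products of uniformly bounded, $*$-strongly convergent nets of unitaries again converges $*$-strongly (on each finitely-supported vector, with the tail controlled by uniform boundedness), I conclude that $\alpha\mapsto W_\alpha$ is $*$-strongly continuous, which is precisely continuity of $\psi$ for the topology on $\Aut(\Linf(\wreathH),h)$. Thus the essential technical input is the factor-wise action of $\psi(\alpha)$ on the fundamental GNS space, which is where the orthogonality encoded in the Haar-state formula of Theorem \ref{propreduced} does the work of converting the problematic sandwiched multiplications into an honest tensor-product action.
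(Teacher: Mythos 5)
Your proposal is correct in substance, and its mechanism is the same as the paper's, but it is packaged through \cite{FF14} machinery where the paper argues by hand. For existence, the paper does exactly what your parenthetical remark sketches: it defines a unitary $U_\alpha$ on the dense subspace of ${\rm L}^2(\wreathH)$ spanned by $\xi$ and the reduced operators, replacing each letter $\nu_{i_k}(b_k)$ by $\nu_{i_k}(\alpha(b_k))$, and checks isometry by induction from the Haar-state formula of Theorem \ref{propreduced}, using (as you do) that $E_H\circ\alpha=\alpha\circ E_H$ by uniqueness in Takesaki's theorem; uniqueness and the homomorphism property are then obtained exactly as you obtain them. For continuity, the paper's argument is your argument written at the level of words rather than of the Fock-type decomposition: it telescopes $\psi(\alpha)(x)\xi-x\xi$ over the letters of a reduced word $x$, notes that each difference term is again the ${\rm L}^2$-image of a reduced word containing the letter $\nu_{i_k}(\alpha(b_k)-b_k)$ (here $E_H\circ\alpha=\alpha\circ E_H$ is used again), and controls its norm by $\Vert\alpha(b_k)-b_k\Vert_{2,G}$ via the orthogonality coming from Theorem \ref{propreduced}; a density argument based on $\Vert\psi_x(\alpha)-\psi_y(\alpha)\Vert^2=h((x-y)^*(x-y))$ finishes. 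So your route buys conceptual economy (everything follows from ``$W_\alpha$ acts factor-wise on the reduced-word decomposition''), while the paper's buys self-containedness.

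Two of your black boxes, however, need justification, and one of them hides a genuine (though fixable) gap. First, the functoriality of the fundamental von Neumann algebra under compatible state-preserving automorphisms of the vertex and edge data is not stated in \cite{FF14}; it is a standard fact, but proving it (say by iterating, along the finite tree $\T_N$, the corresponding statement for one amalgamated free product) amounts to precisely the unitary construction the paper carries out, so it cannot be invoked for free. Second, and more importantly, the reduced-word pieces of ${\rm L}^2(M,\varphi)$ are \emph{not} plain tensor products of pointed GNS spaces: they are relative tensor products over the edge algebras $\Linf(H)\ot\C^N$, which are infinite dimensional whenever $\Irr(H)$ is infinite. Your closing step --- a direct sum of tensor products of uniformly bounded, $*$-strongly convergent unitaries converges $*$-strongly --- is correct verbatim only for plain tensor products; for relative tensor products the norm of an elementary tensor $\eta\ot_B\zeta$ is governed by $B$-valued inner products, so strong convergence of one factor does not by itself control the difference. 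The estimate does close if you run it on elementary tensors of $B$-bounded vectors (e.g.\ vectors $x\xi$ with $x$ in the relevant algebra, which are dense) and use that your implementing unitaries transform the $B$-valued inner products by $\alpha_H\ot{\rm id}$, keeping all bounds uniform; in the non-amalgamated case the amalgam is $\C^N$, relative tensor products split into finite direct sums of plain ones, and your argument is complete as written. (In fairness, the paper's displayed factorization of $\Vert\varphi_k(\alpha)\Vert^2$ into a product of $2$-norms quietly ignores the same amalgam and is only correct as an inequality with operator norms on the outer letters, which is all its proof needs.)
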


\begin{proof}
Let $\alpha\in\mathrm{Aut}_{\Linf(H)}(\Linf(G),h_G)$ and write $({\rm L}^2(\wreathH),\lambda,\xi)$ the GNS of the Haar state. To show that $\psi(\alpha)$ exists, it suffices to show that there exists a well defined unitary $U_\alpha\in\mathcal{B}({\rm L}^2(\wreathH))$ such that, for all reduced operator $a_0 \nu_{i_1}(b_1) \dots \nu_{i_{n}}(b_{n}) a_n\in \Linf(\wreathH)$,
$$U_\alpha(a_0 \nu_{i_1}(b_1)\dots \nu_{i_{n}}(b_{n}) a_n\xi)=a_0 \nu_{i_1}(\alpha(b_1))\dots \nu_{i_{n}}(\alpha(b_{n})) a_n\xi$$
Indeed, if such a unitary is constructed, then $\psi(\alpha)(x):=U_\alpha x U_\alpha^*$ does the job. To show that such a $U_\alpha$ exists it suffices to check that, for any reduced operator $x=a_0 \nu_{i_1}(b_1) \dots \nu_{i_{n}}(b_{n}) a_n\in \Linf(\wreathH)$ one has $h(x^*x)=h(y^*y)$, where $y:=a_0 \nu_{i_1}(\alpha(b_1))\dots \nu_{i_{n}}(\alpha(b_{n})) a_n$. This can be shown by induction on $n$, by first observing that $E_H\circ\alpha=\alpha\circ E_H$, where $E_H\,:\,\Linf(G)\rightarrow\Linf(H)$ is the canonical normal faithful conditional expectation. Indeed, since $\alpha^{-1}\circ E_H\circ\alpha$ is a normal $h$-preserving conditional expectation onto $\Linf(H)$, we have, by uniqueness in Takesaki's Theorem, that $\alpha^{-1}\circ E_H\circ\alpha=E_H$. It follows that the element $y$ is reduced whenever $x$ is. Then, one can easily use the Haar state formula in Theorem \ref{propreduced} to prove our claim by induction. The uniqueness of $\psi(\alpha)$ and the fact that it preserves the Haar state are clear.

\vspace{0.2cm}

\noindent The fact that it is a group homomorphism follows by uniqueness. To prove continuity, it suffices to check that the map $\psi_x\,:\,{\rm Aut}_{\Linf(H)}(\Linf(G),h_G))\rightarrow{\rm L}^2(\wreathH,h)$, $(\alpha\mapsto\psi(\alpha)(x)\xi)$, is continuous for all $x\in\Linf(\wreathH)$, where $\xi\in{\rm L}^2(\wreathH,h)$ is the canonical cyclic vector. Define $\mathcal{C}\subset\Linf(\wreath)$ to be the linear span of $\Linf(S_N^+)$ and the reduced operators and observe that $\mathcal{C}$ is a $\sigma$-strongly dense unital $*$-subalgebra of $\Linf(\wreathH)$. Note that the subset:
$$\{x\in\Linf(\wreathH)\,:\,\psi_x\text{ is continuous }\}\subseteq \Linf(\wreathH)$$
is a clearly a subspace and it is $\sigma$-strongly closed since, $\psi(\alpha)$ being $h$-preserving, we have, $$\Vert\psi_x(\alpha)-\psi_y(\alpha)\Vert^2=\Vert\psi(\alpha)(x)\xi-\psi(\alpha)(y)\xi\Vert^2=\Vert\psi(\alpha)(x-y)\xi\Vert^2=h((x-y)^*(x-y))$$
for all $\alpha\in{\rm Aut}_{\Linf(H)}(\Linf(G),h_G)$ and $x,y\in\Linf(\wreathH)$. Hence, it suffices to show that $\psi_x$ is continuous for all $x\in\Linf(S_N^+)$ or for $x$ a reduced operator. When $x\in\Linf(S_N^+)$ the map $\psi_x$ is the constant map equals to $x\xi$. When $x=a_0 \nu_{i_1}(b_1) \dots \nu_{i_{n}}(b_{n}) a_n\in\Linf(\wreathH)$ is a reduced operator, we have, by induction on $n$, $\psi_x(\alpha)-\psi_x({\rm id})=\sum_{k=1}^n\varphi_k(\alpha)$, where:
$$\varphi_k(\alpha):=a_0\nu_{i_1}(b_1)a_1\dots\nu_{i_{k-1}}(b_k)a_k\nu_{i_k}(\alpha(b_k)-b_k)a_k\nu_{i_{k+1}}(\alpha(b_{k+1}))a_{k+1}\dots\nu_{i_n}(\alpha(b_n))a_n\xi$$
with the natural conventions so that $\varphi_0(\alpha)$ and $\varphi_n(\alpha)$ make sense.
Using the formula for the Haar state given in Theorem \ref{propreduced} and the fact that $\alpha$ is $h_G$ preserving, a direct computation gives:
$$\Vert\varphi_k(\alpha)\Vert^2=\left(\prod_{l=0}^n\Vert a_l\Vert_{2,S_N^+}^2\right)\left(\prod_{1\leq l\leq n,l\neq k}\Vert b_l\Vert_{2,G}^2\right)\Vert\alpha(b_k)-b_k\Vert_{2,G}^2$$
where, for a CQG $G$, $\Vert\cdot\Vert_{2,G}$ is the $2$-norm given by the Haar state on $\Linf(G)$. It follows that if $\alpha_s\rightarrow_s{\rm id}$ in ${\rm Aut}_{\Linf(H)}(\Linf(G),h_G)$ then, for all $1\leq k\leq n$, $\varphi_k(\alpha_s)\rightarrow_s 0$ in ${\rm L}^2(\wreathH)$ for the norm. Hence, $\psi_x$ is continuous at ${\rm id}$ so it is continuous since it is a group homomorphism.\end{proof}

\subsection{An inductive construction for free wreath products algebras}

We define inductively the C*-algebra $\mathcal{A}_i$ ($1\leq i\leq N$) containing $C(H)\ot C(S_N^+)$ with a state $\omega_i\in \mathcal{A}_i^*$ by $\mathcal{A}_0=C(H)\ot C(S_N^+)$ and $\omega_0=h_H\ot h_{S_N^+}$ the Haar state on $\mathcal{A}_0$. Let $A_0:=C_r(H)\ot C_r(S_N^+)$ be the GNS-construction of $\omega_0$ and $M_0:=\Linf(H)\ot \Linf(S_N^+)=A_0''$ the von Neumann algebra generated in the GNS construction. Write $\lambda_0\,:\,\mathcal{A}_0\rightarrow A_0$ the GNS morphism. We still denote by $\omega_0$ the associated faithful state on $A_0$ (resp. faithful normal state on $M_0$). Suppose that $(\mathcal{A}_i,\omega_i)$ is constructed and let $A_i$ be the C*-algebra of the GNS construction of $\omega$ with GNS morphism $\lambda_i\,:\,\mathcal{A}_i\rightarrow A_i$ and $M_i=A_i''$ the von Neumann algebra generated in the GNS construction. Define the full free product $\mathcal{A}_{i+1}:=(C(G)\ot \C^N)\underset{C(H)\ot \C^N}{*}\mathcal{A}_i$, where the amalgamation is with respect to the faithful normal unital $*$-homomorphisms $r_{e_{i+1}}\,:\,C(H)\ot \C^N\rightarrow C(G)\ot\C^N$ and $s_{e_{i+1}}\,:\,C(H)\ot \C^N\mapsto C(H)\ot L_{i+1}\subset C(H)\ot C(S_N^+)\subset \mathcal{A}_i$. Let $A_{i+1}:=(C_r(G)\ot\C^N,h_G\ot{\rm tr})\underset{C_r(H)\ot \C^N}{*}(A_i,\omega_i)$ be the reduced free product with amalgamation with respect to the same maps but at the reduced level. Let $\lambda_{i+1}\,:\,\mathcal{A}_{i+1}\rightarrow A_i$ the unique surjective unital $*$-homomorphism such that $\lambda_{i+1}(a\ot x)=\lambda_G(a)\ot x$ for all $a\in C(G)$, $x\in \C^N$ and $\lambda_{i+1}\vert_{\mathcal{A}_i}=\lambda_i$. Define $\omega_{i+1}:=((h_G\ot{\rm tr})*\omega_i)\circ\lambda_{i+1}$ and note that $A_{i+1}$ is the C*-algebra of the GNS construction of $\omega_{i+1}$  and $\lambda_{i+1}\,:\,\mathcal{A}_{i+1}\rightarrow A_{i+1}$ is the GNS morphism. It follows that $M_{i+1}=A_{i+1}''=(\Linf(G)\ot \C^N)\underset{\Linf(H) \ot \C^N}{*}M_i$, where the amalgamated free product von Neumann algebra is with respect to the faithful normal unital $*$-homomorphisms $r_{e_i}\,:\,\Linf(H)\ot \C^N\rightarrow \Linf(G)\ot\C^N$ and $s_{e_i}\,:\,\Linf(H)\ot \C^N\mapsto \Linf(H)\ot L_{i+1}\subset \Linf(H)\ot \Linf(S_N^+)\subset M_i$  and with respect to the faithful normal states $h_G\ot{\rm tr}$ and $\omega_i$. We still denote by $\omega_{i+1}$ the faithful free product state on $A_{i+1}$ and the faithful normal free product state on $M_{i+1}$.

\begin{proposition}\label{InductiveConstruction}
There exists unique state preserving $*$-isomorphisms $$\rho\,:\, (C(\wreathH),h)\rightarrow(\mathcal{A}_N,\omega_N)\quad\text{and}\quad \rho_r\,:\, (C_r(\wreathH),h)\rightarrow(A_N,\omega_N)$$ and a normal $*$-isomorphism $\rho_r''\,:\,(\Linf(\wreathH),h)\rightarrow(M_N,\omega_N)$, where $h$ the Haar state on $\wreathH$, such that, writing $\mathcal{A}_N=\cup_i^\uparrow\mathcal{A}_i$, $\rho$ maps $C(S_N^+)$ onto $\mathcal{A}_0=C(H)\ot C(S_N^+)$ via $x\mapsto 1\ot x$ and, for all $1\leq i\leq N$, $\rho(\nu_i(a))=a\ot 1\in C(G)\otimes\C^N\subset\mathcal{A}_i=(C(G)\ot \C^N)\underset{C(H)\ot \C^N=C(H)\ot L_i}{*}\mathcal{A}_{i-1}$ and, $\lambda\circ\rho=\rho_r\circ\lambda_N$, $\lambda\circ\rho_r''=\rho_r''\circ\lambda_N$, where $\lambda\,:\,C(\wreathH)\rightarrow C_r(\wreathH)$ is the canonical surjection.
\end{proposition}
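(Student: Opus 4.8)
The plan is to recognize the inductive tower $(\mathcal{A}_i,\omega_i)$ as the process of assembling the fundamental algebra of the graph of operator algebras over $\T_N$ from Sections \ref{SectionGraphFull}--\ref{SectionGraphReduced} by attaching the leaves $p_1,\dots,p_N$ to the root $p_0$ one at a time, and then to transport the isomorphisms $\pi$ (Proposition \ref{PropFull}), $\pi_r$ (Theorem \ref{propreduced}) and $\pi_r''$ (Proposition \ref{propvn}) along this identification.

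First I would treat the full level. Since the tree $\T_N$ is its own maximal subtree, the full fundamental algebra $\mathcal{A}$ of Section \ref{SectionGraphFull} has all edge unitaries equal to $1$ and is therefore the universal unital C*-algebra generated by $\A_{p_0},\dots,\A_{p_N}$ subject only to the edge identifications $s_{v_k}(b)=r_{v_k}(b)$, $b\in\B_{v_k}$. This is exactly the iterated full amalgamated free product obtained by gluing one leaf at a time, with $\mathcal{A}_0=\A_{p_0}$ and $\mathcal{A}_{i}=(C(G)\ot\C^N)\ast_{C(H)\ot\C^N}\mathcal{A}_{i-1}$. Comparing universal properties (associativity of the full amalgamated free product) gives a canonical isomorphism $\Phi\,:\,\mathcal{A}\to\mathcal{A}_N$ intertwining the vertex inclusions, and I set $\rho:=\Phi\circ\pi^{-1}$. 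Chasing the formulas of Proposition \ref{PropFull} yields $\rho(x)=1\ot x$ for $x\in C(\SN)$ and $\rho(\nu_i(a))=a\ot 1\in C(G)\ot\C^N\subset\mathcal{A}_i$, as stated.

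Next I would verify state preservation, i.e. $\omega_N\circ\rho=h$, which by Theorem \ref{propreduced} reduces to showing that $\omega_N$ annihilates the image of every reduced operator $w=a_0\nu_{i_1}(b_1)a_1\cdots\nu_{i_n}(b_n)a_n$. Writing $\omega_N=\omega_0\circ E\circ\lambda_N$ with $\omega_0=h_H\ot h_{\SN}$ and $E\,:\,A_N\to A_{p_0}$ the conditional expectation onto the base, and using that $\lambda_N(\rho(w))=(1\ot a_0)(b_1\ot1)(1\ot a_1)\cdots(b_n\ot1)(1\ot a_n)$ in $A_N$, the $A_{p_0}$-bimodularity of $E$ lets me strip the outer factors $1\ot a_0,1\ot a_n\in A_{p_0}$, so it suffices to show that $E$ kills the alternating word $(b_1\ot1)(1\ot a_1)\cdots(b_n\ot1)$. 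Each leaf letter satisfies $(E_H\ot\id)(b_k\ot1)=E_H(b_k)\ot1=0$, so is centered over $\B_{v_{i_k}}$ and a fortiori over $A_{p_0}$; each central letter $1\ot a_k$ lies in $A_{p_0}$, and precisely when $i_k=i_{k+1}$ the hypothesis $E_{i_k}(a_k)=0$ (Proposition \ref{lemCE}) says it is centered over $\B_{v_{i_k}}=C(H)\ot L_{i_k}$. Viewing $\mathcal{A}_N$ as the amalgamated free product $\ast_{A_{p_0}}\tilde A_{p_k}$ over the common base $A_{p_0}$, where $\tilde A_{p_k}$ is generated by $A_{p_0}$ and $A_{p_k}$, I rewrite the word as a reduced word over $A_{p_0}$: two consecutive leaf letters with $i_k=i_{k+1}$ fuse, through the $\B_{v_{i_k}}$-centered element $1\ot a_k$, into a single $A_{p_0}$-centered element of $\tilde A_{p_{i_k}}$, while a central letter lying between distinct leaves is absorbed into an adjacent leaf letter without spoiling $A_{p_0}$-centering. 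The outcome is a reduced word with strictly alternating leaf indices, which $E$ annihilates. This combinatorial rewriting is the only genuinely technical point; alternatively one may quote the associativity of vertex-reduced fundamental algebras over a tree from \cite{FF14,FG18} to identify $(A_N,\omega_N)$ with the reduced fundamental algebra $(A,\omega)$ of Section \ref{SectionGraphReduced} and conclude via Theorem \ref{propreduced}.

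Finally, since $\rho$ is a state-preserving isomorphism and, by construction, $A_N$ and $C_r(\wreathH)$ are the GNS C*-algebras of $\omega_N$ and $h$ respectively, $\rho$ descends to a unital $*$-isomorphism $\rho_r\,:\,C_r(\wreathH)\to A_N$ intertwining the canonical surjections, $\lambda_N\circ\rho=\rho_r\circ\lambda$. Passing to bicommutants in the GNS spaces produces the normal $*$-isomorphism $\rho_r''\,:\,\Linf(\wreathH)\to M_N$ extending $\rho_r$, which coincides with $\pi_r''$ of Proposition \ref{propvn} under $\Phi$. Uniqueness of $\rho,\rho_r,\rho_r''$ is automatic because each is prescribed on the generating sets $C(\SN)$ and $\nu_i(C(G))$.
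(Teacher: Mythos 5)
Your proof is correct, and its skeleton coincides with the paper's: reduce state preservation, via the uniqueness part of Theorem \ref{propreduced}, to showing that $\omega_N$ annihilates the images of reduced operators; verify this by free-product word combinatorics; then obtain $\rho_r$ and $\rho_r''$ by GNS/bicommutant descent, with uniqueness read off the generators. The two real points of divergence are the following. First, for the isomorphism $\rho$ itself the paper does not recycle Proposition \ref{PropFull}: it defines $\rho$ directly from the universal property of $C(\wreathH)=C(G)^{*_HN}*C(S_N^+)/I$ and builds an inverse $\rho'$ inductively, one full amalgamated free product at a time; your $\rho=\Phi\circ\pi^{-1}$, with $\Phi$ the associativity isomorphism for the star-shaped tree, is shorter and perfectly legitimate. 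Second, for the vanishing of $\omega_N$ on reduced words the paper inducts on the leaves: writing $\mathcal{A}_N=(C(G)\ot\C^N)\underset{C(H)\ot\C^N}{*}\mathcal{A}_{N-1}$, it groups a reduced operator by its leaf-$N$ letters, checks that the interior strings are centered for the expectation onto $C(H)\ot L_N$, and invokes the definition of the free product state, restricting to $\mathcal{A}_{N-1}$ and repeating when no leaf-$N$ letter occurs. Your fusion/absorption rewriting over the star decomposition $*_{A_{p_0}}\tilde A_{p_k}$ is the ``all leaves at once'' version of the same computation; note, however, that to know that the expectation $E$ computing $\omega_N$ (a priori the \emph{composite} of the iterated free-product expectations) kills words that are reduced for the \emph{star} decomposition, you are implicitly using the associativity (d\'evissage) of reduced amalgamated free products over a tree --- exactly the fact you offer as your ``alternative'' route via \cite{FF14,FG18}. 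So your two variants are complementary halves of one argument rather than independent proofs; the paper's leaf-by-leaf induction is precisely how it avoids invoking that identification, and it is equally terse about the analogous centering claim for interior strings, so this is not a gap peculiar to your write-up. Net effect: your proof buys brevity and modularity by leaning on Proposition \ref{PropFull} and on the structure theory of reduced amalgamated free products, where the paper's argument is self-contained.
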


\begin{proof}
Uniqueness being obvious, it suffices to show the existence and that $\rho$ intertwines the states. The existence of $\rho_r$ and $\rho_r''$ follows from that. It is easy to see, by the universal property of $C(\wreathH)=C(G)^{*_H N}*C(S_N^+)/I$ that there exists a unital $*$-homomorphism $\rho$ satisfying the conditions of the statement. To show that $\rho$ is an isomorphism, we construct an inverse $\rho'\,:\,\mathcal{A}_N\rightarrow C(\wreathH)$. To do so, we construct inductively unital $*$-homomorphisms $\rho'_i\,:\,\mathcal{A}_i\rightarrow C(\wreathH)$ by letting $\rho'_0\,:\,\mathcal{A}_0=C(H)\ot C(S_N^+)\rightarrow C(\wreathH)$, $a\ot b\mapsto\nu(a)b$ and, if $\rho'_{i-1}\,:\,\mathcal{A}_{i-1}\rightarrow C(\wreathH)$ is defined, we use the universal property of the full amalgamated free product $\mathcal{A}_{i}=(C(G)\ot\C^N)\underset{C(H)\ot \C^N=C(H) \ot L_i}{*}\mathcal{A}_{i-1}$ to define $\rho'_{i}\,:\,\mathcal{A}_{i}\rightarrow C(\wreathH)$ to be the unique unital $*$-homomorphism such that $\rho'_{i}\vert_{\mathcal{A}_{i-1}}=\rho'_{i-1}$ and $\rho_{i}'(a\ot e_j)=\nu_i(a)u_{ij}$ for all $a\ot e_j\in C(G)\ot\C^N$, where $(e_j)_{1\leq j\leq N}$ is the canonical orthonormal basis of $\C^N$. Then, since $\rho'_i\vert_{\mathcal{A}_{i-1}}=\rho'_{i-1}$, there exists a unique unital $*$-homomorphism $\rho'\,:\,\mathcal{A}_N=\cup_i^\uparrow\mathcal{A}_i\rightarrow C(\wreathH)$ such that $\rho'\vert_{\mathcal{A}_i}=\rho'_i$. It is then easy to check that $\rho'$ is the inverse of $\rho$. It remains to show that $\rho$ intertwines the states. By the uniqueness property of the state $h$ stated in Theorem \ref{propreduced}, it suffices to show that $\omega_N(\rho(c))=0$ for  $c\in C(\wreathH)$ a reduced operator in the sense of Theorem \ref{propreduced}. So let $c=a_0\nu_{i_1}(b_1)a_1\dots\nu_{i_n}(b_n)a_n\in C(\wreathH)$ be a reduced operator and note that, if at least one the $i_k$ is equal to $N$ then $\rho(c)$ is a word with letters alternating from $C(G)\ot\C^N\ominus C(H)\ot \C^N$, and $\mathcal{A}_{N-1}\ominus C(H)\ot L_N$ (where, when, for $C\subset B$ with conditional expectation, $B\ominus C$ denotes the kernel of the conditional expectation onto $C$) so, by definition of the free product state, $\omega_N(\rho(c))=0$. If for all $k$, $i_k\leq N-1$, then we can view $\rho(c)\in\mathcal{A}_{N-1}$ and since $\omega_N\vert_{\mathcal{A}_{N-1}}=\omega_{N-1}$, we can repeat the argument inductively and eventually deduce that $\omega_N(\rho(c))=0$.\end{proof}

\noindent In the setting of von Neumann algebras, the inductive construction implies the following.

\begin{proposition}\label{PropModular}
The following holds for $\wreathH$, with $\psi$ defined in Equation $(\ref{MorphismPsi})$,
\begin{enumerate}
\item The modular group of the Haar state $h$ is $\sigma_t:=\psi(\sigma_t^G)$, for all $t\in\R$, where $(\sigma_t^G)_{t\in\R}$ is the modular group of the Haar state of $G$ on ${\rm L}^\infty(G)$.
    \item The scaling group is $\tau_t:=\psi(\tau_t^G)$, $\forall t\in\R$, where $\tau_t^G$ is the scaling group of $G$.
\item$T(\wreath)=\{t\in\R\,:\,\tau_t^G=\id\}$ $\forall N\geq 2$ and $N\neq 3$ and $\tau(\wreathH)=\tau(G)$ $\forall N\in\N^*$.
\end{enumerate}
\end{proposition}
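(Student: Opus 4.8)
The first two parts identify the modular and scaling groups, and for these I first record that both $\sigma_t^G$ and $\tau_t^G$ lie in $\mathrm{Aut}_{\Linf(H)}(\Linf(G),h_G)$: they preserve $h_G$ (the modular group by definition, the scaling group since the scaling constant is $1$) and they preserve $\Linf(H)$ because $\sigma_t^G\vert_{\Linf(H)}=\sigma_t^H$ and $\tau_t^G\vert_{\Linf(H)}=\tau_t^H$ for a dual quantum subgroup. Hence $\psi(\sigma_t^G)$ and $\psi(\tau_t^G)$ are defined, and since $\psi$ is a continuous group homomorphism they are pointwise-continuous one-parameter groups of $\Linf(\wreathH)$. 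For $(1)$ the plan is to induct along the inductive construction of Proposition \ref{InductiveConstruction}, showing that the modular group of $\omega_i$ on $M_i$ is $\psi(\sigma_t^G)\vert_{M_i}$. The base case is $M_0=\Linf(H)\ot\Linf(S_N^+)$: since $S_N^+$ is of Kac type the modular group of $\omega_0=h_H\ot h_{S_N^+}$ is $\sigma_t^H\ot\id$, and $(\ref{MorphismPsi})$ gives directly $\psi(\sigma_t^G)\vert_{M_0}=\sigma_t^H\ot\id$. For the inductive step I would invoke the standard fact that the modular group of an amalgamated free product state is the amalgamated free product of the modular groups of the two factors, provided they preserve and agree on the amalgam: on $\Linf(G)\ot\C^N$ the modular automorphism $\sigma_t^G\ot\id$ restricts to $\sigma_t^H\ot\id$ on $\Linf(H)\ot\C^N$, and by induction $\psi(\sigma_t^G)\vert_{M_i}$ restricts to the same automorphism on the image of $\Linf(H)\ot\C^N$ under $s_{e_{i+1}}$. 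Since $\psi(\sigma_t^G)$ visibly preserves $M_{i+1}$ (it fixes $\Linf(S_N^+)$ and maps each $\nu_k(\Linf(G))$ onto itself) and acts as $\sigma_t^G\ot\id$ on the new free factor, it coincides with the amalgamated free product modular automorphism on both factors, hence on $M_{i+1}$; taking $i=N$ yields $(1)$.

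For $(2)$ I would use the characterisation of the scaling group recalled in Section \ref{SectionCQG}: $\tau$ is the unique one-parameter group with $\Delta\circ\sigma_t=(\tau_t\ot\sigma_t)\circ\Delta$. With $\sigma_t=\psi(\sigma_t^G)$ from $(1)$, it suffices to check this identity with $\tau_t$ replaced by $\psi(\tau_t^G)$ on the generators $u_{ij}$ and $\nu_i(a)$. On $u_{ij}\in\Linf(S_N^+)$ both $\psi(\sigma_t^G)$ and $\psi(\tau_t^G)$ act trivially, so the relation reduces to the fixed identity $\Delta(u_{ij})=\sum_k u_{ik}\ot u_{kj}$. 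On $\nu_i(a)$ the left-hand side is $\Delta(\nu_i(\sigma_t^G(a)))$, while the right-hand side, after using $\psi(\tau_t^G)\circ\nu_i=\nu_i\circ\tau_t^G$, $\psi(\sigma_t^G)\circ\nu_j=\nu_j\circ\sigma_t^G$ and $\psi(\cdot)\vert_{\Linf(S_N^+)}=\id$, becomes $\sum_j(\nu_i\ot\nu_j)\big((\tau_t^G\ot\sigma_t^G)(\Delta_G(a))\big)(u_{ij}\ot1)$; the two agree precisely because of the defining relation $\Delta_G\circ\sigma_t^G=(\tau_t^G\ot\sigma_t^G)\circ\Delta_G$ for $G$. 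Uniqueness of the scaling group then gives $\tau_t=\psi(\tau_t^G)$.

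For the $T$-invariant (with $N\geq 2$, $N\neq 3$; the argument below is in fact uniform in $N\geq 2$) the inclusion $\{t:\tau_t^G=\id\}\subseteq T(\wreath)$ is immediate, since $\tau_t^G=\id$ gives $\tau_t=\psi(\id)=\id=\mathrm{Ad}(1)$ with $1$ group-like. For the reverse inclusion, assume $G$ non-trivial (if $G$ is trivial then $\wreath=S_N^+$ is Kac and both sides equal $\R$), take $t\in T(\wreath)$ and a group-like $u\in\mathcal U(\Linf(\wreath))$ with $\tau_t=\mathrm{Ad}(u)$. A one-dimensional unitary corepresentation has all its coefficients in $\Pol(\wreath)$, so $u\in\Pol(\wreath)$ and $\tau_t=\mathrm{Ad}(u)$ holds on $\Pol(\wreath)$. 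I would then push this through the surjective morphism $\pi\,:\,C(\wreath)\to C(G^{*N}\rtimes S_N)$ of Proposition \ref{Prop-Subgroup}: as a surjection of compact quantum groups it intertwines the scaling groups, whence $\tau_t^{G^{*N}\rtimes S_N}=\mathrm{Ad}(\pi(u))$ with $\pi(u)$ group-like. By Lemma \ref{Lem1Dim}, $\pi(u)=w\ot v$ with $w\in\Pol(G^{*N})$ group-like and $v\in C(S_N)$; as $C(S_N)$ is abelian, $\mathrm{Ad}(w\ot v)=\mathrm{Ad}(w)\ot\id$, while by Proposition \ref{Prop-Semidirect}$(4)$ the left-hand side equals $\tau_t^{G^{*N}}\ot\id$. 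Thus $\tau_t^{G^{*N}}=\mathrm{Ad}(w)$ is inner via a group-like unitary, i.e. $t\in T(G^{*N})$, and Proposition \ref{Prop-Freeprod} applied to $G^{*N}=G*\dots*G$ gives $T(G^{*N})=\{t:\tau_t^G=\id\}$, so $\tau_t^G=\id$. (For $N=2$ one may instead read this off from the isomorphism $\wreath\simeq G^{*2}\rtimes S_2$ together with Proposition \ref{Prop-Semidirect}$(5)$.) The main obstacle here is the justification that $\pi$ intertwines the scaling groups: since the Haar states are \emph{not} preserved by $\pi$, this cannot be argued through the modular theory and must be deduced at the level of the representation category, i.e. from the preservation of the Woronowicz characters (equivalently of the operators $Q_x$) under a surjective morphism; a secondary point is the automatic membership $u\in\Pol(\wreath)$.

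Finally, for $\tau(\wreathH)=\tau(G)$ I would use $(1)$ to write $\sigma_t=\psi(\sigma_t^G)$ and show that $\psi$ restricts to a homeomorphism on these one-parameter groups. Continuity of $\psi$ shows that $\tau(\wreathH)$, the coarsest topology making $t\mapsto\psi(\sigma_t^G)$ continuous, is coarser than $\tau(G)$. Conversely, since each $\nu_i\,:\,\Linf(G)\to\Linf(\wreathH)$ is Haar-state preserving it is $\|\cdot\|_2$-isometric, so $\|\nu_i(\sigma_{t_n}^G(a)-a)\|_2=\|\sigma_{t_n}^G(a)-a\|_2$; combined with $\psi(\sigma_{t_n}^G)\circ\nu_i=\nu_i\circ\sigma_{t_n}^G$ this forces $\sigma_{t_n}^G\to\id$ whenever $\psi(\sigma_{t_n}^G)\to\id$. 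If $G$ is Kac then $\sigma_t^G=\id$, hence $\sigma_t=\id$, and $\tau(G)=\tau(\wreathH)$ is the trivial topology; if $G$ is not Kac then both $(\R,\tau(G))$ and $(\R,\tau(\wreathH))$ are metrizable group topologies completely determined by their sequences converging to $0$, which by the above equivalence coincide, so the two topologies are equal. This last step I regard as routine once $(1)$ is established.
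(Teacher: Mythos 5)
Your parts (1), (2) and the computation of $\tau(\wreathH)$ coincide with the paper's own proof. Part (1) is proved there by exactly your induction along the construction of Proposition \ref{InductiveConstruction}, the ``standard fact'' you invoke being \cite[Theorem 2.6]{Ue99}; part (2) is the same verification of $\Delta\circ\sigma_t=(\tau_t\ot\sigma_t)\circ\Delta$ on the generators $u_{ij}$ and $\nu_i(a)$; and for the $\tau$-invariant the paper also gets $\tau(\wreathH)\subseteq\tau(G)$ from continuity of $\psi$ and the converse by restricting $\sigma_t$ to a state-preservingly embedded copy of $\Linf(G)$ --- it invokes Remark \ref{RmkRestriction} with $A=\Linf(G)\ot\C^N$ where you test convergence directly on $\nu_i(\Linf(G))$ using that $\nu_i$ is a $\Vert\cdot\Vert_2$-isometry; both versions are correct and essentially identical.

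The $T$-invariant is where you genuinely diverge, and where your argument has its one real gap. The paper's reverse inclusion never uses the quotient $G^{*N}\rtimes S_N$: for $N\geq 4$ it quotes the classification of $\Irr(\wreath)$ from \cite{FP16}, by which the only one-dimensional representation of $\wreath$ is the trivial one, so that $T(\wreath)=\{t\,:\,\tau_t=\id\}$ and part (2) finishes the proof; the case $N=2$ is read off the isomorphism $G\wr_*S_2^+\simeq G^{*2}\rtimes S_2$ together with Propositions \ref{Prop-Semidirect} and \ref{Prop-Freeprod}. (This is precisely why $N=3$ is excluded from the statement.) Your alternative --- pushing the group-like unitary through $\pi\,:\,C(\wreath)\rightarrow C(G^{*N}\rtimes S_N)$ and then using Lemma \ref{Lem1Dim} and Propositions \ref{Prop-Semidirect} and \ref{Prop-Freeprod} --- is structurally sound, is uniform in $N\geq 2$, and would even yield the missing case $N=3$; but it stands or falls with the claim that the surjection $\pi$ intertwines the scaling groups, which you flag as the main obstacle and then leave unproved. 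That claim is true, but it is not a citation-free triviality, and your parenthetical hint is slightly too strong: a surjection of compact quantum groups does \emph{not} preserve the operators $Q_x$ on the nose. What one can prove is this: letting $\tilde Q_x$ denote the $Q$-operator of the restricted representation $(\id\ot\pi)(u^x)$ of the quotient, both families $x\mapsto Q_x$ and $x\mapsto\tilde Q_x$ are natural with respect to all intertwiners of representations of $G$ and multiplicative under tensor products, so that $Q_x^{-1}\tilde Q_x\in\Mor(x,x)=\C\,\id$ for $x\in\Irr(G)$, i.e. $\tilde Q_x=c_xQ_x$ for scalars $c_x>0$; these scalars need not equal $1$ (the trace normalizations on the two sides differ when the restriction is reducible), but they cancel in the combination $Q_x^{it}(\,\cdot\,)Q_x^{-it}$ defining the scaling group, whence $\pi$ intertwines the two scaling groups. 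The same computation shows the scalars do \emph{not} cancel for the modular groups --- consistent with your correct observation that $\pi$ does not preserve Haar states, so no modular-theoretic shortcut exists. With this lemma supplied (and your correct remark that a group-like unitary of $\Linf(\wreath)$ automatically lies in $\Pol(\wreath)$), your proof of (3) is complete and in fact slightly stronger than the paper's.
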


\begin{proof}
\noindent$(1)$. We first note that, for all $t\in\R$, $\sigma_t^G\in{\rm Aut}_{\Linf(H)}(\Linf(G),h_G)$. From \cite[Theorem 2.6]{Ue99}, the modular group of the free product state $\omega_1$ on the amalgamated free product $M_1=(\Linf(G)\ot \C^N)\underset{\Linf(H)\ot\C^N=\Linf(H)\ot L_1}{*}(\Linf(H)\ot\Linf(S_N^+))$ satisfies $\sigma_t^{\omega_1}\vert_{\Linf(G)}=\sigma_t^{h_{S_N^+}}=\id$ and $\sigma_t(x\ot 1)=\sigma_t^G(x)\ot 1$ for all $x\in\Linf(G)$. Identifying $(M_N,\omega_N)\simeq(\Linf(\wreathH),h)$ with the isomorphism of Proposition \ref{InductiveConstruction} and applying inductively \cite[Theorem 2.6]{Ue99} to our finite sequence of von Neumann algebra $(M_i)_{1\leq i\leq N}$ gives the result.

\vspace{0.2cm}

\noindent$(2)$.  Note that, for all $t\in\R$, $\tau_t^G\in{\rm Aut}_{\Linf(H)}(\Linf(G),h_G)$. To show that $\tau_t:=\psi(\tau_t^G)$ is the scaling group we have to show that $\Delta\sigma_t=(\tau_t\ot\sigma_t)\Delta$. Using the properties of $\sigma_t$, $\tau_t$ and the definition of $\Delta$ we have, for all $1\leq i\leq N$ and all $a\in \Pol(G)$,
\begin{eqnarray*}
\Delta(\sigma_t(\nu_i(a)))&=&\Delta(\nu_i(\sigma_t^G(a)))=\sum_{j=1}^N(\nu_i\ot\nu_j)(\Delta_G(\sigma_t^G(a)))(u_{ij}\ot 1)\\
&=&\sum_{j=1}^N(\nu_i\ot\nu_j)((\tau^G_t\ot\sigma^G_t)\Delta_G(a))(u_{ij}\ot 1)\\
&=&\sum_{j=1}^N(\tau_t\ot\sigma_t)(\nu_i\ot\nu_j)(\Delta_G(a))(u_{ij}\ot 1)=(\tau_t\ot\sigma_t)\Delta(\nu_i(a)),
\end{eqnarray*}
where, in the last equality, we used that $\tau_t(u_{ij})=u_{ij}$. Since the modular group and scaling group act as the identity on $\Linf(S_N^+)$, the equality $\Delta\sigma_t(u_{ij})=(\tau_t\ot\sigma_t)\Delta(u_{ij})$ is clear.

\vspace{0.2cm}

\noindent$(3)$. Let us compute the $T$-invariant in the non-amalgamated case. For $N\geq 4$, the irreducible representations of $\wreath$ are completely classified in \cite{FP16} and it follows that the only dimension $1$ irreducible representation is the trivial representation. Hence, $T(\wreath)=\{t\in\R\,:\,\tau_t=\id\}$ and we conclude the proof using assertion $(1)$. For $N=2$, we know from Proposition \ref{Prop-Subgroup} that $G\wr_*S_2^+\simeq G^{* 2}\rtimes S_2$ and the $T$-invariant of such quantum groups is computed in Proposition \ref{Prop-Semidirect}. Using also Proposition \ref{Prop-Freeprod} concludes the computation.

\vspace{0.2cm}

\noindent To compute the $\tau$-invariant, we may and will assume that $N\geq 2$. Since $\psi$ is continuous, one has $\tau(\wreathH)\subseteq\tau(G)$. Let us show that $\tau(G)\subseteq\tau(\wreathH)$. Applying Remark \ref{RmkRestriction} with $M=\Linf(\wreathH)=\left(\Linf(G)\ot\C^N\right)\underset{\Linf(H)\ot\C^N=\Linf(H)\ot L_N}{*}M_{N-1}$, $\omega$ the Haar state and $A=\Linf(G)\ot\C^N$ we see that the map $\R\rightarrow{\rm Aut}(\Linf(G))$, $t\mapsto\sigma_t\vert_{A}=\sigma_t^G\ot\id$ is $\tau(\wreathH)$-continuous. Hence, $(t\mapsto\sigma_t^G)$ is $\tau(\wreathH)$-continuous so $\tau(G)\subseteq\tau(\wreathH)$.\end{proof}

\section{Approximation properties}

\noindent Theorem \ref{THMA} is a consequence of the following more general statement.

\begin{theorem}\label{THMAprime}
For $G$ a CQG with $H$ a dual quantum subgroup and $N\geq 1$, the following holds
\begin{enumerate}
    \item $\Ghat$ is exact if and only if $\wreathHhat$ is exact.
    \item If $\Irr (H)$ is finite, then $\Ghat$ has the Haagerup property if and only if $\wreathHhat$ has the Haagerup property.
    \item If $G$ is Kac and $H$ is co-amenable then $\Ghat$ is hyperlinear if and only if $\wreathHhat$ is hyperlinear.
    \item
    \begin{itemize}\item If $N\geq 5$, $\wreathH$ is not co-amenable for all $G$ and all $H$.
    \item If $N\in\{3,4\}$ and $H$ is a proper dual quantum subgroup of $G$ (\ref{DefProper}) then $\wreathH$ is not co-amenable whenever $G$ is Kac.
    \item $G\wr_{*,H}S_{2}^+$ is co-amenable if and only if $G^{*_H 2}$ is co-amenable.
    \end{itemize}
    \item $\Ghat$ is K-amenable if and only if $\wreathHhat$ is K-amenable.
\end{enumerate}
\end{theorem}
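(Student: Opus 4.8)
The plan is to reduce each of the five assertions to the inductive amalgamated-free-product description of $\wreathH$ established in Proposition \ref{InductiveConstruction}, together with the fundamental-algebra picture of Propositions \ref{PropFull}, \ref{propreduced} and \ref{propvn}. Writing $M_0=\Linf(H)\ot\Linf(\SN)$ and $M_{i+1}=(\Linf(G)\ot\C^N)\underset{\Linf(H)\ot\C^N}{*}M_i$, one has $\Linf(\wreathH)\simeq M_N$, and likewise at the full and reduced C*-levels. Thus every property should follow from (i) the corresponding stability result for reduced (amalgamated) free products, applied $N$ times, and (ii) the fact that the building blocks $S_N^+$ and the copies of $G$ have the property, the subgroup $H$ entering only through the amalgam $C_\bullet(H)\ot\C^N$. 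For each \emph{converse} implication I would use that, by Propositions \ref{dualqsg} and \ref{propvn}, every $\nu_i$ embeds $C_\bullet(G)$ (resp.\ $\Linf(G)$) into $C_\bullet(\wreathH)$ (resp.\ $\Linf(\wreathH)$) with a Haar-state-preserving conditional expectation, so any property passing to subalgebras with expectation descends automatically from $\wreathH$ to $G$.

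For (1), I would combine the fact that a C*-subalgebra of an exact C*-algebra is exact (this already gives the converse via $C_r(G)\subset C_r(\wreathH)$) with the stability of exactness under reduced amalgamated free products over an exact subalgebra; since $C_r(\SN)$, $C_r(H)$ and $C_r(G)$ are all exact when $G$ is, the inductive construction makes $C_r(\wreathH)$ exact. For (2), the hypothesis that $\Irr(H)$ is finite forces $\Linf(H)$, hence $\Linf(H)\ot\C^N$, to be finite-dimensional by Lemma \ref{LemmaInfinite}, so I would invoke stability of the Haagerup property under amalgamated free products over a \emph{finite-dimensional} subalgebra, using that $\Linf(G)\ot\C^N$ and $\Linf(\SN)$ have the Haagerup property. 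For (3), with $G$ Kac and $H$ co-amenable the amalgam $\Linf(H)\ot\C^N$ is amenable, so I would apply stability of Connes-embeddability under amalgamated free products over an amenable von Neumann algebra in the tracial setting, using that $\Linf(\SN)$ is hyperlinear.

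For (4) I would split into the three stated regimes. When $N\geq5$, $S_N^+$ is a genuine compact quantum subgroup of $\wreathH$ (Remark \ref{RmkSubgroup}) that is not co-amenable, and co-amenability passes to quantum subgroups, so $\wreathH$ is not co-amenable. When $N\in\{3,4\}$, $H$ proper and $G$ Kac, I would exhibit — via the amalgamated structure of $\Linf(\wreathH)=M_N$ and the reduced-word estimates behind Proposition \ref{PropProper} and Lemma \ref{LemAmalgamated}, after checking the relevant index hypothesis — a copy of $\rL(\F_2)$ in a matrix amplification of $\Linf(\wreathH)$; its non-amenability then excludes co-amenability, since for Kac type co-amenability is equivalent to amenability of the von Neumann algebra. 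When $N=2$, Proposition \ref{Prop-Subgroup} identifies $G\wr_{*,H}S_2^+$ with $G^{*_H 2}\rtimes S_2$, and Proposition \ref{Prop-Semidirect}(1) gives co-amenability exactly when $G^{*_H 2}$ is co-amenable.

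Part (5) is the one I expect to be the main obstacle. Since no hypothesis is placed on $H$, I cannot work through an amenable amalgam; instead I would use the fundamental-C*-algebra description (Propositions \ref{PropFull} and \ref{propreduced}) together with the K-amenability results for fundamental algebras of \cite{FG18}, reducing the assertion that the canonical surjection $C(\wreathH)\to C_r(\wreathH)$ is a $KK$-equivalence to the analogous $KK$-equivalences for the vertex algebras $C(\SN)\to C_r(\SN)$ and $C(G)\to C_r(G)$. As $S_N^+$ is K-amenable by Voigt \cite{Vo17}, K-amenability of $G$ would then give that of $\wreathH$. The subtle points where I expect to spend most effort are ensuring that the full and reduced edge inclusions coming from $C_\bullet(H)\ot\C^N$ are $KK$-compatible, and proving the converse: deducing K-amenability of $G$ from that of $\wreathH$ requires a $KK$-splitting of $\nu_i\colon C_\bullet(G)\hookrightarrow C_\bullet(\wreathH)$ compatible with the full-to-reduced maps, which is strictly stronger than the conditional expectation that sufficed in (1)--(3).
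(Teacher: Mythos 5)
Your treatment of (1)--(3) is essentially the paper's proof: the same inductive description $M_{i+1}=(\Linf(G)\ot\C^N)\underset{\Linf(H)\ot\C^N}{*}M_i$, the same stability results for amalgamated free products (exactness, Haagerup over a finite-dimensional amalgam, Connes-embeddability over an amenable amalgam), and state-preserving conditional expectations for the converses. Part (4) is also the paper's argument for $N\geq 5$ and $N=2$; for $N\in\{3,4\}$ your route needs one repair: Proposition \ref{PropProper} produces free Haar unitaries from the \emph{representation theory} of the CQG amalgamated free product $G^{*_H N}$, so its "reduced-word estimates" cannot be run inside the von Neumann amalgam $M_1\underset{B}{*}M_{N-1}$ directly (that decomposition is not a quantum group free product). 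You either need the Haar-state-preserving embedding $\Linf(G^{*_H N})\subset\Linf(\wreathH)$, which follows from Theorem \ref{propreduced}, to transport the copy of $\rL(\F_2)$, or you argue as the paper does: $G^{*_H N}$ is a compact quantum subgroup (quotient sense, Remark \ref{RmkSubgroup}) of $\wreathH$, co-amenability passes to it by \cite{Sa10}, and Proposition \ref{PropProper} plus the Kac assumption rules that out.

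The genuine gap is the converse of (5). You assert that deducing K-amenability of $G$ from that of $\wreathH$ "requires a $KK$-splitting of $\nu_i$ compatible with the full-to-reduced maps"; you neither construct such a splitting nor is one needed, and it is unclear one exists. The paper's argument is much softer, using only Cuntz's picture and functoriality of the Kasparov product. Let $\alpha\in KK(C_r(\wreathH),\C)$ satisfy $[\lambda]\ot\alpha=[\varepsilon]$. The inclusion $\pi\colon C(G)^{*_H N}\to C(\wreathH)$ does not intertwine comultiplications, but it preserves Haar states by Theorem \ref{propreduced}, hence induces $\pi_r\colon C_r(G^{*_H N})\to C_r(\wreathH)$ with $\pi_r\circ\lambda_{G^{*_H N}}=\lambda\circ\pi$, and moreover $\varepsilon\circ\pi=\varepsilon_{G^{*_H N}}$. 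Setting $\beta:=[\pi_r]\ot\alpha$ one gets $[\lambda_{G^{*_H N}}]\ot\beta=[\lambda\circ\pi]\ot\alpha=[\pi]\ot[\lambda]\ot\alpha=[\pi]\ot[\varepsilon]=[\varepsilon_{G^{*_H N}}]$, so $G^{*_H N}$ is K-amenable, and the same composition trick along the dual quantum subgroup inclusion $C(G)\subset C(G)^{*_H N}$ gives K-amenability of $G$. In other words, what the conditional expectation cannot do is done by pulling the K-amenability element \emph{back} along a Haar-state-induced morphism, not by splitting it; without this idea (or a substitute) your proposal leaves the converse of (5) unproved.
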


\begin{proof}$(1).$ It is known from \cite{Br13} that $C_r(S_N^+)$ is exact. Hence, the result follows from \cite[Corollary 3.30]{FF14}.

\vspace{0.2cm}

\noindent$(2)$. Note that, at $N\leq 4$, $S_N^+$ is a co-amenable Kac CQG \cite{Ba99} so that $\Linf(S_N^+)$ has the Haagerup property and, at $N\geq 5$, $\Linf(S_N^+)$ has the Haagerup property by \cite{Br13}. We show the result by using the inductive construction of $\Linf(\wreathH)=M_N$, with $M_0=\Linf(H)\ot \Linf(S_N^+)$ and $M_{i+1}=(\Linf(G)\ot\C^N)\underset{\Linf(H)\ot\C^N}{*}M_i$. At $i=0$, $M_0=\Linf(H)\ot\Linf(S_N^+)$ has the Haagerup property, because $\Linf(H)$ is finite-dimensional. Assume that $M_i$ has the Haagerup property. It follows from \cite[Corollary 8.2]{CKSVW23} that $M_{i+1}=(\Linf(G)\ot\C^N)\underset{\Linf(H)\ot\C^N}{*}M_i$ also has the Haagerup property whenever $\Ghat$ has the Haagerup property. By induction, $M_N=\Linf(\wreathH)$ has the Haagerup property. Suppose now that $\wreathHhat$ has the Haagerup property. It is easy to check, using the definition of Haagerup property of \cite{CS15} with respect to a faithful normal state (f.n.s.) that if $(M,\omega)$, $\omega\in M_*$ a f.n.s., has the Haagerup property and $P\subset M$ is a von Neumann subalgebra with a normal and state-invariant conditional expectation $E\,:\, M\rightarrow P$ then, $(P,\omega_{\vert P})$ also has the Haagerup property. Hence, since by construction of amalgamated free products we do have such conditional expectations on each leg of the amalgamated free product we first deduce from  the inductive construction state-preserving isomorphism $$(\Linf(\wreathH),h)\simeq\left((\Linf(G)\ot\C^N)\underset{\Linf(H)\ot \C^N}{*}M_{N-1},\omega_N\right)$$ that $\Linf(G)\ot\C^N$ has the Haagerup property which implies that $\Linf(G)$ also has it.

\vspace{0.2cm}

\noindent$(3)$. By Proposition \cite[Corollary 3.7]{BCF20}, $\widehat{S_N^+}$ is hyperlinear for all $N\in\N^*$. Assuming that $G$ is Kac (so that $\wreathH$ also is), and that $H$ is co-amenable so that $\Linf(H)$ is amenable, we can apply the strategy of the proof of $(2)$ by using \cite[Corollary 4.5]{BDJ08} to deduce the fact that if $\Ghat$ is hyperlinear then so is $\wreathHhat$. The converse is clear.

\vspace{0.2cm}

\noindent$(4)$. The case $N=2$ is a consequence of Propositions \ref{Prop-Subgroup} and \ref{Prop-Semidirect}. Suppose that $N\geq 3$ and $\wreathH$ is co-amenable. Since $G^{*_HN}$ and $S_N^+$ are both compact quantum subgroup of $\wreathH$ by Remark \ref{RmkSubgroup}, it follows that $G^{*_HN}$ and $S_N^+$ are both co-amenable \cite{Sa10}. It implies that $N\leq 4$, since $S_N^+$ is not co-amenable for all $N\geq 5$ \cite{Ba99}. Moreover, since $N\geq 3$, $G^{*_H N}$ is not co-amenable whenever $H$ is proper by Proposition \ref{PropProper} since $H$ has infinite index in $G*_HG$.

\vspace{0.2cm}

\noindent(5). By \cite{Vo17} $\widehat{S_N^+}$ is K-amenable hence, if $\Ghat$ is K-amenable, so is $\widehat{H}$ and, a direct application of \cite[Theorem 5.1 and 5.2]{FG18}, implies that $\wreathHhat$ is K-amenable. Let us prove the converse. To ease the notations we write $\mathcal{A}=C(\wreathH)$ and $A=C_r(\wreathH)$ during this proof. Let $\lambda\,:\, \mathcal{A}\rightarrow A$ be the canonical surjection and assume that $\wreathHhat$ is K-amenable i.e. there exists $\alpha\in KK(A, \C)$ such that $[\lambda]\ot\alpha=[\varepsilon]\in KK(\A,\C)$, where $\ot$ denotes here the Kasparov product and $\varepsilon\,:\,\mathcal{A}\rightarrow\C$ the counit of the free wreath product. Let us show that $\widehat{G^{*_H N}}$ is K-amenable (and so $\widehat{G}$ also is). Consider the canonical inclusion $\pi\,:\,C(G)^{*_H N}\rightarrow \A$. Note that $\pi$ does not intertwine the comultiplications so that we can not deduce the $K$-amenability of $\widehat{G^{*_H N}}$ by using the stability of $K$-amenability by dual quantum subgroup. However, by Theorem \ref{propreduced}, the canonical inclusion $\pi\,:\,C(G)^{*_H N}\rightarrow \A$ intertwines the Haar states hence, there exists a unital $*$-homomorphism $\pi_r\,:\,C_r(G^{*_H N})\rightarrow A$ such that $\pi_r\circ\lambda_G=\lambda\circ\pi$, where $\lambda_G\,:\,C(G^{*_HN})\rightarrow C_r(G^{*_HN})$ is the canonical surjection. Define $\beta:=[\pi_r]\ot\alpha\in KK(C_r(G^{*_HN}),\C)$. One has:
\begin{eqnarray*}
[\lambda_G]\ot\beta&=&[\lambda_G]\ot[\pi_r]\ot\alpha=[\pi_r\circ\lambda_G]\ot\alpha
=[\lambda\circ\pi]\ot\alpha\\
&=&[\pi]\ot[\lambda]\ot\alpha=[\pi]\ot[\varepsilon]
=[\varepsilon\circ\pi]=[\varepsilon_G].
\end{eqnarray*}
This shows that $\widehat{G^{*_HN}}$ is K-amenable.
\end{proof}

\begin{remark}
To deduce Theorem \ref{THMA} it remains to deduce the co-amenability statement for $N=2$ which follows from Proposition \ref{Prop-Freeprod}.
\end{remark}

\begin{remark} As explained in the Introduction, except for $N=2$, the stability of the Haagerup property under the free wreath product construction was open. In the case where $G=\Gammahat$ is the dual of a discrete group, and $H$ is a finite subgroup, then the stability of the Haagerup property and of the weak amenability with constant $1$ has been proved in \cite{Fr22}. In the general but non-amalgamated case, only the stability of the central ACPAP, which is a strengthening of both the Haagerup property and the weak amenability with constant $1$ was known. However, it follows from Proposition \ref{Prop-Semidirect} that $\Lambda_{cb}(\widehat{G\wr_*S_2^+})=\Lambda_{cb}(\Ghat)$. For $N\geq 3$, we believe that weak amenability with constant $1$ is also stable under the free wreath product construction and one obvious way to do the proof would be by induction, using our inductive construction of the von Neumann algebra as well as an amalgamated version (over a finite dimensional subalgebra) of the result of Xu-Ricard \cite{RX06}. Let us mention that for classical wreath products it has been proved by Ozawa \cite{Oz12} that for any non-trivial discrete group $\Lambda$ and any non-amenable discrete group $\Gamma$, the classical wreath product $\Lambda\wr\Gamma$ is not weakly amenable.
\end{remark}

\section{The von Neumann algebra of a free wreath product}

\noindent This section contains the proofs of Theorems \ref{THMB} and \ref{THMC}. Recall that $(\sigma_t^G)_{t\in\R}$ denotes the modular group of the Haar state on $\Linf(G)$.

\vspace{0.2cm}

\noindent Theorem \ref{THMB} is a direct consequence of the following more general statement.

\begin{theorem}\label{THMBprime}
Suppose that $\Irr(G)$ is infinite, $\Irr(H)$ is finite, ${\rm L}^\infty(G)'\cap{\rm L}^\infty(H)=\C1$ and $N\geq 4$. Then ${\rm L}^\infty(\wreathH)$ is a non-amenable full and prime factor without any Cartan subalgebra and the following holds.
    \begin{itemize}
        \item If $G$ is Kac then ${\rm L}^\infty(\wreathH)$ is a type ${\rm II}_1$ factor.
        \item If $G$ is not Kac then ${\rm L}^\infty(\wreathH)$ is a type ${\rm III}_\lambda$ factor for some $\lambda\neq 0$ and:
        $$T({\rm L}^\infty(\wreathH))=\{t\in\R\,:\exists u\in\mathcal{U}(\Linf(H)),\,\,\sigma_t^G={\rm Ad}(u)\}.$$
    \end{itemize}
Moreover, in the non-amalgamated case, we have $\tau(\Linf(\wreath))=\tau(G)$.
\end{theorem}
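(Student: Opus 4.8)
The plan is to combine the inductive amalgamated free product description of $\Linf(\wreathH)$ from Proposition \ref{InductiveConstruction} with the structure theory of amalgamated free product von Neumann algebras and the modular data of Proposition \ref{PropModular}. First I would record that, since $\Irr(H)$ is finite, $\Linf(H)$ is finite dimensional by Lemma \ref{LemmaInfinite}, so the amalgam $B:=\Linf(H)\ot\C^N$ is finite dimensional, in particular amenable. By Proposition \ref{InductiveConstruction} there is a state preserving identification $\Linf(\wreathH)\simeq(\Linf(G)\ot\C^N)\underset{B}{*}M_{N-1}$, where $M_{N-1}\supseteq\Linf(\SN)$. Since $\Irr(G)$ is infinite, $\Linf(G)$ is diffuse (Lemma \ref{LemmaInfinite}), and since $N\geq4$, $\Linf(\SN)$ is diffuse; hence both legs are diffuse over $B$. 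I would then verify that the two inclusions are non-degenerate enough (no simultaneous index-two situation, in analogy with Proposition \ref{Prop-Freeprod}) so that the results of \cite{Ue11,HV13,HI20} on amalgamated free products over an amenable subalgebra apply. This yields that $\Linf(\wreathH)$ is a full prime factor without Cartan subalgebra; the hypothesis $\Linf(G)'\cap\Linf(H)=\C1$ is exactly what rules out a nontrivial central summand and gives factoriality and fullness, while non-amenability is then automatic since an infinite dimensional full factor is never amenable (alternatively one exhibits a copy of ${\rm L}(\F_2)$ via Lemma \ref{LemAmalgamated}).

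For the type classification I would invoke Proposition \ref{PropModular}: the modular group of the Haar state is $\sigma_t=\psi(\sigma_t^G)$. If $G$ is Kac then $\sigma_t=\id$, the fundamental state is a trace, and the factor is of type ${\rm II}_1$. If $G$ is not Kac, then $\Linf(\wreathH)$ is of type III; since it is full it cannot be of type ${\rm III}_0$ (type ${\rm III}_0$ factors are never full, equivalently the almost periodicity of the Haar state forces $\lambda\neq0$), so it is of type ${\rm III}_\lambda$ with $\lambda\neq0$.

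The $T$-invariant is computed from $T(\Linf(\wreathH))=\{t\in\R:\sigma_t\text{ is inner}\}$, which is valid because the factor is full. The inclusion $\supseteq$ is direct: if $\sigma_t^G={\rm Ad}(u)$ with $u\in\mathcal{U}(\Linf(H))$, then $\nu(u)$ commutes with $\Linf(\SN)$ and $\psi(\sigma_t^G)={\rm Ad}(\nu(u))$, so $t\in T(\Linf(\wreathH))$. The reverse inclusion is the technical core: writing $\sigma_t={\rm Ad}(v)$ and using that $\sigma_t$ fixes $\Linf(\SN)$ pointwise, one gets $v\in\Linf(\SN)'\cap\Linf(\wreathH)$, and the key lemma to establish is that this relative commutant equals $\nu(\Linf(H))$. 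I would prove this using the reduced word description and the Haar state formula of Theorem \ref{propreduced} together with Lemma \ref{LemmaCommutant}; granting it, $v=\nu(u)$ with $u\in\mathcal{U}(\Linf(H))$, and restricting ${\rm Ad}(v)$ to $\nu_i(\Linf(G))$ gives $\sigma_t^G={\rm Ad}(u)$. I expect this relative commutant computation to be the main obstacle of the whole theorem.

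Finally, for the $\tau$-invariant in the non-amalgamated case, the inclusion $\tau(\Linf(\wreath))\subseteq\tau(G)$ follows because $\sigma_t=\psi(\sigma_t^G)$ and $\psi$ is continuous, so $t\mapsto[\sigma_t]$ is $\tau(G)$-continuous. For the reverse inclusion I would either mimic Proposition \ref{PropModular}(3), using Remark \ref{RmkRestriction} for the state preserving conditional expectation onto $A=\Linf(G)\ot\C^N$ together with fullness to lift continuity from ${\rm Out}$ to $\Aut(\Linf(G))$, or, more cleanly, compute $Sd(\Linf(\wreath))$ from the amalgamated free product as in \cite{Ue11} and use Connes' description \cite{Co74} of the $\tau$-invariant of a full almost periodic factor in terms of $Sd$, reducing $\tau(\Linf(\wreath))$ to the topology generated by the characters $f_\lambda$, $\lambda\in Sd(G)$, which is precisely $\tau(G)$.
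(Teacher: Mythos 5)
Your treatment of the structural statements (factor, prime, no Cartan, full, non-amenable, type classification) follows the same route as the paper: the inductive decomposition of Proposition \ref{InductiveConstruction} fed into the structure theory of amalgamated free products over the finite-dimensional amalgam $B=\Linf(H)\ot\C^N$, although you leave unverified exactly the hypotheses the paper checks (the relative commutant condition of \cite[Theorem E]{HV13}, which is where $\Linf(G)'\cap\Linf(H)=\C1$ and Lemma \ref{LemmaCommutant} actually enter, \cite[Theorem B]{BHR14} for the absence of Cartan, \cite[Theorem 4.10]{Ue13} for fullness, and \cite[Theorem 8]{Fi07} to rule out finiteness in the non-Kac case). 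The genuine problem is your $T$-invariant argument. Your proposed key lemma, $\Linf(S_N^+)'\cap\Linf(\wreathH)=\nu(\Linf(H))$, is at least as strong as the factoriality of $\Linf(S_N^+)$: the center $Z(\Linf(S_N^+))$ is contained in $\Linf(S_N^+)'\cap\Linf(\wreathH)$, and $\nu(\Linf(H))\cap\Linf(S_N^+)=\C1$ inside $M_0\simeq\Linf(H)\ot\Linf(S_N^+)$, so your lemma forces $Z(\Linf(S_N^+))=\C1$. Factoriality of $\Linf(S_N^+)$ is known only for $N\geq8$ \cite{Br13} and is open for $4\leq N\leq7$; this is precisely why the paper proves only the much weaker Lemma \ref{LemmaCommutant} ($\Linf(S_N^+)'\cap L_k=\C1$), which does not imply your lemma and cannot be upgraded to it without resolving that open problem. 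The paper's proof avoids any relative commutant of $\Linf(S_N^+)$ altogether: since $M_{N-1}$ is diffuse and $B$ is finite dimensional, $M_{N-1}\nprec_M B$, and the theorem of \cite{HI20} for amalgamated free products then gives $T(\Linf(\wreathH))=\{t\in\R\,:\,\exists u\in\mathcal{U}(B),\ \sigma_t={\rm Ad}(u)\}$ with the implementing unitary located in $B$ itself; writing $u=\sum_k u_k\ot e_k$ and evaluating on $\Linf(G)\ot e_k$ yields $\sigma_t^G={\rm Ad}(u_k)$. So your route could at best cover $N\geq 8$, while the statement is for all $N\geq4$.

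There is a second, smaller gap in your $\tau$-invariant argument. In your option (a), fullness only provides unitaries $w_n\in\mathcal{U}(\Linf(\wreath))$ with ${\rm Ad}(w_n)\circ\sigma_{t_n}\to\id$ whenever $t_n\to0$ for $\tau(\Linf(\wreath))$; since ${\rm Ad}(w_n)$ has no reason to preserve $\Linf(G)\ot\C^N$, the restriction argument of Remark \ref{RmkRestriction} cannot be launched. The whole point of the paper's appeal to \cite[Theorem B]{HI20} is that the $w_n$ can be chosen in the amalgam $B=\C^N=L_N$, which is central in $\Linf(G)\ot\C^N$, so that the restricted automorphism equals $\sigma_{t_n}^G\ot\id$ on the nose. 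Your option (b) has a similar sourcing problem: even in the non-amalgamated case the relevant decomposition is an amalgamated free product over $\C^N$, not a plain free product, so \cite[Corollary 2.3 and Theorem 3.2]{Ue11} do not apply as stated, and you would additionally need Connes' identification of $\tau(M)$ with the topology generated by the characters attached to $Sd(M)$ for full almost periodic factors; neither ingredient is available off the shelf in the paper's toolkit.
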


\noindent\textit{Proof of Theorem \ref{THMB}.} First observe that, by Lemma \ref{LemmaInfinite}, both $\Linf(G)$ and $\Linf(S_N^+)$ are diffuse (since $N\geq 4$). Hence, $\Linf(G)\ot\C^N$ and $\Linf(H)\ot\Linf(S_N^+)$ are also diffuse and the inclusions $\Linf(H)\ot\C^N\subset\Linf(G)\otimes\C^N$ and $\Linf(H)\ot L_k\subset\Linf(H)\ot\Linf(S_N^+)$ are without trivial corner for all $1\leq k\leq N$, in the sense of \cite{HV13} (since $\Linf(H)\ot\C^N$ is finite dimensional hence purely atomic, see \cite[Lemma 5.2]{HV13}).

\vspace{0.2cm}

\noindent We show the result by using the inductive construction of $\Linf(\wreathH)=M_N$, with $M_0=\Linf(H)\ot\Linf(S_N^+)$ and $M_{i+1}=(\Linf(G)\ot\C^N)\underset{\Linf(H)\ot\C^N}{*}M_i$. At $i=0$, we know that $M_0$ and $\Linf(G)\ot\C^N$ are diffuse so the inclusions $B=\Linf(H)\ot\C^N\subset \Linf(G)\ot\C^N$ and $B=\Linf(H)\ot L_1\subset M_0$ are without trivial corner. Moreover, it follows from Lemma \ref{LemmaCommutant} that:
$$M_0'\cap(\Linf(G)\ot\C^N)'\cap B\subset (\Linf(G)'\cap\Linf(H))\ot (\Linf(S_N^+)'\cap L_1)=\C.$$
Hence, we may apply \cite[Theorem E]{HV13} to deduce that $M_1$ is a non-amenable prime factor. In particular $M_1$ is diffuse and a direct induction shows that $M_k$ is a non-amenable prime factor for all $1\leq k\leq N$, in particular, $M_N=\Linf(\wreathH)$ is a non-amenable prime factor.

\vspace{0.2cm}

\noindent Suppose that $A\subset M:=\Linf(\wreathH)=\left(\Linf(G)\ot\C^N\right)\underset{\Linf(H)\ot\C^N}{*}M_{N-1}$ is a Cartan subalgebra. By the previous discussion, $M_{N-1}$ is a non-amenable factor. Hence, it has no amenable direct summand and we may apply \cite[Theorem B]{BHR14} to deduce that $A\prec_M\Linf(H)\ot\C^N$. Since $\Linf(H)\ot\C^N$ is finite dimensional, this contradicts the fact that $A$, being maximal abelian in the diffuse von Neumann algebra $M$, is itself diffuse.

\vspace{0.2cm}

\noindent To see that $\Linf(\wreathH)$ is full, we may now apply \cite[Theorem 4.10]{Ue13} (since $\Linf(G)\ot\C^N$ is diffuse and $\Linf(H)\ot\C^N$ is finite dimensional so the inclusion $\Linf(H)\ot\C^N\subset\Linf(G)\ot\C^N$ is entirely non trivial and, as shown before, $M_{N-1}$ is diffuse).

\vspace{0.2cm}

\noindent When $G$ is Kac, $\wreathH$ is also Kac hence, $\Linf(\wreathH)$ is a ${\rm II}_1$-factor. When $G$ is not Kac, $\Linf(\wreathH)$ is a non-amenable factor by the first part of the proof but not a finite factor by \cite[Theorem 8]{Fi07}. Hence, it is a type ${\rm III}_\lambda$ factor with $\lambda\neq 0$ since it is a full factor \cite[Proposition 3.9]{Co74}. To compute the $T$-invariant, we view again  $M:=\Linf(\wreathH)=\left(\Linf(G)\ot\C^N\right)\underset{\Linf(H)\ot\C^N}{*}M_{N-1}$ and we also note that, since $M_{N-1}$ is diffuse and the amalgam $B:=\Linf(H)\ot L_N=\Linf(H)\ot\C^N$ is finite dimensional, we have $M_{N-1}\nprec_{M}B$. We can now use \cite{HI20} to deduce that $T:=T({\rm L}^\infty(\wreathH))=\{t\in\R\,:\,\exists u\in\mathcal{U}(B)\,\,\,\sigma_t={\rm Ad}(u)\}$,
where $(\sigma_t)_{t\in\R}$ is the modular group of the Haar state on $\Linf(\wreathH)$, which coincides with the free product state. In particular one has $\sigma_t\vert_{\Linf(G)\ot\C^N}=\sigma_t^G\ot\id$. Let $T':=\{t\in\R\,:\,\sigma_t^G=\id\}$. It is clear that $T'\subseteq T$. Let $t\in T$ and $u\in\mathcal{U}(B)$ such that $\sigma_t={\rm Ad}(u)$. Write $u=\sum_{k=1}^Nu_k\ot e_k\in\Linf(H)\ot\C^N$; where $u_k\in\Linf(H)$ is unitary for all $1\leq k\leq N$. For all $b\in\Linf(G)$ and all $1\leq k\leq N$ we find $\sigma_t(b\ot e_k)=\sigma_t^G(b)\ot e_k=u(b\ot e_k)u^*=u_kbu_k^*\ot e_k$. Hence, $\sigma_t^G={\rm Ad}(u_1)$ which implies that $t\in T'$. Note that we could also have computed the $T$-invariant by using the results of \cite{Ue13}. Let us now compute Connes' $\tau$-invariant in the non-amalgamated case. It suffices to show that, for any sequence $(t_n)_n$ of real numbers, one has $t_n\rightarrow 0$ for $\tau(\wreath)$ if and only if $t_n\rightarrow 0$ for $\tau(G)$. Assume that $t_n\rightarrow 0$ for $\tau(\Linf(\wreath))$. Then, viewing again $\Linf(\wreath)=\left(\Linf(G)\ot\C^N\right)\underset{B}{*}M_{N-1}$, where this time $B=\C^N= L_N$, we may apply \cite[Theorem B]{HI20}, to deduce that there exists a sequence of unitaries $v_n\in\mathcal{U}(B)$ such that $\alpha_n:={\rm Ad}(v_n)\circ\sigma_{t_n}\rightarrow \id$. To conclude the proof, we apply the restriction argument from Remark \ref{RmkRestriction} to deduce that $\alpha_n\vert_{\Linf(G)\ot\C^N}\rightarrow\id$ in ${\rm Aut}(\Linf(G)\ot\C^N)$. Note that, since $B$ is in the center of $\Linf(G)\ot\C^N$ one has $\alpha_n\vert_{\Linf(G)\ot\C^N}=\sigma_{t_n}^G\ot\id$ and we deduce that $\sigma_{t_n}^G\rightarrow\id$ i.e. $t_n\rightarrow 0$ in $\tau(G)$. The converse statement follows from the continuity of the group homomorphism $\psi$ defined by Equation $(\ref{MorphismPsi})$.$\hfill\square$

\begin{remark}
The assumption $N\neq 2$ in Theorem \ref{THMB} is necessary. Indeed, when $N=2$, we know from Proposition \ref{Prop-Subgroup} and Section \ref{semi-direct} that $\Linf(G\wr_*S_2^+)=\left(\Linf(G^{* 2})\right)\ot\C^2$. In particular, $\Linf(G\wr_*S_2^+)$ is never a factor. For $N=3$, our proof does not work ($\Linf(S_3^+) \simeq \C^6$ is not diffuse) but we don't know if $\Linf(G\w S_3^+)$ can nonetheless be a factor. However, whenever $\vert\Irr(G)\vert\geq 3$ if $N=2$ or whenever $G$ is non-trivial if $N=3$, Proposition \ref{Prop-Freeprod} shows that $\Linf(G^{*N})$ is a full and prime factor without any Cartan subalgebras and of type ${\rm II}_1$ if $G$ is Kac or of type ${\rm III}_\lambda$, $\lambda\neq 0$, with $T$ invariant given by $\{t\in\R\,:\,\sigma_t^G=\id\}$, if $G$ is not Kac. Finally, let us mention that the assumption $\vert\Irr(G)\vert=\infty$ in Theorem \ref{THMB} does not seems necessary but is useful in our proof.\end{remark}

\noindent Theorem \ref{THMC} is a direct consequence of the following.

\begin{theorem}\label{THMCprime}
Suppose that $\Irr(G)$ is infinite, $\Irr(H)$ is finite and $N\geq 2$. The following holds.
\begin{enumerate}
    \item If ${\rm L}^\infty(G)$ is amenable then, $\forall 1\leq i\leq N$, the von Neumann subalgebra of ${\rm L}^\infty(\wreathH)$ generated by $\{\nu_i(a)u_{ij}\,:\, a\in \Linf(G),\,1\leq j\leq N\}$ is maximal amenable with expectation.
    \item If $G$ is Kac then $\Linf(H)\ot\Linf(S_4^+)\simeq\left(\nu(\Linf(H))\cup\Linf(S_4^+)\right)''\subset {\rm L}^\infty(G\wr_*S_4^+)$ is maximal amenable.
\end{enumerate}
\end{theorem}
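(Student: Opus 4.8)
The plan is to deduce both statements from the amenable absorption results of Boutonnet--Houdayer \cite{BH18} applied to the amalgamated free product (AFP) decompositions furnished by Proposition \ref{InductiveConstruction}. The principle I will use is the ``maximal amenability absorption'' statement: in an AFP $M = M_1\underset{B}{*}M_2$ with $B$ amenable, a subalgebra $A$ with expectation sitting inside one of the free factors, which satisfies $A\not\prec B$ relative to that factor and is maximal amenable with expectation there, remains maximal amenable with expectation in $M$. Throughout, $B:=\Linf(H)\ot\C^N$ is finite dimensional, hence amenable, since $\Irr(H)$ is finite (Lemma \ref{LemmaInfinite}); and $\Linf(G)$ is diffuse since $\Irr(G)$ is infinite (again Lemma \ref{LemmaInfinite}). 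All the conditional expectations involved ($E_H$ from Proposition \ref{dualqsg} and the $E_k$ from Proposition \ref{lemCE}) are Haar-state preserving, so the AFP's are state preserving and the required normal expectations onto the vertex algebras exist.

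For part $(1)$, I would first identify, through the isomorphisms of Propositions \ref{PropFull} and \ref{propvn}, the von Neumann subalgebra generated by $\{\nu_i(a)u_{ij}:a\in\Linf(G),\,1\leq j\leq N\}$ with the vertex algebra $M_{p_i}=\Linf(G)\ot\C^N$; it is amenable with expectation because $\Linf(G)$ is amenable and $\C^N$ is finite dimensional. Peeling the leaf $p_i$ off the star-shaped tree $\T_N$ (associativity of the fundamental construction) presents $M$ as a single AFP $M = M_{p_i}\underset{B}{*}\tilde R$, with $B$ embedded naturally in $M_{p_i}$ and as $\Linf(H)\ot L_i\subset M_{p_0}$ inside the complementary algebra $\tilde R$. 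Since $\tilde R\supsetneq B$ for $N\geq 2$ ($\tilde R$ contains either the diffuse $\Linf(S_N^+)$ or another diffuse leaf) and since a diffuse algebra never intertwines into the finite dimensional $B$, giving $M_{p_i}=\Linf(G)\ot\C^N\not\prec_{M_{p_i}}B$, the absorption theorem of \cite{BH18} applies with $A=M_{p_i}$, which is trivially maximal amenable with expectation in itself. This yields maximal amenability with expectation of $M_{p_i}$ in $\Linf(\wreathH)$.

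For part $(2)$, take $N=4$ and $G$ Kac. The relevant subalgebra is the root algebra $M_0=\Linf(H)\ot\Linf(S_4^+)$, which is amenable: $S_4^+$ is a co-amenable Kac quantum group \cite{Ba99}, so $\Linf(S_4^+)$ is amenable by \cite{To06}, and $\Linf(H)$ is finite dimensional. This is exactly where $N=4$ is used, since $\Linf(S_N^+)$ is no longer amenable for $N\geq 5$. I would then argue by induction along the construction $M_{i+1}=(\Linf(G)\ot\C^4)\underset{B}{*}M_i$ of Proposition \ref{InductiveConstruction}, proving that $M_0$ is maximal amenable in $M_i$ for each $i$, the base case $i=0$ being trivial. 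At each step the copy $B=\Linf(H)\ot L_{i+1}$ lies inside $M_0$, and since $\Linf(S_4^+)$ is diffuse (Lemma \ref{LemmaInfinite}) while $L_{i+1}$ is finite dimensional, one has $M_0\not\prec_{M_0}B$, hence $M_0\not\prec_{M_i}B$ via the expectation $M_i\to M_0$; the absorption theorem of \cite{BH18}, applied with $A=M_0$ sitting in the free factor $M_i$, then propagates maximal amenability from $M_i$ to $M_{i+1}$. Because $G$ is Kac the ambient algebra is tracial, so every von Neumann subalgebra admits a trace-preserving conditional expectation and ``maximal amenable with expectation'' coincides with ``maximal amenable'', which is why $(2)$ may omit ``with expectation''.

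The main obstacle I anticipate is matching the hypotheses of \cite{BH18} precisely: pinning down the two non-intertwining conditions $M_{p_i}\not\prec B$ and $M_0\not\prec B$ and confirming them from diffuseness against finite-dimensionality, together with the bookkeeping needed to rewrite the star-shaped fundamental algebra as a genuine two-sided AFP over the single base $B$ so that the theorem applies. The analytic content (relative amenability and intertwining-by-bimodules) is entirely supplied by \cite{BH18}; the work here is the structural reduction and the verification of these non-degeneracy conditions.
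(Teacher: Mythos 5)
Your proposal is correct and follows essentially the same route as the paper: both identify the subalgebra in question with a free factor of an amalgamated free product decomposition (peeling the leaf $p_i$ off $\T_N$ for part $(1)$, i.e.\ the paper's Serre d\'evissage, and the inductive construction of Proposition \ref{InductiveConstruction} for part $(2)$), verify the non-intertwining condition from diffuseness against the finite-dimensional amalgam $\Linf(H)\ot\C^N$, and conclude with the absorption theorem of \cite{BH18}, using traciality in the Kac case to dispense with ``with expectation''. The only cosmetic difference is that in part $(2)$ you induct upward ($M_0$ maximal amenable in each $M_i$) while the paper peels downward ($Q\subset M_{N-1}\subset\cdots\subset M_0$); also note that the correct justification of $M_0\nprec_{M_i}B$ is directly that a diffuse algebra never intertwines into a finite-dimensional one, rather than your detour through $M_0\nprec_{M_0}B$ and the expectation $M_i\to M_0$.
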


\begin{proof}$(1).$ We use Serre's \textit{d\'evissage} in the following way. Fix $1\leq i\leq N$ and Let $\mathcal{T}'_N$ be the graph obtained from $\mathcal{T}_N$ by removing the edge $v_i$ as well as its inverse edge $\overline{v}_i$. This graph is still connected. Let us denote by $M_2$ the fundamental von Neumann algebra of our graph of von Neumann algebras restricted to $\mathcal{T}'_N$ so that we have $\Linf(H)\ot \Linf(S_N^+)\subset M_2$. It follows from \cite{FF14} that $\Linf(\wreathH)$ is canonically isomorphic to $M_1\underset{B}{*}M_2$, where the amalgamation is $B=\Linf(H)\otimes\C^N\subset M_1:=\Linf(G)\ot\C^N$ and $B=\Linf(H)\ot L_i\subset\Linf(H)\ot\Linf(S_N^+)\subset M_2$. Note that the von Neumann algebra generated by $\{\nu_i(a)u_{ij}\,:\, a\in C(G),\,1\leq j\leq N\}$ is then identified with $M_1\subset M_1\underset{B}{*}M_2$ and $M_1\simeq\Linf(G)\ot\C^N\simeq\Linf(G\times\widehat{\Z/N\Z})$ is diffuse, by Lemma \ref{LemmaInfinite}. Hence $M_1\nprec_{M_1}B$ and since $M_1$ is amenable, we may apply \cite[Main Theorem]{BH18} to deduce that $M_1$ is maximal amenable.

\vspace{0.2cm}

\noindent$(2).$ Consider the inductive construction $\Linf(H)\ot\Linf(S_4^+)=M_0\subset M_1\subset\dots\subset M_N=\Linf(G\wr_{*,H} S_4^+)$, where $M_{k+1}=\left(\Linf(G)\ot\C^N\right)\underset{\Linf(H)\ot\C^N}{*} M_k$. Recall that $\Linf(S_4^+)$ is diffuse and amenable and so is $\Linf(H)\ot\Linf(S_4^+)=M_0$. Let $M_0\subset Q\subset M_N$ be an amenable von Neumann algebra. Then $M_0\subset Q\cap M_{N-1}$ so $Q\cap M_{N-1}\underset{M_{N-1}}{\nprec}\Linf(H)\ot\C^N$ since $M_0$ is diffuse. Hence, we can apply \cite[Main Theorem]{BH18} to the amalgamated free product $M_N=\left(\Linf(G)\ot\C^N\right)\underset{\Linf(H)\ot\C^N}{*}M_{N-1}$ and deduce that $Q\subset M_{N-1}$. By a direct induction we find that $Q\subset M_0$. Note that the Kac assumption on $G$ is to ensure that each $M_k$ is finite so the inclusion $Q\cap M_k\subset M_k$ is always with expectation. We do not know if this result still holds in the non Kac case.\end{proof}

\section{Free wreath product of a fundamental quantum group}\label{SectionGraphCQG}

\noindent Let $(\G, (G_{q})_{q\in V(\G)}, (G_{e})_{e\in E(\G)}, (s_{e})_{e\in E(\G)})$ be \textit{a graph of CQG} over the connected graph $\G$  i.e.:
\begin{itemize}
\item For every $q\in V(\G)$ and every $e\in E(\G)$, $G_{q}$ and $G_{e}$ are CQG.
\item For all $e\in E(\G)$, $G_{\overline{e}}=G_e$.
\item For every $e\in E(\G)$, $s_e\,:\,C(G_e)\rightarrow C(G_{s(e)})$ is a faithful unital $*$-homomorphism intertwining the comultiplications (so $G_e$ is a dual quantum subgroup of $G_{s(e)}$).
\end{itemize}

\noindent Consider the graph of $C^*$-algebras $(\mathcal{G},C(G_q),C(G_e),s_e)$ and fix a maximal subtree $\mathcal{T}\subset\mathcal{G}$. Define $C(G)$ as the maximal fundamental C*-algebra of the graph of C*-algebras $(\mathcal{G},C(G_q),C(G_e),s_e)$ relative to the maximal subtree $\mathcal{T}$. By the universal property of $C(G)$, there exists a unique unital $*$-homomorphism $\Delta\,:\, C(G)\rightarrow C(G)\ot C(G)$ such that $\Delta\vert_{C(G_q)}=\Delta_{C(G_q)}$ and $\Delta(u_e)=u_e\ot u_e$ for all $q\in V(\G)$ and all $e\in E(\G)$. The pair $G:=(C(G),\Delta)$ is a CQG, known as the fundamental quantum group and studied in \cite{FF14}. We will denote this CQG by $G=\pi_1(\G,G_p,G_e,\mathcal{T})$. It is known from \cite{FF14} that $C(G)$ is indeed the full C*-algebra of $G$. Let us note that, since $s_e$ identifies $G_e$ as a dual quantum subgroup of $G_{s(e)}$, we have conditional expectations $E_e^s\,:\,C(G_{s(e)})\rightarrow s_e(C(G_e))$ which are non-necessary GNS-faithful.

\vspace{0.2cm}

\noindent Since $G_e\subset G_{s(e)}$ is a dual quantum subgroup $s_e$ factorizes to a faithful unital $*$-homomorphism $s_e\,:\, C_r(G_e)\rightarrow C_r(G_{s(e)})$ intertwining the comultiplications and we have Haar-state-preserving faithful conditional expectations $C_r(G_{s(e)})\rightarrow s_e(C_r(G_e))$. Hence we get a graph of C*-algebras $(\G,C_r(G_q),C_r(G_e),s_e)$ with faithful conditional expectations whose vertex-reduced fundamental C*-algebra is the reduced C*-algebra $C_r(G)$ of $G$ \cite{FF14}.

\begin{definition}
The \textit{loop subgroup} of $G$ is the group $\Gamma\subset\mathcal{U}(C(G))$ generated by $\{u_e\,:\,e\in E(\G)\}$.
\end{definition}

\noindent Note that the inclusion $\Gamma\subset\mathcal{U}(C(G))$ extends to a unital $*$-homomorphism $\pi\,:\,C^*(\Gamma)\rightarrow C(G)$.

\begin{proposition}
For $G=\pi_1(\G,G_p,G_e,\mathcal{T})$ with loop subgroup $\Gamma$ the following holds.
\begin{enumerate}
    \item $\pi\,:\,C^*(\Gamma)\rightarrow C(G)$ is faithful and intertwines the comultiplications.
    \item For all $e\in E(\G)$, $N\in\N^*$, there exists a unique unital $*$-homomorphism
    $$s_e^N\,:\,C\left(G_e\wr_*S_N^+\right)\rightarrow C\left(G_{s(e)}\wr_*S_N^+\right)\text{ such that }s_e^N\circ\nu_i=\nu_i\circ s_e,\,\, s_e^N\vert_{C(S_N^+)}=\id,\,\,\forall i.$$
\end{enumerate}
    Moreover, $s_e^N$ intertwines the comultiplications and restricts to a faithful map $\Pol(G_e\wr_* S_N^+)\hookrightarrow\Pol(G_{s(e)}\wr_* S_N^+)$ i.e. $G_e\wr_* S_N^+$ is a dual quantum subgroup of $G_{s(e)}\wr_* S_N^+$.

\end{proposition}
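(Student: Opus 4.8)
The plan is to treat $(1)$, $(2)$, and the final faithfulness claim in turn, relying throughout on universal properties and, for the last part, on the explicit Haar state formula of Theorem \ref{propreduced}.

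For $(1)$, the intertwining of comultiplications is immediate: both $\Delta\circ\pi$ and $(\pi\ot\pi)\circ\Delta_{C^*(\Gamma)}$ send each generator $u_e$ of $\Gamma$ to $u_e\ot u_e$, so they agree on a generating set and hence everywhere. For faithfulness I would construct an explicit left inverse $\rho\,:\,C(G)\rightarrow C^*(\Gamma)$. The key preliminary observation is that each $s_e$ (and each $r_e=s_{\overline{e}}$), realizing $G_e$ as a dual quantum subgroup, preserves counits, that is $\varepsilon_{G_{s(e)}}\circ s_e=\varepsilon_{G_e}=\varepsilon_{G_{r(e)}}\circ r_e$ (this holds on $\Pol(G_e)$ by Proposition \ref{dualqsg}, hence on the whole C*-algebra). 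Using the universal property of the maximal fundamental C*-algebra I define $\rho$ by $\rho\vert_{C(G_q)}=\varepsilon_{G_q}(\cdot)1$ for $q\in V(\G)$ and $\rho(u_e)=$ the corresponding generator of $\Gamma\subset C^*(\Gamma)$; note $\rho(u_e)=1$ for $e\in E(\T)$ since then $u_e=1$ already in $C(G)$. The defining relations are checked directly, the only nontrivial one being $u_{\overline{e}}s_e(b)u_e=r_e(b)$, whose image under $\rho$ is $\varepsilon_{G_{s(e)}}(s_e(b))1=\varepsilon_{G_{r(e)}}(r_e(b))1$, both equal to $\varepsilon_{G_e}(b)1$ by counit compatibility. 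Then $\rho\circ\pi=\id$ on the generators of $C^*(\Gamma)$, hence everywhere, so $\pi$ is injective.

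For $(2)$, existence and uniqueness follow from the universal property of $C(G_e\w\SN)=C(G_e)^{*N}*C(\SN)/I$. The maps $\nu_i\circ s_e\,:\,C(G_e)\rightarrow C(G_{s(e)}\w\SN)$ together with the identity on $C(\SN)$ assemble, by the universal property of the full free product, into a $*$-homomorphism out of $C(G_e)^{*N}*C(\SN)$; it annihilates $I$ because $\nu_i(s_e(a))$ commutes with $u_{ij}$ in $C(G_{s(e)}\w\SN)$, so it factors through $s_e^N$, and uniqueness is clear since the values on generators are prescribed. The intertwining of comultiplications is a direct computation on $\nu_i(a)$ and $u_{ij}$: inside $\Delta(\nu_i(a))=\sum_j(\nu_i\ot\nu_j)(\Delta_{G_e}(a))(u_{ij}\ot 1)$, applying $s_e^N\ot s_e^N$ turns $(\nu_i\ot\nu_j)(\Delta_{G_e}(a))$ into $(\nu_i\ot\nu_j)((s_e\ot s_e)\Delta_{G_e}(a))=(\nu_i\ot\nu_j)(\Delta_{G_{s(e)}}(s_e(a)))$, using that $s_e$ intertwines the comultiplications, while the $u_{ij}$ are fixed.

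The substantive part is the final faithfulness statement, where I would mirror the proof of Proposition \ref{quantumsub}. Since $s_e$ is faithful and intertwines comultiplications it intertwines Haar states, $h_{G_{s(e)}}\circ s_e=h_{G_e}$. I claim $s_e^N$ likewise intertwines Haar states, proved via the uniqueness characterization of Theorem \ref{propreduced} (non-amalgamated case, so the condition on $\nu(C(H))$ is vacuous): set $\omega:=h_{G_{s(e)}\w\SN}\circ s_e^N$ and verify its defining properties. On $C(\SN)$ one has $\omega=h_{\SN}$ because $s_e^N$ is the identity there; and if $x=a_0\nu_{i_1}(b_1)\dots\nu_{i_n}(b_n)a_n$ is a reduced operator (so $h_{G_e}(b_k)=0$ and $E_{i_k}(a_k)=0$ whenever $i_k=i_{k+1}$), then $s_e^N(x)=a_0\nu_{i_1}(s_e(b_1))\dots\nu_{i_n}(s_e(b_n))a_n$ is again reduced, since $h_{G_{s(e)}}(s_e(b_k))=h_{G_e}(b_k)=0$ and the $a_k$ are unchanged; hence $\omega(x)=0$. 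By uniqueness $\omega=h_{G_e\w\SN}$. Faithfulness of the Haar state on $\Pol(G_e\w\SN)$ then forces $s_e^N$ to be injective there, and as $s_e(\Pol(G_e))\subset\Pol(G_{s(e)})$ together with $u_{ij}\in\Pol(\SN)$ shows $s_e^N$ maps $\Pol(G_e\w\SN)$ into $\Pol(G_{s(e)}\w\SN)$, Proposition \ref{dualqsg} identifies $G_e\w\SN$ as a dual quantum subgroup of $G_{s(e)}\w\SN$. The main obstacle is exactly this Haar-state computation: everything hinges on showing that $s_e^N$ carries reduced operators to reduced operators, which in turn depends on $s_e$ preserving the Haar state so that the vanishing condition $h_{G_e}(b_k)=0$ is transported correctly to $G_{s(e)}$.
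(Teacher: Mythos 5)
Your proof is correct and follows essentially the same route as the paper: part (1) rests on the same counit trick ($\varepsilon_{s(e)}\circ s_e=\varepsilon_e$ plus the universal property of the fundamental algebra), which you package as an explicit retraction $\rho$ with $\rho\circ\pi=\id$ while the paper instead verifies the universal property of $C^*(\Gamma)$ on the image by extending an arbitrary unitary representation of $\Gamma$ through the counits — a purely cosmetic difference. For part (2) and the dual-quantum-subgroup claim, the paper simply cites Proposition \ref{quantumsub}, whose proof is exactly what you reconstruct: $s_e^N$ carries reduced operators to reduced operators, hence intertwines the Haar states by the uniqueness in Theorem \ref{propreduced}, and faithfulness on $\Pol$ then follows from faithfulness of the Haar state together with Proposition \ref{dualqsg}.
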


\begin{proof}
$(1).$ Let $C_\Gamma$ be the image of $\pi$ i.e. the C*-algebra generated by $\{u_e\,:\, e\in E(\G)\}$ in $C(G)$. To show that that $\pi$ is faithful, it suffices to check that $C_\Gamma$ satisfies the universal property of $C^*(\Gamma)$. Recall that $\varepsilon_{s(e)}\circ s_e(b)=\varepsilon_e(b)$, where, for $p\in V(\G)$, $\varepsilon_p$ is the counit on $C(G_p)$ and, for $e\in E(\G)$, $\varepsilon_e$ is the counit on $C(G_e)$. Let now $\rho\,:\,\Gamma\rightarrow\mathcal{U}(H)$ be a unitary representation of $\Gamma$. By the universal property of $C(G)=\pi_1(\G,C(G_p),C(G_e),\mathcal{T})$, there exists a unique unital $*$-homomorphism $\widetilde{\rho}\,:\, C(G)\rightarrow \mathcal{B}(H)$ such that $\widetilde{\rho}(u_e)=\rho(u_e)$ and $\widetilde{\rho}(a)=\varepsilon_p(a)\id_H$, for all $a\in C(G_p)$ and all $p\in V(\G)$. Hence, $\widetilde{\rho}\vert_{C_\Gamma}\,:\, C_\Gamma\rightarrow\mathcal{B}(H)$ is a unital $*$-homomorphism such that $\widetilde{\rho}(g)=\rho(g)$, for all $g\in\Gamma$. It follows that $\pi$ is faithful. The fact that $\pi$ intertwines the comultiplications follows from the equality $\Delta(u_e)=u_e\ot u_e$ for all $e\in E(\G)$.

\vspace{0.2cm}
\noindent$(2).$ The existence and the properties of the morphism $s_e^N$ follows from Proposition \ref{quantumsub}.
\end{proof}

\noindent By the previous Proposition, we will always view $C^*(\Gamma)=C(\widehat{\Gamma})\subset C(G)$ as the C*-algebra generated by $\{u_e\,:\,e\in E(\G)\}$ and such that the inclusion intertwines the comultiplications. In particular, $\widehat{\Gamma}$ is a dual quantum subgroup of $G$. Moreover, the previous Proposition shows that $(\G, (G_{q}\wr_* S_N^+)_{q\in V(\G)}, (G_{e}\wr_* S_N^+)_{e\in E(\G)}, (s^N_{e})_{e\in E(\G)})$ is a graph of quantum groups. Its fundamental quantum group is determined in the following Theorem.

\begin{theorem}\label{THMGraphCQG2}
If $G=\pi_1(\G,G_p,G_e,\mathcal{T})$ with loop subgroup $\Gamma$ then $$G\wr_{*,\widehat{\Gamma}}S_N^+\simeq\pi_1\left(\G,G_p\wr_*S_N^+,G_e\wr_*S_N^+,\mathcal{T}\right).$$
\end{theorem}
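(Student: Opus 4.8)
The plan is to exhibit a $*$-isomorphism between the two maximal C*-algebras that intertwines the comultiplications. Since by \cite{FF14} the maximal fundamental C*-algebra is the full C*-algebra of the fundamental quantum group, and since $C(\wreathH)$ with $H=\widehat{\Gamma}$ is full by definition, such an isomorphism yields the desired isomorphism of compact quantum groups. Write $P$ for the maximal fundamental C*-algebra of the graph of quantum groups $(\G, G_q\w\SN, G_e\w\SN, s^N_e)$, generated by the vertex algebras $C(G_q\w\SN)$ and unitaries $U_e$ ($e\in E(\G)$) subject to $U_{\ebar}=U_e^*$, $U_e=1$ for $e\in E(\T)$, and $U_{\ebar}s^N_e(b)U_e=r^N_e(b)$ for $b\in C(G_e\w\SN)$. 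For each vertex $q$ let $\nu_i^q\colon C(G_q)\to C(G_q\w\SN)\subset P$ denote the $i$-th wreath copy and $\iota_q\colon C(G_q)\hookrightarrow C(G)$ the vertex inclusion.

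First I would record a structural fact inside $P$. Restricting the edge relation to $b\in C(\SN)\subset C(G_e\w\SN)$ and using $s^N_e|_{C(\SN)}=r^N_e|_{C(\SN)}=\id$, the tree edges (where $U_e=1$) force the copies of $C(\SN)$ sitting in $C(G_{s(e)}\w\SN)$ and in $C(G_{r(e)}\w\SN)$ to coincide in $P$; by connectedness of $\T$ all vertex copies of $C(\SN)$ coincide, giving a single copy $C(\SN)\subset P$. The same relation then reads $U_{\ebar}u_{ij}U_e=u_{ij}$ for \emph{every} edge, so every $U_e$ commutes with this common $C(\SN)$.

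Next I would build the two maps by universal properties. To define $\Phi\colon C(\wreathH)\to P$ I use the universal property of the free wreath product with amalgamation: for each $i$ I define a $*$-homomorphism $\widetilde{\nu}_i\colon C(G)\to P$ by $\widetilde{\nu}_i\circ\iota_q=\nu_i^q$ on each vertex algebra and $\widetilde{\nu}_i(u_e)=U_e$ on the loop unitaries. Well-definedness amounts to checking the defining relations of the fundamental algebra $C(G)$: the tree condition holds since $U_e=1$ there, while the edge relation $u_{\ebar}s_e(c)u_e=r_e(c)$ is transported to $U_{\ebar}\nu_i^{s(e)}(s_e(c))U_e=\nu_i^{r(e)}(r_e(c))$ in $P$ via the identities $s^N_e\circ\nu_i=\nu_i\circ s_e$ and $r^N_e\circ\nu_i=\nu_i\circ r_e$. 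Because $U_e$ is independent of $i$, the maps $\widetilde{\nu}_i$ agree on $C^*(\Gamma)=C(\widehat{\Gamma})$ and hence factor through the amalgamated free product; composing with the common inclusion $C(\SN)\hookrightarrow P$, and noting that $\widetilde{\nu}_i(a)=\nu_i^q(a)$ commutes with $u_{ij}$ inside the vertex algebra $C(G_q\w\SN)$ while $\widetilde{\nu}_i(u_e)=U_e$ commutes with $C(\SN)$, the wreath relations hold and $\Phi$ is obtained. In the other direction I define $\Psi\colon P\to C(\wreathH)$ through $\theta_q\colon C(G_q\w\SN)\to C(\wreathH)$, $\theta_q(\nu_i^q(a))=\nu_i(\iota_q(a))$ and $\theta_q|_{C(\SN)}=\id$, together with $W_e:=\nu(u_e)$ (the common restriction to the amalgam). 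The only nontrivial point is the edge relation of $P$, which on the generators $\nu_i^e(c)$ unwinds to $\nu_i(u_{\ebar}s_e(c)u_e)=\nu_i(r_e(c))$, i.e.\ exactly the edge relation of $C(G)$ pushed through $\nu_i$, and on $C(\SN)$ to the commutation of $W_e$ with $C(\SN)$, both of which hold.

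Finally I would check that $\Phi$ and $\Psi$ are mutually inverse by evaluating on the generating sets (vertex algebras, the common $C(\SN)$, and the $U_e$ resp.\ $u_e$), where both composites are visibly the identity, and that $\Phi$ intertwines the comultiplications. The latter is routine on $C(\SN)$ and on the vertex parts, using $\Delta_G|_{C(G_q)}=\Delta_{G_q}$; the one computation worth isolating is $\Delta(\nu_i(u_e))=\sum_j \nu_i(u_e)u_{ij}\ot\nu_j(u_e)=W_e\ot W_e$, where I factor $W_e\ot$ out using $\nu_j(u_e)=\nu_i(u_e)=W_e$ and then apply $\sum_j u_{ij}=1$, matching $\Delta(U_e)=U_e\ot U_e$ on the fundamental-algebra side. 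The main obstacle is therefore not a single hard estimate but the careful bookkeeping forced by the amalgamation over $\widehat{\Gamma}$: one must constantly track that the loop unitaries are shared among the $N$ copies, which is simultaneously what identifies all $C(\SN)$-copies in $P$, what makes $U_e$ (equivalently $W_e$) well defined, and what yields the grouplike comultiplication $W_e\ot W_e$.
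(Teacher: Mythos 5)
Your proof is correct and follows essentially the same route as the paper: both directions are obtained from the two universal properties (the fundamental C*-algebra on one side, the amalgamated free wreath product on the other), with the decisive point in each case being that the loop unitaries are shared among the $N$ copies, which is exactly what the amalgamation over $C^*(\Gamma)$ encodes. Your write-up is in fact somewhat more explicit than the paper's (the identification of all vertex copies of $C(\SN)$ inside $P$, and the grouplike computation $\Delta(\nu_i(u_e))=W_e\ot W_e$ are only implicit there), but the strategy is identical.
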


\begin{proof}
Define $\mathcal{A}:=C\left(\pi_1\left(\G,G_p\wr_*S_N^+,G_e\wr_*S_N^+,\mathcal{T}\right)\right)$ and $\B:=C\left (G\wr_{*,\widehat{\Gamma}}S_N^+\right)$. For all $p\in V(\G)$, we have a unital $*$-homomorphism $\varphi_p\,:\, C(G_p\w\SN) \rightarrow \B$ coming from the inclusion $C(G_p) \subset C(G)$. Moreover, $\forall e\in E(\G)\setminus E(\T)$, there is a unitary $u_e\in \B$ such that
\begin{equation*}
    u_e^*\varphi_{s(e)}\circ s_e(a) u_e = \varphi_{r(e)}\circ r_e(a)\quad\forall a\in C(G_e\w\SN),
\end{equation*}
the unitary being the one in $C^*(\Gamma)$, which is unique because of the amalgamation and works for every copy of $C(G_e)$ in $C(G_e\w\SN)$ by construction of $G$. Hence, the universal property of $\A$ gives a morphism $\pi\,:\,\A \rightarrow \B$ which clearly intertwines the comultiplications. We will give an inverse of this map, using the universal property of $\B$. For $1\leq i \leq N$, there is for any $p\in V(\G)$ a map $\nu_i \,:\,C(G_p) \rightarrow \A$ sending $C(G_p)$ on its $i$-th copy in the free product defining $C(G_p\w \SN)$ in $\A$. There is also for every edge $e\in E(\G)\setminus E(\T)$ a unitary $u_e\in \A$, which satisfies the relations of the universal property of $\B$, and is the same for every one of the $N$ copies of $C(G_e)$. Therefore, we get a map $\psi_0\,:\,C(\wreath)\rightarrow \A$ which factors through $\B$, because the unitaries $u_e$ are independent of the copy of $C(G_e)$, thus we can factor the map through the quotient amalgamating the algebras generated by the $u_e^i$ for $1\leq i \leq N$, which is exactly $\B$. We get a morphism $\psi : \B \rightarrow\A$ for which it is easy to see that it intertwines the comultiplications. The maps are inverse of each other, because they send the $N$ copies of $C(G)$ to $N$ corresponding copies of $C(G)$ and respect the unitaries from the fundamental algebra construction.
\end{proof}

\begin{example}\label{ExGraphCQG}
Suppose that $\G$ has two edges $e$ and $\overline{e}$. We have two cases.
\begin{enumerate}
    \item If $s(e)\neq r(e)$ then $\G$ is a tree so $\mathcal{T}=\mathcal{G}$ and the loop subgroup is trivial. We are in the situation of an amalgamated free product $G=G_1\underset{H}{*} G_2$ and, by Theorem \ref{THMGraphCQG2},
    $$G\wr_* S_N^+\simeq G_1\wr_* S_N^+\underset{H\wr_* S_N^+}{*} G_2\wr_*S_N^+.$$
    \item If $s(e)=r(e)$ then $\mathcal{T}$ has no edges and the loop subgroup is $\Gamma=\langle u_e\rangle\simeq\Z$. We are in the situation of an HNN extension i.e. $H$ and $\Sigma$ are CQG such that $C(\Sigma)\subset C(H)$ is a dual quantum subgroup and $\theta\,:\,C(\Sigma)\hookrightarrow C(H)$ is a faithful unital $*$-homomorphism intertwining the comultiplications then the HNN extension $G:={\rm HNN}(H,\Sigma,\theta)$ is the CQG with $C(G)$ the universal unital C*-algebra generated by $C(H)$ and a unitary $u\in C(G)$ with the relations $\theta(b)=ubu^*$ for all $b\in C(\Sigma)\subset C(H)$ and comultiplication given by $\Delta(u)=u\ot u$ and $\Delta\vert_{C(H)}=\Delta_H$. The loop subgroup $\Gamma=\Z$ satisfies $C^*(\Gamma)=\langle u\rangle\subset C(G)$ and, by Theorem \ref{THMGraphCQG2}, $\HNN(H,\Sigma,\theta)\wr_{*,\hat{\Gamma}}\SN \simeq \HNN(H\w\SN,\Sigma\w\SN,\tilde{\theta})$.
\end{enumerate}
\end{example}

\section{K-theory}\label{SectionKtheory}

\noindent In \cite{FG18} is obtained a 6-term exact sequences for the KK-theory of the reduced and the full fundamental algebras of any graph of $C^*$-algebras $(\G,A_p,B_e,s_e)$ with (non-necessary GNS-faithful) conditional expectations. It is shown in \cite{FG18} that the canonical surjection from the full fundamental C*-algebra $P$ to the vertex-reduced fundamental C*-algebra $P_r$ is a KK-equivalence and, denoting by $P_\bullet$ either $P$ or $P_r$ one has the following exact sequences, for any C*-algebra $C$.

\begin{equation}\label{Sequence1}
\begin{tikzcd}
\bigoplus_{e\in E^+(\G)} KK^0(C,B_e) \arrow[r,"\sum s_e^*-r_e^*"] & \bigoplus_{p\in V(\G)} KK^0(C,A_p) \arrow[r] & KK^0(C,P_\bullet) \arrow[d]\\
KK^1(C,P_\bullet) \arrow[u] & \bigoplus_{p\in V(\G)} KK^1(C,A_p) \arrow[l] & \bigoplus_{e\in E^+(\G)} KK^1(C,B_e) \arrow[l,"\sum s_e^*-r_e^*"],
\end{tikzcd}
\end{equation}

\begin{equation}\label{Sequence2}
\begin{tikzcd}
\bigoplus_{e\in E^+(\G)} KK^0(B_e,C) \arrow[d] & \bigoplus_{p\in V(\G)} KK^0(A_p,C) \arrow[l,"\sum {s_e}_*-{r_e}_*"] & KK^0(P_\bullet,C) \arrow[l]\\
KK^1(P_\bullet,C) \arrow[r] & \bigoplus_{p\in V(\G)} KK^1(A_p,C) \arrow[r,"\sum {s_e}_*-{r_e}_*"] & \bigoplus_{e\in E^+(\G)} KK^1(B_e,C) \arrow[u].
\end{tikzcd}
\end{equation}

\noindent Recall that, given a CQG $G$, $C_\bullet(G)$ denotes either $C(G)$ or $C_r(G)$.

\vspace{0.2cm}

\noindent\textit{Proof of Theorem \ref{THMD}.} We can use the exact sequence $(\ref{Sequence1})$ with $C= \mathbb{C}$ and with the graph of C*-algebra over $\mathcal{T}_N$ from Section \ref{SectionGraphFull} in the case of $C(G)$ and from Section \ref{SectionGraphReduced} in the case of $C_r(G)$ since both graphs of $C^*$-algebras have injective connecting maps and conditional expectations.
{\small\begin{equation*}
\begin{tikzcd}[cramped]
\underset{v\in E^+}{\bigoplus} K_0(C_\bullet(H)\ot \C^N) \arrow[r,"\sum s_v^*-r_v^*"] & \underset{{p\in V\setminus\lbrace p_0\rbrace}}{\bigoplus} K_0(C_\bullet(G)\otimes \mathbb{C}^N) \oplus K_0(C_\bullet(S_N^+)) \arrow[r] & K_0(C_\bullet(\wreathH)) \arrow[d]\\
K_1(C_\bullet(\wreathH)) \arrow[u] & \underset{p\in V\setminus\lbrace p_0\rbrace}{\bigoplus} K_1(C_\bullet(G)\otimes\C^N) \oplus K_1(C_\bullet(S_N^+))\arrow[l] & \underset{v\in E^+}{\bigoplus} K_1(C_\bullet(H)\ot \C^N) \arrow[l,"\sum s_v^*-r_v^*"].
\end{tikzcd}
\end{equation*}
Using the compatibility of the K-theory with direct sums, we get the following:
\begin{equation*}
\begin{tikzcd}
 K_0(C_\bullet(H))\ot \Z^{N^2} \arrow[r,"\sum s_v^*-r_v^*"] &   \left(K_0(C_\bullet(G))\ot \Z^{N^2}\right)  \oplus K_0(C_\bullet(S_N^+)) \arrow[r] & K_0(C_\bullet(\wreathH)) \arrow[d]\\
K_1(C_\bullet(\wreathH)) \arrow[u] & \left( K_1(C_\bullet(G))\ot \Z^{N^2} \right) \oplus K_1(C_\bullet(S_N^+))\arrow[l] &  K_1(C_\bullet(H))\ot \Z^{N^2} \arrow[l,"\sum s_v^*-r_v^*"].
\end{tikzcd}
\end{equation*}
In what follows we restrict to the cases where $H$ is the trivial group, so that the maps $\sum s_v^*-r_v^*$ are always injective, as shown just below. We will however come back to the above sequence in some special cases.
Let $e_j\in\C^N$, $1\leq j\leq N$ the canonical basis made of pairwise orthogonal rank one projections so that $\{[e_j]\,:\,1\leq j\leq N\}$ is a basis of $K_0(\C^N)\simeq\Z^N$. One checks easily that $\{[1\ot e_j]\,:\,1\leq j\leq N\}$ is linearly independent in  $K_0(C_\bullet(G)\ot\C^N)$ hence for every $1\leq i\leq N$, the elements
$$[1 \otimes e_j] - [u_{ji}] \in K_0((C(G)\ot \C^N))\oplus K_0(C_\bullet(S_N^+)),\,\,1\leq j\leq N$$
are also linearly independent and the map
$$s_{v_i}^*-r_{v_i}^*\,:\,K_0(\C^N)=\Z^N\rightarrow K_0((C(G)\ot \C^N)) \oplus K_0(C_\bullet(S_N^+)),\,\,
    (s_{v_i}^* - r_{v_i}^*)  [e_j]  = [1 \otimes e_j] - [u_{ji}]$$
is injective, for all $1\leq i\leq N$. Now, let us denote by $[e_{ij}]\in\left(\Z^N\right)^{\oplus N}$ the element $[e_j]$ viewed in the $i^{th}$-copy of $\Z^N$ so that $\{[e_{ij}]\,:\,1\leq i,j\leq N\}$ is a basis of $\left(\Z^N\right)^{\oplus N}$ and, similarly, we denote by $[1\ot e_{ij}]\in K_0(C_\bullet(G)\ot\C^N)^{\oplus N}$ the element $[1\ot e_j]$ viewed in the $i^{th}$ copy of $K_0(C_\bullet(G)\ot\C^N)$ so that the map $\psi:=\sum_{v\in E^+}s_v^*-r_v^*\,:\,\left(\Z^N\right)^{\oplus N}\rightarrow K_0(C_\bullet(G)\ot\C^N)^{\oplus N}\bigoplus K_0(C_\bullet(S_N^+))$ is given by $\psi([e_{ij}])=[1\ot e_{ij}]-[u_{ji}]$. As before, since $\{[1\ot e_{ij}]\,:\,1\leq i,j\leq N\}$ is linearly independent in $K_0(C_\bullet(G)\ot\C^N)^{\oplus N}$, it follows that $\{[1\ot e_{ij}]-[u_{ji}]\,:\,1\leq i,j\leq N\}$ is also linearly independent in $K_0(C_\bullet(G)\ot\C^N)^{\oplus N}\bigoplus K_0(C_\bullet(S_N^+))$ so $\psi$ is injective.

\vspace{0.2cm}

\noindent From this we get the isomorphism for $K_1$ and, for $K_0$, we have:
\begin{equation*}
    K_0(C_\bullet(\wreath)) \simeq \left(K_0(C_\bullet(G)\ot \C^N)^{\oplus N} \oplus K_0(C_\bullet(S_N^+)) \right)/ \text{Im}(\psi)
\end{equation*}

\noindent The K-theory of $C_\bullet(\SN)$, where the two algebras are KK-equivalent because $\widehat{S_N^+}$ is K-amenable, is computed by Voigt in \cite{Vo17} for $N\geq 4$ and, for $1\leq N\leq 3$ we have $C_\bullet(S_N^+)=C(S_N)=\C^{N!}$ hence,
$$
K_0(C_\bullet(\SN)) \simeq \left\{\begin{array}{lcl} \Z^{N^2-2N+2}&\text{if}&N\geq 4,\\\Z^{N!}&\text{if}&1\leq N\leq 3.\end{array}\right.\quad\text{and}\quad
K_1(C_\bullet(\SN))\simeq\left\{\begin{array}{lcl} \Z&\text{if}&N\geq 4,\\0&\text{if}&1\leq N\leq 3.\end{array}\right.
$$

\noindent When $N\geq 4$, a family of generators of $K_0(C_\bullet(\SN))$ is given by the classes of $(N-1)^2$ coefficients of the fundamental representation of $\SN$, $[u_{ij}]$, $1\leq i,j\leq N-1$, and the class $[1]$ of the unit.
\begin{align*}
    K_0(C_\bullet(\wreath)) \simeq &\left(K_0(C_\bullet(G)\ot \C^N))^{\oplus N} \oplus K_0(C_\bullet(S_N^+)) \right)/ \text{Im}(\psi)\\
    \simeq&  \left( K_0(C_\bullet(G))\ot \Z^{N^2}) \oplus \Z^{N^2-2N+2} \right)/ \left< ([1]\ot[e_{ij}])-[u_{ji}],1\leq i,j\leq N\right>\\
    \simeq & \left( K_0(C_\bullet(G))\ot \Z^{N^2} \right)/ ([1]\ot \Z^{2N-2}).
\end{align*}
The last quotient is obtained after identifying the classes $[1\ot e_{ij}]$ with $[u_{ji}]$. This gives the first part of the statement of Theorem \ref{THMD} for $N\geq 4$. For the computation of the K-theory of quantum reflection groups $H^{s+}_N = \widehat{\Z_s}\w S_N^+$, for $1\leq s \leq +\infty$, which have K-amenable duals, note that, for $s<\infty$, since $C^*(\Z_s)\simeq C(\widehat{\Z_s}) \simeq \C^s$, we have $K_0(C^*(\Z_s)) \simeq \Z^s$ and $K_1(C^*(\Z_s)) \simeq \lbrace 0\rbrace$. Hence,
$$
    K_0(C_\bullet(H^{s+}_N)) \simeq \left( K_0(C^*(\Z_s))\ot \Z^{N^2} \right)/ ([1]\ot \Z^{2N-2})
     \simeq (\Z^{s}\ot \Z^{N^2})/([1]\ot \Z^{2N-2})
     \simeq \Z^{sN^2-2N+2}.
$$
The same computation works when $s=+\infty$ with $K_0(C^*(\Z)) \simeq \Z$, for $K_0$, but for $K_1$, as $K_0(C^*(\Z)) \simeq \Z$. $\hfill\square$

\vspace{0.2cm}

\noindent We are grateful to Adam Skalski for suggesting to us to use the uniqueness of the trace from \cite{lem14} to deduce the following from Theorem \ref{THMD}.

\begin{corollary}\label{CORReflection}
If $M,N\geq 8$ and $s,t\geq 1$, then $C_r(H_N^{s+})\simeq C_r(H_M^{t+})\Leftrightarrow(N,s)=(M,t)$.
\end{corollary}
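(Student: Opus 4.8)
The plan is to prove the nontrivial implication ($\Rightarrow$), the converse being immediate since $(N,s)=(M,t)$ yields the same algebra. The strategy combines the K-theory computations of Theorem \ref{THMD} with the uniqueness of the tracial state on $C_r(H_N^{s+})$ for $N\geq 8$ due to Lemeux \cite{lem14}: uniqueness of the trace promotes the image of the trace on $K_0$ to an isomorphism invariant, which together with the ranks of $K_0$ and $K_1$ will pin down the pair $(N,s)$.

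First I would use $K_1$ to separate the finite from the infinite parameters. By Theorem \ref{THMD}, $K_1(C_r(H_N^{s+}))$ is free of rank $1$ when $s<+\infty$ and of rank $N^2+1$ when $s=+\infty$; since $N\geq 8$ we have $N^2+1\geq 65>1$. As any $*$-isomorphism induces an isomorphism on $K_1$, we cannot have $s<+\infty$ paired with $t=+\infty$. In the case $s=t=+\infty$ the equality $N^2+1=M^2+1$ already forces $N=M$, settling that case. It therefore remains to treat $s,t<+\infty$, where $K_1\simeq\Z$ carries no information and the trace must be brought in.

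For $N\geq 8$ and $G=\widehat{\Z_s}$ (which is Kac), \cite{lem14} provides a unique tracial state $\tau_N$ on $C_r(H_N^{s+})$; hence any $*$-isomorphism $\phi\colon C_r(H_N^{s+})\to C_r(H_M^{t+})$ satisfies $\tau_M\circ\phi=\tau_N$, so $\phi_*$ intertwines the induced maps $(\tau_N)_*,(\tau_M)_*\colon K_0\to\R$ and the image subgroups coincide. I claim $(\tau_N)_*(K_0(C_r(H_N^{s+})))=\frac{1}{sN}\Z$. Indeed, using the description of $C_r(H_N^{s+})$ as the vertex-reduced fundamental algebra of Section \ref{SectionGraphReduced} and the exact sequence (\ref{Sequence1}) with $C=\C$: the trivial amalgam gives edge algebras $B_{v_k}=\C^N$ with $K_1(\C^N)=0$, so the map $\bigoplus_p K_0(A_p)\to K_0(C_r(H_N^{s+}))$ is surjective. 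Thus $K_0$ is generated by classes of projections from the vertex algebras $A_{p_0}=C(S_N^+)$ and $A_{p_i}=C^*(\Z_s)\ot\C^N\simeq\C^{sN}$, namely (via the isomorphism $\pi_r$ of Theorem \ref{propreduced}) the $[u_{ij}]$ and the $[\nu_i(p_k)u_{ij}]$, where $p_0,\dots,p_{s-1}$ are the minimal projections of $C^*(\Z_s)=\C^s$. Since $\pi_r$ intertwines the Haar state with the fundamental state $\omega$ and $\omega\vert_{A_{p_i}}=h_{\widehat{\Z_s}}\ot\tr$, I get $\tau_N(\nu_i(p_k)u_{ij})=\omega(p_k\ot e_j)=\frac1s\cdot\frac1N=\frac1{sN}$ and $\tau_N(u_{ij})=h(u_{ij})=\frac1N$ (and $\tau_N(1)=1$). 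Every generator therefore has trace in $\frac1{sN}\Z$, while the projection $\nu_i(p_k)u_{ij}$ realizes the value $\frac1{sN}$, proving the claim.

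The conclusion in the finite case is then arithmetic: equality of the trace images gives $\frac1{sN}\Z=\frac1{tM}\Z$, i.e.\ $sN=tM=:c\geq 8$, and equality of the $K_0$-ranks from Theorem \ref{THMD} gives $sN^2-2N=tM^2-2M$, which rewrites as $(c-2)(N-M)=0$; since $c>2$ we obtain $N=M$ and hence $s=t$. The main obstacle is the computation of Step three, namely showing that $(\tau_N)_*(K_0)$ is \emph{exactly} $\frac1{sN}\Z$: this requires both the surjectivity $\bigoplus_p K_0(A_p)\to K_0$ (which hinges on $K_1(B_{v_k})=0$ in the non-amalgamated case) to bound the image inside $\frac1{sN}\Z$, and the exhibition of a genuine projection $\nu_i(p_k)u_{ij}$ of trace precisely $\frac1{sN}$ to achieve the bound; the remaining steps are routine given Theorems \ref{THMD} and \ref{propreduced} and the uniqueness of the trace.
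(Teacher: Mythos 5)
Your proof is correct and follows essentially the same route as the paper: both use the $K$-theory computation of Theorem \ref{THMD} together with Lemeux's unique trace theorem for $N\geq 8$ to turn the trace values on the $K_0$-generators ($1$, $1/N$ from $C_r(S_N^+)$ and $1/(sN)$ from $[\nu_i(p_k)u_{ij}]$) into an isomorphism invariant. The only difference is one of explicitness: where the paper simply asserts that $(N,s)$ "can be retrieved from the data of the K-theory," you spell out the $K_1$-rank argument separating $s=+\infty$ from $s<+\infty$ (which is genuinely needed, since trace image and $K_0$-rank alone cannot distinguish $H_N^{1+}$ from $H_N^{\infty+}$) and the final arithmetic $sN=tM$, $(sN-2)(N-M)=0$ forcing $(N,s)=(M,t)$.
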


\begin{proof}
The dimensions of the K-theory groups may happen to be equal for different pairs of integers, so we need to go a bit further to differentiate between them. We use the value of the Haar state applied to the generators of $K_0(C_r (H_N^{s+}))\simeq \Z^{sN^2-2N+2}$. Thanks to the computation, we know that there are, in addition to the class of the unit $[1]$, $N^2-2N+1$ generators coming from the ones of $K_0(C_r(S_N^+))$, namely $[1 \ot u_{ij}]$ for $1\leq i,j\leq N-1$, they are equal to $[u_{ij}]$ in $K_0(C_r(H_N^{s+}))$. The trace of such elements is the same as the trace of the corresponding element in $C_r(\SN)$, thanks to \ref{propreduced}, which is equal to $1/N$. The $(s-1)N^2$ remaining generators are the ones coming from the $N$ copies of $K_0(C^*(\Z_s)\ot \C^N)$ and are of the form $[\delta_k\ot e_j]$, for $k\in \Z_s$, $k\neq 0$, and $1\leq j\leq N$. Such a class, in the $i$-th copy, is sent to $[\nu_i(\delta_k)u_{ij}]$, which is of trace $1/(sN)$. Thus, using the uniqueness of the trace of these algebras, as proved in \cite{lem14}, we get that the pair $(N,s)$ can be retrieved from the data of the K-theory, and it allows to discriminate the algebras of the different quantum reflection groups.
\end{proof}

\begin{corollary}\label{CORKthFree}
For all $m\geq 1$, and $N\geq 4$, the dual of $\widehat{\mathbb{F}}_m\w S_N^+$ is K-amenable and we have: $$K_0(C_\bullet(\widehat{\mathbb{F}}_m\w S_N^+)) \simeq \Z^{N^2-2N+2}\quad\text{and} \quad K_1(C_\bullet(\widehat{\mathbb{F}}_m\w S_N^+)) \simeq \Z^{N^2m+1}.$$
If $1\leq N\leq 3$, then the dual of $\widehat{\mathbb{F}}_m\w S_N^+$ is also K-amenable, with $$K_0(C_\bullet(\widehat{\mathbb{F}}_m\w S_N^+)) \simeq \Z^{N!}\quad\text{and} \quad K_1(C_\bullet(\widehat{\mathbb{F}}_m\w S_N^+)) \simeq \Z^{N^2m} .$$
In particular, for all $n,m\geq 1$ and $N,M\geq 1$, $C_\bullet(\widehat{\mathbb{F}}_n\w S_N^+)\simeq C_\bullet(\widehat{\mathbb{F}}_m\w S_M^+)\Leftrightarrow (n,N)=(m,M)$.
\end{corollary}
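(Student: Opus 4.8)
The plan is to reduce everything to Theorem \ref{THMD} applied to $G=\widehat{\mathbb{F}}_m$, for which $C(G)=C^*(\mathbb{F}_m)$ and $C_r(G)=C^*_r(\mathbb{F}_m)$. First I would record the $K$-amenability. The free group $\mathbb{F}_m$ is $K$-amenable (Cuntz), so its dual $\widehat{\mathbb{F}}_m$ is a $K$-amenable compact quantum group, and Theorem \ref{THMAprime}(5) (with trivial amalgam $H=\{e\}$) gives that $\widehat{\mathbb{F}}_m\w S_N^+$ is $K$-amenable. In particular the canonical surjection $C(\widehat{\mathbb{F}}_m\w S_N^+)\to C_r(\widehat{\mathbb{F}}_m\w S_N^+)$ is a $KK$-equivalence, so the groups $K_\bullet(C_\bullet(\widehat{\mathbb{F}}_m\w S_N^+))$ do not depend on the choice of full or reduced completion; this justifies the $C_\bullet$ notation throughout.

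Next I would feed into Theorem \ref{THMD} the classical computation $K_0(C_\bullet(\widehat{\mathbb{F}}_m))\simeq\Z$ (generated by the unit) and $K_1(C_\bullet(\widehat{\mathbb{F}}_m))\simeq\Z^m$ (generated by the group generators). For $N\geq 4$, Theorem \ref{THMD} then gives $K_0\simeq\Z^{N^2}/\Z^{2N-2}\simeq\Z^{N^2-2N+2}$ and $K_1\simeq(\Z^m)^{\oplus N^2}\oplus\Z\simeq\Z^{N^2m+1}$. For $1\le N\le 3$ the same substitution, using the $N=3$ branch $\Z^{N^2}/\Z^3$ when $N=3$ and the $N\neq 3$ branch otherwise, yields $K_0\simeq\Z^{N!}$ (one checks the values $1,2,6$ for $N=1,2,3$ by hand) and $K_1\simeq(\Z^m)^{\oplus N^2}\simeq\Z^{N^2m}$. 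This establishes the displayed $K$-theory formulas.

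Finally, for the distinguishing statement the implication $\Leftarrow$ is trivial, and for $\Rightarrow$ I would use that the ranks $r_0:=\mathrm{rk}\,K_0$ and $r_1:=\mathrm{rk}\,K_1$ are isomorphism invariants of the C*-algebra. From the formulas, $r_0$ depends only on $N$, taking the values $1,2,6$ for $N=1,2,3$ and $(N-1)^2+1$ for $N\ge 4$; since $1<2<6<10<17<\dots$ is strictly increasing, $N\mapsto r_0(N)$ is injective, so an isomorphism forces $N=M$. Once $N$ is pinned down, the relation $r_1=N^2n+1$ (for $N\ge 4$) or $r_1=N^2n$ (for $N\le 3$) determines $n$ uniquely, giving $n=m$. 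The only genuine care required — and the closest thing to an obstacle in an otherwise routine argument — is checking that the piecewise rank function for $K_0$ has no collisions across the boundary between $N\le 3$ and $N\ge 4$ (it does not, since $6<10$), so that $r_0$ really recovers $N$ before $r_1$ recovers $n$.
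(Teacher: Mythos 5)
Your proposal is correct and follows essentially the same route as the paper: Cuntz's $K$-amenability of $\mathbb{F}_m$ combined with the stability statement (Theorem \ref{THMAprime}(5)) gives $K$-amenability, the classical computation $K_0(C^*_\bullet(\mathbb{F}_m))\simeq\Z$, $K_1(C^*_\bullet(\mathbb{F}_m))\simeq\Z^m$ is substituted into Theorem \ref{THMD}, and the pair $(N,n)$ is recovered from $K_0$ first and then $K_1$. Your explicit verification that the rank function $N\mapsto\mathrm{rk}\,K_0$ has no collision across the boundary $N\le 3$ versus $N\ge 4$ (i.e.\ $6<10$) is a detail the paper leaves implicit, but the argument is the same.
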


\begin{proof}
K-amenability of free groups has been proved by Cuntz in \cite{Cun83}, when he introduced the notion of K-amenability for discrete groups. The K-theory for the maximal C*-algebra was initially computed in \cite{Cun82} and for the reduced C*-algebra in \cite{PV82}. The result is $K_0(C^*(\F_m)) \simeq \Z$ and $K_1(C^*(\F_m)) \simeq \Z^m$. Using this and Theorem \ref{THMD}, we get the first result. For the last statement, we first use equality of the $K_0$-groups to deduce that $N=M$ and then, equality of the $K_1$-groups to deduce that $n=m$.\end{proof}

\noindent We can use the results of Section \ref{SectionGraphCQG} to compute the KK-theory of $C^*$-algebras of a free wreath product of a fundamental quantum group of graph of CQG. The main theorem is the following, using the notations of Section \ref{SectionGraphCQG} and denoting by $E^+$ and $V$ the positive edges and vertices of the connected graph $\G$.

\begin{theorem}\label{THMExactSeqGraphCQG}
If $G=\pi_1(\G,G_p,G_e,\mathcal{T})$ for any $C^*$-algebra $A$, there is a cyclic exact sequences:
\begin{equation*}
\begin{tikzcd}[cramped]
 \underset{e\in E^+}{\bigoplus} \scalemath{0.9}{KK^0(A, C_\bullet(G_e\wr_*S_N^+))} \arrow[r,"\sum s_e^*-r_e^*"] & \underset{p\in V}{\bigoplus} \scalemath{0.9}{KK^0(A,C_\bullet(G_p\wr_*S_N^+))} \arrow[r] & \scalemath{0.9}{KK^0(A,C_\bullet(G\wr_{*,\Gammahat}\SN))} \arrow[d]\\
\scalemath{0.9}{KK^1(A,C_\bullet(G\wr_{*,\Gammahat}\SN))} \arrow[u] & \underset{p\in V}{\bigoplus} \scalemath{0.9}{KK^1(A,C_\bullet(G_p\wr_*S_N^+))} \arrow[l] & \underset{e\in E^+}{\bigoplus} \scalemath{0.9}{KK^1(A,C_\bullet(G_e\wr_*S_N^+))} \arrow[l,"\sum s_e^*-r_e^*"]
\end{tikzcd}
\end{equation*}

\begin{equation*}
\begin{tikzcd}[cramped]
\underset{e\in E^+}{\bigoplus} \scalemath{0.9}{KK^0(C_\bullet(G_e\wr_*S_N^+),A)}\,\,\, \arrow[d] & \underset{\,\,p\in V}{\bigoplus} \scalemath{0.9}{KK^0(C_\bullet(G_p\wr_*S_N^+),A)} \arrow[l,"\sum {s_e}_*-{r_e}_*\,\,"] & \scalemath{0.9}{KK^0(C_\bullet(G\wr_{*,\Gammahat}\SN),A)} \arrow[l]\\
\scalemath{0.9}{KK^1(C_\bullet(G\wr_{*,\Gammahat}\SN),A)} \arrow[r] & \underset{p\in V}{\bigoplus} \scalemath{0.9}{KK^1(C_\bullet(G_p\wr_*S_N^+),A)}\,\,\, \arrow[r,"\sum {s_e}_*-{r_e}_*"] & \underset{e\in E^+}{\bigoplus} \scalemath{0.9}{KK^1(C_\bullet(G_e\wr_*S_N^+),A)} \arrow[u]
\end{tikzcd}
\end{equation*}
\end{theorem}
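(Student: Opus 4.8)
The plan is to identify $G\wr_{*,\Gammahat}\SN$ with a fundamental quantum group and then feed the associated graph of C*-algebras into the six-term exact sequences of \cite{FG18} recorded above as $(\ref{Sequence1})$ and $(\ref{Sequence2})$. By Theorem \ref{THMGraphCQG2} we have $G\wr_{*,\Gammahat}\SN\simeq\pi_1(\G,G_p\w\SN,G_e\w\SN,\mathcal{T})$, so it is the fundamental quantum group of the graph of CQG $(\G,(G_p\w\SN),(G_e\w\SN),(s_e^N))$. As recalled in Section \ref{SectionGraphCQG} (following \cite{FF14}), the full C*-algebra $C(G\wr_{*,\Gammahat}\SN)$ is then the maximal fundamental C*-algebra, and the reduced C*-algebra $C_r(G\wr_{*,\Gammahat}\SN)$ the vertex-reduced fundamental C*-algebra, of the graph of C*-algebras
$$\left(\G,\,C_\bullet(G_p\w\SN),\,C_\bullet(G_e\w\SN),\,s_e^N\right).$$

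First I would check that this is a genuine graph of C*-algebras with conditional expectations, so that the machinery of \cite{FG18} applies. The underlying graph $\G$ is connected, all the algebras involved are unital, and $G_{\overline{e}}\w\SN=G_e\w\SN$. The connecting maps $s_e^N$ are unital faithful $*$-homomorphisms by the Proposition preceding Theorem \ref{THMGraphCQG2}, which moreover shows that $G_e\w\SN$ is a dual quantum subgroup of $G_{s(e)}\w\SN$. Hence Proposition \ref{dualqsg} provides, on both the full and the reduced level, Haar-state-preserving conditional expectations $E_e^s\,:\,C_\bullet(G_{s(e)}\w\SN)\rightarrow s_e^N(C_\bullet(G_e\w\SN))$, which are exactly the data required; note that $(\ref{Sequence1})$ and $(\ref{Sequence2})$ are valid for conditional expectations that need not be GNS-faithful.

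Then I would simply apply the exact sequences $(\ref{Sequence1})$ and $(\ref{Sequence2})$ with $C=A$, taking $A_p=C_\bullet(G_p\w\SN)$, $B_e=C_\bullet(G_e\w\SN)$, and the induced morphisms $s_e^N$, $r_e^N=s_{\overline{e}}^N$ on KK-theory as the connecting maps. Since \cite{FG18} shows that the canonical surjection from the maximal to the vertex-reduced fundamental C*-algebra is a KK-equivalence, the resulting sequence holds verbatim whether $C_\bullet$ denotes the full or the reduced C*-algebra, which yields the two cyclic exact sequences of the statement.

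The main point is not really an obstacle but careful bookkeeping: one must check that the maps $\sum s_e^*-r_e^*$ (resp. $\sum{s_e}_*-{r_e}_*$) appearing in $(\ref{Sequence1})$ and $(\ref{Sequence2})$ are precisely those induced by $s_e^N$ and $r_e^N$, and that the full/reduced dichotomy is handled uniformly through the KK-equivalence of \cite{FG18}; everything else is a direct transcription of the cited six-term sequences.
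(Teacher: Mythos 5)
Your proposal is correct and follows essentially the same route as the paper's proof: identify $C_\bullet(G\wr_{*,\Gammahat}\SN)$, via Theorem \ref{THMGraphCQG2} and the results of \cite{FF14} recalled in Section \ref{SectionGraphCQG}, with the maximal (resp.\ vertex-reduced) fundamental C*-algebra of the graph of full (resp.\ reduced) C*-algebras $(\G,C_\bullet(G_p\w\SN),C_\bullet(G_e\w\SN),s_e^N)$ with conditional expectations, and then apply the six-term sequences $(\ref{Sequence1})$ and $(\ref{Sequence2})$ of \cite{FG18}. Your extra verifications (dual quantum subgroup structure giving the conditional expectations via Proposition \ref{dualqsg}, and the fact that GNS-faithfulness is not needed at the full level) are exactly the points the paper's shorter proof implicitly relies on.
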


\begin{proof}
As observed in Section \ref{SectionGraphCQG} we have, at the level of full as well as reduced C*-algebras, a graph of C*-algebras $(\G,C_\bullet(G_q\wr_* S_N^+),C_\bullet(G_q\wr_* S_N^+),s_E^N)$ with conditional expectations (which are GNS-faithful only at the reduced level). By Theorem \ref{THMGraphCQG2} the full/reduced C*-algebra $C_\bullet(G\wr_{*,\widehat{\Gamma}}S_N^+)$ is the full/vertex-reduced fundamental algebra of $(\G,C_\bullet(G_q\wr_* S_N^+),C_\bullet(G_r\wr_* S_N^+),s_E^N)$. Hence, we may apply the exact sequences $(\ref{Sequence1})$ and $(\ref{Sequence2})$.\end{proof}

\noindent We compute now K-theory groups of some free wreath products of an amalgamated free product with $S_N^+$.

\begin{corollary}\label{freeprodK}
If $G = G_1 *_H G_2$ then there is a cyclic sequence of K-theory groups:
\begin{equation*}
\begin{tikzcd}
 K_0(C_\bullet(H\w \SN)) \arrow[r] & K_0(C_\bullet(G_1\w \SN))\oplus K_0(C_\bullet(G_2\w\SN)) \arrow[r] & K_0(C_\bullet(\wreath)) \arrow[d]\\
 K_1(C_\bullet(\wreath)) \arrow[u] & K_1(C_\bullet(G_1\w \SN))\oplus K_1(C_\bullet(G_2\w\SN)) \arrow[l] &  K_1(C_\bullet(H\w \SN))\arrow[l] .
\end{tikzcd}
\end{equation*}
\end{corollary}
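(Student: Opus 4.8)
The plan is to specialize Theorem \ref{THMExactSeqGraphCQG} to the graph of Example \ref{ExGraphCQG}(1). Let $\G$ be the graph with two vertices carrying $G_1,G_2$ and a single pair of mutually inverse edges $e,\overline{e}$ with $s(e)\neq r(e)$, and with $G_e=H$ the common dual quantum subgroup. Since $s(e)\neq r(e)$ the graph is a tree, so $\mathcal{T}=\G$, the loop subgroup $\Gamma$ is trivial, and $\widehat{\Gamma}$ is the trivial group; hence $G\wr_{*,\widehat{\Gamma}}S_N^+\simeq G\wr_*S_N^+$. By Example \ref{ExGraphCQG}(1) we have $G=G_1*_HG_2=\pi_1(\G,G_p,G_e,\mathcal{T})$, so Theorem \ref{THMGraphCQG2} applies and the fundamental quantum group picture is exactly the one required by Theorem \ref{THMExactSeqGraphCQG}.

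The key step is to read off the terms of the first cyclic sequence of Theorem \ref{THMExactSeqGraphCQG} for this particular graph, taking $A=\C$ so that $KK^i(\C,C_\bullet(-))=K_i(C_\bullet(-))$. With the above data, the positive edge set $E^+$ has a single element $e$, contributing $K_i(C_\bullet(G_e\wr_*S_N^+))=K_i(C_\bullet(H\wr_*S_N^+))$, while the vertex set $V=\{p_1,p_2\}$ contributes $K_i(C_\bullet(G_1\wr_*S_N^+))\oplus K_i(C_\bullet(G_2\wr_*S_N^+))$. The middle-right term $K_i(C_\bullet(G\wr_{*,\widehat{\Gamma}}S_N^+))$ becomes $K_i(C_\bullet(\wreath))$ by the identification in the previous paragraph. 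Substituting these into the six-term exact sequence of Theorem \ref{THMExactSeqGraphCQG} yields precisely the displayed cyclic sequence of the Corollary, with the connecting maps being the restrictions of $\sum s_e^*-r_e^*$.

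This argument is essentially a bookkeeping specialization, so I expect no serious obstacle; the only point requiring care is verifying that the connecting $*$-homomorphisms $s_e^N,r_e^N$ in the graph of quantum groups — built from Theorem \ref{quantumsub} / Proposition \ref{quantumsub} — are indeed the maps $H\wr_*S_N^+\hookrightarrow G_k\wr_*S_N^+$ induced by the dual quantum subgroup inclusions $C(H)\subset C(G_k)$, so that $s_e^*,r_e^*$ in the K-theory sequence are the functorially induced maps one expects. This is immediate from the construction of $s_e^N$ in the Proposition preceding Theorem \ref{THMGraphCQG2} (namely $s_e^N\circ\nu_i=\nu_i\circ s_e$ and $s_e^N|_{C(S_N^+)}=\id$), which restricts $s_e$ to the $i$-th copy on each leg. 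Once this identification is in place, the Corollary follows directly from Theorem \ref{THMExactSeqGraphCQG}.
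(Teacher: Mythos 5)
Your proposal is correct and follows exactly the paper's own argument: the paper's proof likewise observes (via Example \ref{ExGraphCQG}) that the loop subgroup is trivial for an amalgamated free product, so that $G\wr_{*,\widehat{\Gamma}}S_N^+\simeq G\wr_*S_N^+$, and then applies the first exact sequence of Theorem \ref{THMExactSeqGraphCQG} with $A=\C$. Your write-up merely spells out the bookkeeping (one positive edge, two vertices, identification of the connecting maps via $s_e^N$) that the paper leaves implicit.
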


\begin{proof}
As observed in Example \ref{ExGraphCQG} the loop subgroup is trivial in the case of an amalgamated free product. The proof follows from the first exact sequence in Theorem \ref{THMExactSeqGraphCQG} with $A=\C$.\end{proof}

\begin{corollary}
For the compact quantum group $\widehat{{\rm SL}_2(\Z)}\w \SN$, which has K-amenable dual, we have for $N\geq 4$:
\begin{equation*}
    K_0(C_\bullet(\widehat{{\rm SL}_2(\Z)}\w \SN)) \simeq \Z^{8N^2-2N+2}\quad\text{and}\quad K_1(C_\bullet(\widehat{{\rm SL}_2(\Z)}\w \SN))\simeq\Z.
\end{equation*}
For $1\leq N\leq 3$, we have:
\begin{equation*}
    K_0(C_\bullet(\widehat{{\rm SL}_2(\Z)}\w \SN))  \simeq\left\{
\begin{array}{lcl}\Z^{69}&\text{if}&N=3,\\ \Z^{8N^2-2N+2}&\text{if}&N\in \lbrace1,2\rbrace.\end{array}\right. \quad\text{and}\quad K_1(C_\bullet(\widehat{{\rm SL}_2(\Z)}\w \SN))\simeq 0.
\end{equation*}
\end{corollary}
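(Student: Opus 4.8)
The plan is to realise $\widehat{{\rm SL}_2(\Z)}$ as an amalgamated free product of duals of finite cyclic groups and feed the pieces into Corollary \ref{freeprodK}. I would start from the classical isomorphism ${\rm SL}_2(\Z)\simeq\Z_4\underset{\Z_2}{*}\Z_6$, where $\Z_2=\{\pm I\}$ is the center, which dualizes to $\widehat{{\rm SL}_2(\Z)}\simeq\widehat{\Z_4}\underset{\widehat{\Z_2}}{*}\widehat{\Z_6}$ (the dual of a subgroup being a dual quantum subgroup). Since ${\rm SL}_2(\Z)$ is K-amenable as a discrete group, $\widehat{{\rm SL}_2(\Z)}$ is a K-amenable CQG, hence so is $\widehat{{\rm SL}_2(\Z)}\w\SN$ by Theorem \ref{THMAprime}(5); this makes $C_\bullet$ unambiguous. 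I then apply Corollary \ref{freeprodK} with $G_1=\widehat{\Z_4}$, $G_2=\widehat{\Z_6}$, $H=\widehat{\Z_2}$, noting $G_1\w\SN=H_N^{4+}$, $G_2\w\SN=H_N^{6+}$, $H\w\SN=H_N^{2+}$, and substitute Theorem \ref{THMD}: for $N\geq4$, $K_0(H_N^{s+})\simeq\Z^{sN^2-2N+2}$ and $K_1(H_N^{s+})\simeq\Z$ for $s\in\{2,4,6\}$, while for $1\leq N\leq3$, $K_0(H_N^{s+})\simeq\Z^{(s-1)N^2+N!}$ and $K_1(H_N^{s+})\simeq0$.

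The heart of the argument is to control the two connecting maps $\sum s_e^*-r_e^*$ of the six-term sequence. In degree $1$ (relevant only for $N\geq4$), the map $\gamma:K_1(H_N^{2+})\to K_1(H_N^{4+})\oplus K_1(H_N^{6+})$ is transparent: since $K_1(C^*(\Z_s))=0$, the group $K_1(H_N^{s+})\simeq\Z$ comes entirely from $K_1(C(\SN))$, and because the inclusions $\widehat{\Z_2}\w\SN\hookrightarrow\widehat{\Z_s}\w\SN$ are the identity on $C(\SN)$, both $s_e^*$ and $r_e^*$ are the identity there. Thus $\gamma$ is the injection $x\mapsto(x,-x)$ with ${\rm coker}(\gamma)\simeq\Z$. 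In degree $0$, I would prove that $\alpha:K_0(H_N^{2+})\to K_0(H_N^{4+})\oplus K_0(H_N^{6+})$ is split injective; it suffices to split one component, say $s_e^*:K_0(H_N^{2+})\to K_0(H_N^{4+})$. The inclusion $\widehat{\Z_2}\w\SN\hookrightarrow\widehat{\Z_4}\w\SN$ is a morphism of the graphs of C*-algebras of Section \ref{SectionGraphReduced} which is the identity on the $C(\SN)$-vertex and equals $\iota_4\otimes\id$ on each vertex algebra $C^*(\Z_\bullet)\otimes\C^N$, where $\iota_4$ is induced by the subgroup inclusion $C^*(\Z_2)\hookrightarrow C^*(\Z_4)$; by naturality of the exact sequence (\ref{Sequence1}), the induced $K_0$-map is $\iota_4\otimes\id$ descended to the quotient $\left(K_0(C^*(\Z_s))\otimes\Z^{N^2}\right)/([1]\otimes\Z^{2N-2})$ of Theorem \ref{THMD}.

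A short character computation shows $\iota_4$ sends the two minimal projections of $C^*(\Z_2)=\C^2$ to $[q_0]+[q_2]$ and $[q_1]+[q_3]$ in $\Z^4=K_0(C^*(\Z_4))$, so $\iota_4$ is split injective with free cokernel $\Z^2$, and $[1_{C^*(\Z_4)}]=\iota_4([1_{C^*(\Z_2)}])$ lies in its image. Choosing any retraction $r_4$ of $\iota_4$ (which then automatically satisfies $r_4([1])=[1]$), the map $r_4\otimes\id$ carries $[1]\otimes\Z^{2N-2}$ into itself, hence descends to a retraction of $s_e^*$ on the quotient. Therefore $s_e^*$, and a fortiori $\alpha$, is split injective with free cokernel, and the same computation works verbatim for $\widehat{\Z_6}$ (with $\iota_6([p_\pm])=[q_0]+[q_2]+[q_4]$ and $[q_1]+[q_3]+[q_5]$).

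With $\alpha$ and (for $N\geq4$) $\gamma$ injective, the two connecting maps of the six-term sequence vanish, so it breaks into the short exact sequences $0\to K_0(H_N^{2+})\xrightarrow{\alpha}K_0(H_N^{4+})\oplus K_0(H_N^{6+})\to K_0(\widehat{{\rm SL}_2(\Z)}\w\SN)\to0$ and $0\to K_1(H_N^{2+})\xrightarrow{\gamma}K_1(H_N^{4+})\oplus K_1(H_N^{6+})\to K_1(\widehat{{\rm SL}_2(\Z)}\w\SN)\to0$. Split injectivity of $\alpha$ yields $K_0\simeq{\rm coker}(\alpha)$, free of rank $(4N^2-2N+2)+(6N^2-2N+2)-(2N^2-2N+2)=8N^2-2N+2$ for $N\geq4$, and $K_1\simeq{\rm coker}(\gamma)\simeq\Z$. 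For $1\leq N\leq3$ all the $K_1$-groups of the pieces vanish, so the sequence forces $K_1\simeq\ker(\alpha)=0$ and $K_0\simeq{\rm coker}(\alpha)$, free of rank $(3N^2+N!)+(5N^2+N!)-(N^2+N!)=7N^2+N!$, which equals $8N^2-2N+2$ for $N\in\{1,2\}$ and equals $69$ for $N=3$. The main obstacle is the bookkeeping in the third paragraph: making rigorous, through naturality of the fundamental-algebra exact sequence, that the inclusion-induced $K_0$-map really is $\iota_s\otimes\id$ on the quotient presentation of Theorem \ref{THMD}, so that the retraction descends and the cokernel stays free; the rest is a rank count.
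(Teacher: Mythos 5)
Your proof is correct, and its skeleton is the paper's: the same splitting ${\rm SL}_2(\Z)\simeq \Z_4*_{\Z_2}\Z_6$, the same identification $\widehat{{\rm SL}_2(\Z)}\w\SN\simeq H_N^{4+}*_{H_N^{2+}}H_N^{6+}$ via Example \ref{ExGraphCQG}, the same six-term sequence from Corollary \ref{freeprodK} fed with Theorem \ref{THMD}, the same diagonal argument in degree one, and the same rank counts (including the cases $1\leq N\leq 3$, which you treat more explicitly than the paper does). The one genuine difference is the mechanism for the degree-zero injectivity. The paper never invokes naturality of the exact sequences of \cite{FG18}: it observes that the generators of $K_0(C_\bullet(H_N^{2+}))$ are classes of explicit projections $[1]$, $[u_{ij}]$ and $[\nu_i(\delta_k)u_{ij}]$ (as in the proof of Corollary \ref{CORReflection}), and that the two legs of the amalgamated free product are honest $*$-homomorphisms sending these projections to the explicit projections $[\nu_i(\beta(\delta_k))u_{ij}]$ and $[\nu_i(\alpha(\delta_k))u_{ij}]$; plain functoriality of $K_0$ then exhibits $\psi$ as a map carrying a basis to a family that extends to a basis, so that the cokernel is visibly free of the expected rank. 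Your route --- identify $s_e^*$ with $\iota_s\ot\id$ on the quotient presentation via naturality of sequence~(\ref{Sequence1}) under morphisms of graphs of C*-algebras, then split it with a retraction coming from the character computation --- proves the same thing, and your retraction argument is in fact a welcome rigorous substitute for the paper's terse ``the quotient by $\mathrm{Im}(\psi)$ identifies free copies of $\Z$''. But the naturality you flag as the main obstacle is indeed the only ingredient not available off the shelf in the paper; you can eliminate it entirely by running your character computation directly on the projection generators above, i.e.\ by replacing the appeal to naturality with the paper's generator-tracking, after which your splitting and rank count go through verbatim.
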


\begin{proof}
We use the well known isomorphism ${\rm SL}_2(\Z) \simeq \Z_6 *_{\Z_2} \Z_4$ which implies the K-amenability of ${\rm SL}_2(\Z)$ hence of the dual of $\widehat{{\rm SL}_2(\Z)}\w \SN$ as well by Theorem \ref{THMA}. Moreover, by Example \ref{ExGraphCQG}, we have $\widehat{{\rm SL}_2(\Z)}\w \SN\simeq H_N^{6+}*_{H_N^{2+}} H_N^{4+}$. We may use the K-theory of the quantum reflection groups computed in Theorem \ref{THMD} to deduce the one for the group $\widehat{{\rm SL}_2(\Z)}\w \SN$. Applying Corollary \ref{freeprodK} and $K$-amenability, we get the following cyclic exact sequence
\begin{equation*}
\begin{tikzcd}
 K_0(C_\bullet(H_N^{2+})) \arrow[r,"\psi"] & K_0(C_\bullet(H_N^{6+}))\oplus K_0(C_\bullet(H_N^{4+})) \arrow[r] &  K_0(C_\bullet(\widehat{{\rm SL}_2(\Z)}\w \SN))\arrow[d]\\
K_1(C_\bullet(\widehat{{\rm SL}_2(\Z)}\w \SN)) \arrow[u] & K_1(C_\bullet(H_N^{6+}))\oplus K_1(C_\bullet(H_N^{4+})) \arrow[l] &  K_1(C_\bullet(H_N^{2+}))\arrow[l] .
\end{tikzcd}
\end{equation*}
The maps in the bottom line are the diagonal embedding and the map $(x,y)\mapsto x-y$, which are respectively injective and surjective $\Z \rightarrow\Z\oplus \Z \rightarrow \Z$. Hence, the top line is a short exact sequence and the  map $\psi$ is injective, and we have $$K_0(C_\bullet(\widehat{{\rm SL}_2(\Z)}\w \SN)) \simeq \Z^{6N^2-2N+2}\oplus\Z^{4N^2-2N+2}/\mathrm{Im} (\psi).$$
The map $\psi$ sends the generators of $K_0(C_\bullet(H_N^{2+}))$, coming from the classes of the form $[v\ot e_j]$ in $K_0(C^*(\Z_2)\ot \C^N)$ to the corresponding classes $[\beta^*v\ot e_j]-[\alpha^*v\ot e_j]$, where $\beta : \Z_2 \rightarrow \Z_6$ and $\alpha : \Z_2 \rightarrow \Z_4$ are the canonical embeddings. Hence, the quotient by $\mathrm{Im}(\psi)$ identifies free copies of $\Z$ and the group is isomorphic to $\Z^{8N^2-2N+2}$.
\end{proof}

\begin{remark}
We can use Proposition \ref{freeprodK} to give another proof of Corollary \ref{CORKthFree}. It is also possible to compute first the K-theory of the amalgamated free products and then apply Theorem \ref{THMD} to get the results of \ref{freeprodK}.
\end{remark}

\noindent Let us now do a K-theory computation in the context of an HNN extension. Recall that the Baumslag-Solitar group ${\rm BS}(n,m)$, for $n,m\in\Z^*$, is defined by generators and relations:
$${\rm BS}(n,m):=\langle a,b\,\vert\,ab^na^{-1}=b^m\rangle.$$
Let $\langle a\rangle<{\rm BS}(n,m)$ be the subgroup generated by $a$ and view $\widehat{\langle a\rangle}$ as a dual quantum subgroup of the compact quantum group $\widehat{{\rm BS}(n,m)}$.

\begin{proposition}
The dual of the compact quantum group $\widehat{{\BS}(n,m)}\wr_{\ast,\widehat{\langle a\rangle}} \SN$ is K-amenable for any $n,m\in \Z^*$ and $N\geq 1$. Its K-theory groups are given by the following, for any $n,m\in\Z^*$: if $N\geq 4$ and $n=m$, then
\[K_0(C_\bullet(\widehat{{\rm BS}(n,m)}\wr_{\ast,\widehat{\langle a\rangle}} \SN)) \simeq \Z^{2N^2-2N+3}\text{ and }K_1(C_\bullet(\widehat{{\rm BS}(n,m)}\wr_{\ast,\widehat{\langle a\rangle}} \SN)) \simeq\Z^{2N^2-2N+3},\]
and if $n\neq m$,
$$K_0(C_\bullet(\widehat{{\rm BS}(n,m)}\wr_{\ast,\widehat{\langle a\rangle}} \SN)) \simeq \Z^{N^2-2N+3},\,\,K_1(C_\bullet(\widehat{{\rm BS}(n,m)}\wr_{\ast,\widehat{\langle a\rangle}} \SN)) \simeq\Z^{N^2-2N+3}\oplus (\Z_{\vert n-m\vert})^{N^2}.$$
For $1\leq N\leq 3$, if $n=m$, then
\[K_0(C_\bullet(\widehat{{\rm BS}(n,m)}\wr_{\ast,\widehat{\langle a\rangle}} \SN)) \simeq \Z^{N^2+N!} \text{ and }K_1(C_\bullet(\widehat{{\rm BS}(n,m)}\wr_{\ast,\widehat{\langle a\rangle}} \SN)) \simeq \Z^{N^2+N!} \]
and if $n\neq m$,
\[K_0(C_\bullet(\widehat{{\rm BS}(n,m)}\wr_{\ast,\widehat{\langle a\rangle}} \SN)) \simeq \Z^{N!}\text{ and }K_1(C_\bullet(\widehat{{\rm BS}(n,m)}\wr_{\ast,\widehat{\langle a\rangle}} \SN)) \simeq\Z^{N!}\oplus (\Z_{\vert n-m\vert})^{N^2}.\]
\end{proposition}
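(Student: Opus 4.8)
The plan is to recognize $\widehat{\BS(n,m)}$ as a quantum HNN extension and then run the graph-of-quantum-groups machinery of Section \ref{SectionGraphCQG}. Recall that $\BS(n,m)$ is the HNN extension of $\langle b\rangle\cong\Z$ with stable letter $a$ and associated subgroups $\langle b^n\rangle,\langle b^m\rangle$, the relation $ab^na^{-1}=b^m$ encoding the isomorphism $b^n\mapsto b^m$. Dualizing, $C^*(\BS(n,m))$ is generated by $C^*(\langle b\rangle)=C^*(\Z)$ and the unitary $u=\delta_a$ subject to $u\,\delta_{b^n}u^*=\delta_{b^m}$, which is exactly the quantum HNN extension $\HNN(\widehat{\Z},\widehat{\Z},\theta)$ of Example \ref{ExGraphCQG}(2). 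Here the two faithful morphisms $C^*(\langle b^n\rangle)\to C^*(\langle b\rangle)$ send the canonical generator $w$ to $z^n$ (the inclusion $s_e$) and to $z^m$ (the twisting map $r_e=\theta$), where $z$ generates $C^*(\Z)=C(\widehat\Z)$, and the loop subgroup is $\langle a\rangle$. Example \ref{ExGraphCQG}(2) then yields
$$\widehat{\BS(n,m)}\wr_{\ast,\widehat{\langle a\rangle}}\SN\simeq\HNN\!\left(\widehat{\Z}\w\SN,\widehat{\Z}\w\SN,\tilde\theta\right)=\HNN\!\left(H_N^{\infty+},H_N^{\infty+},\tilde\theta\right),$$
since $\widehat{\Z}\w\SN=H_N^{\infty+}$. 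K-amenability is immediate: $\BS(n,m)$ is K-amenable (an HNN extension of $\Z$ over $\Z$), hence so is $\widehat{\BS(n,m)}$, and by Theorem \ref{THMAprime}(5) the free wreath product is K-amenable as well; this legitimizes the use of $C_\bullet$.

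Next I would apply the first cyclic exact sequence of Theorem \ref{THMExactSeqGraphCQG} with $A=\C$ to the graph with a single vertex and a single positive loop edge, both decorated by $H_N^{\infty+}$. Writing $P:=K_0(C_\bullet(H_N^{\infty+}))$ and $Q:=K_1(C_\bullet(H_N^{\infty+}))$, whose values are furnished by Theorem \ref{THMD} (namely $P\simeq\Z^{N^2-2N+2}$, $Q\simeq\Z^{N^2+1}$ for $N\geq 4$, and $P\simeq\Z^{N!}$, $Q\simeq\Z^{N^2}$ for $1\leq N\leq 3$), the sequence reads
$$P\xrightarrow{\ \beta_0\ }P\longrightarrow K_0(G')\longrightarrow Q\xrightarrow{\ \beta_1\ }Q\longrightarrow K_1(G')\longrightarrow P,$$
where $G':=\widehat{\BS(n,m)}\wr_{\ast,\widehat{\langle a\rangle}}\SN$ and $\beta_i:=(s_e^N)^\ast-(r_e^N)^\ast$. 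The computation of $\beta_0$ and $\beta_1$ is the heart of the proof. Since $s_e$ and $r_e$ are unital they induce the identity on $K_0(H_N^{\infty+})$ (they fix the classes $[1]$ and $[u_{ij}]$ generating $P$), so $\beta_0=0$. On $K_1$, one uses the naturality of the decomposition of Theorem \ref{THMD}, $Q=K_1(C_\bullet(\widehat\Z))^{\oplus N^2}\oplus K_1(C(\SN))$, with respect to the coefficient morphisms: $s_e^N$ and $r_e^N$ restrict to the identity on $C(\SN)$, hence act trivially on the summand $K_1(C(\SN))$, while on each of the $N^2$ copies of $K_1(C^*(\Z))=\Z$ they act by $(s_e)_\ast$ and $(r_e)_\ast$. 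As $(s_e)_\ast[w]=[z^n]=n[z]$ and $(r_e)_\ast[w]=m[z]$, the map $\beta_1$ is multiplication by $n-m$ on each of the $N^2$ copies of $\Z$ and is zero on $K_1(C(\SN))$.

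It then remains to read off the groups from exactness. Because $\beta_0=0$, the map $P\to K_0(G')$ is injective and $K_1(G')\to P$ is surjective, so exactness yields short exact sequences $0\to P\to K_0(G')\to\ker\beta_1\to 0$ and $0\to\operatorname{coker}\beta_1\to K_1(G')\to P\to 0$. Both split because $P$ and $\ker\beta_1$ are free, giving $K_0(G')\simeq P\oplus\ker\beta_1$ and $K_1(G')\simeq\operatorname{coker}\beta_1\oplus P$. When $n=m$ we have $\beta_1=0$, so $\ker\beta_1=\operatorname{coker}\beta_1=Q$; when $n\neq m$ we get $\ker\beta_1=K_1(C(\SN))$ and $\operatorname{coker}\beta_1=(\Z_{\vert n-m\vert})^{N^2}\oplus K_1(C(\SN))$. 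Substituting the values of $P$, $Q$ and $K_1(C(\SN))$ (which is $\Z$ for $N\geq 4$ and $0$ for $1\leq N\leq 3$) reproduces the four asserted cases. The main obstacle is the naturality underlying the computation of $\beta_1$: one must verify that the explicit generators of $K_1(C_\bullet(H_N^{\infty+}))$ built in the proof of Theorem \ref{THMD} are sent by $s_e^N$ (respectively $r_e^N$) to $n$ (respectively $m$) times the corresponding generators, i.e. that the connecting maps in the exact sequence of Theorem \ref{THMD} are natural with respect to the coefficient morphisms $s_e,r_e\colon\widehat\Z\to\widehat\Z$.
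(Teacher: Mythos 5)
Your proposal is correct and follows essentially the same route as the paper: identify $\widehat{\BS(n,m)}$ as the HNN extension $\HNN(\widehat\Z,\widehat\Z,\theta_n,\theta_m)$, apply Example \ref{ExGraphCQG}(2) and the six-term sequence of Theorem \ref{THMExactSeqGraphCQG} with $A=\C$, observe that the difference map vanishes on $K_0$ (the generators come from $C_\bullet(\SN)$ and are fixed) and acts as multiplication by $n-m$ on the $\Z^{N^2}$ summand of $K_1$, then split the resulting short exact sequences using freeness. The naturality point you flag as the main obstacle is exactly what the paper uses implicitly, and it holds because $\theta_l$ preserves each vertex subalgebra $C_\bullet(\widehat\Z)\ot\C^N$ and fixes $C_\bullet(\SN)$, so functoriality of $K_1$ applied to the inclusions realizing the isomorphism of Theorem \ref{THMD} gives the claimed matrix form of $\beta_1$.
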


\begin{proof}
Note that ${\rm BS}(n,m)$ is the HNN-extension ${\rm BS}(n,m)={\rm HNN}(\Z,\Z,\theta_n,\theta_m)$, where $\theta_l\,:\,\Z\rightarrow\Z$ is the multiplication by $l\in\{n,m\}$. In particular, ${\rm BS}(n,m)$ is K-amenable and the CQG $\widehat{{\rm BS}(n,m)}$ is also an HNN-extension. We still denote by $\theta_l\,:\, C_\bullet(\widehat{\Z}\wr_* S_N^+)\rightarrow C_\bullet(\widehat{\Z}\wr_* S_N^+)$ the map defined by $\theta_l(\nu_i(k)u_{ij})=\nu_i(lk)u_{ij}$ for all $l\in\{n,m\}$, $k\in\Z$ and $1\leq i,j\leq N$. By Example \ref{ExGraphCQG}, the loop subgroup is $\Gamma=\langle a\rangle$ and, by using Theorem \ref{THMExactSeqGraphCQG} (with $A=\C$), we get,
\begin{equation*}
\begin{tikzcd}
 K_0\left(C_\bullet\left(\widehat{\Z}\w \SN\right)\right) \arrow[r,"\theta_n^\ast-\theta_m^\ast"] & K_0\left(C_\bullet\left(\widehat{\Z}\w \SN\right)\right) \arrow[r] & K_0(\B) \arrow[d]\\
 K_1(\B)\arrow[u] & K_1\left(C_\bullet\left(\widehat{\Z}\w \SN\right)\right) \arrow[l] &  K_1\left(C_\bullet\left(\widehat{\Z}\w \SN\right)\right)\arrow[l,"\theta_n^\ast-\theta_m^\ast"]
\end{tikzcd}
\end{equation*}
where $\B:=C_\bullet(\widehat{{\rm BS}(n,m)}\wr_{\ast,\widehat{\langle a\rangle}} \SN)$. As the group $K_0(C_\bullet(\widehat{\Z}\w \SN))$ is generated by the image of $K_0(C_\bullet(\SN))$ in it, and since we have, for $l\in\{n,m\}$, $\theta_l^*([u_{ij}])=[\theta_l(u_{ij})]=[u_{ij}]$ and $\theta_l^*([1])=[\theta_l(1)]=[1]$, where $[u_{ij}]$ and $[1]$ are the generators of the $K_0$ group, the map $\theta_n^\ast-\theta_m^\ast$ is trivial at the $K_0$ level. It is however nontrivial at the $K_1$ level: $K_1(C(\widehat{\Z}\w \SN))$ is generated by the images of the generator $[1]$ of $K_1(C^*(\Z))$, the one of the generator of $K_1(C(\SN))$. The action of $\theta_n$ at the K-theory level is thus given by multiplication by $n$ on the generators coming from $K_1(C^*(\Z))$, and acts trivially on the one coming from $K_1(C(\SN))$. Thus the sequence splits differently depending on if $m$ and $n$ are equal or not. If $n=m$, then the maps are trivial and the sequence splits in two short sequences as follows:
$$0 \rightarrow \Z^{N^2-2N+2} \rightarrow K_0(\B)\rightarrow \Z^{N^2+1} \rightarrow 0\quad\text{and}\quad0 \rightarrow \Z^{N^2+1} \rightarrow K_0(\B)\rightarrow \Z^{N^2-2N+2} \rightarrow 0,$$
giving the first part of the result. If $m\neq n$ then the map is still trivial at the $K_0$-level, but acts as the multiplication by $m-n$ on the $N$ first generators of $K_1(C(\widehat{\Z}\w \SN))$ and as $0$ on the last one. The sequence splits in two short sequences:
$$0 \rightarrow \Z^{N^2-2N+2} \rightarrow K_0(\B)\rightarrow \Z \rightarrow 0\quad\text{and}\quad0 \rightarrow \Z^{N^2} \xrightarrow{\psi} \Z^{N^2+1} \rightarrow K_1(\B)\rightarrow \Z^{N^2-2N+2} \rightarrow 0,$$ the map $\psi$ in the second being multiplication by $(n-m)$ on each of the first $N^2$ terms of the sum, and the sequence becomes $0 \rightarrow (\Z_{\vert n-m\vert})^{N^2} \oplus \Z \rightarrow K_1(\B)\rightarrow \Z^{N^2-2N+2} \rightarrow 0$, and thus the result in the remaining case because the group on the right-hand-side is free.
The proof for the cases $1\leq N \leq 3$ works the same way.
\end{proof}

\begin{remark}
The computation of this case can also be achieved thanks to the six-term exact sequence for the free wreath product with amalgamation of Theorem \ref{THMD}, written in the beginning of the proof in section \ref{SectionKtheory}. The main point in the use of this exact sequence is that the maps $\sum s_v^*-r_v^*$ appearing in the sequence are injective and explicit in this special case, allowing an easy computation.
\end{remark}

We can compare it to the non-amalgamated case, using the following proposition about the K-theory of the Baumslag-Solitar groups $\BS(m,n)=\HNN\left(\Z,\Z,\theta_m,\theta_n\right)$. We include a proof as we couldn't find the result stated except in the solvable case in \cite{PV18}.

\begin{proposition}
Let $m$ and $n$ be integers, then the Baumslag-Solitar group $\BS(m,n)$ is K-amenable and its K-theory is given, if $n=m$, by
\[K_0(C^*_\bullet(\BS(m,m))) \simeq \Z^2\text{ and }K_1(C^*_\bullet(\BS(m,m))) =\Z^2,\]
and if $n\neq m$, by
\[K_0(C^*_\bullet(\BS(m,n))) \simeq \Z\text{ and }K_1(C^*_\bullet(\BS(m,n))) =\Z\oplus \Z_{\vert n-m\vert}.\]
\end{proposition}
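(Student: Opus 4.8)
The plan is to realize $\BS(m,n)=\HNN(\Z,\Z,\theta_m,\theta_n)$ as the fundamental group of a graph of groups and to feed the associated graph of $C^*$-algebras into the six-term exact sequences \eqref{Sequence1}--\eqref{Sequence2}. Following Example \ref{ExGraphCQG}(2), I would take the graph $\mathcal{G}$ with a single vertex $p$ and a single pair of edges $\{e,\overline{e}\}$ (so $E^+(\mathcal{G})=\{e\}$ and $s(e)=r(e)=p$, and $\mathcal{T}$ has no edges), vertex algebra $A_p=C^*(\Z)$, edge algebra $B_e=C^*(\Z)$, and structure maps $s_e=\theta_m$, $r_e=\theta_n$, where $\theta_l\,:\,C^*(\Z)\to C^*(\Z)$ is the unital $*$-homomorphism sending the canonical generating unitary $u$ to $u^l$. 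Both maps admit conditional expectations (averaging over the rotation action of $\Z/l\Z$ on $C^*(\Z)\cong C(\mathbb{T})$), so this is a genuine graph of $C^*$-algebras with conditional expectations, whose full, respectively vertex-reduced, fundamental $C^*$-algebra is $C^*(\BS(m,n))$, respectively $C^*_r(\BS(m,n))$, exactly as in the group case of \cite{FF14,FG18}.

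First I would record K-amenability. By \cite{FG18} the canonical surjection from the full fundamental algebra to the vertex-reduced fundamental algebra is a $KK$-equivalence; since these two algebras are $C^*(\BS(m,n))$ and $C^*_r(\BS(m,n))$, this says precisely that $\BS(m,n)$ is K-amenable. In particular $C_\bullet^*(\BS(m,n))$ has well-defined K-theory independent of the full/reduced choice, so it suffices to run a single exact sequence.

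Next I would identify the connecting maps. One has $K_0(C^*(\Z))\cong\Z$ generated by $[1]$ and $K_1(C^*(\Z))\cong\Z$ generated by $[u]$. Since $\theta_l$ is unital, $(\theta_l)_*=\mathrm{id}$ on $K_0$; since $(\theta_l)_*[u]=[u^l]=l[u]$, the map $(\theta_l)_*$ is multiplication by $l$ on $K_1$. Hence, applying \eqref{Sequence1} with $C=\C$, the map $s_e^*-r_e^*$ is $0$ on $K_0$ and multiplication by $(m-n)$ on $K_1$, and the (periodic) six-term sequence reads
\[
\Z\xrightarrow{\ 0\ }\Z\longrightarrow K_0\longrightarrow\Z\xrightarrow{\ \cdot(m-n)\ }\Z\longrightarrow K_1\longrightarrow\Z\xrightarrow{\ 0\ }\Z,
\]
where $K_i:=K_i(C^*_\bullet(\BS(m,n)))$. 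Finally I would solve this in the two cases. If $m=n$ both boundary maps vanish, yielding short exact sequences $0\to\Z\to K_0\to\Z\to0$ and $0\to\Z\to K_1\to\Z\to0$; these split since $\Z$ is free, giving $K_0\cong K_1\cong\Z^2$. If $m\neq n$, multiplication by $m-n$ is injective with cokernel $\Z_{|m-n|}$, so exactness forces $K_0\cong\Z$ together with an extension $0\to\Z_{|m-n|}\to K_1\to\Z\to0$ which again splits, giving $K_1\cong\Z\oplus\Z_{|m-n|}$.

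The main obstacle is the correct computation of the induced maps on $K_1$, namely verifying $(\theta_l)_*[u]=l[u]$ under $K_1(C(\mathbb{T}))\cong\Z$, and keeping straight which structure map is $s_e$ and which is $r_e$ (this is harmless, since only the difference $m-n$, hence $|m-n|$, enters the answer). Everything else is a routine diagram chase, using only that extensions of a free abelian group split.
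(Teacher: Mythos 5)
Your proposal is correct and follows essentially the same route as the paper: realize $\BS(m,n)$ as the fundamental algebra of the one-vertex, one-edge graph of $C^*$-algebras with vertex and edge algebra $C^*(\Z)$ and structure maps $\theta_m,\theta_n$, get K-amenability from the KK-equivalence of the full and vertex-reduced fundamental algebras in \cite{FG18}, and then run the six-term sequence with the connecting map being $0$ on $K_0$ and multiplication by $m-n$ on $K_1$. Your treatment of the conditional expectations and the splitting arguments is slightly more detailed than the paper's, but the computation and conclusions are identical.
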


\begin{proof}
The Baumslag-Solitar groups are the fundamental group of the graph with only one vertex and one edge, with $\Z$ on each and with $\Z$ being sent to the subgroups $n\Z$ and $m\Z$ by multiplication by $m$ and $n$. Thus, the corresponding C*-algebras are the full and vertex-reduced fundamental C*-algebra of this graph with $C^*(\Z)$ and the maps induced from the multiplications, which are K-equivalent. The exact sequence of K-theory is then 
\begin{equation*}
\begin{tikzcd}
 K_0\left(C^*(\Z)\right) \arrow[r,"\theta_m^\ast-\theta_n^\ast"] & K_0\left(C^*(\Z)\right) \arrow[r] & K_0(C^*_\bullet(\BS(m,n))) \arrow[d]\\
 K_1(C^*_\bullet(\BS(m,n)))\arrow[u] & K_1\left(C^*(\Z)\right) \arrow[l] &  K_1\left(C^*(\Z)\right),\arrow[l,"\theta_m^\ast-\theta_n^\ast"]
\end{tikzcd}
\end{equation*}
which becomes 
\begin{equation*}
\begin{tikzcd}
 \Z \arrow[r,"0"] & \Z \arrow[r] & K_0(C^*_\bullet(\BS(m,n))) \arrow[d]\\
 K_1(C^*_\bullet(\BS(m,n)))\arrow[u] & \Z \arrow[l] &  \Z,\arrow[l,"(m-n)"]
\end{tikzcd}
\end{equation*}
giving the result as the map $(m-n)$ is injective if $m\neq n$ and $0$ if $m=n$.
\end{proof}

From this we can compute the K-theory of the free wreath products using Theorem \ref{THMD}.

\begin{proposition}
Let $m$ and $n$ be integers and $N\geq 4$, then the dual of the compact quantum group $\widehat{{\BS}(n,m)}\w\SN$ is K-amenable and if $n=m$, then
\[K_0(C_\bullet(\widehat{{\BS}(m,n)}\w \SN)) \simeq \Z^{2N^2-2N+2}\text{ and }K_1(C_\bullet(\widehat{{\BS}(m,n)}\w \SN)) =\Z^{2N^2+1}.\]
If $n\neq m$,
\[K_0(C_\bullet(\widehat{{\BS}(m,n)}\w \SN)) \simeq \Z^{N^2-2N+2}\text{ and }K_1(C_\bullet(\widehat{{\BS}(m,n)}\w \SN)) =\Z^{N^2+1} \oplus (\Z_{\vert n-m\vert})^{N^2}.\]
For $1\leq N\leq 3$, if $n=m$, then
\[K_0(C_\bullet(\widehat{{\BS}(m,n)}\w \SN)) \simeq \Z^{N^2+N!} \text{ and }K_1(C_\bullet(\widehat{{\BS}(m,n)}\w \SN)) =\Z^{2N^2}.\]
If $n\neq m$,
\[K_0(C_\bullet(\widehat{{\BS}(m,n)}\w \SN)) \simeq\Z^{N!} \text{ and }K_1(C_\bullet(\widehat{{\BS}(m,n)}\w \SN)) =\Z^{N^2} \oplus (\Z_{\vert n-m\vert})^{N^2}.\]
\end{proposition}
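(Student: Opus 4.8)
The plan is to obtain this Proposition as a direct instance of Theorem \ref{THMD}, feeding in the K-theory of $G=\widehat{\BS(m,n)}$ supplied by the previous Proposition. First I would settle K-amenability: the previous Proposition records that $\BS(m,n)$ is K-amenable, so $\widehat{\BS(m,n)}$ is a K-amenable compact quantum group, and Theorem \ref{THMA}(5) then gives that $\widehat{\BS(m,n)}\w\SN$ is K-amenable as well. In particular its full and reduced C*-algebras are KK-equivalent, so the notation $C_\bullet$ is unambiguous throughout and it suffices to perform the computation once.

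Next I would record the input data from the previous Proposition: for $n=m$ one has $K_0(C_\bullet(\widehat{\BS(m,n)}))\simeq\Z^2$ and $K_1(C_\bullet(\widehat{\BS(m,n)}))\simeq\Z^2$, while for $n\neq m$ one has $K_0\simeq\Z$ and $K_1\simeq\Z\oplus\Z_{\vert n-m\vert}$. The $K_1$ half of the statement is then immediate from the formula $K_1(C_\bullet(\wreath))\simeq K_1(C_\bullet(G))^{\oplus N^2}\oplus K_1(C(\SN))$ of Theorem \ref{THMD}: substituting $K_1(C(\SN))\simeq\Z$ for $N\geq 4$ and $K_1(C(\SN))=0$ for $1\leq N\leq 3$ yields all four $K_1$ formulas. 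The only bookkeeping is to track the torsion, so that in the case $n\neq m$ the $N^2$ copies of $\Z\oplus\Z_{\vert n-m\vert}$ assemble to $\Z^{N^2}\oplus(\Z_{\vert n-m\vert})^{N^2}$, with the free summand coming from $K_1(C(\SN))$ added on when $N\geq 4$.

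For $K_0$ I would substitute into the corresponding formula of Theorem \ref{THMD}, namely $K_0(C_\bullet(\wreath))\simeq K_0(C_\bullet(G))^{\oplus N^2}/\Z^{2N-2}$ for $N\neq 3$ and its variant $/\Z^{3}$ for $N=3$. With $K_0(G)\simeq\Z^2$ this gives $\Z^{2N^2}/\Z^{2N-2}\simeq\Z^{2N^2-2N+2}$ for $N\geq 4$ and $\Z^{N^2+N!}$ for $1\leq N\leq 3$, and with $K_0(G)\simeq\Z$ it gives $\Z^{N^2-2N+2}$ for $N\geq 4$ and $\Z^{N!}$ for $1\leq N\leq 3$, matching every stated case. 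The one point requiring genuine care, and the main (mild) obstacle, is that this substitution is legitimate precisely because the quotient in Theorem \ref{THMD} is by the image of the injective map $\psi$ identifying $[1\ot e_{ij}]$ with $[u_{ji}]$: since $K_0(\widehat{\BS(m,n)})$ is free and contains the class $[1]$ of the unit as part of a basis, these relations eliminate exactly the ``unit'' generator in each of the $N^2$ copies without creating torsion, so the cokernel is free of the asserted rank. Concretely I would verify the clean identity $K_0(C_\bullet(\wreath))\simeq\Z^{rN^2}\oplus K_0(C(\SN))$ with $r={\rm rank}\,K_0(G)-1$, which reproduces all cases uniformly, and I would close by noting that these answers differ from the amalgamated HNN computation of the previous Proposition, which is the intended comparison.
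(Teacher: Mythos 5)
Your proposal is correct and takes essentially the same route as the paper: the paper's (essentially implicit) proof is precisely to feed the K-theory of $C^*_\bullet(\BS(m,n))$ from the preceding Proposition into the formulas of Theorem \ref{THMD}, with K-amenability of $\widehat{\BS(m,n)}\w\SN$ following from K-amenability of $\BS(m,n)$ and Theorem \ref{THMA}(5). Your additional verification that $[1]$ is a basis element of the free group $K_0(C^*_\bullet(\BS(m,n)))$, so that the quotient by $\mathrm{Im}(\psi)$ in Theorem \ref{THMD} is free of the expected rank (and the torsion appears only through the $K_1$ summands), is a detail the paper leaves implicit, and you handle it correctly.
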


\section*{Acknowledgements}

\noindent The authors would like to thank Kenny De Commer, Adam Skalski, Roland Vergnioux and Makoto Yamashita for their help and their suggestions during the redaction of this article.

\bibliography{ref}
\bibliographystyle{amsalpha}
\end{document}